\newtheorem{theorem}{Theorem}[section]
\newtheorem{lemma}[theorem]{Lemma}
\newtheorem{corollary}[theorem]{Corollary}
\newtheorem{proposition}[theorem]{Proposition}
\newtheorem{definition}[theorem]{Definition}
\newtheorem{assumption}[theorem]{Assumption}
\newtheorem{example}[theorem]{Example}
\numberwithin{equation}{section}
\newcommand{\eps}{\varepsilon}
\newcommand{\norm}[1]{\left|\left|#1\right|\right|}
\newcommand{\E}[1]{\mathbb E\left[#1\right]}
\def\Var{\mathrm{Var}}  
\def\Cov{\mathrm{Cov}} 
\newcommand{\R}{\mathbb R}
\newcommand{\N}{\mathbb N}
\newcommand{\tr}{\text{tr}}
\renewcommand{\P}{\mathbb P}
\title{Shrinkage to Infinity: Reducing Test Error by Inflating the Minimum Norm Interpolator in Linear Models}
\author{Jake Freeman\thanks{Department of Operations Research and Financial Engineering, Princeton University. Email: \texttt{jake.freeman@princeton.edu}}}
\begin{document}

\date{}
\maketitle{}

\begin{abstract}
\cite{10.1214-21-AOS2133} found that ridge regularization is essential in high dimensional linear regression $y=\beta^Tx + \epsilon$ with isotropic co-variates $x\in \R^d$ and $n$ samples at fixed $d/n$. However, \cite{10.1214-21-AOS2133} also notes that when the co-variates are anisotropic and $\beta$ is aligned with the top eigenvalues of population covariance, the “situation is qualitatively different.” In the present article, we make precise this observation for linear regression with highly anisotropic covariances and diverging $d/n$. We find (both theoretically and empirically) that simply \textit{scaling up} (or inflating) the minimum $\ell_2$ norm interpolator by a constant greater than one can improve the generalization error. This is in sharp contrast to traditional regularization/shrinkage prescriptions. Moreover, we use a data-splitting technique to produce consistent estimators that achieve generalization error comparable to that of the optimally inflated minimum-norm interpolator. Our proof relies on matching upper and lower bounds for expectations of Gaussian random projections for a general class of anisotropic covariance matrices when $d/n\rightarrow \infty$.
\end{abstract}
\section{Introduction}
In this paper, we study over-parameterized least square regression with a particular emphasis on data-generating processes that have a strongly anisotropic covariance matrix. Specifically, we assume we are given:
\begin{align*}
    \{(x_i,y_i)\}^n_{i=1} &\text{ where } x_i\sim N(0,\Sigma), \Sigma\in\mathbb{R}^{d\times d}\\
    y_i=x_i^T\beta +\eps &\text{ where } \E{\eps\,|\,x_i}=0,\E{\eps^2_i\,|\,x_i}\leq \sigma^2_{max}
\end{align*}

The parameters $d,\Sigma,\sigma_{max}^2,\text{ and }\beta$ are all non-random functions of $n$. We assume $d>n+1$ and take $tr(\Sigma)/n\to\infty$ and assume $\Sigma$ satisfies the structural conditions in Assumption \ref{assumpt:weak_canonical_case}, including that $\Sigma$ has  an effective rank proportional to $n$, where we define the effective rank to be:
\begin{align*}
    \text{eff-rank}(\Sigma)=\frac{tr(\Sigma)}{\norm{\Sigma}_2}
\end{align*}
Examples of allowable $\Sigma$ include covariances with power-law eigenvalue decay and spectra consisting of two well-separated blocks.  We focus on analyzing generalization error for functionals related to the minimum $\ell_2$-norm interpolator, $\theta_{MN}$:

\begin{equation}\label{equ:intro_min_norm_def}
\begin{aligned}
    \theta_{MN}&:=\arg\min\left\{\norm{\theta}_2 : x_i^T\theta=y_i \;\forall i\in[n]\right\}
\end{aligned}    
\end{equation}

\cite{10.1214-21-AOS2133} studied $\theta_{MN}$ in the proportional regime ($d/n\to\gamma\in (0,\infty)$). They found that when $\Sigma$ is isotropic, optimally tuned ridge regression has notably better test error compared to $\theta_{MN}$, across all $\gamma$ and signal-to-noise ratios. The paper also noted that if $\Sigma$ was anisotropic and $\beta$ aligns well with the top eigenvectors of $\Sigma$, the ``the situation is qualitatively different'' and suggested that $d/n\to\infty$ is the optimal setting to view this setup.
Our core observation is that in this setup the estimator,
\begin{align}\label{equ:def_theta_hat}
 \hat{\theta}=c\theta_{MN}   
\end{align}
with a well-chosen $c>1$ results in an improvement of the generalization error over $\theta_{MN}$. That is,
\begin{align}\label{equ:inflation}
G(\hat{\theta})<G(\theta_{MN}) \text{ where } G(\theta(\cdot))=\E{(x^T\theta(X)-x^T\beta)^2}
\end{align}

We call this phenomenon the ``Inflation Property'' (see Theorems \ref{thm:add_improve}, \ref{theorem:canon_multiplicative}). This property is in tension with the traditional notion that shrinking $\theta_{MN}$ towards zero often leads to better generalization.

The Inflation Property can be heuristically understood by the following two core intuitions:

\begin{itemize}
    \item Recall that the Johnson--Lindenstrauss lemma (JL) states that a random Gaussian projection matrix (with isotropic variates) is approximately an isometry. The connection to JL is most apparent in the noiseless regime (the regime  where $\eps=0$ a.s.) because $\hat{\theta}_c=c\Pi_X\beta$ (where $\Pi_X:=X^T(XX^T)^{-1}X$). When $\Sigma$ is sufficiently anisotropic, choosing $c>1$ (in Equ. \ref{equ:def_theta_hat}) can be thought of as increasing lengths to account for the structure of the tail. Indeed, in the noiseless isotropic regime, the optimal $c$ from Equ. \ref{equ:def_theta_hat} to use is 1.
    \item In the noiseless case,
    \begin{align*}
        G(c\theta_{MN})=\beta^T\Sigma\beta-2c\E{\beta^T\Sigma\Pi_X\beta}+\beta^T\E{\Pi_X\Sigma\beta}\beta
    \end{align*}
    As a result, in order for $c>1$ (and hence the Inflation Property to hold), we require that:
    \begin{align*}
        \beta^T(\E{\Pi_X\Sigma}-\E{\Pi_X\Sigma\Pi_X})\beta>0
    \end{align*}
    This can re-written as requiring $\beta_{||}^T\Sigma\beta_{\perp}>0$ on average (where $\beta_{||}=\Pi_X\beta$ and $\beta_{\perp}=\beta-\beta_{||}$). This implies that we need $\beta_{\perp}$ and $\beta_{||}$ to be aligned with some of the same eigenvectors of $\Sigma$ (on average). Thus, the Inflation Property comes from the span of the data not living in an $n$-dimensional hyperplane of the eigenvectors of $\Sigma$.
\end{itemize}

The Inflation Property (Equ. \ref{equ:inflation}) is somewhat analogous to classical results in James--Stein shrinkage \citep{james_estimation_1961}. In the James--Stein setup, shrinking towards an arbitration point improves the generalization error. In our setting, shrinking towards the wrong point will result in worse generalization error (see Theorem \ref{thm:add_improve}). Shrinking $\theta_{MN}$ towards (or away from) any direction other than zero results in some data generation processes having diverging generalization error (see Prop. \ref{prop:optimal_direction_shrink}).

The Inflation Property (Equ. \ref{equ:inflation}) relates to implicit regularization---i.e. that ``small'' variance directions of $\Sigma$ are able to absorb label noise and add an ``effective'' $\ell_2$ regularization penalty. \cite{10.1214-21-AOS2133} showed in the regime when $d$ and $n$ are proportional, the predictions given by the $\ell_2$ minimum-norm interpolator (Equ. \ref{equ:intro_min_norm_def}) can be improved by adding positive regularization when the covariance structure is close to isotropic (in Prop. \ref{prop:lambda_reg_isotropic_high_dim}, we recover these results when $d/n\to \infty$ when the noise is non-trivial).

However, when the data is sufficiently anisotropic, our analysis reveals a different regime---where the $\ell_2$ minimum-norm interpolator is actually overly regularized and some sort of anti-shrinkage or anti-regularization can improve generalization error. A related result is \cite{JMLR:v21:19-844} which showed that a negative ridge penalty was optimal in spiked covariance model and found that the number of dimensions needed to grow super-linearly to the sample size for this effect to occur. In Prop. \ref{prop:spiked_cov_model_improv}, we relate to \cite{JMLR:v21:19-844} by showing that under certain assumptions, the spiked covariance model experiences the Inflation Property.
\begin{figure}[H]
  \centering 
  \subfloat{\includegraphics[width=0.8\linewidth]{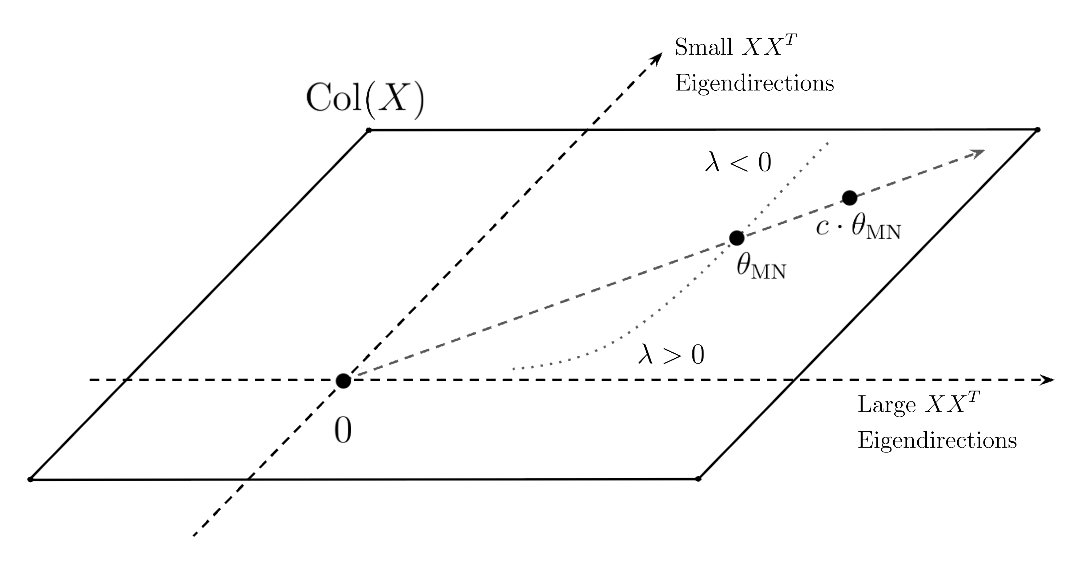}\label{fig:f1}}
  \captionsetup{justification=centering}
  \caption{This figure illustrates how the Inflation Property relates to $\theta_{MN}$, positive ridge regression, and negative ridge regression.}
  \label{fig:fig_1}
\end{figure}

Although the Inflation Property occurs in similar settings as when a negative ridge penalty is beneficial, the two are not equivalent (as seen in Fig. \ref{fig:fig_1}). While the Inflation Property increases all the eigenvalues equally, negative ridge regression increases the size of the eigenvalues in the smaller directions of $X$ disproportionately (compared to the eigenvalues of the larger directions of $X$). This can be seen by considering the singular values of the Tikhonov-regularized projection matrix with a ridge penalty $\lambda$ (in terms of the singular values of $X$),
\begin{align*}
    s_i^{ridge}=\frac{s_i^2}{s_i^2+\lambda}
\end{align*}

Further, while the Inflation Property reduces the bias, it is a data-dependent form of bias reduction that leverages the structure of the feature matrix $X$. In contrast, naively creating an unbiased estimator, $\hat{\theta} = \E{\Pi_X}^{-1} \theta_{MN}$, leads to diverging generalization error (see Prop. \ref{prop:make_unbiased}). 

We conclude with a brief summary of our main results: 

\begin{itemize}
    \item In some cases, the improvement created by the Inflation Property (Equ. \ref{equ:inflation}) is multiplicative, even in the presence of a vanishing signal-to-noise ratio ($\frac{\beta^T\Sigma\beta}{\E{\eps^2}}\to0$). See Theorem \ref{theorem:canon_multiplicative}, Prop. \ref{prop:snr_ex_prop}. That is, for a general class of covariance matrices there is an $\alpha\in(0,1)$, not depending on $\Sigma$, such that for some $c>1$ (depending on $\Sigma$),
    \begin{align*}
        G(c\theta_{MN})<\alpha G(\theta_{MN})
    \end{align*}
    \item To show that the multiplicative improvement holds in high probability and to obtain even stronger improvements from the Inflation Property, we use a data splitting technique (see Section \ref{sec:data_splitting}). Here, we can create an estimator, $\theta_{ds}$, which is the sum of $\ell_2$ minimum norm interpolators performed on each subset of the data. The advantage of the data splitting technique is that we are able to show the concentration of empirical risk around its mean. That is, for some estimator $\hat{c}>1$ when the Inflation Property holds,
    \begin{align*}
        \P\left(\norm{\hat{c}\theta_{ds}-\beta}_{\Sigma}\geq \alpha_{min} G(\theta_{MN})\right)\to0
    \end{align*}

    Additionally, under certain assumptions, data splitting allows us to achieve, for a well choosen $c>1$, $G(c\theta_{ds})=o(1)$, despite $G(\theta_{ds})=\Theta(1)$.
    \item On the technical side, we show a variety of results relating to the expectation of anisotropic Gaussian random projection matrices and concentration of functions of anisotropic Gaussian random projection matrices, including showing that $\E{\Pi_X}=\Theta(\frac{n}{tr(\Sigma)})\Sigma$ when $d/n\to\infty$. See Props. \ref{prop:proj_expect} and \ref{prop:var_Sig_proj}. 
    \item We also provide empirical results showing the existence of the Inflation Property on different real-world data sets and show using simulations that the Inflation Property can hold with non-Gaussian covariates. See Figures \ref{fig:fig_sim_1}, \ref{fig:fig_2}, \ref{fig:fig_3}, and \ref{fig:fig_4}.
\end{itemize}
\subsection{Related Literature}
The present work is motivated by recent papers showing that minimum-norm interpolators perform near optimally in the over-parameterized context, despite being overfit and with no additional regularization \citep{Belkin_2019,pmlr-v89-belkin19a}. In the context of linear models, \cite{bartlett2020benign} characterized when the $\ell_2$ minimum-norm interpolator results in an asymptotically vanishing risk (also called benign overfitting). Through these works, the authors found that the minimum-norm interpolator experiences implicit regularization. 

Similar results have shown the benign overfitting result in many other contexts---see e.g. ridgeless kernel regression \citep{44f6fc04-5e81-3369-9330-afe10d4a9993}, two-layer neural networks \citep{NEURIPS2022_a12c999b}, and multiclass classification \citep{NEURIPS2021_caaa29ea}. Of particular note is \cite{pmlr-v151-wang22k} where the authors analyzed the case of the $\ell_1$ minimum-norm interpolator in the case of a sparse $\beta$ and isotropic covariance. Here, benign overfitting does not occur unless $d\gg n$. We differ from the sparsity case considered in \cite{pmlr-v151-wang22k} by exploiting the joint structure of $\beta$ and $\Sigma$ rather than just $\beta$. Additionally, \cite{belkin2359–2376} showed that there is a strong relation between minimum-norm interpolation and the max-margin support-vector-machine when $d\gg n$.

Other papers have explored whether the $\ell_2$ minimum-norm interpolator undergo an anti-regularization procedure. In \cite{10.1214-21-AOS2133}, the authors focused on the proportional regime (when $n/d\to(0,\infty)$) and found that a positive ridge penalty is helpful when the data is isotropic or close to isotropic. Although this paper did not consider using a negative ridge penalty case, it found that a positive ridge penalty is not helpful when the data experiences sufficient anisotropy and $\beta$ is adequately aligned with the top eigenvectors of $\Sigma$.

In \cite{JMLR:v21:19-844}, the authors (1) empirically found that for some data sets the optimal ridge penalty is in fact negative and (2) theoretically showed for a spiked covariance model that the optimal ridge penalty is not positive (in the proportional regime). As discussed above, we relate to \cite{JMLR:v21:19-844} by showing that the spiked covariance matrix also experiences the Inflation Property. Additionally, \cite{pmlr-v130-richards21b} and \cite{NEURIPS2020_72e6d323} explored the asymptotics of ridge regression with a negative ridge penalty in the proportional regimes and provided conditions when a negative ridge penalty is optimal. One of the key insights of these papers is that positive ridge regression increases the bias and does not decrease the variance when the signal is well aligned with the strongest directions of the features. That is, a negative ridge penalty is optimal when the signal-to-noise ratio is high and $\beta$ is well aligned with $\Sigma$. The Inflation Property (Equ. \ref{equ:inflation}) is able to handle a vanishing signal-to-noise ratio, potentially, due to the substantially larger set of small directions to ``absorb'' the noise compared to the proportional regime.  \cite{NEURIPS2020_72e6d323} also showed that the negative ridge regression is unique to the over-parameterized case.

In \cite{JMLR:v24:22-1398}, the authors explored how negative regularization interacts with two types of noise: (1) the noise added to the labels and (2) the noise created by the tail of the covariance matrix when trying to estimate the top eigenvalues. Furthermore, the paper hypothesized that negative ridge regression could result in a better than multiplicative improvement. \cite{patil2024optimalridgeregularizationoutofdistribution} found that a negative ridge penalty can be optimal in the out-of-distribution case, even when the training data is isotropic---although the optimality of a negative ridge penalty can also occur in the under-parameterized out-of-distribution case.

The study of the $d/n\to\infty$ regime is natural in the context of modern machine learning methods, where the model parameters often greatly exceed the data size. For example, \cite{kaplan2020scalinglawsneurallanguage} found that the empirical optimal scaling for certain neural networks is in the regime where $d/n\to\infty$. Additionally, when dealing with rare diseases, researchers often grapple with dealing with high-dimensional low sample size data \citep{konietschke2021small}, which is analogous to the $d/n\to\infty$ regime.

Almost all of the aforementioned results are in regimes where $d$ and $n$ are proportional to each other or $d=\infty$. This is in large part due to either the nice properties of a fixed Hilbert space (for the $d=\infty$ case) or the asymptotical laws provided by the Marchenko–-Pastur distribution, respectively. Such distributional advantages are not present when $d/n\to\infty$ at an arbitrary rate because the distribution of the eigenvalues will shrink to an atom around zero in an arbitrary manner.

As a result, many of the techniques discussed in the above papers are inapplicable. For example, \cite{bartlett2020benign} requires that $tr(\Sigma)=o(n)$, \cite{10.1214-21-AOS2133} is predicated on the Marchenko--Pastur distribution, and \cite{JMLR:v21:19-844} explored only the spiked ridge model locally around zero regularization. Further, the tools used in \cite{pmlr-v151-wang22k} are also inapplicable because they heavily rely on a sparsity assumption. 

In \cite{10.1214-21-AOS2133}, the authors consider the proportional case, when $d/n\to\gamma\in(0,\infty)$, and take limit as $\gamma\to\infty$. Several papers have shown that these limits do not commute \citep{benaychgeorges2010eigenvalueseigenvectorsfinitelow, 10.1214/009117905000000233}. The reason these limits do not commute is because the design matrices have many singular values that diverge and are not captured by the limiting Marchenko--Pastur distribution, as a result taking the limit as $\gamma\to\infty$ does not resurrect the mass that has escaped.

We also note that the Inflation Property had not previously been studied and as a result, some of our techniques are necessarily different than the existing literature.
\section{Results}\label{sec:results}
In this section, we state our main results. To do this, we first establish some relevant definitions and notation. We show that, under general conditions, inflating the minimum norm interpolant can achieve an additive improvement in the generalization error. We, then, show stronger conditions where we can obtain a multiplicative improvement in the generalization error. Next, we focus on how to characterize and estimate the optimal amount of inflation. Finally, we show that the use of a data-splitting technique to create an estimator, $\theta_{ds}$, which can sharpen our bounds and improve our estimation procedure of the optimal constant.
\subsection{Definitions and Notation}\label{sec:def_not}
To present our results, we provide the following notation and definitions. We note that we are using the standard Euclidean inner and outer products ($x^Ty$ and $xy^T$) and all vectors and matrices are real-valued.

We study linear regression in the following sense:
\begin{definition}[Linear Regression Setup]\label{def:regress_setup}
We have (i.i.d.) training samples $(x_i,y_i)_{i=1}^n$ where $x_i\in\mathbb{R}^d$, $y_i=\beta^Tx_i+\eps_i$, and $n+1<d$.
\begin{enumerate}
    \item $x_i\sim N(0,\Sigma)$ where $\Sigma$ is symmetric, positive definite.
    \item $\eps_i\sim \P_\eps$ where $\P_\eps$ is such that $\E{\eps_i \, | \, x_i}=0$ and $\E{\eps_i^2\,|\,x_i}\leq \sigma_{max}^2$ for $x_i$-a.s (for some constant $\sigma_{max}^2$ which depends $n$).
    \item $X\in\mathbb{R}^{n\times d}$ s.t. $X_i=x_i^T$, $Y=(y_1,...,y_n)$, and $\vec{\eps}=(\eps_1,...,\eps_n)$.
\end{enumerate}
\end{definition}
We emphasize that that $d$, $\Sigma$, $\mathbb{P}_\eps$, $\sigma_{max}^2$, and $\beta$ are each non-random functions of $n$. We further note that $X$ is often called the design matrix. 

As discussed in the Introduction, the $\ell_2$ minimum norm interpolator plays a key role in our analysis. We restate the definition provided in Equ. \ref{equ:intro_min_norm_def}.
\begin{definition}[Minimum-Norm Interpolator]\label{def:mn_interpolator}
Let $X$, $Y$, and $\vec{\eps}$ be as in Def. \ref{def:regress_setup}. The minimum-norm interpolator ($\theta_{MN}$) is:
\begin{align*}
    \theta_{MN}&=\arg\min\{\norm{\theta}: X\theta=Y\}\\
    &=X^\dagger Y=X^T(XX^T)^{-1}X\beta+X^\dagger\vec\eps
    \\&=\Pi_X\beta+X^\dagger\vec{\eps}
\end{align*}
where $A^\dagger$ is the pseudo-inverse of $A$ and $\Pi_A$ is the orthogonal projection onto $A$ (that is $\Pi_X=X^T(XX^T)^{-1}X$). 
\end{definition}
We seek to characterize the generalization error.
\begin{definition}[Generalization Error]\label{def:gen_and_risk_error}
We define the generalization error, $G(\cdot)$, as the expectation (over all randomness)  of the $\ell_2$ risk, $R(\cdot)$:
\begin{equation}\label{equ:risk_and_gen_error}
\begin{aligned}
    R(\theta(X))&=\mathbb{E}_{x}\left[(\theta(X)^Tx-\beta^Tx)^2\right]\\ 
    G(\theta(\cdot))&=\mathbb{E}_{X,\eps}\left[R(\theta(\cdot))\right]  
\end{aligned}
\end{equation}
\end{definition}

We discussed the Inflation Property briefly in the Introduction. We formally define it as follows:
\begin{definition}[Inflation Property]\label{def:inflation_prop}
    Let $G$ be as in Def. \ref{def:gen_and_risk_error} and $\theta_{MN}$ be as in Def. \ref{def:mn_interpolator}. The Inflation Property occurs when,
    \begin{equation*}
    \exists\,c>1 \text{ s.t. } G(c\theta_{MN})<G(\theta_{MN})
    \end{equation*}
\end{definition}

Although we focus on the existence of some constant where the Inflation Property holds, we are also interested in the optimal constant, which is denoted as follows. 

\begin{definition}\label{def:c_opt}
Let $c_{opt}=\arg\min_{c}G(c\theta_{MN})$.  
\end{definition}
The generalization error is quadratic in $c$ and from the quadratic formula, we find that $c_{opt}=\frac{\E{\theta_{MN}^T\Sigma\beta}}{\E{\theta_{MN}^T\Sigma\theta_{MN}}}$. We study when the numerator is bigger than the denominator.

To facilitate stating our results, we will use the following definitions.
\begin{definition}~\label{def:mean_concentration}
\begin{enumerate}
    \item We write $\lambda_1\geq...\geq \lambda_d$ as the ordered eigenvalues of $\Sigma$ (from Def. \ref{def:regress_setup}) and write $v_i$ as the eigenvector corresponding to the $i$th eigenvalue of $\Sigma$.
    \item We defined the signal-to-noise ratio or SNR as: $\frac{\beta^T\Sigma\beta}{\E{\eps^2}}$
    \item We say a sequence of random variables $X_n\geq 0$ concentrates around its mean if $\frac{\sqrt{\Var[X_n]}}{\E{X_n}}=o(1)$.
    \item Let $r(n)=\frac{tr(\Sigma)^2}{tr(\Sigma^2)}$ and $\sigma^2=\E{tr(\Sigma X^T(XX^T)^{-1}\Lambda(X)(XX^T)^{-1}X)}/\E{tr(\Sigma X^T(XX^T)^{-2}X)}$ where $\Lambda(X)=\text{diag}(\E{\eps_1^2\,|\,X},...,\E{\eps_d^2\,|\,X})$.
    \item If $f(n)=o(g(n))$, then $f(n)/g(n)\to 0$. If $f(n)=\Theta(g(n))$, then $\lim\sup _n f(n)/g(n)\in(0,\infty)$. If $f(n)=O(g(n))$, then $\lim\sup_n f(n)/g(n)\in[0,\infty)$. If $f(n)=\Omega(g(n))$, then $\lim\inf_nf(n)/g(n)\in(0,\infty]$. Finally, if $f(n)=\omega(g(n))$, then $f(n)/g(n)\to\infty$.
\end{enumerate}    
\end{definition}

\subsection{Additive Improvement in Generalization Error}\label{sec:results_theory}
We first provide sufficient conditions on $\Sigma$, $\beta$, and $\P_{\eps}$ such that the inflated estimator provides a strictly better improvement of the generalization error. This result is similar to the shrinkage-type improvements found by \cite{james_estimation_1961} and the series of papers relating to James--Stein estimation that followed. 

We present a few sets of assumptions in Section \ref{sec:results} that can be understood as placing constraints in three key drivers: (1) the effective dimension of the feature covariance, (2) the strength of the alignment between $\beta$ and the top directions of $\Sigma$, and (3) the spectral structure of $\Sigma$ (e.g., the presence of a strong signal subspace).

\begin{assumption}\label{assump:additive} We assume that for all $n$ (where we note that $\beta$, $\Sigma$, $\sigma^2$ depend on $n$) the following holds: 
\begin{enumerate}
    \item $\beta^T\Sigma\beta=1$ and $tr(\Sigma)=d$
    \item $\lambda_1\leq \frac{1}{8}\frac{d}{n}$, $\lambda_i\geq \exp(-n^{1/2})/d$
    \item $\frac{n}{d}\beta^T\Sigma^2\beta-(1+\sigma_{max}^2)\frac{n}{d^2}tr(\Sigma^2)> 0$
\end{enumerate}
\end{assumption}
\begin{theorem}[Additive Improvement]\label{thm:add_improve}
Under Assumption \ref{assump:additive}, whenever $\frac{n}{d}\lambda_1\leq \frac{1}{32}$ for all $n$ and $n$ is sufficiently large (depending on the relationship between $d$ and $n$),
\begin{equation}\label{equ:additive_gen_err_1}
G(\theta_{MN})>G(c\theta_{MN})\; \forall c\in(1,2c_{opt}-1),\;c_{opt}>1    
\end{equation}
 Further, when $\lambda_1=o(\frac{d}{n})$ and $\sigma_{max}^2=\sigma^2$, Assumption \ref{assump:additive}.3 is also a necessary condition for Equ. \ref{equ:additive_gen_err_1} to hold. 
\end{theorem}

We describe this result as ``additive improvement'' because essentially $G(\theta_{MN})-G(c_{opt}\theta_{MN})=\xi_n$ where $\xi_n$ can vanish in the limit. This contrasts to what we call ``multiplicative improvement'', where the improvement does not vanish in the limit and scales $G(\theta_{MN})$ by a constant less than one.

Before presenting stronger assumptions which allow us to obtain a multiplicative improvement to the generalization error, we briefly discuss the assumptions to Theorem \ref{thm:add_improve}.

The role of Assumption \ref{assump:additive}.1 is simply to normalize the problem as a function of $n$ and is done without loss of generality. In the next sub-section, we discuss in depth the purpose of Assumption \ref{assump:additive}.2 in its relation to the effective rank of $\Sigma$. Assumption \ref{assump:additive}.3 requires that $\beta$ align with $\Sigma$ more strongly than its expected alignment with an isotropic random vector in $\mathbb{R}^{d}$ (on a noise adjusted basis). The assumption on the minimum eigenvalue is technical but fairly innocuous given the average eigenvalue is approximately order 1.
\subsection{Multiplicative Improvement}\label{sec:multiplicative_improvement}
To have a hope of learning in this high-dimensional setting beyond just additive improvement, we need the effective dimension of $\Sigma$ to be on the order of $n$ rather than on the order of $d$. In the isotropic case, when $d\gg n$, the asymptotic generalization error approaches the risk of simply predicting zero. Indeed, under Assumption \ref{assump:additive}.1, if $\lambda_1=o(\frac{d}{n})$, then we have the same result where asymptotically we are indifferent to predicting zero (i.e. the additive improvement shrinks to zero).  

The generalization error refers to how well we can predict the output of a fresh draw, $x\sim N(0,\Sigma)$. As such, the prediction risk is mostly dependent on $\Sigma^2$. To have a sufficiently small effective rank, there should be an order $n$ collection of eigen-directions of $\Sigma^2$ whose eigenvalues are large (relative to the tail) and $\beta$ maintain a non-vanishing projection onto this dominant subspace. We note that the tail of $\Sigma$ can still play an out-sized role in determining the distribution of $x$.

With this in mind, we state the following assumptions that are sufficient to obtain a multiplicative improvement in the generalization error:
\begin{assumption}[Strong Multiplicative Assumptions]\label{assumpt:strong_canonical_case} Given some constants $\alpha_{min}\in (0,1)$, $\alpha'\in (0,\infty)$, $\alpha_{noise}\in (0,\infty)$:
\begin{enumerate}
    \item $tr(\Sigma)=d$, $\beta^T\Sigma\beta=1$. 
    \item $\lambda_1= q_{max}\frac{d}{n}$ for $q_{max}\in\left(0,\frac{1}{8}\right]$ where $q_{max}$ is independent of $n$, $\lambda_i\geq \exp(-n^{1/2})/d$.
    \item $1\leq\text{card}(K_n)=O(n)$ where $K_n:=\{ i : \; \forall n'\geq n \;\; \alpha_{min}\,q_{max}\leq \frac{n'}{d(n')}\lambda_i(\Sigma_{n'})\leq q_{max}\}$.
    \item $\forall n\geq 1\;\sum_{i\in K_n}\lambda_i\beta^Tv_iv_i^T\beta>\alpha'>0$.
    \item $(1+\sigma^2)nr(n)\leq \alpha_{noise}q_{max}^2$.
\end{enumerate}
\end{assumption}
We note that Assumption \ref{assumpt:strong_canonical_case} does not imply Assumption \ref{assump:additive}. However, for fixed thresholds $\alpha_{min},\alpha',\alpha_{noise}$, we do get that Assumption \ref{assumpt:strong_canonical_case} implies Assumption \ref{assump:additive} when $q_{max}$ is sufficiently small. 

Before we discuss the assumptions themselves, we provide a few examples of sequences of eigenvalues which meet the above assumptions (ignoring the assumptions on $\beta$ and $\P_\eps$). We show in Prop. \ref{prop:examples_shown} that these examples satisfy Assumption \ref{assumpt:strong_canonical_case}.
\begin{example}\label{example:eigen_decay}~
\begin{enumerate}
    \item $\Sigma=\begin{bmatrix}
q\frac{d}{n}I_{n} & 0 \\
0 & \epsilon I_{d-n} 
\end{bmatrix}$ where $\epsilon=\frac{d}{d-n}(1-q)$, $0<q\leq \frac{1}{8}$, and $\alpha_{noise}>1$ are fixed quantities. 
    \item Suppose for some fixed $\alpha\in(0,1)$, we have $d=\gamma n^{1+\alpha}$ where $\gamma\geq 8$ is a fixed quantity. Consider the spectra $\lambda_i=\frac{1}{\gamma} \frac{d}{n}i^{-a}$ with some fixed $\alpha_{min}$ and $n\gg 1$. 
    \item Let $0<q\leq \frac{1}{8}$, $\alpha_{min}\in [0,1), \alpha_0\geq 0, \alpha_1\in(0,1),\alpha_2\in(0,1)$ be fixed for all $n$ and $d=n^{1-\alpha_2+\alpha_1}/n$. Consider the following spectra for $n\gg 1$:
    \begin{equation*}
        \lambda_i\propto\begin{cases} 
      q\frac{d}{n}(\alpha_{min}+i^{-\alpha_0}) & i\leq n \\
      (\frac{d}{n})^{-\alpha_1}(i-n+1)^{-\alpha_2} & i>n \\
   \end{cases}
    \end{equation*}
    Where $\propto$ refers to the use of constants to ensure that the spectrum sums to $d$.
    \end{enumerate}

\end{example}

Example \ref{example:eigen_decay}.1 helps show the import of the $d/n\to\infty$ assumption. Consider instead that $d/n\to\gamma\in (1,\infty)$. Then, if $q= \frac{1}{\gamma}$, then the covariance matrix would have to isotropic (and these results would not hold) and $q<\frac{1}{\gamma}$ could not occur (because of our eigenvalue ordering and trace condition). Similarly, without the $d/n\to\infty$ condition, the results might not hold for all $n,d$ large.

Assumptions \ref{assumpt:strong_canonical_case}.2 and \ref{assumpt:strong_canonical_case}.5 relate to two notions of effective rank of $\Sigma\;$--- (1) $\;tr(\Sigma)/\lambda_1$ and (2) $tr(\Sigma)^2/tr(\Sigma^2)$. These are the same notions that \cite{bartlett2020benign} characterized has being essential for determining whether benign overfitting occurs. 

Both of these notions require sufficient over-parameterization. On the one hand, Assumptions \ref{assumpt:strong_canonical_case}.2-3 imposes the condition that the effective rank of the order $n$ times some large constant. On the other hand, Assumption \ref{assumpt:strong_canonical_case}.5 requires that the effective rank be over-parameterized in a similar manner when adjusted for noise. In other words, Assumption \ref{assumpt:strong_canonical_case}.5 places a restriction on the SNR. We can also see the model can handle order 1 noise and when $\text{card}(K_n)=o(n)$, the model can handle vanishing a SNR. 

Assumption \ref{assumpt:strong_canonical_case}.3 relates to the separation of the data into two classes of eigenvalues. Assumption \ref{assumpt:strong_canonical_case}.3 can be thought of as looking at the set of eigenvalues that are uniformly in the ``strong signal'' block. These assumptions also require that at least one eigenvalue is growing at $d/n$ and Assumption \ref{assumpt:strong_canonical_case}.4 requires that $\beta$ be aligned with the directions of the ``strong signal'' subspace in a non-vanishing manner.

Assumptions \ref{assumpt:strong_canonical_case}.3-4 can be replaced with a less intuitive but more general assumption that allows us to take advantage of more in-depth interactions between $\Sigma$ and $\beta$ to achieve various technical results. 
\begin{assumption}[Weak Multiplicative Assumptions]\label{assumpt:weak_canonical_case}
For some given constants $C_{noise}>0$, $C_1>0$, 
\begin{enumerate}
    \item $tr(\Sigma)=d$ and $\beta^T\Sigma\beta=1$.
    \item $\lambda_1\leq \frac{1}{8}\frac{d}{n}$ and $\lambda_i\geq \exp(-n^{1/2})/d$.
    \item $\frac{n^2}{d^2}\beta^T\Sigma^{3}\beta\leq C_1q^2$ where $q:=\frac{n}{d}\beta^T\Sigma^2\beta$ and $q$ is constant with respect to $n$.
    \item $(1+\sigma^2)nr(n)\leq C_{noise}q^2$.
\end{enumerate}
\end{assumption}
As seen in Prop. \ref{prop:weak_implies_strong}, Assumption \ref{assumpt:weak_canonical_case} implies Assumption \ref{assumpt:strong_canonical_case}. We also show in Prop. \ref{prop:simple_canon_multiplicative} that an analogous result to Theorem \ref{theorem:canon_multiplicative} occurs under Assumption \ref{assumpt:strong_canonical_case}. With Assumption \ref{assumpt:weak_canonical_case} in mind, we can state the following result:
\begin{theorem}\label{theorem:canon_multiplicative}
    Consider Assumption \ref{assumpt:weak_canonical_case}. There exists an $\alpha\in (0,1)$ (not depending on $n$ or any particular $\Sigma$ but possibly on $C_1$ or $C_{noise}$) such that when $q<\frac{1}{216(C_1+C_{noise})}$ and $n$ sufficiently large (depending on the relationship between $d$ and $n$), $\alpha G(\theta_{MN})\geq G(c_{opt}\theta_{MN})$ and $c_{opt}>1$.  Further, when $r(n)=o(1/n)$, then, $\alpha$ only depends on $C_1$ and the restriction on $q$ becomes $q<\frac{1}{216C_1}$. 
\end{theorem}

As a result of Theorem \ref{theorem:canon_multiplicative}, we can see that the inflation method can create a multiplicative improvement in the generalization error. Although it would be helpful to know whether $q<\frac{1}{C_1+C_{noise}}$ \textit{ a priori}, Prop. \ref{prop:c_estimate_mult} allows us to construct an estimator achieves an improvement analogous to $\alpha$ when the assumptions of Theorem \ref{theorem:canon_multiplicative} are met and only results in marginally worse generalization error in other cases, as compared to $G(\theta_{MN})$ (see Prop. \ref{prop:ds_MN_close_expect}).

As discussed above regarding Example \ref{example:eigen_decay}.1, the general allowance of $q$ to be sufficiently small may not hold if $d/n$ does not go to infinity.
\subsection{Characterizing \texorpdfstring{$c_{opt}$}{c\_{opt}} in the Multiplicative Case}
Given the importance of $c_{opt}$ (see Def. \ref{def:c_opt}), we look at how it relates to $\beta$ and $\Sigma$.

\begin{proposition}\label{prop:weak_c_opt_bounds}
    Consider Assumption \ref{assumpt:weak_canonical_case} and $(1+\sigma^2)r(n)=o(1/n)$. Then, there are universal constants $k_1$ and $k_2$ such that the following is true,
    \begin{equation*}
    \frac{\frac{n}{d}\beta^T\Sigma^2\beta}{\frac{n^2}{d^2}\beta^T\Sigma^3\beta}(1+k_1q) \leq c_{opt}\leq \frac{\frac{n}{d}\beta^T\Sigma^2\beta}{\frac{n^2}{d^2}\beta^T\Sigma^3\beta}(1+k_2q)       
    \end{equation*}

\end{proposition}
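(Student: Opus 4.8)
The plan is to start from the exact expression $c_{opt} = \frac{\E{\theta_{MN}^T\Sigma\beta}}{\E{\theta_{MN}^T\Sigma\theta_{MN}}}$ recorded right after Definition \ref{def:c_opt}, and expand both numerator and denominator using the decomposition $\theta_{MN} = \Pi_X\beta + X^\dagger\vec\eps$ from Definition \ref{def:mn_interpolator}. Since $\E{\vec\eps\mid X}=0$, the cross terms in the numerator drop and we get $\E{\theta_{MN}^T\Sigma\beta} = \E{\beta^T\Pi_X\Sigma\beta}$; similarly the denominator splits as $\E{\beta^T\Pi_X\Sigma\Pi_X\beta} + \E{\vec\eps^T (X^\dagger)^T\Sigma X^\dagger\vec\eps}$, and the latter noise term equals $\sigma^2\,\E{\tr(\Sigma X^T(XX^T)^{-2}X)}$ by the definition of $\sigma^2$ in Definition \ref{def:mean_concentration}.4. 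So everything reduces to controlling three quantities: $\E{\beta^T\Pi_X\Sigma\beta}$, $\E{\beta^T\Pi_X\Sigma\Pi_X\beta}$, and the noise trace term, each up to relative error $O(q)$.

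The key step is to invoke the random-projection estimates — Propositions \ref{prop:proj_expect} and \ref{prop:var_Sig_proj}, and whatever refined bilinear-form bounds the paper has proved for $\E{\Pi_X\Sigma}$ and $\E{\Pi_X\Sigma\Pi_X}$ — to show $\E{\beta^T\Pi_X\Sigma\beta} = \frac{n}{d}\beta^T\Sigma^2\beta\,(1+O(q))$ and $\E{\beta^T\Pi_X\Sigma\Pi_X\beta} = \frac{n^2}{d^2}\beta^T\Sigma^3\beta\,(1+O(q))$. The heuristic behind these is that $\Pi_X \approx \frac{n}{\tr\Sigma}\Sigma = \frac{n}{d}\Sigma$ (using $\tr\Sigma = d$ from Assumption \ref{assumpt:weak_canonical_case}.1), so $\Pi_X\Sigma$ behaves like $\frac{n}{d}\Sigma^2$ against $\beta$ and $\Pi_X\Sigma\Pi_X$ like $\frac{n^2}{d^2}\Sigma^3$; the point is to make the $O(q)$ error explicit with universal constants, which is exactly where Assumption \ref{assumpt:weak_canonical_case}.3 (bounding $\frac{n^2}{d^2}\beta^T\Sigma^3\beta$ by $C_1 q^2$, i.e. the third moment is not too large relative to the square of the second) and the eigenvalue bound \ref{assumpt:weak_canonical_case}.2 (so $\frac{n}{d}\lambda_1 \le 1/8$) get used to close the estimates. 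Under the additional hypothesis $(1+\sigma^2)r(n) = o(1/n)$, the noise trace term $\sigma^2\E{\tr(\Sigma X^T(XX^T)^{-2}X)}$ is $o(q^2 \cdot \frac{n}{d}\beta^T\Sigma^2\beta)$ relative to the bias denominator — this is the analogue of Assumption \ref{assumpt:weak_canonical_case}.4 in the sharper regime — so it can be absorbed into the $O(q)$ slack and does not affect the leading order of $c_{opt}$.

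Putting the pieces together, $c_{opt} = \frac{\frac{n}{d}\beta^T\Sigma^2\beta\,(1+O(q))}{\frac{n^2}{d^2}\beta^T\Sigma^3\beta\,(1+O(q))} = \frac{\frac{n}{d}\beta^T\Sigma^2\beta}{\frac{n^2}{d^2}\beta^T\Sigma^3\beta}\,(1+O(q))$, and then one unwinds the $(1+O(q))$ factor into a two-sided bound $(1+k_1 q) \le c_{opt}\big/\big(\tfrac{(n/d)\beta^T\Sigma^2\beta}{(n^2/d^2)\beta^T\Sigma^3\beta}\big) \le (1+k_2 q)$ for universal $k_1, k_2$; note $k_1$ may be negative a priori, but since $\frac{(n/d)\beta^T\Sigma^2\beta}{(n^2/d^2)\beta^T\Sigma^3\beta} = \frac{\beta^T\Sigma^2\beta}{\frac{n}{d}\beta^T\Sigma^3\beta} \ge \frac{1}{\lambda_1 \cdot (n/d)} \ge 8 > 1$ by \ref{assumpt:weak_canonical_case}.2, the lower bound already forces $c_{opt} > 1$ for $q$ small, consistent with Theorem \ref{theorem:canon_multiplicative}. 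The main obstacle I expect is the bookkeeping in the second step: getting matching upper and lower bounds on the bilinear forms $\beta^T\E{\Pi_X\Sigma\Pi_X}\beta$ with a genuinely \emph{universal} $O(q)$ constant (rather than one depending on $\Sigma$) requires the anisotropic random-projection machinery of Propositions \ref{prop:proj_expect}–\ref{prop:var_Sig_proj} to be applied not just to traces but to quadratic forms in the fixed vector $\beta$, and controlling the second-order term $\E{\Pi_X\Sigma\Pi_X} - \frac{n^2}{d^2}\Sigma^3$ uniformly is the delicate part — this is presumably where the hypothesis $q < $ (small constant) implicit via Assumption \ref{assumpt:weak_canonical_case} and the separation conditions it entails (Prop. \ref{prop:weak_implies_strong}) do the real work.
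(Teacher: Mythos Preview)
Your proposal is correct and matches the paper's approach: start from $c_{opt}=\E{\theta_{MN}^T\Sigma\beta}/\E{\theta_{MN}^T\Sigma\theta_{MN}}$, expand numerator and denominator via Propositions~\ref{prop:proj_expect}, \ref{prop:exp_sig_exp}, and \ref{prop:exp_noise_first_order} to get $\frac{n}{d}\beta^T\Sigma^2\beta(1+O(q))$ over $\frac{n^2}{d^2}\beta^T\Sigma^3\beta+(1+\sigma^2)nr(n)+O(q^3)$, then kill the noise term using $(1+\sigma^2)nr(n)=o(1)$ together with the Cauchy--Schwarz lower bound $\frac{n^2}{d^2}\beta^T\Sigma^3\beta\ge q^2$. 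One small correction: the relevant expectation bound for the denominator is Proposition~\ref{prop:exp_sig_exp} (the Loewner sandwich for $\E{\Pi_X\Sigma\Pi_X}$), not Proposition~\ref{prop:var_Sig_proj}, which controls a variance you do not need here; and Proposition~\ref{prop:weak_implies_strong} plays no role in this argument.
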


When $q$ is sufficiently small, $c_{opt}\approx \frac{\frac{n}{d}\beta^T\Sigma^2\beta}{\frac{n^2}{d^2}\beta^T\Sigma^3\beta}$. This has a few interpretations:
\begin{enumerate}
    \item[\textbullet] We can write $c_{opt}\approx\|\hat{\beta}\|_I^2\;/\;\|\hat{\beta}\|_{(\frac{n}{d}\Sigma)}^2$ where $\hat{\beta}=\frac{n}{d}\Sigma\beta$. Under this viewpoint, $c_{opt}$ is adjusting $\beta$ for the differences in its standard Euclidean geometry and its geometry under $\Sigma$. Returning to the Johnson--Lindenstrauss Lemma interpretation, the minimum-norm interpolant is projecting onto a space that contracts the lengths and $c_{opt}$ is inflating those lengths to bring them back to what they should be under an isotropic projection.
\item[\textbullet] Let $\lambda$ be a random variable distributed such that $\P(\lambda=\frac{n}{d}\lambda_i)=\frac{n\lambda_i}d(\beta^Tv_i)^2$. Then, $c_{opt}=\E{\lambda}/\E{\lambda^2}$. This lens describes the minimum-norm interpolant as being overly dependent on the alignment of $\beta$ and $\Sigma^3$ and $c_{opt}$ removing that dependence in favor of the stronger dependence between $\beta$ and $\Sigma^2$.    
    \item[\textbullet] As discussed previously, $\frac{n}{d}\beta^T\Sigma^2\beta-(1+\sigma^2)nr(n)>0$ means that $\beta$ is better aligned with $\Sigma$ than its average alignment with a random isotropic vector. Since $(1+\sigma^2)nr(n)=o(1)$, $\beta$ is markedly better aligned to $\Sigma$ than a random isotropic vector. Because in this case $\frac{n^2}{d^2}\beta^T\Sigma^3\beta<\frac{n}{d}\beta^T\Sigma^2\beta$, we know that $c_{opt}>1$ whenever $\beta$'s alignment with $\Sigma$ dominates a random isotropic vector. 
\end{enumerate}
\subsection{Obtaining a Consistent Estimator Through Data Splitting}\label{sec:data_splitting}
In order to obtain a consistent estimator, we make use of data splitting\footnote{With additional assumptions on the dependence between $\eps$ and a draw of the data, it is possible to obtain concentration for $\theta_{MN}$ when $(1+\sigma^2)nr(n)=o(1)$.}. Although cross-validation type approaches are common in ridge regression techniques \citep{liu2020ridgeregressionstructurecrossvalidation, Delaney01041986}, we use the data splitting primarily to create an estimator $\theta_{ds}$ for the unknown vector $\beta$ by taking advantage of an averaging like effect which allows us to obtain consistent estimators and concentration. We do use a hold-out estimation technique to estimate the optimal inflation constant. We define the data splitting procedure and calculation of $\theta_{ds}$ as follows:

\begin{definition}\label{def:data_splitting} Let $X$, $Y$, and $\vec\eps$ be from Def. \ref{def:regress_setup}.
\begin{enumerate}
    \item Let $N\in\mathbb{N}$ to be the number of data splits.
    \item For $1\leq i\leq N-1$, $X^{(i)}:=(x_{(i-1)\lfloor \frac{n}{N}\rfloor+1},...,x_{i\lfloor \frac{n}{N}\rfloor})$.
    \item $X^{(N)}:=(x_{(N-1)\lfloor \frac{n}{N}\rfloor+1},...,x_{n})$.
    \item Let $\vec{\eps}^{(i)}$ and $Y^{(i)}$ be the elements of $\vec\eps$ (resp. $Y$) corresponding to the elements in $X^{(i)}$.
    \item Let $\theta^{(i)}_{MN}:=(X^{(i)})^T((X^{(i)})(X^{(i)})^T)^{-1}Y^{(i)}$.
    \item Let $\theta_{ds}:=\sum_{i=1}^{N-1}\theta_{MN}^{(i)}$
    \item Let $c^*=\arg\min_c G(c\theta_{ds})$
\end{enumerate} 
\end{definition}
At first glance, the use of data splitting to obtain better results in this anti-regularization setup seems contradictory. Much of the literature has focused on the regularization properties of data splitting \citep{pmlr-v151-muecke22a, 10.1162-neco.1997.9.5.1163, bagging_regularizes}. In the context of data splitting in kernel ridge regression, \cite{pmlr-v30-Zhang13} found that each ``local'' estimator (the estimator on the subset) is anti-regularized and required a positive ridge penalty to be optimal. 

At first glance, it may seem that we need to divide by $1/N$ in the definition of $\theta_{ds}$. But because of the high dimensionality, $\E{\theta_{MN}^{(i)}}\approx \frac{n/N}{d}\Sigma\beta$ (see Prop. \ref{prop:proj_expect}) and so $\theta_{ds}$ already reflects an implicit average. 

Here, when we split data into a growing number of partitions, we are able to use the averaging like effect to create an estimator ($\theta_{ds}$) which performs very similarly to $\theta_{MN}$ but is consistent and allows us to obtain better concentration results. 

We first off note that the generalization error between $\theta_{ds}$ and $\theta_{MN}$ are comparable. For avoidance of doubt, $c_{opt}$ remains as defined in Def. \ref{def:c_opt}. 
\begin{proposition}\label{prop:ds_MN_close_expect}
Consider Assumption \ref{assumpt:weak_canonical_case} and $N\to\infty$. Let $\rho:=\frac{n}{d}\lambda_1$. Then, for $c>0$, 
\begin{align*}
|G(c\theta_{ds})-G(c\theta_{MN})|\leq 3c\rho q+15(C_1+C_{noise})c^2\rho q^2    
\end{align*}. 

Further, as $c_{opt}\leq \frac{4}{3(C_1+C_{noise})q}$, 
\begin{align*}
|G(c_{opt}\theta_{ds})-G(c_{opt}\theta_{MN})|\leq 30\rho    
\end{align*}

Further,
\begin{align*}
    |G(c^*\theta_{ds})-G(c_{opt}\theta_{MN})|\leq 3q
\end{align*}
\end{proposition}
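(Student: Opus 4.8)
The plan is to use that, for any fixed vector $\theta$, the map $c\mapsto G(c\theta)$ is a parabola. By Definition \ref{def:gen_and_risk_error} and Assumption \ref{assumpt:weak_canonical_case}.1 (which makes $G(0\cdot\theta)=\beta^T\Sigma\beta=1$), writing $a_\star:=\E{\theta_\star^T\Sigma\beta}$ and $b_\star:=\E{\theta_\star^T\Sigma\theta_\star}$ for $\star\in\{MN,ds\}$,
\[
G(c\theta_{MN})=1-2c\,a_{MN}+c^2b_{MN},\qquad G(c\theta_{ds})=1-2c\,a_{ds}+c^2b_{ds}.
\]
Hence $c_{opt}=a_{MN}/b_{MN}$, $c^*=a_{ds}/b_{ds}$, and the target is the difference of vertex values, which I would expand as
\[
G(c^*\theta_{ds})-G(c_{opt}\theta_{MN})=\frac{a_{MN}^2}{b_{MN}}-\frac{a_{ds}^2}{b_{ds}}
=\frac{a_{MN}^2\,(b_{ds}-b_{MN})}{b_{MN}\,b_{ds}}+\frac{(a_{MN}-a_{ds})(a_{MN}+a_{ds})}{b_{ds}}.
\]

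Next I would pin down the sizes of the leading coefficients. From $\E{\Pi_X}=\Theta\!\big(\tfrac{n}{tr(\Sigma)}\big)\Sigma$ (Prop.\ \ref{prop:proj_expect}) one gets $a_{MN}=\beta^T\E{\Pi_X}\Sigma\beta\asymp q$; splitting $b_{MN}=\E{\beta^T\Pi_X\Sigma\Pi_X\beta}+\E{\vec\eps^T(X^\dagger)^T\Sigma X^\dagger\vec\eps}$ into signal and noise parts, the signal part is pinned both above and below by Props.\ \ref{prop:proj_expect}--\ref{prop:var_Sig_proj} together with matrix Jensen and the Cauchy--Schwarz inequality $\tfrac{n^2}{d^2}\beta^T\Sigma^3\beta\ge(\tfrac nd\beta^T\Sigma^2\beta)^2=q^2$, while the noise part is bounded by Assumption \ref{assumpt:weak_canonical_case}.4; this yields $q^2\lesssim b_{MN}\le (C_1+C_{noise})q^2$ up to universal constants, so $c_{opt}=\Theta(1/q)$. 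Then I would control the coefficient differences sharply. The first inequality of this proposition, applied at $c\to0^+$ (divide by $c$) and $c\to\infty$ (divide by $c^2$), already gives the crude bounds $|a_{ds}-a_{MN}|\le\tfrac32\rho q$ and $|b_{ds}-b_{MN}|\le 15(C_1+C_{noise})\rho q^2$ behind the $30\rho$ estimate above; but for the present claim I would argue more precisely, using that $\theta_{ds}=\sum_{i=1}^{N-1}\theta^{(i)}_{MN}$ with the split sizes $m=\lfloor n/N\rfloor$ obeying $m/n\le 1/N\to0$. Then $a_{ds}=(N-1)\,\beta^T\E{\Pi_{X^{(1)}}}\Sigma\beta$ and the corresponding expansion of $b_{ds}$ match $a_{MN}$ and $b_{MN}$ once the leading terms of Props.\ \ref{prop:proj_expect}--\ref{prop:var_Sig_proj} are inserted, with relative error of order $n/d+m/n$; since $q$ is a fixed constant and $N$ may be taken as large as desired, this makes $|a_{ds}-a_{MN}|=O(q^2)$ and $|b_{ds}-b_{MN}|=O(q^3)$.

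Feeding these into the displayed identity, the first term has size $a_{MN}^2|b_{ds}-b_{MN}|/(b_{MN}b_{ds})\asymp|b_{ds}-b_{MN}|/q^2=O(q)$, and the second has size $|a_{MN}-a_{ds}|(a_{MN}+a_{ds})/b_{ds}\asymp|a_{MN}-a_{ds}|/q=O(q)$; collecting universal constants (and using $\rho\le\tfrac18$ and the standing hypothesis $q<\tfrac1{216(C_1+C_{noise})}$ to absorb lower-order corrections) gives $|G(c^*\theta_{ds})-G(c_{opt}\theta_{MN})|\le 3q$. An equivalent packaging is the sandwich $|G(c^*\theta_{ds})-G(c_{opt}\theta_{MN})|\le\max\{|G(c_{opt}\theta_{ds})-G(c_{opt}\theta_{MN})|,\,|G(c^*\theta_{ds})-G(c^*\theta_{MN})|\}$, valid because $c_{opt}$ and $c^*$ are the respective minimizers of the two parabolas, combined with the sharp coefficient bounds evaluated at $c_{opt},c^*=\Theta(1/q)$.

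The main obstacle is the sharp control of the coefficient differences. The quick consequences of the first inequality of the proposition only pin $|a_{ds}-a_{MN}|$ and $|b_{ds}-b_{MN}|$ down to order $\rho q$ and $(C_1+C_{noise})\rho q^2$, which in the vertex-value perturbation translate into an error of order $(C_1+C_{noise})\rho$ --- not $O(q)$ in general, since $\rho$ is only known to lie in $[q,\tfrac18]$. Reaching the clean constant $3q$ therefore genuinely relies on the leading-order-exact expansions of $\E{\Pi_{X^{(i)}}}$ and $\E{\Pi_{X^{(i)}}\Sigma\Pi_{X^{(i)}}}$ from Props.\ \ref{prop:proj_expect} and \ref{prop:var_Sig_proj} and on a careful accounting of the dependence on the split size $m$; once those are in hand, the remaining estimates are routine bookkeeping.
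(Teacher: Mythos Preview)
Your decomposition of the vertex-value difference is natural, but the claimed bound $|b_{ds}-b_{MN}|=O(q^3)$ is not supported by the tools you invoke and is where the argument breaks. First, a citation slip: Prop.~\ref{prop:var_Sig_proj} is a variance bound; the relevant expansion of $\E{\Pi_X\Sigma\Pi_X}$ is Prop.~\ref{prop:exp_sig_exp}. More importantly, even that proposition only pins $b_{MN}$ to within an additive $\Theta\bigl(\tfrac{n^3}{d^3}\beta^T\Sigma^4\beta\bigr)\le \rho\cdot\tfrac{n^2}{d^2}\beta^T\Sigma^3\beta\le \rho C_1 q^2$ (from the $-2\tfrac{n^3}{d^3}\Sigma^4$ term in its lower bound). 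Since $\rho\in[q,\tfrac18]$ is not controlled by $q$, this gives $|b_{ds}-b_{MN}|=O(\rho q^2)$ at best, and your first term becomes $a_{MN}^2|b_{ds}-b_{MN}|/(b_{MN}b_{ds})=O(\rho)$ rather than $O(q)$---precisely the obstacle you flagged but did not resolve. Letting $N\to\infty$ does not help here: the $\rho q^2$ slack lives in $b_{MN}$, not in $b_{ds}$.

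The paper sidesteps this by \emph{not} comparing $b_{ds}$ and $b_{MN}$. It writes both vertex values directly as $\dfrac{q^2}{\frac{n^2}{d^2}\beta^T\Sigma^3\beta+(1+\sigma^2)nr(n)}$ times a multiplicative factor $(1+O(q))$, using Prop.~\ref{prop:straight_var_in_ds} for $a_{ds}^2/b_{ds}$ and Props.~\ref{prop:proj_expect}, \ref{prop:exp_sig_exp}, \ref{prop:exp_noise_first_order} for $a_{MN}^2/b_{MN}$. The crucial asymmetry is that for the lower bound on $a_{MN}^2/b_{MN}$ one needs the \emph{upper} bound on $b_{MN}$, and the upper bound in Prop.~\ref{prop:exp_sig_exp} has relative error only $O(nr(n))=O(q^2)$; the $O(\rho)$ slack appears only in the lower bound on $b_{MN}$ and is therefore never used. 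Then, since the common reference $\tfrac{q^2}{\text{denom}}\le 1$ (by Cauchy--Schwarz on the denominator) and $\tfrac{n^2}{d^2}\beta^T\Sigma^3\beta\le C_1 q^2$, the multiplicative $(1+O(q))$ correction pulls out as an additive $O(q)$. Your telescoping identity cannot exploit this one-sided tightness: it forces you to control $b_{MN}$ from both sides simultaneously, which costs you a factor of $\rho/q$.
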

\begin{proposition}
    Consider Assumption \ref{assumpt:weak_canonical_case}, $\lambda_1\leq\rho\frac{d}{n}$. Then, 
    \begin{align*}
        |c^*-c_{opt}|\leq\kappa \rho
    \end{align*}
    Where $\kappa$ is universal. 
\end{proposition}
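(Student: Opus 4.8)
The plan is to treat $c_{opt}$ and $c^{*}$ as ratios of two expectations and to control their difference using the numerator/denominator estimates already proved in Proposition~\ref{prop:ds_MN_close_expect}. Since $c\mapsto G(c\theta)=c^{2}\E{\theta^{T}\Sigma\theta}-2c\E{\theta^{T}\Sigma\beta}+\beta^{T}\Sigma\beta$ is a nonnegative quadratic in $c$ with minimiser $\E{\theta^{T}\Sigma\beta}/\E{\theta^{T}\Sigma\theta}$ (this is how $c_{opt}$ is defined in Definition~\ref{def:c_opt}, and $c^{*}$ in Definition~\ref{def:data_splitting}.7), write
\begin{align*}
c_{opt}=\frac{A_{MN}}{B_{MN}},\qquad c^{*}=\frac{A_{ds}}{B_{ds}},
\end{align*}
with $A_{MN}=\E{\theta_{MN}^{T}\Sigma\beta}=\beta^{T}\Sigma\E{\theta_{MN}}$, $B_{MN}=\E{\theta_{MN}^{T}\Sigma\theta_{MN}}$, and $A_{ds},B_{ds}$ the corresponding quantities for $\theta_{ds}$. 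Then
\begin{align*}
c^{*}-c_{opt}=\frac{(A_{ds}-A_{MN})B_{MN}-A_{MN}(B_{ds}-B_{MN})}{B_{MN}B_{ds}},
\end{align*}
so it suffices to bound $|A_{ds}-A_{MN}|$ and $|B_{ds}-B_{MN}|$ from above, $A_{MN}$ from above, and $B_{MN},B_{ds}$ from below.

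First I would import the two closeness estimates from (the proof of) Proposition~\ref{prop:ds_MN_close_expect}, namely $|A_{ds}-A_{MN}|=|\beta^{T}\Sigma\E{\theta_{MN}}-\beta^{T}\Sigma\E{\theta_{ds}}|\le 6\rho q$ and $|B_{ds}-B_{MN}|\le 15(C_{1}+C_{noise})\rho q^{2}$. For the numerator factor, since the label noise has conditional mean zero one has $\E{\theta_{MN}}=\E{\Pi_{X}}\beta$, so Proposition~\ref{prop:proj_expect} (together with $tr(\Sigma)=d$ and $\beta^{T}\Sigma\beta=1$) gives $A_{MN}=\beta^{T}\Sigma\E{\Pi_{X}}\beta\le \tfrac{n}{d}\beta^{T}\Sigma^{2}\beta(1+o(1))=q(1+o(1))$.

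The main step is the lower bound on the denominators. Writing $\beta^{T}\Sigma^{k}\beta=\sum_{i}(\beta^{T}v_{i})^{2}\lambda_{i}^{k}$ and applying Cauchy--Schwarz to the pairing $(\lambda_{i}^{1/2}\beta^{T}v_{i})(\lambda_{i}^{3/2}\beta^{T}v_{i})$ yields $(\beta^{T}\Sigma^{2}\beta)^{2}\le(\beta^{T}\Sigma\beta)(\beta^{T}\Sigma^{3}\beta)=\beta^{T}\Sigma^{3}\beta$, hence $\tfrac{n^{2}}{d^{2}}\beta^{T}\Sigma^{3}\beta\ge q^{2}$. Decomposing $B_{MN}=\E{\beta^{T}\Pi_{X}\Sigma\Pi_{X}\beta}+\E{\vec\eps^{T}(XX^{T})^{-1}X\Sigma X^{T}(XX^{T})^{-1}\vec\eps}$ and inserting the lower bounds of Propositions~\ref{prop:exp_sig_exp} and \ref{prop:exp_noise_first_order}, and using $\Sigma^{k+1}\preceq\lambda_{1}\Sigma^{k}$ to absorb the $\tfrac{n^{3}}{d^{3}}\beta^{T}\Sigma^{4}\beta$ correction into $\rho\cdot\tfrac{n^{2}}{d^{2}}\beta^{T}\Sigma^{3}\beta$, one obtains $B_{MN}\ge\tfrac{n^{2}}{d^{2}}\beta^{T}\Sigma^{3}\beta(1-\kappa_{0}\rho)(1+o(1))\ge q^{2}(1-\kappa_{0}\rho)(1+o(1))$ for a universal $\kappa_{0}$, and then $B_{ds}\ge B_{MN}-15(C_{1}+C_{noise})\rho q^{2}\ge q^{2}(1-\kappa_{1}\rho)(1+o(1))$. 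In particular $B_{MN},B_{ds}$ are comparable to $\tfrac{n^{2}}{d^{2}}\beta^{T}\Sigma^{3}\beta$, both of order $q^{2}$. Substituting into the displayed identity gives $|c^{*}-c_{opt}|\le\dfrac{6\rho q}{B_{ds}}+\dfrac{A_{MN}\cdot 15(C_{1}+C_{noise})\rho q^{2}}{B_{MN}B_{ds}}$; equivalently, the numerator and denominator of $c^{*}/c_{opt}$ are each $1+O(\rho)$, so $c^{*}=c_{opt}\bigl(1+O(\rho)\bigr)$, and since $c_{opt}\le\tfrac{4}{3(C_{1}+C_{noise})q}$ is bounded (Proposition~\ref{prop:ds_MN_close_expect}) this collapses to $|c^{*}-c_{opt}|\le\kappa\rho$.

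The part I expect to require the most care is precisely the \emph{lower bound on the denominators}: a priori $B_{MN}$ and $B_{ds}$ are only of order $q^{2}$, so the perturbation $|B_{ds}-B_{MN}|\le 15(C_{1}+C_{noise})\rho q^{2}$ has to be played off against the Cauchy--Schwarz bound $\tfrac{n^{2}}{d^{2}}\beta^{T}\Sigma^{3}\beta\ge q^{2}$ to guarantee that $B_{ds}$ does not become negligible; this forces one to take $\rho$ small relative to $C_{1}+C_{noise}$, which is harmless because $\rho\le\tfrac18$ by Assumption~\ref{assumpt:weak_canonical_case}.2 and the remaining range can be absorbed into $\kappa$. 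Beyond that, the argument is bookkeeping: keeping every constant explicit when combining the estimates from Propositions~\ref{prop:ds_MN_close_expect}, \ref{prop:proj_expect}, \ref{prop:exp_sig_exp} and \ref{prop:exp_noise_first_order}.
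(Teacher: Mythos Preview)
Your approach is correct and reaches the same conclusion as the paper, but the organization differs. The paper's proof introduces the explicit reference value
\[
R=\frac{\tfrac{n}{d}\beta^{T}\Sigma^{2}\beta}{(1+\sigma^{2})nr(n)+\tfrac{n^{2}}{d^{2}}\beta^{T}\Sigma^{3}\beta}
\]
and shows directly, from Propositions~\ref{prop:proj_expect}, \ref{prop:exp_sig_exp}, \ref{prop:exp_noise_first_order} for $c_{opt}$ and from Proposition~\ref{prop:straight_var_in_ds} for $c^{*}$, that each equals $R(1+O(\rho))$; since $R\le 1/q$ is bounded (by the same Cauchy--Schwarz inequality you use), the difference is $O(\rho)$. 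You instead write $c^{*}-c_{opt}$ as a difference of two ratios and import the numerator and denominator perturbation bounds from Proposition~\ref{prop:ds_MN_close_expect}. The two are logically equivalent---the bounds in Proposition~\ref{prop:ds_MN_close_expect} themselves come from those same four propositions---but the paper's route is slightly cleaner on the denominator: it lower-bounds $B_{ds}$ directly via Proposition~\ref{prop:straight_var_in_ds} (which gives $B_{ds}\ge \tfrac{n^{2}}{d^{2}}\beta^{T}\Sigma^{3}\beta(1+o(1))$ as $N\to\infty$) rather than through $B_{ds}\ge B_{MN}-|B_{ds}-B_{MN}|$, and this sidesteps entirely the ``$\rho$ small relative to $C_{1}+C_{noise}$'' case split you flag at the end. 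In both arguments the final $\kappa$ inherits dependence on $q$ (and, in yours, on $C_{1}+C_{noise}$), so ``universal'' should be read as ``independent of $n$''.
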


We can think of $G(c_{opt}\theta_{MN})$ and $G(c^*\theta_{ds})$ to be $\Theta(1)$. However, using $\theta_{ds}$, we are able to have cases where $ G(c^*\theta_{ds})=o(1)$ but $G(\theta_{ds})$ is pretty close to one.
\begin{assumption}[Rate Improvement]\label{assumpt:rate_improve}~
\begin{enumerate}
    \item Assumption \ref{assumpt:weak_canonical_case} holds.
    \item $\frac{n^2}{d^2}\beta^T\Sigma^3\beta\leq q^2(1+o(1))$.
    \item $(1+\sigma^2)nr(n)=o(1)$.
    \item $\sigma_{max}^2=\exp(o(n^{1/3}))$
\end{enumerate}
\end{assumption}
We note these are fairly restrictive, and we basically require that $\Sigma\beta$ be very heavily aligned, with an eigenspace of $\Sigma$ that has a restricted range of eigenvalues. Using the probability interpretation of the distribution ($\lambda$), we can think of $\lambda$ as having low variance, but this does not mean that $\Sigma$ is isotropic (an isotropic covariance matrix does satisfy Assumption \ref{assumpt:rate_improve}.2-4). This can be seen with the following example:
\begin{align*}
    \Sigma=\begin{bmatrix}
q\frac{d}{n}I_{n} & 0 \\
0 & \epsilon I_{d-n}
\end{bmatrix} \text{ and } \beta= \begin{bmatrix}
\beta_{align} \\
0
\end{bmatrix}
\end{align*}

In other words, $\beta$ is well aligned with the top-eigenspace of $\Sigma$. We note that $\beta^T\Sigma\beta=1$ which implies that $\norm{\beta}^2=\frac{n}{d}q$. Thus, $\frac{n}{d}\beta^T\Sigma^2\beta =q$ and $\frac{n^2}{d^2}\beta^T\Sigma^3\beta=q^2$.

Additionally, for Assumption \ref{assumpt:rate_improve}.3-4, we require that the effective rank when adjusted for noise to be $o(n)$. We note that the restriction on $\sigma_{max}^2$ is relatively benign given that $\sigma^2$ is limited to being order $n^2$.

\begin{proposition}
    Suppose Assumption \ref{assumpt:rate_improve} holds. Then, $G(\theta_{ds})=\Theta(1)$ and $$\min_cG(c\theta_{ds})=o(1)$$
\end{proposition}

From here, we look at showing that $R(c\theta_{ds})$ concentrates around its mean (see Def. \ref{def:mean_concentration}).
\begin{proposition}\label{prop:concentration_estimators}
Consider Assumption \ref{assumpt:weak_canonical_case}. Let $c>0$ be constant and assume $n$ sufficiently large (depending on the relationship between $d$ and $n$). Let $M$ be some large number that is fixed. Suppose one of the following is true:
\begin{enumerate}
    \item $\sigma_{max}^2=o(\sqrt{n})$; or
\item $\sigma^2nr(n)=o(1)$, $\sigma^2=o(n/\log(n)^2)$ and $\sigma_{max}^2/\sigma^2=o(N^{1/2})$;

\end{enumerate}

For any $M>c>0$, $\Var[c\theta_{ds}]=o(1)$. As a result, when $\min_c G(c\theta_{ds})=\omega(1)$, $R(c\theta_{ds})$ concentrates around its mean (as in Def. \ref{def:mean_concentration}).

\end{proposition}

With additionally regularity on the functional dependence between $\eps$ and a draw of the data, we can obtain concentration of $G(c\theta_{ds})$ when it (or a subsequence) is $o(1)$.

Now, using the concentration of $R(c\theta_{ds})$, we are able create an estimator for $c^*$ such that $\hat{c}^*\theta_{ds}$ is able to achieve the generalization error of $G(c^*\theta_{ds})$ in probability. To estimate $\hat{c}^*$, we use a hold-out estimation technique.  Essentially, the way we constructed $\theta_{ds}$, there is one partition of data left over. We use this left over data to help construct an estimator of $c^*$ and the remaining data is used to construct $\theta_{ds}$ as described in Def. \ref{def:data_splitting}.6. Specifically, we construct our estimator of $c^*$, as follows:
\begin{align*}
    \hat{c}^*=\frac{\sum_{i=1}^k\theta_{ds}^Tz_i\eta _i}{\sum_{i=1}^k(\theta_{ds}^Tz_i)^2}
\end{align*}

Where $k$ is the size of $X^{(N)}$ and $z_i,\eta_i$ represent the corresponding $i$-th element of $X^{(N)}$ and $Y^{(N)}$.

The use of $\theta_{ds}$ allows us to use the data in an efficient manner. Because in high-dimensions we see every direction basically once, it is important that only a vanishingly small amount of data is held out and this allows us to hold out a small amount of data while also simultaneously leveraging the whole data set. 

\begin{proposition}\label{prop:c_estimate_mult}
We assume that Assumption \ref{assumpt:weak_canonical_case} holds and $\sigma_{max}^2=o(\sqrt{n})$. Then, we can construct a specific number of splits ($N$) to create estimators $\theta_{ds}$,  $\hat{c}^*$, and $\delta_n=o(1)$ such that,
    \begin{align*}
        \mathbb{P}(|\hat{c}^*-c^*|>\delta_n)=o(1)\\
        \mathbb{P}(|R(\hat{c}^*\theta_{ds})-G(\hat{c}\theta_{ds})|>\delta_n)=o(1)\\
    \end{align*}
When the assumptions of Theorem \ref{theorem:canon_multiplicative} hold, then,
\begin{align*}
    \mathbb{P}(R(\hat{c}^*\theta_{ds})>\alpha G(\theta_{MN}))\to 0
\end{align*}
Where $\alpha$ is as constructed in Theorem \ref{theorem:canon_multiplicative}.
\end{proposition}
We note the foregoing proposition shows that when the Inflation Property occurs we attain the improvement and otherwise, the generalization error is vanishingly close to the generalization error of the optimally scaled, $\theta_{ds}$. Further, the generalization error of our estimator is relatively close to that of the optimally scaled $\theta_{MN}$ when $\rho$ is small.
\section{Experimental Results} In this section, we provide empirical results to highlight some of the theoretical results found in Section \ref{sec:results_theory} with simulated data. Using the simulated data, we are able to expand upon the results by showing how the Inflation Property performs against (negative) ridge regression and see how it performs asymptotically. We, then, provide empirical simulations that suggest that the Inflation Property can hold, even when the distribution of the data ($x_i$) is non-Gaussian. Finally, we show for a real-world data sets that the Inflation Property holds. For most of these results, we also provide a comparison to negative ridge regression. Because, for a fixed negative penalty, the matrix $(XX^T+\lambda I_n)^{-1}$ can be indefinite, positive, or negative definite depending on the draws of the data, studying negative ridge is not analytically tractable using the tools in this paper. Similarly, the theoretical results in \cite{JMLR:v21:19-844} were limited to showing that the penalty must be negative but did not study further.
\subsection{Simulation Setup}\label{sec:sim_setup}
% simulation setup
We begin our simulations by following the setup in the latter portion of Section 2.3 of \cite{JMLR:v21:19-844}. That is, we consider $\sigma^2=.1$, $\Sigma=(1-\rho)I_d+\rho\,\mathbf{1}\mathbf{1}^T$ where $\rho=.1$, and $\beta=(b,...,b)$ with $b=\sqrt{1/(d(1-\rho+d\rho))}$. 
% plot for difference 
\begin{figure}[H]
  \centering

  \begin{subfigure}[t]{0.465\textwidth}
    \centering
    \resizebox{\linewidth}{!}{\input{figure-sim-diff-1.pgf}}
    \caption{Generalization error of the inflated minimum-norm interpolator versus ridge regression across sample sizes, with $\rho=5/n$ and $d=n\log(n)$.}
    \label{fig:fig_sim_diffy}
  \end{subfigure}
  \hfill
  \begin{subfigure}[t]{0.465\textwidth}
    \centering
    \resizebox{\linewidth}{!}{\input{figure-sim-SNR.pgf}}
    \caption{Generalization error of the Inflation Property estimator versus ridge regression across signal-to-noise ratios, varying $\sigma$ to change the noise level.}
    \label{fig:fig_sim_snr}
  \end{subfigure}

  \caption{Comparison of the Inflation Property estimator and ridge regression across sample-size and signal-to-noise-ratio regimes. The dashed lines represent the Inflation Property.}
  \label{fig:fig_sim_combined}
\end{figure}
\begin{figure}[H]
  \centering

  \begin{subfigure}[t]{0.465\textwidth}
    \centering
    \resizebox{\linewidth}{!}{\input{figure-sim10.pgf}}
    \caption{The optimal inflation constant ($c_{opt}$) is plotted as $n$ grows for various $d/n$ ratios. }
    \label{fig:fig_sim_asym_2}
  \end{subfigure}
  \hfill
  \begin{subfigure}[t]{0.465\textwidth}
    \centering
    \resizebox{\linewidth}{!}{\input{figure-sim4-10.pgf}}
    \caption{This figure plots the probability (as a function of $n$) the empirical risk for the estimator $\hat{c}^*\theta_{ds}$ (where $\hat{c}^*$ is constructed in the same manner as Prop. \ref{prop:c_estimate_mult}) is less than the generalization error of the optimally scaled $\theta_{MN}$.}
    \label{fig:fig_sim_asym_1}
  \end{subfigure}

  \caption{These plots look at how the Inflation Property performs as $n$ grows. In Figure \ref{fig:fig_sim_asym_2}, the Inflation Property holds even for small $n$. As seen in Figure \ref{fig:fig_sim_asym_1}, we see that empirically the convergence in Prop. \ref{prop:c_estimate_mult} occurs very quickly (which shows that the Inflation Property is relevant even in relatively small data sets).  }
  \label{fig:fig_sim_combined_asymp}
\end{figure}

% non-Gaussian design
\subsection{Non-Gaussian Design}
We consider the same setup as in Section \ref{sec:sim_setup} with the exception of the distribution of $x$. Here, we consider when $x$ is distributed as Chi-Squared, Laplace, and Exponential (where we adjust the distribution to be mean zero and adjusting the variance of each variate to be one).

As seen in Figure \ref{fig:fig_sim_1}, the Inflation Property occurred when the assumptions were relaxed to consider distributions beyond Gaussian distributions. From theoretical side, it is unclear to what extent it is possible to generalize our theorems beyond Gaussian (or nearly Gaussian) distributions. The key propositions that allow us to understand expectations of and properties of the matrix $(XX^T)^{-1}$ require the use of Stein's Lemma (see e.g. Props. \ref{prop:proj_expect}, \ref{prop:exp_sig_exp}). Understanding $(XX^T)^{-1}$ is non-trivial when $X$ does not have i.i.d. standard Normal entries. One might think that a Lindenberg method (of slowly replacing variates with Gaussian distributions and tracking the differences) could yield theoretical results. This does not yield results in a tractable manner due to the matrix inversion operations and empirically, it looks like if it would work, it would only work for certain distributions (Figure \ref{fig:fig_sim_1} shows a clear difference between generalization error with different input distribution).

Figure \ref{fig:fig_sim_1} also reveals a complicated picture about how negative ridge regression and the Inflation Property operate on different distributions. In this case, negative ridge regression under-performs the Inflation Property for all distributions. It is notable that negative ridge regression performs significantly better on the Exponential distribution compared to other distributions. Here, the Exponential distribution has comparable tails to the Laplace distribution---this indicates the heaviness of the tails is not necessarily responsible for the difference. 

\begin{figure}[H]
  \centering 
  \input{figure-sim1-3.pgf}
  \captionsetup{justification=centering}
  \caption{This figure compares negative ridge regression (solid lines) and inflation $c\theta_{MN}$ (dashed lines) across Gaussian, centered $\chi^2$, Laplace distribution ($a=0$, and $b=\frac{1}{\sqrt{2}}$), and the Exponential distribution (with scale being one). Each distribution has been normalized to have a mean zero and a variance of one. The legend colors are consistent across each pair of inflation and negative ridge regression curves.}

  \label{fig:fig_sim_1}
\end{figure}

\subsection{Real Data}
% real data part
We repeat the setup described in Section 2.6 of \cite{JMLR:v21:19-844}\footnote{In Section 2.1 of \cite{JMLR:v21:19-844}, the authors study a different data set but ultimately conclude positive ridge regression was helpful. Likewise, the Inflation Property does not hold on this data set.}, except (1) we transformed the labels and the features to be mean zero (under the training data) and (2) after generating the features, we rescaled the features such that the trace of its covariance matrix is equal to the number of parameters ($d=2,000$). We end up finding that the Inflation Property holds on this data set. We note that the expected generalization error associated with the optimal inflationary constant is slightly lower than the expected generalization error arising from the optimal (negative) ridge penalty. This difference was statistically significant (p-value: 0.0299), and with probability one, the inflated/negative ridge estimator outperformed the standard $\theta_{MN}$ estimator.
\begin{figure}[H]
  \centering 
  \input{figure-kobak-1.pgf}
  \captionsetup{justification=centering}
\caption{In this figure, the test error for the set up described in \cite{JMLR:v21:19-844} is compared between the Inflation Property and ridge regression.}
  \label{fig:fig_2}
\end{figure}
In addition to the data from \cite{JMLR:v21:19-844}, we consider two high dimensional, low sample size data sets. For each data set, we employ the following procedure. First, we use a bootstrap procedure to take several separate random samples (with the remaining data for each random sample being used as a test set). We split the data seventy-five percent for the training sample and the balance going to the test set. Second, we normalize each bootstrap sample such that the sample covariance matrix of the features has a trace equal to the number of the features in the dataset and the mean is zero. The normalization used for the training sample is used on the corresponding test set. Third, the test error is calculated for each bootstrap sample for a given estimator and is then averaged over the bootstrapped samples. As seen in Figure \ref{fig:fig_3}, negative ridge regression is not always inferior to the Inflation Property.

\begin{figure}[H]
  \centering

  \begin{subfigure}[t]{0.48\textwidth}
    \centering
    \resizebox{\linewidth}{!}{\input{figure-newset-3.pgf}}
    \caption{Data set used in \cite{Yeoh:2002wg}, a classification problem with six classes, $n=248$, and $d=12625$. This data set consists of samples of pediatric acute lymphoblastic leukemia, with features given by gene-expression measurements from oligonucleotide microarrays.}
    \label{fig:fig_3}
  \end{subfigure}
  \hfill
  \begin{subfigure}[t]{0.48\textwidth}
    \centering
    \resizebox{\linewidth}{!}{\input{figure-newset-2.pgf}}
    \caption{Data set used in \cite{Sorlie:2001kr}, a classification problem with five classes, $n=85$, and $d=456$. This data set consists of eighty-five breast tissue samples, with features reflecting micro-array signals from a variety of complementary DNA clones.}
    \label{fig:fig_4}
  \end{subfigure}

  \captionsetup{justification=centering}
  \caption{This figure reflects two real-world data set and the effects of (negative) ridge regression and inflation on the test error.}
  \label{fig:fig_combined_datasets}
\end{figure}

\section{Proof Sketch}\label{sec:proof-sketch}
In this section, we provide a proof sketch for some of the key concepts used in the results described in Section \ref{sec:results}. From elementary matrix operations on $G(c\theta_{MN})$, we can see that the key objects of study are $\Pi_X=X^T(XX^T)^{-1}X$ and $X^T(XX^T)^{-1}$ both of which ultimately revolve around $(XX^T)^{-1}=:A^{-1}$. 

Specifically, we will use these objects to show a proof sketch for why $\E{\Pi_X}\approx \frac{n}{d}\Sigma$ and why the key elements of Prop. \ref{prop:c_estimate_mult} hold. See Def. \ref{def:mn_interpolator}.
\begin{proposition}[Informal]\label{prop:informal_pi}
    Assume $\lambda_{max}(\Sigma)\leq q\frac{1}{k+1}\frac{d}{n}$ where $q$ is small. Then, $\E{\Pi_X}\approx \frac{n}{d}\Sigma$.
\end{proposition}

We note this is a simplification of the bounds in Prop. \ref{prop:proj_expect}.

We note that $A\stackrel{d}{=}\sum_{i=1}^d\lambda_i(\Sigma)a_ia_i^T$ where $a_i\sim N(0,I_n)$ i.i.d. and that,
\begin{align*}
    v_i(\Sigma)^T\Pi_Xv_j(\Sigma)\stackrel{d}{=}\sqrt{\lambda_i}\sqrt{\lambda_j}a_i^TA^{-1}a_j
\end{align*}
By independence and mean zero property, to calculate $\E{\Pi_X}$ we just need to consider the diagonal entries (in the basis of the eigenvectors of $\Sigma$). From Sherman-Morrison,
\begin{align}\label{equ:aAInva}
    a_i^TA^{-1}a_i=a_i^TA_i^{-1}a_i-\frac{\lambda_i(a_i^TA_i^{-1}a_i)^2}{1+\lambda_ia_i^TA_i^{-1}a_i}
\end{align}
Where for a set of indexes, $\alpha$, we say that $A_\alpha:=\sum_{i\notin \alpha, i\leq d} \lambda_ia_ia_i^T$.

As a result, we need to determine the expectation of $tr(A^{-1})$ and $tr(A^{-2})$ (although we sketch for an arbitrary $k$ for $A^{-k}$). We note that this is a simplified form of Prop. \ref{prop:tr_A_bounds}.
\begin{proposition}[Informal]
    Assume $\lambda_{max}(\Sigma)\leq q\frac{1}{k+1}\frac{d}{n}$ where $q$ is small. Then,
    \begin{align*}
    \E{tr(A^{-k})}\approx \frac{\E{tr(A^{-k+1})}}{d}\approx \frac{n}{d^k}    
    \end{align*}

    Where $A^0=I_n$.
\end{proposition}
\begin{proof}
    
    Fix some $i\in\N, i\leq d$. $F(a_i):=A^{-k}a_i$. 
\begin{align*}
    \sum_{j=1}^n\frac{\partial F(a_i)}{\partial a_{ij}}&=\sum_{j=1}^n\partial a_{ij}e_j^TA^{-k}a_i= \sum_{j=1}^ne_jA^{-1}e_j+e_j^T(\partial a_{ij} A^{-k})a_i
    \\&=tr(A^{-k})-\lambda_i\left(\sum_{r=0}^{k-1}a_i^TA^{-k+r}a_itr(A^{-r-1})+ka_i^TA^{-k-1}a_i\right)
\end{align*}

Putting this together and using Stein's Lemma (Lemma \ref{lemma:steins}), we note that,
\begin{align*}
    \E{tr(A^{-k+1})}&=\sum_{i=1}^d\lambda_i\E{a_i^TF(a_i)}=\sum_{i=1}^d\lambda_i\sum_{j=1}^n\E{\frac{\partial F(a_i)}{\partial a_{ij}}}
    \\&=\sum_{i=1}^d\lambda_i\E{tr(A^{-k})}-\sum_{i=1}^d\lambda_i^2\left(\sum_{r=0}^{k-1}\E{a_i^TA^{-k+r}a_itr(A^{-r-1})+a_i^TA^{-k-1}a_i}\right)
    \\&\geq d\E{tr(A^{-k})}(1-q)
\end{align*}

The result comes from our use our bound on $\lambda_1$ and the fact that:
\begin{align*}
    \sum_{i=1}^d\lambda_i\left(\sum_{r=0}^{k-1}\E{a_i^TA^{-k+r}a_itr(A^{-r-1})+a_i^TA^{-k-1}a_i}\right)\approx \E{tr(A^{-k})}(1-q)
\end{align*}

As a result, $\E{tr(A^{-k})}\geq \frac{n}{d^k(1-q)^k}$.

From the above equations, we can see that since $A$ is symmetric, positive definite and we can show that $a_i^TA^{-k+r}a_i\geq 0$ for any relevant $r$. As a result,
\begin{align*}
    \sum_{i=1}^d\lambda_i^2\left(\sum_{r=0}^{k-1}\E{a_i^TA^{-k+r}a_itr(A^{-r-1})+a_i^TA^{-k-1}a_i}\right)\geq 0
\end{align*}
Thus, using our original set of equations from Stein's lemma, when $k>1$,
\begin{align*}
   \E{tr(A^{-k+1})}\geq  d\E{tr(A^{-k})}
\end{align*}

Further, we note that from properties of trace matrices and Jensen's Inequality, 
\begin{align*}
\E{tr(A^{-1})}\geq \E{\frac{n}{tr(A)}}\geq \frac{n}{d}    
\end{align*}
As a result, $\E{tr(A^{-k})}\approx \frac{n}{d^k}$ when $q$ is small.
\end{proof}
We now turn to completing the informal proof of Prop. \ref{prop:informal_pi}.
\begin{proof}
Using this informal proposition, we can see that,
\begin{align*}
    \E{\frac{\lambda_i(a_i^TA_i^{-1}a_i)^2}{1+\lambda_i^TA_i^{-1}a_i}}\leq \E{a_i^TA_i^{-1}a_i}=\E{tr(A_i^{-1})}\leq \frac{n}{d(1-q)}
\end{align*}
Where we keep with the notion that $\lambda_{max}(\Sigma)\leq \frac{q}{k+1}\frac{d}{n}$.

Conversely, for a lower bound, we use Jensen Inequality,
\begin{align*}
    \E{\frac{\lambda_i(a_i^TA_i^{-1}a_i)^2}{1+\lambda_ia_i^TA_i^{-1}a_i}}\geq \frac{\lambda_i\E{a_i^TA_i^{-1}a_i}^2}{1+\lambda_i\E{a_i^TA_i^{-1}a_i}}\geq\frac{\lambda_i\frac{n^2}{d^2}}{1+\lambda_i\frac{n}{d}}\geq \frac{n}{d}\frac{q}{1+q}
\end{align*}

Thus, $\E{\Pi_X}\approx \frac{n}{d}\Sigma$.
\end{proof}
We next sketch a proof of the variances of $\theta_{ds}^T\Sigma\theta_{ds}$ and $\theta_{ds}^T\Sigma\beta$. This essentially uses a simplified form of Props. \ref{prop:exp_sig_exp}, \ref{prop:var_proj_sig_proj}, \ref{prop:var_exp_beta}  in the context of data splitting. We note that each $\theta_{MN}^{(i)}$ consists of approximately $n/N$ data points and so $\E{\theta_{MN}^{(i)}}\approx \frac{n/N}{d}$. 
\begin{proposition} [Informal]
Assume that Assumption \ref{assumpt:weak_canonical_case} holds and $\eps=0$. Then,
\begin{align*}
\E{\theta_{ds}^T\Sigma\beta}&\approx\frac{n}{d}\beta^T\Sigma^2\beta\\
\E{\theta_{ds}^T\Sigma\theta_{ds}}&\approx nr(n)+\frac{n^2}{d^2}\beta^T\Sigma^3\beta\\
    \Var\left[\beta^T\Sigma\theta_{ds}\right]&=o(1)\\
    \Var[\theta_{ds}^T\Sigma\theta_{ds}]&=o(1)
\end{align*}
\end{proposition}
\begin{proof}
For the first term, we can easily use Prop. \ref{prop:informal_pi} as follows:
\begin{align*}
   \E{\theta_{ds}^T\Sigma\beta}&\approx(N-1)\beta^T\E{\Pi_{X^{(1)}}}\Sigma\beta\approx \frac{n}{d}\beta^T\Sigma^2\beta\\ 
\end{align*}

For the second term, similarly,
\begin{align*}
    \E{\theta_{ds}^T\Sigma\theta_{ds}}\approx \frac{n^2}{d^2}\beta^T\Sigma^3\beta+\sum_{i=1}^d\E{\beta^T\Pi_{X^{(i)}}\Sigma\Pi_{X^{(i)}}\beta}
\end{align*}

We handle the term $\E{\beta^T\Pi_{X^{(i)}}\Sigma\Pi_{X^{(i)}}\beta}$, as follows, where we let $A:=\sum_{i=1}^d\lambda_ia_ia_i^T$ and $a_i$ is i.i.d. $N(0,I_{\lfloor n/N \rfloor})$,
\begin{align*}
    v_j^T\Pi_{X^{(i)}}\Sigma\Pi_{X^{(i)}}v_k\stackrel{d}{=}\sqrt{\lambda_k\lambda_j}\sum_{\ell=1}^da_j^TA^{-1}a_\ell a_\ell^TA^{-1}a_k
\end{align*}

We note that the non-diagonal terms (in the basis of eigenvectors of $\Sigma$) are mean zero. As result, considering diagonal terms and employing a similar procedure as in Prop. \ref{prop:informal_pi}, we get:
\begin{align*}
    \E{\Pi_{X^{(i)}}\Sigma\Pi_{X^{(i)}}}\approx \frac{n/N}{d^2}\left(\sum_{\ell=1}^d\lambda_\ell^2\right)\Sigma+\frac{n^2/N^2}{d^2}\Sigma^3
\end{align*}

Here, we can see the power of data splitting in removing lower order terms. As such,
\begin{align*}
    \E{\theta_{ds}^T\Sigma\theta_{ds}}=\frac{n}{d^2}tr(\Sigma^2)\beta^T\Sigma\beta+\frac{n^2}{d^2}\beta^T\Sigma^3\beta
\end{align*}

For our variance terms, we begin by noting that,
\begin{align*}
    \Var[\theta_{ds}^T\Sigma\beta]&=\sum_{i=1}^N\Var[\beta^T\Sigma\theta_{MN}^{(i)}]\leq \sum_{i=1}^N\E{(\theta_{MN}^{(i)})^T\Sigma\beta\beta^T\Sigma\theta_{MN}^{(i)}}
\end{align*}

Essentially, we use that $\theta_{MN}^{(i)}$ is on the order of $\frac{n}{N}\Sigma\beta$. Because we are essentially taking the ``square'' and summing over $N$ elements, we end up with an extra division by $N$. This occurs in a related sense for $\Var[\theta_{ds}^T\Sigma\theta_{ds}]$. 

\end{proof}

We now turn to a proof sketch of part of Prop. \ref{prop:c_estimate_mult}.
\begin{proposition}[Informal] Assume that Assumption \ref{assumpt:weak_canonical_case} holds and $\eps=0$. Then we can construct a specific number of splits ($N$) to create estimators $\theta_{ds}$ and $\hat{c}^*$ such that for any $\delta>0$,
\begin{align*}
    \mathbb{P}(|\hat{c}^*-c^*|>\delta)&=o(1)\\
    \mathbb{P}(|R(\hat{c}^*\theta_{ds})-G(c^*\theta_{ds})|>\delta)&=o(1)
\end{align*}
    
\end{proposition}
\begin{proof}
    We note that our goal is to estimate $c^*$ (recall Def. \ref{def:data_splitting}), which can be described at, 
    \begin{align*}
        c^*=\frac{\E{\theta_{ds}^T\Sigma\beta}}{\E{\theta_{ds}^T\Sigma\theta_{ds}}}=\frac{\frac{n}{d}\beta^T\Sigma^2\beta}{\frac{n^2}{d^2}\beta^T\Sigma^3\beta+nr(n)}(1+o(1))
    \end{align*}

    We consider the samples within, $X^{(N)}$. That is, $x_{(N-1)\lfloor \frac{x}{N}\rfloor},...,x_n$. For notational convenience, we denote them as $z_1,...,z_k$ and the corresponding outputs as $\eta_1,...,\eta_k$ where $k$ is the size of $X^{(N)}$. We note that $k\approx \frac{n}{2N}$. Since $\theta_{ds}$ is constructed from the $X^{(1)},...,X^{(N-1)}$, these samples are held out from our original estimator. 

    Let $\hat{q}:=\frac{1}{k}\sum_{i=1}^k\theta_{ds}^Tz_i\eta_i$ and $\hat{r}:=\frac{1}{k}\sum_{i=1}^k(\theta_{ds}^Tz_i)^2$. Let $\hat{c}^*:=\hat{q}/\hat{r}$.

    We note that using some properties of Gaussian distributions, $\E{\hat{q}}=\frac{n}{d}\beta^T\Sigma^2\beta(1+o(1))$ and $\E{\hat{r}}=nr(n)(1+o(1))+\frac{n^2}{d^2}\beta^T\Sigma^3\beta(1+o(1))$.

    By the conditional variance on all the samples except $X^{(N)}$ (which we will denote $X^s$),
    \begin{align*}
        \Var\left[\left.\frac{1}{k}\sum_{i=1}^k\theta_{ds}^Tz_i\eta_i\right|X^s\right]&=\frac{1}{k}\theta_{ds}^T\Var[z_1\eta_1]\theta_{ds}\leq \frac{1}{k}\theta_{ds}^T\Sigma\theta_{ds}+\frac{1}{k}(\theta_{ds}^T\Sigma\beta)^2\\
        \Var\left[\left.\frac{1}{k}\sum_{i=1}^k(\theta_{ds}^Tz_i)^2\right|X^s\right]&=\frac{1}{k}\Var[\theta_{ds}^Tzz^T\theta_{ds}]\leq \frac{2}{k}(\theta_{ds}^T\Sigma\theta_{ds})^2
    \end{align*}

    Now, turning to the law of total variance,
    \begin{align*}
        \Var\left[\frac{1}{k}\sum_{i=1}^k\theta_{ds}^Tz_i\eta_i\right]&\leq \E{\frac{1}{k}\theta_{ds}^T\Sigma\theta_{ds}+\frac{1}{k}(\theta_{ds}^T\Sigma\beta)^2}+\Var\left[\E{\left.\theta_{ds}^Tz_1\eta_1\right|X^s}\right]
        \\&=O\left(\frac{1}{k}\right)+o(1)
    \end{align*}
    Where we note that $\Var\left[\E{\left.\theta_{ds}^Tz_1\eta_1\right|X^s}\right]=\Var[\theta_{ds}^T\Sigma\beta]=o(1)$.
    We take $N$ to be such that $k$ is growing to infinity and, additionally, $N\to\infty$. Essentially, we need that $n/N\to\infty$ and $N\to\infty$. With the presence of noise, we need to perform a more complex balancing act. 
    
    By a similar procedure we can show analogously $\Var\left[\frac{1}{k}\sum_{i=1}^k\theta_{ds}^Tz_i\eta_i\right]=o(1)$.

    Let $\delta>0$ be fixed for all $n$. Now, by the Continuous Mapping Theorem we know that $\hat{c}^*\to c^*$ in probability and 
    \begin{align*}
        \mathbb{P}(|R(\hat{c}\theta_{ds})-G(c^*\theta_{ds})|>\delta)=o(1)
    \end{align*}
\end{proof}
\section{Proofs}
\subsection{Results from Linear Algebra}
\begin{definition}[Loewner Order]
    Let $A,B\in\mathbb{R}^{n\times n}$. Suppose $B-A$ is symmetric positive semi-definite (that is, $x^T(B-A)x\geq 0$ for all $x\in\mathbb{R}^n$) and $A,B$ have the same eigenvectors, then we say that $A\preceq B$. 
\end{definition}

\begin{proposition}\label{prop:cauchy_tr_tr_inverse}
If $C\in \mathbb{R}^{n\times n}$ is symmetric positive definite, then $\frac{n^2}{\tr(A)}\leq \tr(A^{-1})$  
\end{proposition}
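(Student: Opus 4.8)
The plan is to reduce everything to an inequality about the eigenvalues of $C$ and then invoke Cauchy--Schwarz. Since $C$ is symmetric positive definite, it is diagonalizable by an orthogonal matrix, so write its eigenvalues as $\mu_1,\dots,\mu_n$, all strictly positive (which is what makes $C^{-1}$ well defined, with eigenvalues $1/\mu_1,\dots,1/\mu_n$). Because the trace is similarity-invariant, $\tr(C)=\sum_{i=1}^n \mu_i$ and $\tr(C^{-1})=\sum_{i=1}^n \mu_i^{-1}$.

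Next I would apply the Cauchy--Schwarz inequality to the vectors $u=(\sqrt{\mu_1},\dots,\sqrt{\mu_n})$ and $v=(1/\sqrt{\mu_1},\dots,1/\sqrt{\mu_n})$ in $\R^n$. This gives
\[
n^2=\lr{\sum_{i=1}^n \sqrt{\mu_i}\cdot\frac{1}{\sqrt{\mu_i}}}^2=\inprod{u}{v}^2\leq \norm{u}^2\norm{v}^2=\lr{\sum_{i=1}^n \mu_i}\lr{\sum_{i=1}^n \frac{1}{\mu_i}}=\tr(C)\,\tr(C^{-1}).
\]
Dividing through by $\tr(C)>0$ yields $\tr(C^{-1})\geq n^2/\tr(C)$, which is the claim. (Equivalently, one can phrase this as the AM--HM inequality applied to the positive numbers $\mu_1,\dots,\mu_n$.)

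There is really no hard step here: the only thing to be careful about is recording that positive definiteness is used twice --- once to guarantee $\mu_i>0$ so that $C^{-1}$ exists and has the stated spectrum, and once to guarantee $\tr(C)>0$ so that the final division is legitimate. If one wanted to avoid diagonalization entirely, an alternative is to note that $C^{-1}\succeq \tr(C)^{-1}\lr{\text{something}}$ is not quite clean, so the spectral route above is the most direct; I would present that one.
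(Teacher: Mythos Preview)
Your proof is correct and matches the paper's approach: both reduce to the eigenvalues and apply Cauchy--Schwarz to obtain $n^2 \leq \tr(C)\,\tr(C^{-1})$. The paper states the inequality $n^2 \leq (\sum_i c_i)(\sum_i 1/c_i)$ for positive sequences directly without writing out the vectors $u,v$, but the content is identical.
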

\begin{proof}
    By Cauchy-Schwartz, for any strictly positive sequence ($c_1,...,c_n$), $n^2\leq (\sum_{i}c_i)(\sum_{i}\frac{1}{c_i})$. As such, $n^2\leq \tr(A)\tr(A^{-1})$ and the result follows.  
\end{proof}
\begin{definition}\label{def:same_eigenstruct}
    Suppose $A$ and $B$ share the same eigenvectors. We say that matrices $A$ and $B$ are simultaneously diagonalizable with the same ordering if $\lambda_i(A)$ and $\lambda_i(B)$ can correspond to the same eigenvectors (up to permutations for identical eigenvalues).   
\end{definition}
\begin{proposition}[Chebyshev's Sum Inequality]\label{prop:chebyshev_sum_inequal}
Suppose $a_1\geq...\geq a_n$ and $b_1\geq ...\geq b_n$. Then, $\left(\frac{1}{n}\sum_{k=1}^na_k\right)\left(\frac{1}{n}\sum_{k=1}^nb_k\right)\leq \left(\frac{1}{n}\sum_{k=1}^na_kb_k\right)$
\end{proposition}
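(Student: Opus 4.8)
The plan is to derive the inequality from the elementary observation that two sequences sorted in the same order are ``positively correlated'' in a pointwise sense. Concretely, I would introduce the double sum
\[
S:=\sum_{i=1}^n\sum_{j=1}^n (a_i-a_j)(b_i-b_j),
\]
and note that every summand is nonnegative: since $a_1\geq\cdots\geq a_n$ and $b_1\geq\cdots\geq b_n$, the quantities $a_i-a_j$ and $b_i-b_j$ always carry the same sign (both $\geq 0$ when $i\leq j$, both $\leq 0$ when $i\geq j$), so each product $(a_i-a_j)(b_i-b_j)$ is $\geq 0$ and hence $S\geq 0$.

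The second step is pure bookkeeping: expanding the product and using the symmetry of the index set $\{1,\dots,n\}^2$ gives
\[
S=\sum_{i,j}\bigl(a_ib_i-a_ib_j-a_jb_i+a_jb_j\bigr)
 =2n\sum_{k=1}^n a_kb_k\;-\;2\Bigl(\sum_{k=1}^n a_k\Bigr)\Bigl(\sum_{k=1}^n b_k\Bigr).
\]
Combining this identity with $S\geq 0$ and dividing through by $2n^2$ yields exactly $\bigl(\tfrac1n\sum_k a_k\bigr)\bigl(\tfrac1n\sum_k b_k\bigr)\leq \tfrac1n\sum_k a_kb_k$, which is the claim.

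An alternative route, if one prefers to avoid the double sum, is to invoke the rearrangement inequality: for every cyclic shift $\sigma$ of $\{1,\dots,n\}$ one has $\sum_k a_kb_{\sigma(k)}\leq \sum_k a_kb_k$ because the two sequences are similarly ordered; averaging this bound over the $n$ cyclic shifts turns the left-hand side into $\tfrac1n\bigl(\sum_k a_k\bigr)\bigl(\sum_k b_k\bigr)$ and delivers the same conclusion. Either way, there is no substantive obstacle here — the statement is classical, and the only point requiring any care is the sign argument for the pairwise differences (ties, i.e. equal entries, cause no trouble, since a product of two zeros or of two same-sign terms is still $\geq 0$). In the sequel this lemma is presumably used in tandem with the simultaneous-diagonalization setup of Definition~\ref{def:same_eigenstruct} to compare traces of products of commuting positive matrices.
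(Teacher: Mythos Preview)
Your proof is correct and is in fact the standard argument for Chebyshev's sum inequality. The paper itself does not give a proof but simply refers to page~43 of \cite{hardy1988inequalities}; your double-sum argument (and the rearrangement alternative) supplies exactly the kind of self-contained verification the paper omits. Your closing remark is also accurate: the inequality is immediately applied in Proposition~\ref{prop:tr(AB)_leq_ntr(A)tr(B)} to eigenvalue sequences of simultaneously diagonalizable matrices.
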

A proof of this is available on pg. 43 of \cite{hardy1988inequalities}.
\begin{proposition}\label{prop:tr(AB)_leq_ntr(A)tr(B)}
Suppose $A,B\in\mathbb{R}^{n\times n}$ and meet Def. \ref{def:same_eigenstruct}, then $\tr(A)\tr(B)\leq n\tr(AB)$.
\end{proposition}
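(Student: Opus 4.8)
The plan is to reduce the statement to Chebyshev's sum inequality (Proposition \ref{prop:chebyshev_sum_inequal}) applied to the eigenvalue sequences of $A$ and $B$. Since $A$ and $B$ meet Definition \ref{def:same_eigenstruct}, they are simultaneously diagonalizable in a common eigenbasis $v_1,\dots,v_n$, and the ``same ordering'' clause lets me choose the indexing so that $a_i:=\lambda_i(A)$ and $b_i:=\lambda_i(B)$ are each non-increasing in $i$, with $Av_i=a_iv_i$ and $Bv_i=b_iv_i$.

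First I would write $A=VD_AV^{-1}$ and $B=VD_BV^{-1}$ with $D_A=\mathrm{diag}(a_1,\dots,a_n)$ and $D_B=\mathrm{diag}(b_1,\dots,b_n)$, so that $AB=VD_AD_BV^{-1}$ and hence $\tr(AB)=\tr(D_AD_B)=\sum_{i=1}^n a_ib_i$, while $\tr(A)=\sum_{i=1}^n a_i$ and $\tr(B)=\sum_{i=1}^n b_i$. Next I would invoke Proposition \ref{prop:chebyshev_sum_inequal} with the two non-increasing sequences $(a_i)$ and $(b_i)$ to obtain $\bigl(\tfrac1n\sum_{i=1}^n a_i\bigr)\bigl(\tfrac1n\sum_{i=1}^n b_i\bigr)\leq \tfrac1n\sum_{i=1}^n a_ib_i$. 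Multiplying through by $n^2$ yields $\tr(A)\tr(B)\leq n\tr(AB)$, which is the claim.

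The only point requiring care is the ordering hypothesis: Chebyshev's sum inequality needs both sequences sorted in the same direction, so I must genuinely use the part of Definition \ref{def:same_eigenstruct} asserting that a single permutation of the shared eigenvectors simultaneously makes $\lambda_i(A)$ and $\lambda_i(B)$ non-increasing; without that alignment the inequality can reverse. In the intended applications $A$ and $B$ are both (positive-coefficient) polynomials in a fixed symmetric positive definite matrix, so this alignment holds automatically, but it is worth flagging in the proof. I expect no other obstacle.
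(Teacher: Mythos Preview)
Your proposal is correct and matches the paper's own proof essentially line for line: simultaneously diagonalize, note $\tr(AB)=\sum_i a_ib_i$, and apply Chebyshev's sum inequality (Proposition~\ref{prop:chebyshev_sum_inequal}) to the two non-increasing eigenvalue sequences. Your remark about needing the ``same ordering'' clause of Definition~\ref{def:same_eigenstruct} is exactly the point the paper is relying on implicitly.
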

\begin{proof}
Let $\lambda_1(A)\geq...\geq\lambda_n(A)$ be the eigenvalues of $A$ and $\lambda_1(B)\geq...\geq\lambda_n(B)$ be the eigenvalues of $B$. 

By the simultaneously diagonalizable condition of $A$ and $B$, $AB$ has the following (non-ordered) eigenvalues  $\lambda_1(A)\lambda_1(B),...,\lambda_n(A)\lambda_n(B)$. With that, we use Chebyshev's Sum Inequality (Prop. \ref{prop:chebyshev_sum_inequal}) to obtain:
\begin{align*}
    \tr(AB)=\sum_k\lambda_k(A)\lambda_k(B)\geq \frac{1}{n}\left(\sum_{k}\lambda_k(A)\right)\left(\sum_{k}\lambda_k(B)\right)=\frac{1}{n}\tr(A)\tr(B)
\end{align*}
\end{proof}
\begin{proposition}\label{prop:arb_mult_matrix_trace}
Let $B_1,...,B_s\in\mathbb{R}^{n\times n}$ and meets the requirements of Def. \ref{def:same_eigenstruct} in a pairwise manner. Then $\Pi_{k=1}^s\tr(B_k)\leq n^{s-1}\tr(\Pi_{k=1}^sB_k)$    
\end{proposition}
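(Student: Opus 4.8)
The plan is to argue by induction on $s$, taking Proposition \ref{prop:tr(AB)_leq_ntr(A)tr(B)} as the base case $s=2$ (the case $s=1$ being the trivial identity $\tr(B_1)\le\tr(B_1)$). Since $B_1,\dots,B_s$ are pairwise simultaneously diagonalizable, they share a common eigenbasis $v_1,\dots,v_n$; write $\mu_{k,i}$ for the eigenvalue of $B_k$ on $v_i$. I will also use that the $B_k$ are positive semidefinite, so $\mu_{k,i}\ge 0$: this is exactly what makes the inequality true — a product of two decreasing but sign-changing sequences need not be decreasing, and the bound really does fail in that generality — and positive semidefiniteness holds in every application of this proposition in the paper.

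For the inductive step, assume the claim for $s-1$ factors and set $C:=\prod_{k=2}^s B_k$. The sub-collection $B_2,\dots,B_s$ is again pairwise simultaneously diagonalizable with the same ordering, so the inductive hypothesis gives $\prod_{k=2}^s\tr(B_k)\le n^{s-2}\,\tr(C)$, hence $\prod_{k=1}^s\tr(B_k)\le n^{s-2}\,\tr(B_1)\tr(C)$. It remains to reapply Proposition \ref{prop:tr(AB)_leq_ntr(A)tr(B)} to $B_1$ and $C$, so I must check they satisfy Definition \ref{def:same_eigenstruct}. They share the eigenbasis $v_1,\dots,v_n$, with $C$ acting on $v_i$ by $\prod_{k=2}^s\mu_{k,i}$. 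Order the $v_i$ so that $\mu_{1,i}$ is non-increasing in $i$, breaking ties of $B_1$ so that $\prod_{k\ge 2}\mu_{k,i}$ is also non-increasing inside each such tie-block. For $i<j$: if $\mu_{1,i}>\mu_{1,j}$ then, since $B_1$ and each $B_k$ are simultaneously diagonalizable with the same ordering, $\mu_{k,i}\ge\mu_{k,j}\ge 0$, so $\prod_{k\ge 2}\mu_{k,i}\ge\prod_{k\ge 2}\mu_{k,j}$; if $\mu_{1,i}=\mu_{1,j}$ the same inequality holds by the tie-break. Thus both eigenvalue sequences are non-increasing in this ordering, so $(B_1,C)$ meets Definition \ref{def:same_eigenstruct}, and Proposition \ref{prop:tr(AB)_leq_ntr(A)tr(B)} gives $\tr(B_1)\tr(C)\le n\,\tr(B_1C)=n\,\tr\big(\prod_{k=1}^s B_k\big)$. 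Combining with the previous bound yields $\prod_{k=1}^s\tr(B_k)\le n^{s-1}\tr\big(\prod_{k=1}^s B_k\big)$, completing the induction.

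The one step needing genuine care is the propagation just used: that pairwise simultaneous-diagonalizability-with-the-same-ordering of $B_1,\dots,B_s$, together with non-negativity of all the $\mu_{k,i}$, forces $B_1$ and the partial product $\prod_{k=2}^s B_k$ to again be simultaneously diagonalizable with the same ordering — this is what lets Proposition \ref{prop:tr(AB)_leq_ntr(A)tr(B)} be invoked at each stage, and it is precisely where positive semidefiniteness is consumed. Equivalently, the whole statement is the iterated two-variable Chebyshev (FKG-type) inequality: letting $I$ be uniform on $[n]$, it reads $\E{\prod_{k}\mu_{k,I}}\ge\prod_{k}\E{\mu_{k,I}}$ for the comonotone non-negative variables $\mu_{1,I},\dots,\mu_{s,I}$, which one peels off one factor at a time via Proposition \ref{prop:chebyshev_sum_inequal} — the induction above phrased probabilistically.
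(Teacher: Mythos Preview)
Your proof is correct and takes essentially the same inductive approach as the paper, peeling off one factor via Proposition \ref{prop:tr(AB)_leq_ntr(A)tr(B)} and applying the inductive hypothesis to the remaining product. You are in fact more careful than the paper's own argument: you explicitly verify that $B_1$ and the partial product $C=\prod_{k\ge 2}B_k$ again meet Definition \ref{def:same_eigenstruct}, and you correctly flag that this step --- and indeed the inequality itself for $s\ge 3$ --- tacitly requires nonnegative eigenvalues, an assumption the paper leaves implicit but which holds in every application.
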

\begin{proof}

For this proof we will use an inductive method to obtain the result. Consider the base case when $s=1$, this trivially holds as $tr(B_1)\leq tr(B_1)$.
Fix $s\in\mathbb{N}, s>1$. Assume that the result holds for all $s'\leq s$. Then, by Prop. \ref{prop:tr(AB)_leq_ntr(A)tr(B)},
\begin{align*}
\tr(\Pi_{k=1}^sB_k)=\tr(B_s\Pi_{k=1}^{s-1}B_k)\leq n\tr(B_s)\tr(\Pi_{k=1}^{s-1}B_k)\leq nn^{s-2}\Pi_{k=1}^s\tr(B_k)=n^{s-1}\Pi_{k=1}^s\tr(B_k)
\end{align*}

\end{proof}
\begin{proposition}\label{prop:bounds_effective_ranks}
Let $\Sigma$ be a positive definite matrix which is symmetric and $\lambda_1$ be the top eigenvalue of $\Sigma$. Then $\frac{tr(\Sigma)}{\lambda_1}\leq \frac{tr(\Sigma)^2}{tr(\Sigma^2)}\leq \frac{tr(\Sigma)^2}{\lambda_1^2}$    
\end{proposition}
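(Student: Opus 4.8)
The plan is to diagonalize $\Sigma$ and reduce everything to an elementary inequality about the eigenvalues. Write $\lambda_1\geq\lambda_2\geq\cdots\geq\lambda_d>0$ for the eigenvalues of $\Sigma$ (positive since $\Sigma$ is symmetric positive definite), so that $\tr(\Sigma)=\sum_{i}\lambda_i$ and $\tr(\Sigma^2)=\sum_{i}\lambda_i^2$. The quantity in the middle is then $\tr(\Sigma)^2/\tr(\Sigma^2)=\big(\sum_i\lambda_i\big)^2\big/\sum_i\lambda_i^2$, and both desired inequalities follow from comparing the denominator $\sum_i\lambda_i^2$ against $\lambda_1\sum_i\lambda_i$ and against $\lambda_1^2$ respectively.

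For the lower bound $\tr(\Sigma)/\lambda_1\leq \tr(\Sigma)^2/\tr(\Sigma^2)$, after clearing the (positive) denominators it suffices to show $\tr(\Sigma^2)\leq \lambda_1\,\tr(\Sigma)$, i.e. $\sum_i\lambda_i^2\leq \lambda_1\sum_i\lambda_i$. This is immediate term by term: since $\lambda_i\leq\lambda_1$ and $\lambda_i>0$, we have $\lambda_i^2\leq \lambda_1\lambda_i$ for every $i$, and summing gives the claim.

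For the upper bound $\tr(\Sigma)^2/\tr(\Sigma^2)\leq \tr(\Sigma)^2/\lambda_1^2$, since the numerator $\tr(\Sigma)^2$ is positive it suffices to show $\lambda_1^2\leq \tr(\Sigma^2)=\sum_i\lambda_i^2$, which holds because $\lambda_1^2$ is one of the (nonnegative) summands. Chaining the two bounds yields the stated inequality.

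There is no real obstacle here: the only thing to be careful about is that $\Sigma$ being positive definite guarantees all denominators ($\lambda_1$, $\tr(\Sigma^2)$, $\tr(\Sigma)^2$) are strictly positive so the manipulations of the fractions are valid, and that the two key steps are the termwise bound $\lambda_i^2\leq\lambda_1\lambda_i$ and the trivial domination $\lambda_1^2\leq\sum_i\lambda_i^2$.
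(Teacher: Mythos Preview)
Your proof is correct and fully self-contained: diagonalizing $\Sigma$ and using the termwise bounds $\lambda_i^2\leq \lambda_1\lambda_i$ and $\lambda_1^2\leq \sum_i\lambda_i^2$ is exactly the right elementary argument. The paper does not actually give a proof of this proposition---it simply cites Lemma~5 of \cite{bartlett2020benign}---so your write-up is in fact more complete than what appears in the paper.
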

The proof follows from Lemma 5 of \cite{bartlett2020benign}.
\begin{proposition}\label{prop:A_diffs}
    Let $A$ be a positive definite matrix, symmetric, and be differentiable. Then, $d(A^r)=\sum_{k=0}^{r-1}A^{k}d(A)A^{r-k-1}$ and $d(A^{-1})=-A^{-1}d(A)A^{-1}$.    
\end{proposition}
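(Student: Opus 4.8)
The plan is to treat both identities as consequences of the Leibniz (product) rule for the differential of a product of matrix-valued functions, being careful throughout that $d(A)$ need not commute with $A$, so the ordering of factors in each term matters.

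First I would establish $d(A^r)=\sum_{k=0}^{r-1}A^{k}d(A)A^{r-k-1}$ by induction on $r$. The base case $r=1$ is immediate, since the right-hand side is the single term $A^0 d(A)A^0=d(A)$. For the inductive step, I would write $A^{r+1}=A\cdot A^{r}$ and apply the product rule $d(XY)=d(X)\,Y+X\,d(Y)$ to get $d(A^{r+1})=d(A)\,A^{r}+A\,d(A^{r})$; substituting the inductive hypothesis into the second term and reindexing the resulting sum (shifting $k\mapsto k+1$ in that group of terms) collapses everything into $\sum_{k=0}^{r}A^{k}d(A)A^{r-k}$, which is the claim for $r+1$. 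Note that neither symmetry nor positive definiteness is actually used here: the argument only needs $A$ to be a differentiable matrix-valued function so that the product rule applies.

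For $d(A^{-1})=-A^{-1}d(A)A^{-1}$, I would differentiate the identity $AA^{-1}=I$. Since the right-hand side is constant, $d(AA^{-1})=0$, and the product rule gives $d(A)\,A^{-1}+A\,d(A^{-1})=0$. Here positive definiteness (or just invertibility) enters to guarantee that $A^{-1}$ exists and is itself differentiable; left-multiplying by $A^{-1}$ and rearranging yields $d(A^{-1})=-A^{-1}d(A)A^{-1}$.

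There is no real obstacle to this proposition: the only point requiring care is resisting the temptation to commute $d(A)$ past powers of $A$, i.e.\ keeping the sandwiched form $A^{k}d(A)A^{r-k-1}$ intact rather than collapsing it to $rA^{r-1}d(A)$, which would be valid only in the commutative (scalar) case.
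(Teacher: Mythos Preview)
Your proposal is correct and follows essentially the same approach as the paper: both use the product rule on $AA^{-1}=I$ to obtain $d(A^{-1})$, and both prove the formula for $d(A^r)$ by induction on $r$ via the product rule applied to $A\cdot A^{r-1}$ (or equivalently $A\cdot A^{r}$), substituting the inductive hypothesis and reindexing. The only cosmetic differences are the order in which the two identities are handled and whether the induction step goes from $r-1$ to $r$ or from $r$ to $r+1$.
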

\begin{proof}
We note that matrix product rule says, $\frac{d}{dx}D(x)E(x)=D(x)E'(x)+D(x)E'(x)$.

By properties of matrix inverse, the result inverse matrix identity result follows from noting that:
\begin{align*}
    0=\frac{d}{dx}I_n&=\frac{d}{dx}AA^{-1}=A(A^{-1})'+A'A^{-1}
\end{align*}

Turning to the first expression, $d(A^1)=d(A)$ and as such the base case is satisfied.

Now fix some $r\in\mathbb{N}$ and assume that for all $r'<r$, $d(A^{r'})=\sum_{k=0}^{r'-1}A^{k}d(A)A^{r'-k-1}$.

We note that,
\begin{align*}
    d(A^r)=d(AA^{r-1})&=Ad(A^{r-1})+d(A)A^{r-1}\\
    &=A\sum_{k=0}^{r-2}A^{k}d(A)A^{r-k-2}+d(A)A^{r-1}\\
    &=\sum_{k=0}^{r-2}A^{k+1}d(A)A^{r-k-2}+d(A)A^{r-1}\\
    &=\sum_{k=1}^{r-1}A^kd(A)A^{r-k-1}+A^0d(A)A^{r-0-1}\\
    &=\sum_{k=0}^{r-1}A^kd(A)A^{r-k-1}
\end{align*}

\end{proof}
\begin{proposition}
    Let $A=\sum_{i=1}^d\lambda_ia_ia_i^T$ and $r\in\mathbb{N}$. Then, $$\sum_ja_i^T\partial_{a_{ij}}(A^{-r})e_j=-\lambda_i\left(\sum_{k=0}^{r-1}a_i^TA^{-r+k}a_itr(A^{-k-1})+ra_i^TA^{-r-1}a_i\right)$$
\end{proposition}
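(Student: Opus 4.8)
The plan is to differentiate $A^{-r}$ directly, treating the eigenvalues $\lambda_\ell$ as fixed and the components $a_{ij}$ of the eigenvector $a_i$ as the variables. First I would record the elementary fact that, since only the $i$-th rank-one summand of $A=\sum_\ell \lambda_\ell a_\ell a_\ell^T$ depends on $a_{ij}$,
\[
\partial_{a_{ij}}A=\lambda_i\left(e_j a_i^T+a_i e_j^T\right).
\]
Next I would obtain a closed form for $\partial_{a_{ij}}(A^{-r})$. Writing $A^{-r}=(A^{-1})^r$ and combining the two identities of Proposition \ref{prop:A_diffs} (the power rule applied to $B=A^{-1}$ together with $d(A^{-1})=-A^{-1}\,d(A)\,A^{-1}$), or equivalently by a one-line induction, one gets
\[
\partial_{a_{ij}}(A^{-r})=-\sum_{k=0}^{r-1}A^{-(k+1)}\,\big(\partial_{a_{ij}}A\big)\,A^{-(r-k)}.
\]

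Substituting the expression for $\partial_{a_{ij}}A$ and contracting with $\sum_j a_i^T(\cdot)e_j$ then splits the quantity of interest into two sums, one coming from the $e_j a_i^T$ term and one from the $a_i e_j^T$ term. For the $e_j a_i^T$ contribution I would use that $A$ is symmetric, so $a_i^T A^{-m}e_j=(A^{-m}a_i)_j$, and hence for each fixed $k$,
\[
\sum_j\big(a_i^T A^{-(k+1)}e_j\big)\big(a_i^T A^{-(r-k)}e_j\big)=(A^{-(k+1)}a_i)^T(A^{-(r-k)}a_i)=a_i^T A^{-(r+1)}a_i,
\]
which upon summing over $k=0,\dots,r-1$ yields $r\,a_i^T A^{-(r+1)}a_i$, the second term in the claim. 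For the $a_i e_j^T$ contribution I would factor out the scalar $a_i^T A^{-(k+1)}a_i$ and use $\sum_j e_j^T A^{-(r-k)}e_j=\tr(A^{-(r-k)})$, giving $\sum_{k=0}^{r-1}\big(a_i^T A^{-(k+1)}a_i\big)\tr(A^{-(r-k)})$; a change of summation index $k\mapsto r-1-k$ rewrites this as $\sum_{k=0}^{r-1}a_i^T A^{-r+k}a_i\,\tr(A^{-(k+1)})$, matching the first term. Multiplying the sum of the two contributions by the overall factor $-\lambda_i$ gives the stated identity.

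There is no serious obstacle here: the argument is just the matrix product/chain rule plus bookkeeping. The only points that need care are (i) deriving the formula for $\partial_{a_{ij}}(A^{-r})$ cleanly from Proposition \ref{prop:A_diffs}, (ii) using the symmetry of $A$ to collapse $\sum_j(\cdot)_j(\cdot)_j$ into an inner product (and thereby a single $a_i^T A^{-(r+1)}a_i$ independent of $k$), and (iii) getting the reindexing of the trace sum right so that the exponents line up with the stated form.
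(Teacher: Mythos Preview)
Your proposal is correct and follows essentially the same route as the paper: compute $\partial_{a_{ij}}(A^{-r})$ via Proposition~\ref{prop:A_diffs}, substitute $\partial_{a_{ij}}A=\lambda_i(a_ie_j^T+e_ja_i^T)$, contract with $a_i^T(\cdot)e_j$, and sum over $j$ to produce the trace term and the $r\,a_i^TA^{-r-1}a_i$ term. The only cosmetic difference is that you index the sum so that a final reindexing $k\mapsto r-1-k$ is needed for the trace term, whereas the paper's indexing lands directly on the stated form.
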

\begin{proof}
We use Prop. \ref{prop:A_diffs} to find the following:
\begin{align*}
    a_i^T\partial_{a_{ij}}(A^{-r})e_j&=a_i^T\sum_{k=0}^{r-1}-A^{-r+1}A^k\partial_{a_{ij}}(A)A^{r-k-1}A^{-r}e_j=\lambda_ia_i^T\left(-\sum_{k=0}^{r-1}A^{-r+k}(a_ie_j^T+e_ja_i^T)A^{-k-1}\right)e_j\\&=\lambda_i\left(-\sum_{k=0}^{r-1}a_i^TA^{-r+k}a_ie_j^TA^{-k-1}e_j-\sum_{k=0}^{r-1}a_i^TA^{-r+k}e_ja_i^TA^{-k-1}e_j\right)
\end{align*}
As such, summing over $j$, the result follows.
\end{proof}
\begin{proposition}[Weyl's Inequality for Eigenvalues]\label{prop:weyl_inequality}
Let $\lambda_1(X),...,\lambda_n(X)$ be the ordered eigenvalues of a matrix $X$. Let $X,Y\in\mathbb{R}^{n\times n}$ Then,    
\begin{align*}
    \lambda_{j+k-1}(X+Y)\leq \lambda_{j}(X)+\lambda_k(Y)
\end{align*}
Where $j,k\in\mathbb{N}$ and $j+k-1\leq n$,
\end{proposition}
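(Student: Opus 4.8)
The plan is to give the standard proof via the Courant--Fischer min-max principle, under the (implicit, and in every application here satisfied) assumption that $X$ and $Y$ are symmetric. Recall that for a symmetric $A\in\R^{n\times n}$ with ordered eigenvalues $\lambda_1(A)\geq\dots\geq\lambda_n(A)$ and Rayleigh quotient $R_A(v)=\tfrac{v^TAv}{v^Tv}$, one has for each $1\le m\le n$ the identity
\begin{align*}
\lambda_m(A)=\min_{\substack{V\subseteq\R^n\\ \dim V=n-m+1}}\ \max_{\substack{v\in V\\ v\neq 0}}R_A(v),
\end{align*}
with the outer minimum attained by the span of the eigenvectors of $A$ associated to $\lambda_m(A),\dots,\lambda_n(A)$.

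First I would fix $j,k$ with $j+k-1\le n$, set $m=j+k-1$, and take $U$ to be the span of the eigenvectors of $X$ for $\lambda_j(X),\dots,\lambda_n(X)$, so that $\dim U=n-j+1$ and $R_X(v)\le\lambda_j(X)$ for every nonzero $v\in U$; likewise let $W$ be the span of the eigenvectors of $Y$ for $\lambda_k(Y),\dots,\lambda_n(Y)$, so that $\dim W=n-k+1$ and $R_Y(v)\le\lambda_k(Y)$ on $W$. Next I would invoke the dimension count $\dim(U\cap W)\ge\dim U+\dim W-n=n-(j+k-1)+1=n-m+1$ and select any subspace $Z\subseteq U\cap W$ with $\dim Z=n-m+1$.

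Then, using additivity of the Rayleigh quotient, for every nonzero $v\in Z$,
\begin{align*}
R_{X+Y}(v)=R_X(v)+R_Y(v)\le\lambda_j(X)+\lambda_k(Y),
\end{align*}
hence $\max_{0\neq v\in Z}R_{X+Y}(v)\le\lambda_j(X)+\lambda_k(Y)$. Finally, applying the min-max identity to $X+Y$ at index $m$ --- where the minimum runs over all $(n-m+1)$-dimensional subspaces and is therefore bounded above by its value on the particular subspace $Z$ --- yields $\lambda_{j+k-1}(X+Y)=\lambda_m(X+Y)\le\lambda_j(X)+\lambda_k(Y)$, which is the claim.

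This argument has essentially no obstacle; the only two points requiring a sentence of justification are the min-max identity itself (which can be derived in two lines from the fact that any $(n-m+1)$-dimensional subspace meets the span of the top $m$ eigenvectors nontrivially, or simply attributed to a standard matrix-analysis reference) and the dimension inequality for $U\cap W$. Since the result is a named classical fact invoked only as a tool elsewhere in the paper, an acceptable alternative is to omit the argument and cite it directly.
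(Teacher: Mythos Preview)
Your proof via the Courant--Fischer min--max principle is correct and is the standard argument. The paper itself does not supply a proof at all: it simply cites pages 239--240 of Horn and Johnson's \emph{Matrix Analysis}, treating the result as a classical fact to be invoked rather than re-derived. You anticipated exactly this at the end of your write-up, and indeed either option is fine here---the statement is used only as a tool (inside Proposition~\ref{prop:A_higher_moment_psd}) and applies to symmetric matrices, so your added hypothesis is harmless in context.
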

This follows from pgs. 239-240 of \cite{Horn_Johnson_2012}. 
\begin{proposition}\label{prop:pd_moments}
    Suppose $A$ is positive definite and symmetric. Then $A^k$ is positive definite and symmetric for any $k\in \mathbb{Z}$.
\end{proposition}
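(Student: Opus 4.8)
The plan is to reduce everything to the spectral decomposition of $A$. Since $A$ is symmetric, the spectral theorem gives an orthogonal matrix $Q$ and a diagonal matrix $\Lambda = \mathrm{diag}(\lambda_1,\dots,\lambda_n)$ with $A = Q\Lambda Q^T$; positive definiteness of $A$ is exactly the statement that every $\lambda_i > 0$. In particular $A$ is invertible, so $A^k$ is well defined for every $k\in\mathbb{Z}$ (including negative $k$), and one checks by telescoping $Q^TQ = I$ in the product $A^k = (Q\Lambda Q^T)\cdots(Q\Lambda Q^T)$ (and, for $k<0$, using $A^{-1} = Q\Lambda^{-1}Q^T$) that $A^k = Q\Lambda^k Q^T$, where $\Lambda^k = \mathrm{diag}(\lambda_1^k,\dots,\lambda_n^k)$.

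From this representation both claims are immediate. Symmetry: $(A^k)^T = (Q\Lambda^k Q^T)^T = Q(\Lambda^k)^T Q^T = Q\Lambda^k Q^T = A^k$, since a diagonal matrix is its own transpose. Positive definiteness: for any nonzero $x\in\mathbb{R}^n$, writing $y = Q^T x$ (which is nonzero since $Q$ is orthogonal), we have $x^T A^k x = y^T \Lambda^k y = \sum_{i=1}^n \lambda_i^k y_i^2$, and each $\lambda_i^k > 0$ because $\lambda_i > 0$ and raising a positive real to any integer power keeps it positive. Hence $x^T A^k x > 0$.

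The only point that requires any care at all is the case $k \le 0$, where one must first observe that $A$ being positive definite forces $0$ not to be an eigenvalue, so that $A^{-1}$ exists and has spectral form $Q\Lambda^{-1}Q^T$; after that the argument is identical. There is no real obstacle here — the statement is a routine consequence of diagonalizability, and it is recorded mainly for later reference (e.g. to justify manipulations with $\Sigma^r$ and $(XX^T)^{-r}$ elsewhere in the paper).
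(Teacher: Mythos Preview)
Your proof is correct and uses essentially the same idea as the paper: both arguments rest on the eigenvalue (spectral) decomposition of $A$. The only cosmetic difference is that you handle all $k\in\mathbb{Z}$ uniformly via $A^k = Q\Lambda^k Q^T$, whereas the paper splits into the cases $k>0$, $k=0$, and $k<0$ (reducing the last to the first via $A^{-1}$); your presentation is slightly more streamlined but not substantively different.
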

\begin{proof}
    For $k>0$, this follows trivially by the eigenvalue decomposition of $A$.
    
    When $k=0$, then $A^k=I$ which is trivially positive definite.

    By noting that $A^{-1}$ is also symmetric and positive definite, the result follows.
\end{proof}
\begin{proposition}[Sherman-Morrison Formula]
    Let $A\in\mathbb{R}^{n\times n}$, $A$ is invertible, and $u^TAv\neq-1$ where $u,v\in\mathbb{R}^n$. Then, $(A+uv^T)^{-1}=A^{-1}-\frac{A^{-1}uv^TA^{-1}}{1+u^TAv}$.
\end{proposition}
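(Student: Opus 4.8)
The plan is to prove the Sherman--Morrison identity by direct verification of a candidate inverse. Set
\begin{equation*}
B := A^{-1} - \frac{A^{-1}uv^TA^{-1}}{1+v^TA^{-1}u},
\end{equation*}
noting that $v^TA^{-1}u$ is a scalar, so the denominator is a well-defined nonzero number under the non-degeneracy hypothesis; this scalar equals $u^TA^{-1}v$, and it (not $u^TAv$) is what belongs in the denominator and in the hypothesis, as the one-dimensional case $n=1$, $A=[a]$ already shows --- so the displayed statement is to be read with $A^{-1}$ in place of $A$ in the fraction and the assumption. The claim $B=(A+uv^T)^{-1}$ then follows from checking $(A+uv^T)B=I$ (which, $A+uv^T$ being square, suffices).

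First I would expand $(A+uv^T)B$ into two pieces. Using $AA^{-1}=I$, the first is $(A+uv^T)A^{-1}=I+uv^TA^{-1}$. For the second, the only step needed is that a $1\times 1$ quantity factors out of matrix products, giving $(A+uv^T)A^{-1}u=u+u(v^TA^{-1}u)=(1+v^TA^{-1}u)u$, hence
\begin{equation*}
(A+uv^T)\,\frac{A^{-1}uv^TA^{-1}}{1+v^TA^{-1}u}=\frac{(1+v^TA^{-1}u)\,uv^TA^{-1}}{1+v^TA^{-1}u}=uv^TA^{-1}.
\end{equation*}
Subtracting, the two rank-one terms cancel and $(A+uv^T)B=I$, so $B=(A+uv^T)^{-1}$. (The symmetric check $B(A+uv^T)=I$ is identical, grouping instead $v^TA^{-1}(A+uv^T)=(1+v^TA^{-1}u)v^T$.)

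There is no substantive obstacle here: the proof is a single rank-one cancellation, and the only care required is clerical --- tracking that $v^TA^{-1}u=u^TA^{-1}v$ is scalar-valued and commutes with the surrounding vectors, and reading the denominator and non-degeneracy condition with $A^{-1}$ rather than $A$ so that the identity is actually correct.
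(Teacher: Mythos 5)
Your verification is correct, and it is worth noting that the paper itself offers no proof here: it simply cites Sherman and Morrison's original article, so your direct computation is a genuine addition rather than a rederivation of an argument in the text. The one-line rank-one cancellation you give --- showing $(A+uv^T)A^{-1}u=(1+v^TA^{-1}u)u$ so that the fractional term collapses to $uv^TA^{-1}$ --- is the standard and cleanest route, and your observation that the statement as printed is wrong is also correct and valuable: the denominator and the non-degeneracy hypothesis must involve $v^TA^{-1}u$, not $u^TAv$, as the $n=1$ case immediately shows. (In the paper's actual applications, e.g.\ Prop.~\ref{prop:A_higher_moment_psd} and Prop.~\ref{prop:tr_A_bounds}, the formula is always invoked with $A$ symmetric positive definite and $u=v=a_i$, so the intended reading is unambiguous there, but the proposition as stated should be corrected.) One small caveat in your write-up: the identity $v^TA^{-1}u=u^TA^{-1}v$ that you assert in passing holds only when $A$ is symmetric, since transposing the scalar gives $u^TA^{-T}v$; this does not affect your proof, because your computation consistently uses $v^TA^{-1}u$ and never relies on that symmetry, but the aside should be dropped or qualified.
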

This follows directly from \cite{10.1214-aoms-1177729893}.
\begin{proposition}\label{prop:A_higher_moment_psd}
    Let $A:=\sum_ib_ib_i^T$,$A_i:=A-b_ib_i^T$ and $k\in\mathbb{N}$. Then, 
    \begin{enumerate}
        \item $A-A_i$ is positive semi-definite and $A_i^{-1}-A^{-1}$ is positive semi-definite.
        \item $b_i^TA_i^kb_i\leq b_i^TA^kb_i$ and $b_i^TA^{-k}b_i\leq b_iA_i^{-k}b_i$
        \item $\tr(A_i^k)\leq \tr(A^k)$ and $\tr(A^{-k})\leq \tr(A_i^{-k})$
        \item $A^{-1}b_ib_i^TA^{-1}\preceq A_i^{-1}b_ib_i^TA_i^{-1}$
    \end{enumerate}
    
\end{proposition}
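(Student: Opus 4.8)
The plan is to dispatch the four claims with two tools already available in the excerpt: the Sherman--Morrison formula for the two statements involving a single inverse (claims 1 and 4), and a one-parameter interpolation $A(t):=A_i+t\,b_ib_i^T$, $t\in[0,1]$, together with the matrix-derivative identities of Proposition \ref{prop:A_diffs}, for the two statements involving $k$-th powers (claims 2 and 3). Throughout I use, as is implicit in the statement, that $A$ and each $A_i$ are invertible; then $A(t)\succeq A_i\succ 0$ for every $t\in[0,1]$, so all inverses and powers below are well defined and $A(t)^m\succeq 0$ for every $m\in\mathbb Z$.

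For claim 1, $A-A_i=b_ib_i^T\succeq 0$ by inspection. For the second part, Sherman--Morrison applied to $A=A_i+b_ib_i^T$ gives $A^{-1}=A_i^{-1}-\big(1+b_i^TA_i^{-1}b_i\big)^{-1}A_i^{-1}b_ib_i^TA_i^{-1}$, whence $A_i^{-1}-A^{-1}=\big(1+b_i^TA_i^{-1}b_i\big)^{-1}(A_i^{-1}b_i)(A_i^{-1}b_i)^T\succeq 0$, the scalar being positive because $A_i^{-1}\succeq 0$.

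For claims 2 and 3, note $A(0)=A_i$, $A(1)=A$, $\dot A(t)=b_ib_i^T$. Proposition \ref{prop:A_diffs} gives $\frac{d}{dt}A(t)^k=\sum_{j=0}^{k-1}A(t)^jb_ib_i^TA(t)^{k-1-j}$, so $\frac{d}{dt}\big(b_i^TA(t)^kb_i\big)=\sum_{j=0}^{k-1}\big(b_i^TA(t)^jb_i\big)\big(b_i^TA(t)^{k-1-j}b_i\big)\ge 0$ since each $A(t)^m\succeq 0$; integrating over $[0,1]$ gives $b_i^TA_i^kb_i\le b_i^TA^kb_i$. Taking a trace and using cyclicity collapses the sum to $\frac{d}{dt}\tr(A(t)^k)=k\,b_i^TA(t)^{k-1}b_i\ge 0$, giving $\tr(A_i^k)\le\tr(A^k)$. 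The negative-power statements are the mirror image: from $\frac{d}{dt}A(t)^{-k}=-\sum_{j=0}^{k-1}A(t)^{-(j+1)}b_ib_i^TA(t)^{-(k-j)}$ (again Proposition \ref{prop:A_diffs}) one gets $\frac{d}{dt}\big(b_i^TA(t)^{-k}b_i\big)\le 0$ and $\frac{d}{dt}\tr(A(t)^{-k})=-k\,b_i^TA(t)^{-(k+1)}b_i\le 0$, so integrating yields $b_i^TA^{-k}b_i\le b_i^TA_i^{-k}b_i$ and $\tr(A^{-k})\le\tr(A_i^{-k})$. (For claim 3 one can alternatively invoke Weyl's inequality, Proposition \ref{prop:weyl_inequality}, to get $\lambda_j(A_i)\le\lambda_j(A)$ for all $j$ and then use monotonicity of $t\mapsto t^{\pm k}$ on $(0,\infty)$.) The point to be careful about here --- and the only real obstacle --- is that for $k\ge 2$ one cannot argue ``$A_i\preceq A$, hence $A_i^k\preceq A^k$,'' because $t\mapsto t^k$ is not operator monotone; the interpolation sidesteps this since it is the scalar quantities $b_i^T(\cdot)b_i$ and $\tr(\cdot)$ that are shown monotone along the path, and their $t$-derivatives are manifestly signed.

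For claim 4, Sherman--Morrison applied to $A_i=A-b_ib_i^T$ gives $A_i^{-1}=A^{-1}+\big(1-b_i^TA^{-1}b_i\big)^{-1}A^{-1}b_ib_i^TA^{-1}$; I first note $b_i^TA^{-1}b_i=\big(1+b_i^TA_i^{-1}b_i\big)^{-1}b_i^TA_i^{-1}b_i\in[0,1)$ (Sherman--Morrison in the other direction), so $\gamma:=\big(1-b_i^TA^{-1}b_i\big)^{-1}\ge 1$. Multiplying the displayed identity on the right by $b_i$ gives $A_i^{-1}b_i=\gamma\,A^{-1}b_i$, hence $A_i^{-1}b_ib_i^TA_i^{-1}=\gamma^2A^{-1}b_ib_i^TA^{-1}$. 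Since $\gamma^2\ge 1$, the difference $A_i^{-1}b_ib_i^TA_i^{-1}-A^{-1}b_ib_i^TA^{-1}=(\gamma^2-1)A^{-1}b_ib_i^TA^{-1}$ is positive semidefinite, and as both matrices are nonnegative multiples of the single rank-one matrix $(A^{-1}b_i)(A^{-1}b_i)^T$ they share eigenvectors, so $A^{-1}b_ib_i^TA^{-1}\preceq A_i^{-1}b_ib_i^TA_i^{-1}$ in the Loewner order of the excerpt. The remaining bookkeeping (invertibility of the $A_i$, positivity of $A(t)$ and of $1-b_i^TA^{-1}b_i$) is immediate from $A_i\succ 0$.
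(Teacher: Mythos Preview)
Your proof is correct and uses the same two tools as the paper (Sherman--Morrison and the derivative identities of Proposition~\ref{prop:A_diffs}), but deploys them more uniformly. For the positive-power half of claim~2 the paper instead expands $A^k=(A_i+b_ib_i^T)^k$ and shows by induction that every monomial in $A_i$ and $b_ib_i^T$ satisfies $b_i^T(\cdot)b_i\ge 0$; your interpolation argument handles both signs of the exponent in one stroke. For claim~3 the paper invokes Weyl's inequality (your parenthetical alternative) rather than differentiating the trace. The most noticeable difference is claim~4: the paper applies Sherman--Morrison in the direction $A^{-1}=A_i^{-1}-\cdots$ and expands $A^{-1}b_ib_i^TA^{-1}$ to find the multiplier $(1+b_i^TA_i^{-1}b_i)^{-2}$, whereas you apply it in the direction $A_i^{-1}=A^{-1}+\cdots$ to obtain the clean vector identity $A_i^{-1}b_i=\gamma\,A^{-1}b_i$ with $\gamma=1+b_i^TA_i^{-1}b_i\ge 1$, from which $A_i^{-1}b_ib_i^TA_i^{-1}=\gamma^2A^{-1}b_ib_i^TA^{-1}$ is immediate. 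Your route is shorter there and makes the shared-eigenvector requirement of the paper's Loewner order transparent.
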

\begin{proof}

Fix $x$ as a vector. $x^TA-A_ix=x^Tb_ib_i^Tx\geq 0$. Additionally, the following:
\begin{align*}
    x^T(A_i^{-1}-A^{-1})x=x^T\left(A_i^{-1}-A_i^{-1}+\frac{A_i^{-1}b_ib_i^TA_i^{-1}}{1+b_i^TA_i^{-1}b_i}\right)x=\frac{1}{1+b_i^TA_i^{-1}b_i}(x^TA_ib_i)^2\geq 0
\end{align*}

For the second part (non-inverse case), we want to show that $b_i^TEb_i\geq 0$ where $E$ is made from an arbitrary multiplication of $b_ib_i^T$ and $A_i$. We will prove this by induction.

Let $r\in\mathbb{N}$, $E_0:=A_i$, $E_1:=b_ib_i^T$, and $E:=E_{i_1}...E_{i_r}$ where $i\in \{0,1\}^r$.

Suppose $r=1$, if $i_1=0$, then $b_i^TA_ib_i\geq 0$ by p.s.d. of $A_i$. Otherwise, noting that $b_i^Tb_ib_i^Tb_i\geq 0$ concludes the base case.

Suppose $r>1$ and $b_i^TEb_i\geq 0$ for any $E=E_{i_1}...E_{i_{r'}}$ where $r'<r$. If $i_l=0\;\forall \, l\leq r$, then $b_i^TEb_i=b_i^TA_i^rb_i\geq 0$. Otherwise, $\exists \, l\leq r$ such that $i_l=1$. Now, 
\begin{align*}
    b_i^TEb_i=b_i^T(E_{i_1}...E_{i_{l-1}}b_ib_i^TE_{i_{l+1}}...E_{i_r})b_i=(b_i^TE_{i_1}...E_{i_{l-1}}b_i)(b_i^TE_{i_{l+1}}...E_{i_{r}}b_i)\geq 0
\end{align*}
Where the last inequality comes from the inductive hypothesis.

Thus, as $b_i^TA^kb_i=b_i^T(A_i+b_ib_i^T)^kb_i\geq b_i^TA_i^kb_i$ where we note the remaining terms in the binomial expansion are of the form $E=E_{i_1}...E_{i_{k}}$.

We now turn to the inverse case, let $C(s):=A_i+sb_ib_i^T$ and $B(s):=b_i^T(C(s))^{-k}b_i$ where we note that $B(0)=b_i^TA_i^{-k}b_i$ and $B(1)=b_i^TA^{-k}b_i$. We then take the derivative and it is is negative on $(0,1)$,
\begin{align*}
    \frac{d}{ds}B(s)&=b_i^T\partial_s((A_i+sb_ib_i^T)^{-k})b_i=-b_i^T\sum_{r=0}^{k-1}C(s)^{r-k}\partial_s(C(s))C(s)^{k-r-1+k}\\& =-\sum_{r=0}^{k-1}b_i^TC(s)^{r-k}b_ib_i^TC(s)^{-r-1}b_i \leq 0
\end{align*}
Where we use Prop. \ref{prop:A_diffs} and we note that $C(s)$ is positive definite on $s\in[0,1]$ (see Prop. \ref{prop:pd_moments}).

As a result, $b_i^TA^{-k}b_i\leq b_i^TA_i^{-k}b_i$. 

For the third part. We note that if $E:=-b_ib_i^T$, then $A_i=A+E$ and $E$'s eigenvalues are $0$ with multiplicity of $n-1$ and $-b_i^Tb_i$.

Using Prop. \ref{prop:weyl_inequality}, with $k=1$, we see that $\lambda_j(A_i)=\lambda_j(A+E)\leq \lambda_j(A)+\lambda_1(E)=\lambda_j(A)$. As a result, $\lambda_j(A_i)^k\leq \lambda_j(A)^k$ and the trace results follow directly.

For the fourth item, we note the following expansion (where we let $\gamma:=A_i^{-1}b_ib_i^TA_i^{-1}$):
\begin{align*}
    A^{-1}b_ib_i^TA^{-1}&=\left(A_i^{-1}-\frac{A_i^{-1}b_ib_i^TA_i^{-1}}{1+b_i^TA_i^{-1}b_i}\right)b_ib_i^T\left(A_i^{-1}-\frac{A_i^{-1}b_ib_i^TA_i^{-1}}{1+b_i^TA_i^{-1}b_i}\right)\\
    &=\gamma-2\frac{\gamma b_i^TA_i^{-1}b_i}{1+b_i^TA_i^{-1}b_i}+\frac{A_i^{-1}b_i(b_i^TA_i^{-1}b_i)^2b_i^TA_i^{-1}}{(1+b_i^TA_i^{-1}b_i)^2}
    \\&=A_i^{-1}b_ib_i^TA_i^{-1}\left(1-2\frac{b_i^TA_i^{-1}b_i}{1+b_i^TA_i^{-1}b_i}+\frac{(b_i^TA_i^{-1}b_i)^2}{(1+b_i^TA_i^{-1}b_i)^2}\right)
    \\&\leq A_i^{-1}b_ib_i^TA_i^{-1}
\end{align*}

The result follows from noting the following: (1) both, $A^{-1}b_ib_i^TA^{-1}$ and $A_i^{-1}b_ib_i^TA_i^{-1}$ are positive definite, and (2)
\begin{align*}
 \left(1-2\frac{b_i^TA_i^{-1}b_i}{1+b_i^TA_i^{-1}b_i}+\frac{(b_i^TA_i^{-1}b_i)^2}{(1+b_i^TA_i^{-1}b_i)^2}\right)=\frac{1}{(1+b_i^TA_i^{-1}b_i)^2}   
\end{align*}

\end{proof}
\subsection{Results from Probability Theory}
\begin{proposition}[Continuous Mapping Theorem]\label{prop:cts_mapping_theorem}
    Suppose $X_n$ and $Y_n$ converge in probability to $X$ and $Y$ respectively. Then, for some continuous function, $F$ constant (with respect to $n$) and non-random, $F(X_n,Y_n)$ converges in probability to $F(X,Y)$. Further, suppose $\P[Y\leq 0]=0$, then, $X_n/Y_n$ converge in probability to $X/Y$. 
\end{proposition}
This follows directly from \cite{Durrett_2019}, Theorems 2.3.4 and 3.2.10. 
\begin{proposition}\label{prop:total_law_probability}
Suppose $X,Y$ are random variables with $\Var[Y]<\infty$. Then,
\begin{align*}
    \Var[Y]=\E{\Var[Y\,|\,X]}+\Var[\E{Y\,|\,X}]
\end{align*}
\end{proposition}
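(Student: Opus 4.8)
The plan is to reduce the identity to the definition of variance together with the tower property of conditional expectation. First I would note that $\Var[Y]<\infty$ forces $\E{Y^2}<\infty$, so $Y\in L^2$ and every quantity appearing below is finite and well defined; in particular $\E{Y\,|\,X}$ is square-integrable (conditional expectation is an $L^2$-contraction) and $\Var[Y\,|\,X]:=\E{Y^2\,|\,X}-\E{Y\,|\,X}^2\geq 0$ is an integrable random variable.

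Next I would start from $\Var[Y]=\E{Y^2}-\E{Y}^2$ and rewrite both terms by conditioning on $X$. By the tower property, $\E{Y^2}=\E{\E{Y^2\,|\,X}}$ and $\E{Y}=\E{\E{Y\,|\,X}}$. Substituting the pointwise identity $\E{Y^2\,|\,X}=\Var[Y\,|\,X]+\E{Y\,|\,X}^2$ into the first of these gives
\begin{align*}
\E{Y^2}=\E{\Var[Y\,|\,X]}+\E{\E{Y\,|\,X}^2}.
\end{align*}
Then I would combine this with $\E{Y}^2=\lr{\E{\E{Y\,|\,X}}}^2$ and recognize that $\E{\E{Y\,|\,X}^2}-\lr{\E{\E{Y\,|\,X}}}^2$ is exactly $\Var[\E{Y\,|\,X}]$, applying the definition of variance to the random variable $\E{Y\,|\,X}$. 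This yields $\Var[Y]=\E{\Var[Y\,|\,X]}+\Var[\E{Y\,|\,X}]$, as claimed.

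There is no substantive obstacle here; the only point requiring minor care is the integrability bookkeeping at the start — justifying that all conditional moments exist and are finite so that the tower property and the algebraic rearrangement are legitimate — which is precisely where the hypothesis $\Var[Y]<\infty$ is used. This result is entirely standard (it is the law of total variance), and the argument above is the usual textbook derivation; see, e.g., \cite{Durrett_2019}.
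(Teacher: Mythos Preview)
Your proof is correct and follows essentially the same approach as the paper: write $\Var[Y]=\E{Y^2}-\E{Y}^2$, apply the tower property, and add and subtract $\E{\E{Y\,|\,X}^2}$ to split the expression into $\E{\Var[Y\,|\,X]}+\Var[\E{Y\,|\,X}]$. Your version is in fact more carefully written, since you explicitly address the integrability issues that justify the manipulations.
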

\begin{proof}
The result follows from noting that,
\begin{align*}
    \Var[Y]=\E{Y^2}-\E{Y}^2=\E{\E{Y^2\,|\,X}}-\E{\E{Y\,|\,X}^2}+\E{\E{Y\,|\,X}^2}-\E{\E{Y\,|\,X}^2}
\end{align*}
\end{proof}
\begin{proposition}\label{prop:gen_cauchy_scwartz}
Let $X_1,...,X_s$ be a sequence of random variables. Then,
\begin{align*}
    \E{\Pi_{k=1}^sX_k}\leq \Pi_{k=1}^s\E{X_k^{2^s}}^{1/{2^s}}
\end{align*}
\end{proposition}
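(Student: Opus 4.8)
The plan is to prove this by induction on $s$, bootstrapping from the two-variable case (the ordinary Cauchy--Schwarz inequality), in the same spirit as the inductive trace bound in Prop.~\ref{prop:arb_mult_matrix_trace}. We may assume every moment $\E{X_k^{2^s}}$ is finite, since otherwise the right-hand side is $+\infty$ and there is nothing to prove.

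For the base case $s=1$ the claim reads $\E{X_1}\leq \E{X_1^2}^{1/2}$, which is immediate: $\E{X_1}\leq |\E{X_1}|\leq \E{X_1^2}^{1/2}$ by Jensen's inequality (equivalently, Cauchy--Schwarz against the constant function $1$).

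Now fix $s\geq 2$ and assume the inequality for any $s-1$ random variables. First peel off $X_1$ via Cauchy--Schwarz, using $\big(\prod_{k=2}^s X_k\big)^2=\prod_{k=2}^s X_k^2$:
\begin{align*}
\E{\prod_{k=1}^s X_k}=\E{X_1\cdot\prod_{k=2}^s X_k}\leq \E{X_1^2}^{1/2}\,\E{\prod_{k=2}^s X_k^2}^{1/2}.
\end{align*}
The product $\prod_{k=2}^s X_k^2$ involves the $s-1$ variables $X_2^2,\dots,X_s^2$, so the induction hypothesis applied to them gives
\begin{align*}
\E{\prod_{k=2}^s X_k^2}\leq \prod_{k=2}^s\E{(X_k^2)^{2^{s-1}}}^{1/2^{s-1}}=\prod_{k=2}^s\E{X_k^{2^s}}^{1/2^{s-1}},
\end{align*}
and taking a square root turns this into $\prod_{k=2}^s\E{X_k^{2^s}}^{1/2^s}$. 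For the leftover factor, Jensen's inequality applied to the convex map $t\mapsto t^{2^{s-1}}$ (here $2^{s-1}\geq 1$) and the nonnegative variable $X_1^2$ gives $\big(\E{X_1^2}\big)^{2^{s-1}}\leq \E{(X_1^2)^{2^{s-1}}}=\E{X_1^{2^s}}$, i.e.\ $\E{X_1^2}^{1/2}\leq \E{X_1^{2^s}}^{1/2^s}$. Multiplying the last two bounds together completes the induction.

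The argument is entirely routine; the only place that calls for a little attention is the exponent bookkeeping --- checking that the exponent $1/2^{s-1}$ produced by the induction hypothesis, once combined with the square root coming from Cauchy--Schwarz, yields exactly $1/2^s$, and invoking Jensen in the correct direction for the residual $\E{X_1^2}^{1/2}$ term. (One could also skip the induction and instead quote the generalized H\"older inequality with all exponents equal to $s$, followed by the monotonicity of $L^p$ norms to pass from $L^s$ to $L^{2^s}$; the inductive proof is preferable here since it uses nothing beyond Cauchy--Schwarz and Jensen.)
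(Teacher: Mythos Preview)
Your proof is correct and is essentially identical to the paper's own argument: both proceed by induction on $s$, using Cauchy--Schwarz to peel off one factor and then applying the inductive hypothesis to the squares of the remaining $s-1$ variables, with Jensen upgrading the $L^2$ norm of the peeled variable to the $L^{2^s}$ norm. The only cosmetic difference is that the paper peels off $X_s$ rather than $X_1$.
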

\begin{proof}
Suppose $s=1$, $\E{X_1}\leq \E{X_1^2}^{1/2}$ by Cauchy-Schwartz.

Fix $s>1$. Suppose that $\E{\Pi_{k=1}^{s-1}X_k}\leq \Pi_{k=1}^{s-1}\E{(X_k)^{2^{s-1}}}^{1/{2^{s-1}}}$. Then,
\begin{align*}
    \E{(\Pi_{k=1}^{s-1}X_k)X_{s}}&\leq \E{(\Pi_{k=1}^{s-1}X_k)^2}^{1/2}\E{X_s^2}^{1/2}\leq \E{X_s^2}^{1/2}\Pi_{k=1}^{s-1}\E{(X_k)^{2s}}^{1/{2s}}
    \\&\leq \E{X_s^{2^s}}^{1/2^s}\Pi_{k=1}^{s-1}\E{(X_k)^{2s}}^{1/{2s}}
\end{align*}

And the result follows.

\end{proof}
\begin{proposition}\label{prop:fubini_derivative_interchange}
Let $f(x,y)$ be a differentiable function with respect to $y$ around $y_0$. Assume further that $\exists\,\delta>0, C>0 \;\forall\, y\in(y_0-\delta,y_0+\delta)$, $\E{|f(x,y)|}<C<\infty$ where the expectation is over $x$. Then,
\begin{align*}
    \frac{\partial}{\partial y}\E{f(x,y)}\biggr|_{y=y_0}=\E{\frac{\partial}{\partial y}f(x,y)\biggr|_{y=y_0}}
\end{align*}
\end{proposition}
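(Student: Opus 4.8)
The proposition's name signals the route: write $f$ as the integral of its own $y$-derivative, swap that integral with the expectation by Fubini, and differentiate back. The plan is to first fix $h\neq 0$ small enough that $y_0+h\in(y_0-\delta,y_0+\delta)$ and, since $y\mapsto f(x,y)$ is differentiable on that neighborhood, invoke the fundamental theorem of calculus pointwise in $x$:
\[
f(x, y_0 + h) - f(x, y_0) = \int_0^h \partial_y f(x, y_0 + t)\, dt.
\]

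Next I would take the expectation over $x$ of both sides. The hypothesis $\E{\abs{f(x,y)}}<C$ on the neighborhood makes $\E{f(x,y_0+h)}$ and $\E{f(x,y_0)}$ finite, so the left-hand side is a well-defined finite number; the substantive point is that $(x,t)\mapsto \partial_y f(x,y_0+t)$ is jointly integrable on the product of the $x$-space with $[0,h]$, which I would extract from differentiability together with the uniform $L^1$ control of $f$ on the neighborhood (e.g.\ via the mean value theorem identity $\abs{f(x,y_0+h)-f(x,y_0)}=\abs{h}\,\abs{\partial_y f(x,\xi)}$ and a locally uniform envelope for $\partial_y f$ on $(y_0-\delta,y_0+\delta)$). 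Granting this, Fubini's theorem gives
\[
\E{f(x,y_0+h)} - \E{f(x,y_0)} = \int_0^h \E{\partial_y f(x, y_0 + t)}\, dt.
\]

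Finally I would divide by $h$ and let $h\to 0$. Since $t\mapsto \E{\partial_y f(x,y_0+t)}$ is continuous at $t=0$ (by the same envelope and dominated convergence), the fundamental theorem of calculus applies once more to the right-hand side and yields $\frac{\partial}{\partial y}\E{f(x,y)}\big|_{y=y_0}=\E{\partial_y f(x,y)\big|_{y=y_0}}$, which is the claim. An equivalent path, bypassing the Fubini step, is to argue directly with the difference quotients $\big(f(x,y_0+h)-f(x,y_0)\big)/h$: these converge pointwise in $x$ to $\partial_y f(x,y_0)$, are dominated — via the mean value theorem — by the same integrable envelope, and so dominated convergence lets the limit pass through $\E{\cdot}$.

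The main obstacle is precisely the integrability input used in the swap: the stated hypothesis only bounds $\E{\abs{f(x,y)}}$, so the work is in converting this (plus differentiability on the neighborhood) into an integrable dominating function for $\partial_y f$ that is valid uniformly for all small $h$. Once such an envelope is in hand, the Fubini interchange and the two applications of the fundamental theorem of calculus are routine, and the remaining steps — pointwise convergence of the difference quotients and finiteness of the expectations on the left — are immediate.
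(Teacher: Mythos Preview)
Your approach is the same as the paper's: the paper's entire proof is the single sentence ``This proposition follows by Fubini--Tonelli Theorem (Theorem 2.37 of Folland),'' so both arguments reduce to invoking Fubini to swap $\E{\cdot}$ with the $y$-integral/derivative. Your write-up is considerably more detailed than the paper's one-line citation, and you correctly flag the real issue --- that the stated hypothesis bounds only $\E{\abs{f(x,y)}}$, not $\E{\abs{\partial_y f(x,y)}}$, so an integrable envelope for the derivative is not actually furnished by the assumptions as written; the paper glosses over this and simply cites Fubini.
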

This proposition follows by Fubini-Tonelli Theorem (Theorem 2.37 of \cite{follandrealanalysis}).
\begin{proposition}\label{prop:norm_decomp}
    Let $x\sim N(0,\Sigma)$ where $\Sigma\in\mathbb{R}^{d\times d}$ is a positive definite, symmetric covariance matrix. Then, $x=\sum_{i=1}^da_i\sqrt{\lambda_i}v_i$ where $a_i\sim N(0,1)$.
\end{proposition}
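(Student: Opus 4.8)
The plan is to use the spectral theorem for symmetric positive definite matrices and then verify that the claimed representation has the correct joint distribution by a direct covariance computation; this is a standard change-of-variables argument.

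First I would invoke the spectral theorem: since $\Sigma$ is symmetric positive definite, there is an orthonormal basis of eigenvectors $v_1,\dots,v_d$ with strictly positive eigenvalues $\lambda_1,\dots,\lambda_d$, so that $\Sigma=\sum_{i=1}^d\lambda_i v_iv_i^T$ and, writing $V=[v_1\,\cdots\,v_d]$ and $\Lambda=\mathrm{diag}(\lambda_1,\dots,\lambda_d)$, one has $\Sigma=V\Lambda V^T$ with $V^TV=VV^T=I_d$. Positive definiteness guarantees $\lambda_i>0$, so $\Lambda^{1/2}$ and $\Lambda^{-1/2}$ are well defined. Next, given $x\sim N(0,\Sigma)$, I would set $a:=\Lambda^{-1/2}V^Tx$, i.e.\ $a_i=\lambda_i^{-1/2}v_i^Tx$. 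Being a fixed linear image of the Gaussian vector $x$, $a$ is Gaussian with mean zero and covariance $\Lambda^{-1/2}V^T\Sigma V\Lambda^{-1/2}=\Lambda^{-1/2}V^T(V\Lambda V^T)V\Lambda^{-1/2}=\Lambda^{-1/2}\Lambda\Lambda^{-1/2}=I_d$, so $a\sim N(0,I_d)$ and in particular each $a_i\sim N(0,1)$. Finally, reassembling, $\sum_{i=1}^d a_i\sqrt{\lambda_i}\,v_i=V\Lambda^{1/2}a=V\Lambda^{1/2}\Lambda^{-1/2}V^Tx=VV^Tx=x$, which is the asserted identity. Equivalently one can run the argument in reverse: start from $a\sim N(0,I_d)$, define $x:=\sum_i a_i\sqrt{\lambda_i}v_i=V\Lambda^{1/2}a$, and compute $\mathrm{Cov}(x)=V\Lambda^{1/2}I_d\Lambda^{1/2}V^T=V\Lambda V^T=\Sigma$.

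There is essentially no obstacle here; the only points requiring minor care are that the eigenvectors are taken orthonormal (so that $V$ is orthogonal and $VV^T=I_d$) and that positive definiteness is what makes $\Lambda^{\pm 1/2}$ legitimate. The marginal statement $a_i\sim N(0,1)$ is immediate from $a\sim N(0,I_d)$; that the $a_i$ are moreover mutually independent follows from the diagonal covariance together with joint Gaussianity, a fact that will be convenient in later arguments even though it is not explicitly asserted in the statement.
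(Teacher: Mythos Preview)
Your proof is correct and is the standard spectral-decomposition argument. The paper itself does not supply a proof of this proposition but simply cites a reference, so your write-up is in fact more self-contained than what appears in the paper.
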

This result is seen in \cite{16223}.
\begin{lemma}[Hanson-Wright Inequality]\label{lemma:hanson-wright}
Let $z\sim N(0,I_k)$. Let $B\in\mathbb{R}^{k\times k}$ which is positive semi-definite and symmetric. Then, for every $t\geq 0$,
\begin{align*}
    \mathbb{P}\left(\left|z^TBz - \E{z^TBz}\right|>t\right)\leq 2\exp\left[-\eta\min\left(\frac{t^2}{tr(B^2)},\frac{t}{\norm{B}_2}\right)\right]
\end{align*}
Where $\eta$ is a universal constant. 
If $B$ is not positive semi-definite and symmetric, then, for every $t\geq 0$,
\begin{align*}
    \mathbb{P}\left(|z^TBz - \E{z^TBz}|>t\right)\leq 2\exp\left[-\eta\min\left(\frac{t^2}{tr(B^T B)},\frac{t}{\sqrt{\norm{B^TB}_2}}\right)\right]    
\end{align*}
\end{lemma}
The proof follows from Theorem 1.1 of \cite{rudelson2013hanson}.

\begin{proposition}\label{prop:general_chi_square_moment}
Let $z\sim N(0,I_k)$. Let $0\preceq B$ with a rank of $\ell$ and $g(k)\to 0$. Assume $\frac{\norm{B}_2}{tr(B)}=\omega(g(k))$, $\frac{tr(B)^2}{tr(B^2)}=\omega(g(k))$, and $\lambda_{min}\geq \omega(\exp(-g(k))/\ell)$. Fix $q\in \mathbb{N}$, $\alpha\in (0,1)$, then, 
\begin{align*}
    \E{\frac{1}{(z^TBz)^q}}\leq \frac{1}{tr(B)^q(1-\alpha)^q}(1+o(1))
\end{align*}
\end{proposition}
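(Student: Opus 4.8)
The plan is to split the expectation according to whether $z^{T}Bz$ concentrates around its mean $\E{z^{T}Bz}=\tr(B)$ or not, control the ``typical'' part by a deterministic estimate that already produces the stated bound, and show the ``atypical'' part is negligible by combining the Hanson--Wright inequality (Lemma \ref{lemma:hanson-wright}) with a crude lower bound on $z^{T}Bz$. Write $A:=\tr(B)$ and fix $\alpha\in(0,1)$. Then
\begin{align*}
\E{\frac{1}{(z^{T}Bz)^{q}}}=\E{\frac{1}{(z^{T}Bz)^{q}};\,z^{T}Bz\geq(1-\alpha)A}+\E{\frac{1}{(z^{T}Bz)^{q}};\,z^{T}Bz<(1-\alpha)A}.
\end{align*}
On the first event $(z^{T}Bz)^{-q}\leq((1-\alpha)A)^{-q}$, so the first term is at most $\tr(B)^{-q}(1-\alpha)^{-q}$, which is exactly the asserted bound; everything reduces to showing the second term is $o(\tr(B)^{-q})$.

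For the second term, let $\lambda_{\min}$ be the smallest nonzero eigenvalue of $B$ and $P$ the orthogonal projection onto $\operatorname{range}(B)$, so that $z^{T}Bz\geq\lambda_{\min}\norm{Pz}^{2}$ and $\norm{Pz}^{2}\sim\chi^{2}_{\ell}$. By Cauchy--Schwarz,
\begin{align*}
\E{\frac{1}{(z^{T}Bz)^{q}};\,z^{T}Bz<(1-\alpha)A}\leq\lambda_{\min}^{-q}\,\E{\norm{Pz}^{-4q}}^{1/2}\,\pr{z^{T}Bz<(1-\alpha)A}^{1/2}.
\end{align*}
Here $\E{\norm{Pz}^{-4q}}=2^{-2q}\Gamma(\ell/2-2q)/\Gamma(\ell/2)=\ell^{-2q}(1+o(1))$ once $\ell>4q$, which holds for $k$ large because $\ell\geq\tr(B)^{2}/\tr(B^{2})$ and the effective-rank hypotheses force the latter to diverge; and Hanson--Wright with $t=\alpha A$, together with $\tr(B^{2})\leq\norm{B}_{2}\tr(B)$ and Proposition \ref{prop:bounds_effective_ranks}, gives
\begin{align*}
\pr{z^{T}Bz<(1-\alpha)A}\leq\pr{\abs{z^{T}Bz-A}>\alpha A}\leq 2\exp\!\lr{-\eta\alpha^{2}\,\tfrac{\tr(B)}{\norm{B}_{2}}}.
\end{align*}
Multiplying the second term by $\tr(B)^{q}$ and using $\tr(B)\leq\ell\norm{B}_{2}$ (hence $(\tr(B)/(\lambda_{\min}\ell))^{q}\leq(\norm{B}_{2}/\lambda_{\min})^{q}$), the whole matter reduces to checking
\begin{align*}
q\log\frac{\norm{B}_{2}}{\lambda_{\min}}-\tfrac{\eta}{2}\alpha^{2}\,\frac{\tr(B)}{\norm{B}_{2}}\longrightarrow-\infty,
\end{align*}
which is precisely what the three $g(k)$-hypotheses encode: the lower bound on $\lambda_{\min}$ (and $\norm{B}_{2}\leq\tr(B)$) controls the logarithm, while $\norm{B}_{2}/\tr(B)$ being small makes the negative term win, uniformly over admissible $B$.

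The main obstacle is this final balancing step. The quantity $(z^{T}Bz)^{-q}$ can be enormous on the rare event that $z^{T}Bz$ is atypically small, so one must weigh ``how small it can get'' --- captured by $\lambda_{\min}$ and the rank $\ell$ through the inverse-$\chi^{2}$ moment, which is exactly why the hypothesis $\lambda_{\min}\gtrsim 1/\ell$ is needed, so that the $\ell^{-2q}$ factor cancels the $\lambda_{\min}^{-q}$ blow-up --- against ``how rare it is,'' captured by the Hanson--Wright exponent $\eta\alpha^2\tr(B)/\norm{B}_2$, which must diverge fast enough relative to $g(k)$ to beat the logarithmic loss. After fixing $\alpha$, the remainder is bookkeeping of the three conditions. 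An essentially equivalent route replaces the concentration step with the Laplace-transform identity $(z^{T}Bz)^{-q}=\Gamma(q)^{-1}\int_{0}^{\infty}s^{q-1}e^{-s z^{T}Bz}\,ds$ and the exact formula $\E{e^{-sz^{T}Bz}}=\prod_{i\leq\ell}(1+2s\lambda_{i})^{-1/2}$, with the small-$s$ regime producing the main term $\tr(B)^{-q}$ and the large-$s$ tail (where $\ell>2q$ ensures convergence) producing a negligible correction under the same hypotheses.
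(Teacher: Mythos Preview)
Your proof is correct and follows essentially the same approach as the paper: split on the event $\{z^TBz\geq(1-\alpha)\tr(B)\}$, bound the good part deterministically, and on the bad part use Cauchy--Schwarz together with $z^TBz\geq\lambda_{\min}\chi^2_\ell$ for the inverse moment and Hanson--Wright for the deviation probability. Your write-up is in fact somewhat more careful than the paper's on the final balancing step (making explicit why $\lambda_{\min}^{-q}\ell^{-q}$ is beaten by the exponential tail), and the Laplace-transform remark is a nice alternative, but the core argument is the same.
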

\begin{proof}
Let $Q=z^TBz$ and $E=\{Q<tr(B)(1-\alpha)\}$. Using Cauchy-Schwartz and Markov Inequality, we note that:
\begin{align*}
    \E{\frac{1}{Q^q}}&=\E{\frac{1}{Q^q}\mathbf{1}(E)}+\E{\frac{1}{Q^q}\mathbf{1}(E^c)}\leq \frac{1}{tr(B)^q(1-\alpha)^q}+\E{\frac{1}{Q^{2q}}}^{1/2}\mathbb{P}(E^c)^{1/2}
    \\&\leq\frac{1}{tr(B)^q(1-\alpha)^q}+\frac{1}{\lambda_{min}^q}\E{\frac{1}{(\chi^2_\ell)^{2q}}}^{1/2}\mathbb{P}(E^c)^{1/2}
    \\&\leq \frac{1}{tr(B)^q(1-\alpha)^q}+\frac{\Theta(1)}{\lambda_{min}^q\ell^{q}}\mathbb{P}(E^c)^{1/2}
\end{align*}

The conclusion follows from our assumptions and Lemma \ref{lemma:hanson-wright},
\begin{align*}
    \mathbb{P}(E^c)\leq \mathbb{P}(|z^TBz-\E{z^TBz}|>\alpha tr(B))\leq 2\exp\left(-\eta\min\left(\frac{\alpha ^2tr(B)^2}{tr(B^2)},\alpha\frac{tr(B)}{\norm{B}_2}\right)\right)\leq \exp(-g(k))
\end{align*}

\end{proof}
\begin{proposition}\label{prop:general_chi_square_moment_taught}
Let $z\sim N(0,I_k)$. Let $0\preceq B$ with a rank of $\ell$ and $g(k)\to 0$. Further assume that, $\frac{\norm{B}_2}{tr(B)}=\Omega(g(k))$, $\frac{tr(B)^2}{tr(B^2)}=\Omega(g(k))$, and $\lambda_{min}=\Omega(1)$. Fix $q\in \mathbb{N}$, then, 
\begin{align*}
    \E{\frac{1}{(z^TBz)^q}}\leq \frac{1}{tr(B)^q(1-1/h(k))^q}(1+o(1))
\end{align*}
Where $h(k)=o(g(k))$
\end{proposition}
\begin{proof}
    We essentially reuse Prop. \ref{prop:general_chi_square_moment} and note that in the calculation of $\mathbb{P}(E^c)$, the term $\alpha^2\frac{tr(B)^2}{tr(B^2}=e^{-\omega(1)}$ when $\alpha^2\ll \frac{1}{g(k)}$. The same reasoning applies for the other term.
\end{proof}
\begin{lemma}[Stein's Lemma]\label{lemma:steins}
Let $a$ be a vector that is $N(0,I_n)$ and $F:\mathbb{R}^n\to \mathbb{R}^n$, then $\E{a^TF(a)}=\E{\sum_{j=1}^n \frac{\partial F_j(a)}{\partial a_j}}$. We also impose the following regularization conditions: $\E{a_jF_j(a)} < \infty$ and $\E{| \frac{\partial F_j(a)}{\partial a_j}|} < \infty$.
\end{lemma}
\begin{proof}
The classical Stein's Lemma (see Theorem 1 of \cite{LANDSMAN2008912}) is for $g:\mathbb{R}\to\mathbb{R}$, $x\sim N(0,1)$, and subject to $\E{|xg(x)|}+\E{|g'(x)|}<\infty$,
\begin{align*}
\E{g(x)x}=\E{g'(x)}    
\end{align*}

Now we apply the Tower Property and Classical Stein's Lemma as follows, \begin{align*}
 \E{a^TF(a)}=\sum_{j=1}^n\E{a_jF_j(a)}=\sum_{j=1}^n\E{\E{a_jF_j(a) | \;a_1,...a_{j-1},a_{j+1},...,a_n}}=\sum_{j=1}^n\E{\frac{\partial F_j(a)}{\partial a_j}}   
\end{align*}  
\end{proof}
\begin{proposition}\label{prop:stein_satsified}
Let $A=\sum_{i=1}^d\lambda_ia_ia_i^T$ where $\lambda_i>0$ and $a_i\sim N(0,I_n)$ (i.i.d.). Consider a fixed $i$ and $k$ and let $F(x)=A^{-k}x$. Then, the regularity conditions of Lemma \ref{lemma:steins} are satisfied.   
\end{proposition}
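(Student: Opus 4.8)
\emph{Proof plan.} The plan is to verify the two integrability hypotheses of Lemma~\ref{lemma:steins} directly, taking $a=a_i$ as the Gaussian vector. Write $x:=a_i\in\R^n$ for the variable being differentiated and decompose $A=A_i+\lambda_i xx^T$, where $A_i:=\sum_{j\ne i}\lambda_j a_j a_j^T$ is independent of $x$ (the decomposition of Prop.~\ref{prop:A_higher_moment_psd} with $b_j=\sqrt{\lambda_j}a_j$; note $A$ and $A_i$ are a.s.\ positive definite, since in our regime $d/n\to\infty$ forces $d-1\ge n$). First I would reduce both hypotheses, via crude operator-norm bounds, to one quantitative claim: finiteness of a fixed negative moment of $\lambda_{\min}(A_i)$. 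Then I would establish that claim.

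For the first hypothesis, $F_j(x)=e_j^TA^{-k}x$ gives $|x_jF_j(x)|\le\norm{x}_2^2\,\norm{A^{-k}}_2=\norm{x}_2^2\,\lambda_{\min}(A)^{-k}$. Since $A-A_i=\lambda_i xx^T\succeq 0$ we have $\lambda_{\min}(A)\ge\lambda_{\min}(A_i)$, hence $\lambda_{\min}(A)^{-k}\le\lambda_{\min}(A_i)^{-k}$; taking expectations and using independence of $A_i$ from $x$ with $\E{\norm{x}_2^2}=n$ yields $\E{|x_jF_j(x)|}\le n\,\E{\lambda_{\min}(A_i)^{-k}}$.

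For the second hypothesis I would differentiate with the product rule and Prop.~\ref{prop:A_diffs} applied to $A^{-k}=(A^{-1})^k$, noting $\partial_{x_j}A=\lambda_i(e_jx^T+xe_j^T)$ so $\norm{\partial_{x_j}A}_2\le 2\lambda_i\norm{x}_2$:
\[
\partial_{x_j}F_j(x)=e_j^TA^{-k}e_j-\sum_{l=0}^{k-1}e_j^TA^{-(l+1)}(\partial_{x_j}A)A^{-(k-l)}x .
\]
Bounding each factor in operator norm gives $|\partial_{x_j}F_j(x)|\le\lambda_{\min}(A)^{-k}+2k\lambda_i\norm{x}_2^2\,\lambda_{\min}(A)^{-(k+1)}$, and the same domination $\lambda_{\min}(A)\ge\lambda_{\min}(A_i)$ together with independence yields $\E{|\partial_{x_j}F_j(x)|}\le\E{\lambda_{\min}(A_i)^{-k}}+2k\lambda_i n\,\E{\lambda_{\min}(A_i)^{-(k+1)}}$.

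It remains to bound $\E{\lambda_{\min}(A_i)^{-p}}$ for $p\in\{k,k+1\}$, and this is the step I expect to carry the real content. Setting $\lambda_*:=\min_{j\ne i}\lambda_j>0$, one has $A_i\succeq\lambda_*WW^T$ with $W\in\R^{n\times(d-1)}$ having i.i.d.\ standard Gaussian entries, so $\E{\lambda_{\min}(A_i)^{-p}}\le\lambda_*^{-p}\,\E{\lambda_{\min}(WW^T)^{-p}}$; since the density of $\lambda_{\min}(WW^T)$ is $O(t^{(d-n-2)/2})$ as $t\to0^+$, every fixed negative moment is finite whenever $d$ exceeds $n$ by more than $2(k+1)$, which holds in the regime $d/n\to\infty$. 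The subtlety is precisely that one must not pass to a square ($n\times n$) Gaussian submatrix of $A_i$: the smallest singular value of a square Gaussian matrix has density bounded away from zero near the origin, so only its negative moments of order strictly below $1$ are finite; the argument genuinely requires $d-1-n$ to be large, which the over-parameterized regime supplies.
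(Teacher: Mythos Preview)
Your proposal is correct and takes a genuinely different route from the paper. The paper conditions on $a_i^c=(a_j)_{j\ne i}$ and uses the Loewner-type inequalities of Prop.~\ref{prop:A_higher_moment_psd} (in particular $a_i^TA^{-k}a_i\le a_i^TA_i^{-k}a_i$ and $\tr(A^{-r})\le\tr(A_i^{-r})$) to dominate both $a_i^TF(a_i)$ and $\sum_j\abs{\partial_{a_{ij}}F_j}$ by polynomials in $\tr(A_i^{-r})$, $r\le k+1$; it then simply records that these are a.s.\ finite because $A_i$ is a.s.\ invertible, which only needs $d-1\ge n$ and suffices for the \emph{conditional} application of Stein's Lemma that the downstream arguments actually use. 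Your approach is cruder at the bounding step---replacing all of the Sherman--Morrison machinery by the single operator-norm inequality $\norm{A^{-1}}_2\le\lambda_{\min}(A_i)^{-1}$---but then delivers something strictly stronger, namely \emph{unconditional} integrability, by invoking negative moments of the smallest Wishart eigenvalue. The price is the mild dimensional hypothesis $d-n>2(k+1)$ (automatic in the regime $d/n\to\infty$) and an external fact about the Wishart density near zero, neither of which the paper's self-contained algebraic toolkit needs.
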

\begin{proof}
Let $a_i^c:=(a_1,...,a_{i-1},a_{i+1},...,a_d)$. Further, let $A_i:=A-\lambda_ia_ia_i^T$,

We note that, $$\E{a_i^TF(a_i)\;|\;a_i^c}=\E{a_i^TA^{-k}a_i\;|\;a_i^c}\leq \E{a_i^TA_i^{-k}a_i\;|\;a_i^c}=tr(A^{-k}_i)<\infty$$
Where the first inequality comes from Prop. \ref{prop:A_higher_moment_psd} and the second comes from the invertibility of $A_i$. By Jensen's Inequality, this is sufficient for the first regularity condition of Lemma \ref{lemma:steins}.

We now prove the second part of the proposition. First, we derive an expression for $\sum_j \E{\left.|\frac{\partial A^{-k}}{\partial a_{ij}}|\right|\; a_i^c}$ for some $k\in\mathbb{N}$.
\begin{align*}
    \sum_{j}\left|\frac{\partial F_j(a_j)}{\partial a_{ij}}\right|&=\sum_{j}|\partial_{a_{ij}}e_j^TA^{-k}a_i|=\sum_{j}|e_j^TA^{-k}e_j+e_j^T(\partial_{a_{ij}}A^{-k})a_i|
    \\&=\sum_{j}\left|e_j^TA^{-k}e_j-\lambda_i\sum_{k=0}^{r-1}\left(a_i^TA^{-r+k}a_ie_j^TA^{-k-1}e_j+a_i^TA^{-r+k}e_je_j^TA^{-k-1}a_i\right)\right|
    \\&\leq \sum_{j}e_j^TA^{-k}e_j+\lambda_i\sum_{k=0}^{r-1}\left(a_i^TA_i^{-k+r}a_itr(A_i^{-r-1})+ka_i^TA_i^{-k-1}a_i\right)
    \\&\leq tr(A_i^{-k})+\lambda_i\sum_{k=0}^{r-1}\left(a_i^TA_i^{-k+r}a_itr(A_i^{-r-1})+ka_i^TA_i^{-k-1}a_i\right)=:\alpha_*
\end{align*}

Where the third inequality comes from Prop. \ref{prop:A_diffs} and the inequality comes from Prop. \ref{prop:A_higher_moment_psd} (as well as properties of positive semi-definite matrices).

As $\alpha_*\geq 0$ and by invertibility of $A_i$ (and hence arbitrary powers of $A_i$),  
\begin{align*}
   \sum_j \E{\left.|\frac{\partial A^{-k}}{\partial a_{ij}}|\right|\; a_i^c}\leq \E{\alpha_*\;|\;\ a_i^c}=tr(A_i^{-k})+2\lambda_i\left(\sum_{r=0}^{k-1}tr(A_i^{-k+r})tr(A_i^{-r-1})+ktr(A_i^{-k-1})\right)<\infty
\end{align*}
\end{proof}

\begin{proposition}\label{prop:moments_z_quadratics}
Let $B_1,...,B_s\in\mathbb{R}^{n\times n}$ and the $B_k$ matrices pairwise satisfy Def. \ref{def:same_eigenstruct}. Further, let $z\sim N(0,I_n)$. Then $\E{\Pi_{k=1}^sz^TB_kz}=\left(n^{s-1}+O(n^{s-2})\right)tr(\Pi_{k=1}^sB_k)$
\end{proposition}
\begin{proof}
    We note by Def. \ref{def:same_eigenstruct}, any product of $B_k$ matrices will commute. Let us denote the common set of eigenvectors as $\{v_i\}_{i=1}^n$ and the corresponding eigenvalues of $B_k$ to be $\lambda_i(B_k)$.

    Now, we note that $z^TB_kz=\sum_{l}\lambda_l(B_k)(z^Tv_l)^2=\sum_l\lambda_l(B_k)y_l^2$ where $y_i\sim N(0,1)$ and i.i.d. by orthogonality of $v_i$.
    
    As such, 
    \begin{align*}\label{equ:expanded_sum_z_B_z}
        \Pi_{k=1}^sz^TB_kz=\sum_{l_1,...,l_s}\Pi_{k=1}^s\lambda_{l_k}(B_k)y_{l_k}^2
    \end{align*}

Let $f:[n]^s\to{0,1}$ such that $f(p)=1$ if for any $i\neq j$, $p_i\neq p_j$ and is zero otherwise.

Then, 
\begin{align*}
    \E{\Pi_{k=1}^sz^TB_kz}=\E{\sum_{\stackrel{l=(l_1,...,l_s)}{f(l)=1}}\Pi_{k=1}^s\lambda_{l_k}(B_k)y_{l_k}^2+\sum_{\stackrel{l=(l_1,...,l_s)}{f(l)=0}}\Pi_{k=1}^s\lambda_{l_k}(B_k)y_{l_k}^2}
\end{align*}
Now we note that, 
\begin{align*}
n^{s-1}tr(\Pi_{k=1}^sB_k)&=\Pi_{k=1}^str(B_k)=\sum_{l_1,...,l_s}\Pi_{k=1}^s\lambda_{l_k}(B_k)=
\sum_{l_1,...,l_s}\Pi_{k=1}^s\lambda_{l_k}(B_k)\E{y_{l_k}^2}\\&\geq\E{\sum_{\stackrel{l=(l_1,...,l_s)}{f(l)=1}}\Pi_{k=1}^s\lambda_{l_k}(B_k)y_{l_k}^2}    
\end{align*}

Turning to the $f(l)=0$ case, we note that by Jensen's Inequality,
\begin{align*}
    \Pi_{|r_1|+...+|r_s|=s}\E{(\chi_1^2)^{|r_i|}}\leq \E{(\chi_1^2)^s}^s=:C
\end{align*}

Thus,
\begin{align*}
   \E{\sum_{\stackrel{l=(l_1,...,l_s)}{f(l)=0}}\Pi_{k=1}^s\lambda_{l_k}(B_k)y_{l_k}^2}\leq  C\sum_{\stackrel{l=(l_1,...,l_s)}{f(l)=0}}\Pi_{k=1}^s\lambda_{l_k}(B_k)
\end{align*}

Now, we seek to exploit that for each element in the sum, (at least) two indices must match. As a result,
\begin{align*}
    \sum_{\stackrel{l=(l_1,...,l_s)}{f(l)=0}}\Pi_{k=1}^s\lambda_{l_k}(B_k)&=\sum_{q,r}\sum_{\stackrel{l_1,...,l_s}{l_q=l_r}}\Pi_{k=1}^s\lambda_{l_k}(B_k)=\sum_{q,r}tr(B_q^2)\Pi_{k\neq q,k\neq r}tr(B_k)\\&
    \leq \sum_{q,r}n^{s-3}tr(\Pi_{k\neq q,k\neq r}B_k)tr(B_q^2)
    \\&\leq \sum_{q,r}n^{s-2}tr(\Pi_{k=1}^sB_k)\leq s^2n^{s-2}tr(\Pi_{k=1}^sB_k)
\end{align*}

Where the last and penultimate inequalities use Prop. \ref{prop:arb_mult_matrix_trace}. 
As such, the result follows.

\end{proof}
\begin{corollary}
    Let $z\sim N(0,I_n)$ and $B\in \mathbb{R}^{n\times n}$ be a positive definite matrix. Let $s\in \mathbb{N}$ and $k_1,...,k_s\in \mathbb{N}$. Then, $\E{\Pi_{l=1}^sz^TB^{k_l}z}=(n^{s-1}+O(n^{s-2}))tr(B^{k_1+...+k_s})$
\end{corollary}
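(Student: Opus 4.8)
The plan is to derive this as an immediate consequence of Proposition \ref{prop:moments_z_quadratics} by taking $B_l := B^{k_l}$ for $l=1,\dots,s$. The only thing to check is that this choice satisfies the hypotheses of that proposition, namely that the $B_l$ are (real) matrices in $\mathbb{R}^{n\times n}$ that pairwise satisfy Definition \ref{def:same_eigenstruct}.

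First I would note that since $B$ is positive definite and symmetric, each power $B^{k_l}$ is again positive definite and symmetric by Proposition \ref{prop:pd_moments} (here $k_l\in\mathbb{N}$, so this is the elementary case). In particular all the $B_l$ are well-defined real symmetric matrices. Next I would verify the pairwise condition of Definition \ref{def:same_eigenstruct}: write the spectral decomposition $B=\sum_{i=1}^n \mu_i v_iv_i^T$ with $\mu_1\geq\cdots\geq\mu_n>0$. Then $B^{k_l}=\sum_i \mu_i^{k_l} v_iv_i^T$, so every $B_l$ has the same eigenvectors $\{v_i\}$. Moreover, because $t\mapsto t^{k_l}$ is strictly increasing on $(0,\infty)$, we have $\mu_1^{k_l}\geq\cdots\geq\mu_n^{k_l}$, so the ordering of the eigenvalues of each $B_l$ is induced by the single common ordering $\mu_1\geq\cdots\geq\mu_n$ (with the usual caveat about permutations among equal eigenvalues). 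Hence the $B_l$ are simultaneously diagonalizable with the same ordering, pairwise.

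With the hypotheses verified, I would apply Proposition \ref{prop:moments_z_quadratics} directly to obtain
\begin{align*}
\E{\Pi_{l=1}^s z^T B^{k_l} z} = \left(n^{s-1}+O(n^{s-2})\right)\tr\!\left(\Pi_{l=1}^s B^{k_l}\right),
\end{align*}
and then simplify the trace using $\Pi_{l=1}^s B^{k_l} = B^{k_1+\cdots+k_s}$ (all powers of $B$ commute), giving $\tr\!\left(\Pi_{l=1}^s B^{k_l}\right)=\tr(B^{k_1+\cdots+k_s})$, which is the claimed identity.

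There is essentially no genuine obstacle here; the corollary is a specialization of the preceding proposition, and the only (very minor) point requiring a sentence of justification is the "same ordering" clause of Definition \ref{def:same_eigenstruct}, which follows from monotonicity of $t\mapsto t^{k_l}$ on the positive reals. I would keep the write-up to a few lines.
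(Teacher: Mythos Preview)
Your proposal is correct and matches the paper's own proof essentially line for line: the paper also applies Proposition \ref{prop:moments_z_quadratics} to $B_l=B^{k_l}$, noting that all powers of $B$ share eigenvectors and that the ordering is preserved because $x\mapsto x^r$ is increasing for $r\geq 0$. Your write-up just spells out the spectral decomposition and the trace simplification a bit more explicitly.
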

\begin{proof}
    We note that any power of $B$ has the same eigenvectors as any other power of $B$ and that the ordering is maintained because $x^r$ is an increasing function for any $r\geq 0$. As such, the result concludes by Prop. \ref{prop:moments_z_quadratics}.
\end{proof}
\begin{proposition}\label{prop:second_third_quadratic_forms}
Let $z\sim N(0,I_n)$. Let $B,C,D\in\mathbb{R}^{n\times n}$. Then,
\begin{enumerate}
    \item $\E{z^TBzz^TCz}=tr(BC)+tr(CB)+tr(B)tr(C)$  
    \item $\E{z^TBzz^TCzz^TDz}=tr(B)tr(C)tr(D)+2tr(B)tr(CD)+2tr(C)tr(BD)+2tr(D)tr(BC)+8tr(BCD)$
\end{enumerate}  
\end{proposition}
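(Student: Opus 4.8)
The plan is to evaluate these Gaussian moments by Gaussian integration by parts, i.e.\ by repeated use of Stein's Lemma (Lemma \ref{lemma:steins}), so that the sixth moment in part 2 reduces to the fourth moment in part 1; equivalently one can expand by Wick's (Isserlis') formula and sum over pairings. I will use two elementary facts throughout: $\E{z^\top M z}=\tr(M)$, immediate from $\E{z_iz_j}=\delta_{ij}$; and that a quadratic form sees only the symmetric part of its matrix, $z^\top M z=z^\top\tfrac{M+M^\top}{2}z$. (Accordingly the trace monomials in the statement are written in the shape they take when $B,C,D$ are symmetric, using $\tr(MN)=\tr(M^\top N)$ for symmetric $M$; if $B,C,D$ are not symmetric the identities hold with each replaced by its symmetric part, which is the only case used in the sequel, so I treat symmetric $B,C,D$.)

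For part 1, write $z^\top B z\cdot z^\top C z=\sum_i z_i\,F_i(z)$ with $F_i(z):=(Bz)_i\,(z^\top C z)$. Since $F$ is polynomial the integrability hypotheses of Lemma \ref{lemma:steins} are trivial, and
\[
\sum_i \partial_{z_i}F_i(z)=\sum_i B_{ii}\,(z^\top C z)+\sum_i(Bz)_i\bigl((Cz)_i+(C^\top z)_i\bigr)=\tr(B)\,z^\top C z+z^\top B^\top C z+z^\top B^\top C^\top z .
\]
Taking expectations and using $\tr(B^\top C^\top)=\tr\bigl((CB)^\top\bigr)=\tr(BC)$ gives $\E{z^\top B z\,z^\top C z}=\tr(B)\tr(C)+\tr(BC)+\tr(B^\top C)$, which for symmetric $B$ (or $C$) is exactly $\tr(B)\tr(C)+\tr(BC)+\tr(CB)$, the claimed identity. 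I will keep the general form $\tr(B)\tr(C)+\tr(BC)+\tr(B^\top C)$ handy, since in part 2 it gets applied with a non-symmetric second argument.

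For part 2, apply the same device one level up: write $z^\top B z\,z^\top C z\,z^\top D z=\sum_i z_i F_i(z)$ with $F_i(z):=(Bz)_i\,(z^\top C z)(z^\top D z)$. The product rule for the three factors gives, after summing over $i$ and using $B=B^\top$,
\[
\sum_i\partial_{z_i}F_i=\tr(B)\,(z^\top C z)(z^\top D z)+2\,(z^\top D z)(z^\top BC z)+2\,(z^\top C z)(z^\top BD z).
\]
Now take expectations and apply part 1 to the three products on the right, with matrix pairs $(C,D)$, $(D,BC)$, $(C,BD)$; in the last two the first argument ($D$, resp.\ $C$) is symmetric, so part 1 contributes $\tr(\cdot)\tr(\cdot)+2\tr(\cdot\,\cdot)$ in each case. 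Using cyclicity of the trace together with $\tr(CBD)=\tr\bigl((CBD)^\top\bigr)=\tr(DBC)=\tr(BCD)$ (valid for symmetric $B,C,D$) to collapse $\tr(DBC)$ and $\tr(CBD)$ to $\tr(BCD)$, the four resulting terms assemble into
\[
\tr(B)\tr(C)\tr(D)+2\tr(B)\tr(CD)+2\tr(C)\tr(BD)+2\tr(D)\tr(BC)+8\tr(BCD),
\]
where the $8$ arises as $4+4$ from the two cross-products. The only delicate point is this transpose/symmetry bookkeeping — verifying that every trace monomial produced (such as $\tr(B^\top C)$ in part 1, or $\tr(DBC),\tr(CBD)$ in part 2) collapses, for symmetric arguments, to the form in the statement, and that the multiplicities ($2$'s and the $8$) come out correctly. (Via Isserlis directly, the $8\tr(BCD)$ is precisely the contribution of the $8$ among the $15$ perfect matchings of $z_iz_jz_kz_lz_mz_p$ that chain all three quadratic forms into a single cycle, the remaining $7$ matchings producing the other four terms.) There is otherwise no substantive obstacle: this is a routine Gaussian moment calculation.
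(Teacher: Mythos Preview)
Your proof is correct. It takes a genuinely different route from the paper's proof: the paper expands each quadratic form into index sums and applies Isserlis' theorem directly, enumerating the $3$ pairings of four Gaussians for part 1 and all $15$ pairings of six Gaussians for part 2, then grouping them by type (one pairing gives $\tr(B)\tr(C)\tr(D)$, six ``single self-pairing'' cases give the $2\tr(\cdot)\tr(\cdot\cdot)$ terms, and eight ``cycle'' pairings give $8\tr(BCD)$). You instead use Stein's Lemma recursively, peeling off one quadratic form at a time so that the triple-product case reduces to three applications of the double-product case. Your approach avoids the explicit case enumeration and scales more cleanly to higher products; the paper's approach is more direct and makes the combinatorial source of each coefficient (in particular the $8$) explicit, which you also note parenthetically. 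Both are standard. Your observation that the stated formula, as written with $\tr(BC)+\tr(CB)$ and $\tr(BCD)$ rather than the transpose variants, implicitly assumes symmetric matrices (or symmetric parts) is correct and is a point the paper glosses over; since every application in the paper involves symmetric matrices this causes no downstream problems.
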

\begin{proof}
For the first result. We note that $\E{z^TBzz^TCz}=\sum_{i,j,k,l}\E{z_iz_jz_kz_le_i^TBe_je_k^TCe_l}$ and we use Isserlis' Theorem. 

When $i=j$ and $k=l$, we have $\E{\sum_{i,k} z_i^2z_k^2e_i^TBe_ie_k^TCe_k}=tr(B)tr(C)$.

When $i=k$ and $j=l$ or $i=l$ and $j=k$, we have $\E{\sum_{i,j} z_i^2z_j^2e_i^TBe_je_i^TCe_j}=tr(BC)$

The result follows.

Turning to the second result. We note that we have the following expression:
\begin{align*}
    \E{z^TBzz^TCzz^TDz}=\sum_{i,j,k,\ell, m,r} e_i^TBe_je_k^TCe_\ell e_m^TDe_r\E{z_iz_jz_kz_\ell z_mz_r}
\end{align*}

We use Isserlis' Theorem on $\E{z_iz_jz_kz_\ell z_mz_r}$ and note we have the following 15 cases: (1) $i=j$, $k=\ell$ and $m=r$, (2) $i=j$, $k=m$, and $\ell =r$, (3) $i=j$, $k=r$, and $\ell=m$, (4) $i=k$, $j=m$, $\ell=r$, (5) $i=k$, $j=m$, and $\ell=r$, (6) $i=k$, $j=r$, and $\ell=m$, (7) $i=\ell$, $j=k$, and $m=r$, (8) $i=\ell$, $j=r$, and $k=m$, (9) $i=\ell$, $j=m$, and $k=r$, (10) $i=\ell$, $j=r$, and $k=m$, (11) $i=m$, $j=\ell$, and $k=r$, (12) $i=m$, $j=r$, and $k=\ell$, (13) $i=r$, $j=k$, and $\ell=m$, (14) $i=r$, $j=\ell$, and $k=m$, and (15) $i=r$, $j=m$, and $k=\ell$.

We note the first case results in the sum being $tr(B)tr(C)tr(D)$.

We have six self pairings (two self pairings for $B$, $C$, $D$). A self pairing for $B$ occurs if $i=j$, a self pairing for $C$ occurs if $k=\ell$, and a self pairing for $D$ occurs if $m=r$. It is easy to see that when a self pairing occurs, the sum becomes the trace of the self paired matrix times the trace of the product of the remaining two matrices---e.g. $tr(B)tr(CD)$.

The remaining eight cases we have no self pairings. In this case, it is easy to see that by reordering the indices (WLOG), we obtain the following:
\begin{align*}
    \sum_{i',j',k'}B_{i'j'}C_{j'k'}D_{k'i'}=tr(BCD)
\end{align*}

As such, the result follows.

\end{proof}    
\begin{proposition}
    Let $z\in\mathbb{R}^k$ be a random variable with i.i.d. entries such that we have $E[z_1^4]\leq C<\infty$ and $\E{z_1}=0$. Further, let $A\in\mathbb{R}^{k\times k}$ be a fixed matrix. Then, 
    \begin{align*}
        \E{(z^TAz)^2}\leq Ctr(A)^2+2Ctr(A^2)
    \end{align*}
    
\end{proposition}
\begin{proof}
Consider $\E{z_iz_jz_kz_\ell}$. We begin by noting that,
\begin{enumerate}
    \item when $i=j=k=\ell$, $\E{z_iz_jz_kz_\ell}=\E{z_1^4}\leq C$.
    \item When $i=j$, $\ell=k$, and $i\neq k$, $\E{z_iz_jz_kz_\ell}=\E{z_1^2}^2\leq C$.
    \item When $i=k$, $j=\ell$, and $i\neq j$, $\E{z_iz_jz_kz_\ell}=\E{z_1^2}^2\leq C$.
    \item When $i=\ell$, $j=k$, and $i\neq j$,
    $\E{z_iz_jz_kz_\ell}=\E{z_1^2}^2\leq C$.
    \item Otherwise, $\E{z_iz_jz_kz_\ell}=0$.
\end{enumerate}
We note the following:
\begin{align*}
\E{(z^TAz)^2}&=\sum_{i,j,k,\ell}A_{ij}A_{k\ell}\E{z_iz_jz_kz_\ell}\\&\leq 2\sum_{i}A_{ii}^2\E{z_1^4}+\sum_{\stackrel{i=j,k=\ell}{i\neq k}}A_{ii}A_{kk}\E{z_1^2}^2+\sum_{\stackrel{i=k,\ell=j}{i\neq j}}A_{ij}^2\E{z_1^2}^2+\sum_{\stackrel{i=\ell,j=k}{i\neq j}}A_{ij}^2\E{z_1^2}^2
\\&\leq\sum_{\stackrel{i=j,k=\ell}{i\neq k}}A_{ii}A_{kk}\E{z_i^2z_k^2}+\sum_{i=k,\ell=j}A_{ij}^2\E{z_i^2z_j^2}+\sum_{i=\ell,j=k}A_{ij}^2\E{z_i^2z_j^2}\leq Ctr(A)^2+2Ctr(A^2)
\end{align*}

Where the last inequality follows from,
\begin{align*}
\sum_{i=j,k=\ell}A_{ii}A_{kk}\E{z_i^2z_k^2}= \sum_{i=j,k=\ell}A_{ii}A_{kk}\E{z_i^2}^2-\sum_{i}A_{ii}^2\E{z_i^2}^2=tr(A)^2\E{z_i^2} -\sum_{i}A_{ii}^2\E{z_i^2}^2\leq Ctr(A)^2  
\end{align*}
\begin{align*}
    \E{(z^TAz)^2}&=\sum_{i,j,k,\ell}A_{ij}A_{k\ell}\E{z_iz_jz_kz_\ell}\leq 3C\sum_{i}A_{ii}^2+C\sum_{\stackrel{i=j,k=\ell}{i\neq k}}A_{ii}A_{kk}+C\sum_{\stackrel{i=k,\ell=j}{i\neq j}}A_{ij}^2+C\sum_{\stackrel{i=\ell,j=k}{i\neq j}}A_{ij}^2
    \\&\leq Ctr(A)^2+2Ctr(A^2)
\end{align*}
\end{proof}
\begin{proposition}\label{prop:prod_quads_bound_constant}
Let $z\sim N(0,I_n)$. Let $B_1,...,B_s\in\R^{n\times n}$ and each $B_i$ is symmetric positive semi-definite. 
Then,
\begin{align*}
    \E{\Pi_{k=1}^sz^TB_iz}=\Theta(1)\Pi_{k=1}^str(B_i)
\end{align*}
\end{proposition}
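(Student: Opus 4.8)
The plan is to prove the two-sided bound by reducing the whole expectation to products of squares of scalar Gaussians, so that the constants end up depending only on $s$, which is fixed. First I would decompose each $B_i$ through its symmetric PSD square root: writing $c_{i1},\dots,c_{in}$ for the columns of $B_i^{1/2}$, we get $B_i=\sum_k c_{ik}c_{ik}^T$, hence $z^TB_iz=\sum_k (c_{ik}^Tz)^2$ and $\tr(B_i)=\sum_k\norm{c_{ik}}^2$. Expanding the product and interchanging the (finite) sum with the expectation — legitimate since $\prod_i z^TB_iz$ is a degree-$2s$ polynomial in a Gaussian vector, hence integrable — gives
\[
\E{\prod_{i=1}^s z^TB_iz}=\sum_{k_1,\dots,k_s}\E{\prod_{i=1}^s (c_{ik_i}^Tz)^2}.
\]
The point of this step is that for each index tuple $(k_1,\dots,k_s)$ the vector $(c_{1k_1}^Tz,\dots,c_{sk_s}^Tz)$ is a \emph{centered $s$-dimensional} Gaussian with $\E{(c_{ik_i}^Tz)^2}=\norm{c_{ik_i}}^2$, so the problem is now dimension-free.

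Next I would establish the following dimension-free fact: there are constants $0<c_s\le C_s<\infty$, depending only on $s$, with $c_s\prod_i\E{g_i^2}\le \E{\prod_{i=1}^s g_i^2}\le C_s\prod_i\E{g_i^2}$ for every centered Gaussian $(g_1,\dots,g_s)$. After normalizing to unit variances, $\E{\prod_i g_i^2}=P(R)$ is, by Isserlis' theorem, a polynomial in the entries of the correlation matrix $R$, hence continuous on the compact set of $s\times s$ correlation matrices; it is strictly positive there because $\prod_i g_i^2>0$ almost surely for a nondegenerate Gaussian vector; so $c_s:=\min_R P(R)>0$ and $C_s:=\max_R P(R)\le (2s-1)!!<\infty$ do the job (tuples with some $g_i\equiv 0$ are trivial, both sides vanishing).

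Combining the two steps: apply the Gaussian bound inside each summand and resum using $\sum_{k_1,\dots,k_s}\prod_i\norm{c_{ik_i}}^2=\prod_{i=1}^s\tr(B_i)$, to get $c_s\prod_i\tr(B_i)\le \E{\prod_i z^TB_iz}\le C_s\prod_i\tr(B_i)$, which is the asserted $\Theta(1)\prod_i\tr(B_i)$ since $s$ is constant. (For the upper bound alone one could instead reuse the Isserlis/cycle expansion from the proof of Prop.~\ref{prop:moments_z_quadratics}: the expectation is a sum of $(2s-1)!!$ cycle terms of the form $\tr(B_{i_1}\cdots B_{i_m})$, and for PSD matrices $|\tr(B_{i_1}\cdots B_{i_m})|\le\norm{B_{i_1}}_2\cdots\norm{B_{i_{m-1}}}_2\,\tr(B_{i_m})\le\prod_j\tr(B_{i_j})$, using $\norm{B}_2\le\tr(B)$ for PSD $B$.)

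The main obstacle is the lower bound. The tempting route — isolate the ``diagonal'' term $\prod_i\tr(B_i)$ in the Isserlis expansion and discard the rest as nonnegative — fails, because a cycle of length $\ge 3$ contributes $\tr(B_{i_1}\cdots B_{i_m})$, which is not sign-definite for PSD matrices (e.g.\ rank-one $B_i=u_iu_i^T$ gives $(u_1^Tu_2)(u_2^Tu_3)(u_3^Tu_1)$). The virtue of the reduction in the first step is that it replaces these signed trace-products by manifestly nonnegative quantities $\E{\prod_i(c_{ik_i}^Tz)^2}$; what then remains is merely a uniform positive \emph{lower} bound for $\E{\prod_i g_i^2}$, which is supplied softly by compactness (if one wanted the sharp constant $c_s=1$, this is exactly the Gaussian product inequality for squares, but we do not need it).
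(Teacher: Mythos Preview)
Your proof is correct and the reduction is the same as the paper's: write each $B_k$ as a sum of rank-one pieces so that $z^TB_kz$ becomes a sum of squares of scalar Gaussians, expand the product, and bound $\E{\prod_k g_k^2}$ summand-by-summand (the paper uses the eigendecomposition, you use columns of $B_k^{1/2}$, which amounts to the same thing). Where the two diverge is in how the mixed moment is controlled. The paper only treats the upper bound, via iterated Cauchy--Schwarz (Prop.~\ref{prop:gen_cauchy_scwartz}) applied to $\E{\prod_k(x^k_{i_k})^2}$ with each $x^k_{i_k}\sim N(0,1)$; the lower bound is in fact not written out. Your compactness argument on the set of $s\times s$ correlation matrices handles both directions at once and in particular supplies the missing lower bound cleanly, sidestepping the Gaussian product inequality (sharp $c_s=1$) while still giving what the $\Theta(1)$ statement needs.
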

\begin{proof}

Let $v_i^{k}$ be the $i$-th eigenvector for matrix $B_k$ and $\lambda_i^{k}$ be the corresponding eigenvalue (this is well defined by diagonalizability of $B_k$). Let $x_i^k=z^Tv_i^k$.

As a result,
\begin{align*}
    \E{\Pi_{i=1}^kz^TB_iz}&=\E{\Pi_{k=1}^s\sum_{i_k=1}^n\left(x^k_{i_k}\right)^2\lambda_{i_k}^k}=\E{\sum_{(i_1,...,i_s)}\Pi_{k=1}^s\left(x^k_{i_k}\right)^2\lambda_{i_k}^k}
    \\&\leq \E{\sum_{(i_1,...,i_s)}\Pi_{k=1}^s\left(x^k_{i_k}\right)^2\lambda_{i_k}^k}\leq C\sum_{(i_1,...,i_s)}\Pi_{k=1}^s\lambda_{i_k}^k=C\Pi_{k=1}^s\sum_{i_k}\lambda_{i_k}^k=C\Pi_{k=1}^str(B_k)
\end{align*}

The upper bound comes from noting that by Prop. \ref{prop:gen_cauchy_scwartz},
\begin{align*}
    \E{\Pi_{k=1}^s\left(x^k_{i_k}\right)^2}\leq \Pi_{k=1}^s\E{\left(x^k_{i_k}\right)^{2^{s+1}}}^{1/2^s}\leq C^s
\end{align*}

Where we note that $x^k_{i_k}\sim N(0,1)$, the finiteness of moments of normal distributions have finite moments, and: 
\begin{align*}
\E{\left(x^k_{i_k}\right)^4}=\E{(z^T(v_{i_k}^k)(v_{i_k}^k)^Tz)^2}\leq tr\left( (v_{i_k}^k)(v_{i_k}^k)^T\right)^2+2tr\left((v_{i_k}^k)(v_{i_k}^k)^T(v_{i_k}^k)(v_{i_k}^k)^T\right) \leq 3 
\end{align*}
\end{proof}
\subsection{Preparatory Results}
\begin{proposition}\label{prop:var_x_y}
    Suppose $x\sim N(0,\Sigma)$, $\eps$ is a random variable such that $\E{\eps \,|\, x}=0$ and $\E{\eps^2\,|\,x}\leq \sigma_{max}^2$ $x$-a.s. where and $\Sigma$ is symmetric p.d. matrix. Let $y=x^T\beta+\eps$ for some $\beta$. Further, let $z$ be a constant vector. Then,
    \begin{align*}
        \Var[z^Txy]&\leq z^T\Sigma z(\sigma_{max}^2+\beta^T\Sigma\beta)+z^T\Sigma\beta\beta^T\Sigma z
        \\\Var[z^Txx^Tz]&=2(z^T\Sigma z)^2
    \end{align*}
\end{proposition}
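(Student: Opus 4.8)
The plan is to treat the two variances separately, handling the second one first since it is essentially immediate. For the statement $\Var[z^Txx^Tz]=2(z^T\Sigma z)^2$, I would observe that $z^Txx^Tz=(z^Tx)^2$ and that $z^Tx\sim N(0,z^T\Sigma z)$, so $(z^Tx)^2$ is distributed as $z^T\Sigma z$ times a $\chi^2_1$ variable; since $\Var[\chi^2_1]=2$, the claim follows. Alternatively, one can invoke Prop.~\ref{prop:second_third_quadratic_forms}.1 after the substitution $x=\Sigma^{1/2}g$ with $g\sim N(0,I_d)$, taking $B=C=uu^T$ with $u=\Sigma^{1/2}z$, which gives $\E{(g^Tuu^Tg)^2}=2\tr(uu^Tuu^T)+\tr(uu^T)^2=3\|u\|^4$, hence variance $2\|u\|^4=2(z^T\Sigma z)^2$.

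For the first statement, I would write $z^Txy=(z^Tx)(x^T\beta)+(z^Tx)\eps$ and apply the law of total variance (Prop.~\ref{prop:total_law_probability}) conditioning on $x$. Since $\E{\eps\mid x}=0$, the conditional mean is $\E{z^Txy\mid x}=z^Txx^T\beta$, and the conditional variance is $\Var[z^Txy\mid x]=(z^Tx)^2\Var[\eps\mid x]\le \sigma_{max}^2(z^Tx)^2$. Taking expectations and using $\E{(z^Tx)^2}=z^T\Sigma z$ bounds the first term of the decomposition by $\sigma_{max}^2\,z^T\Sigma z$.

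It then remains to compute $\Var[\E{z^Txy\mid x}]=\Var[z^Txx^T\beta]$. Writing $x=\Sigma^{1/2}g$ with $g\sim N(0,I_d)$, $u=\Sigma^{1/2}z$, $v=\Sigma^{1/2}\beta$, this equals $\Var[g^Tuv^Tg]$; since $g^Tuv^Tg=\tfrac12 g^T(uv^T+vu^T)g$ with $uv^T+vu^T$ symmetric, I would apply the fourth-moment identity (Isserlis, or Prop.~\ref{prop:second_third_quadratic_forms}.1 with $B=uu^T$, $C=vv^T$) to get $\E{(g^Tuv^Tg)^2}=\E{(g^Tuu^Tg)(g^Tvv^Tg)}=\|u\|^2\|v\|^2+2(u^Tv)^2$, while $\E{g^Tuv^Tg}=u^Tv$. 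Hence $\Var[z^Txx^T\beta]=\|u\|^2\|v\|^2+(u^Tv)^2=(z^T\Sigma z)(\beta^T\Sigma\beta)+(z^T\Sigma\beta)^2$. Adding the two pieces and noting $(z^T\Sigma\beta)^2=z^T\Sigma\beta\beta^T\Sigma z$ yields the stated bound. There is no real obstacle here; the only point requiring care is the conditional-variance bound and making sure the Gaussian fourth-moment formula is applied to the correctly symmetrized quadratic form.
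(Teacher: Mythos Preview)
Your proof is correct. The route, however, differs from the paper's: the paper does not condition on $x$ or invoke the law of total variance at all. Instead, it writes $\Var[z^Txy]=z^T\E{xx^Ty^2}z-z^T\E{xy}\E{xy}^Tz$ and computes the matrix $\E{xx^T(x^T\beta)^2}$ entrywise in the eigenbasis $\{v_i\}$ of $\Sigma$, obtaining $\E{xx^T(x^T\beta)^2}=(\beta^T\Sigma\beta)\Sigma+2\Sigma\beta\beta^T\Sigma$; combined with $\E{xx^T\eps^2}\preceq\sigma_{max}^2\Sigma$ and $\E{xy}\E{xy}^T=\Sigma\beta\beta^T\Sigma$ this yields the bound. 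For $\Var[z^Txx^Tz]$ it similarly expands $z^T\E{xx^Tzz^Txx^T}z$ in the eigenbasis. Your approach is shorter and more conceptual: the $\chi_1^2$ observation handles the second identity in one line, and the conditioning cleanly isolates the $\sigma_{max}^2 z^T\Sigma z$ contribution before reducing the remaining piece to the bivariate Gaussian moment $\E{a^2b^2}=\E{a^2}\E{b^2}+2(\E{ab})^2$. The paper's approach buys the full matrix identity $\E{xx^T(x^T\beta)^2}=(\beta^T\Sigma\beta)\Sigma+2\Sigma\beta\beta^T\Sigma$, which is more than is needed here but is in the spirit of the matrix-level bounds used elsewhere in the paper; your approach buys brevity and avoids any basis computation.
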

\begin{proof}
Let $(\lambda_1,v_1),...,(\lambda_d,v_d)$ be the eigen-decomposition of $\Sigma$, further $c_k:=\beta^Tv_k$.

We note that $\Var[z^Txy]=\E{(z^Txy)^2}-\E{z^Txy}^2=z^T\E{xx^Ty^2}z-z^T\E{xy}\E{xy}^Tz$.

We begin by calculating $v_i^T\E{xx^T\beta^Txx^T\beta}v_j$ where $i\neq j$, we note that $v_k^Tx\sim N(0,\lambda_k)$ and if $k\neq \ell$, $v_k^Tx\perp v_\ell^Tx$,
\begin{align*}
\E{v_i^Txx^Tv_j\beta^Txx^T\beta}&=\E{v_i^Txv_j^Tx\sum_{k,\ell}c_kc_\ell v_k^Txv_\ell^Tx}=2\E{(v_i^Tx)^2(v_j^Tx)^2}c_ic_j=2\lambda_i\lambda_jc_ic_j
\end{align*}

In the case where $i=j$, we have the following:
\begin{align*}
 \E{v_i^Txx^Tv_i\beta^Txx^T\beta}&=\E{(v_i^Tx)^2\sum_{k=1}^dc_k^2 v_k^Txv_k^Tx}=\lambda_i\E{\sum_{k\neq i}c_k^2(v_k^Tx)^2}+c_k^2\E{(v_i^Tx)^4}
 \\&= \lambda_i\beta^T\Sigma\beta+2c_i^2\lambda_i^2
\end{align*}

As a result, $\E{xx^T\beta^Txx^T\beta}=\Sigma\beta^T\Sigma\beta+\Sigma\beta\beta^T\Sigma$ and we have that

\begin{align*}
    \E{xx^Ty^2}=\E{xx^T\beta^Txx^T\beta+\eps ^2xx^T}\leq \Sigma(\sigma_{max}^2+\beta^T\Sigma\beta)+2\Sigma\beta\beta^T\Sigma
\end{align*}
Turning to the first moment squared, 
\begin{align*}
    \E{xy}\E{xy}^T=\Sigma\beta\beta^T\Sigma
\end{align*}

The first result, thus follows.

For the second result, we start off calculating $\E{xx^Tzz^Txx^T}$. Consider $i\neq j$ and let $\gamma_k:=z^Tv_k$.

\begin{align*}
    v_i^T\E{xx^Tzz^Txx^T}v_j=\E{v_i^Txv_j^Tx\sum_{k,\ell}\gamma_k\gamma_\ell v_k^Txv_\ell^Tx}=2\E{(v_i^Tx)^2(v_j^Tx)^2}\gamma_i\gamma_j=2\lambda_i\lambda_j\gamma_i\gamma_j
\end{align*}

When $i=j$, we obtain the following:
\begin{align*}
    v_i^T\E{xx^Tzz^Txx^T}v_i&=\E{(v_i^Tx)^2\sum_{k,\ell}\gamma_k\gamma_\ell v_k^Txv_\ell^Tx}=\E{(v_i^Tx)^2\sum_{k}\gamma_k^2 (v_k^Tx)^2}
    \\&=\sum_{i\neq k}\gamma_k^2\E{(v_i^Tx)^2(v_k^Tx)^2}+\E{(v_i^Tx)^4}\gamma_i^2
    \\&=\sum_{i\neq k}\gamma_k^2\lambda_i\lambda_k+3\lambda_i^2\gamma_i^2=\lambda_iz^T\Sigma z+2\lambda_i^2\gamma_i^2
\end{align*}

As a result,
\begin{align*}
    z^T\E{xx^Tzz^Txx^T}z&=\sum_{k,\ell}\gamma_k\gamma_\ell v_k^T\E{xx^Tzz^Txx^T}v_\ell=\sum_{k=1}^d\gamma_k^2\lambda_iz^T\Sigma z+2\sum_{k,\ell}\gamma_k^2\gamma_\ell^2 \lambda_i\lambda_j
    \\&=(z^T\Sigma z)^2+2 (z^T\Sigma z)^2
\end{align*}

The result follows from noting that, $\E{z^Txx^Tz}^2=(z^T\Sigma z)^2$.

\end{proof}
\begin{definition}
Let $A:=\sum_{i=1}^{d}\lambda_ia_ia_i^T$ where $a_i\sim N(0,I_n)$. We define the following for an index set $\alpha$,
\begin{align*}
A_{\alpha}=\sum_{i=1,i\notin \alpha}^d\lambda_ia_ia_i^T    
\end{align*}
\end{definition}
\begin{proposition}\label{prop:tr_A_bounds}
Let $A:=\sum_{i=1}^{d}\lambda_ia_ia_i^T$ where $a_i\sim N(0,I_n)$, $L:=\sum_i\lambda_i$, $k\in\mathbb{N}$. Assume $k+2<n<d-1$ and $n+1<L$. Further, assume that $\lambda_{max}\leq \frac{1}{k+1}\frac{d}{n}$. Then, 
\begin{enumerate}
    \item $\E{tr(A^{-k})}=\Theta\left(\frac{\E{tr(A^{-k+1})}}{L}\right)=\Theta(\frac{n}{L^k})$
    \item $\E{tr(A^{-k})}\leq \frac{n}{L^k}(1+o(1))+\frac{1}{(1-(k+2)\rho)^{k+1}}\frac{n^2\,r(n)}{L^k}$ where $r(n):=\frac{\sum_{i=1}^d\lambda_i^2}{L^2}$
    \item $\frac{n}{L}+\frac{n^2\,r(n)(1-\Theta(1)\rho)}{L}\leq \E{tr(A^{-1})}\leq\frac{n}{L}+\frac{n^2\,r(n)}{L(1-2\rho)^2}$ where $\rho:=\frac{n}{d}\lambda_{max}$
    \item $\frac{n}{L^2}(1+o(1))+\frac{2n^2\,r(n)}{L^2}\leq \E{tr(A^{-2})}\leq \frac{n}{L^2}(1+o(1))+\frac{2n^2\,r(n)}{L^2(1-3\rho)^3}$
\end{enumerate}
Where we note that each $o(1)$ is $O(\frac{1}{tr(\Sigma)}\lambda_1)$.
\end{proposition}
\begin{proof}
    Let $F(a_i):=A^{-k}a_i$. From Prop. \ref{prop:stein_satsified}, the regularity conditions of Stein's Lemma (Lemma \ref{lemma:steins}) are satisfied. We start off by noting,
    \begin{align*}
        \sum_{j=1}^n\frac{\partial F_j(a_i)}{\partial a_{ij}}&=\partial_{a_{ij}}e_j^TA^{-k}a_i=\sum_{j=1}^ne_j^TA^{-k}e_j+e_j^T\left(\partial_{a_{ij}}A^{-k}\right)a_i\\&=tr(A^{-k})-\lambda_i\left(\sum_{r=0}^{k-1}a_i^TA^{-k+r}a_itr(A^{-r-1})+k\,a_i^TA^{-k-1}a_i\right)
    \end{align*}
    Where we take $A^0=I_n$.
    
    Second we note that,
    \begin{align*}
        \sum_{i}\lambda_i\E{a_i^TF(a_i)}=\sum_i\lambda_i\E{a_i^TA^{-k}a_i}=\E{tr(A^{-k+1})}
    \end{align*}
    Now, by Stein's Lemma,
    \begin{equation}\label{equ:stein_lemma_expanded}
    \begin{aligned}
\sum_{i=1}^d\lambda_i\E{a_i^TF(a_i)}&=\sum_{i=1}^d\lambda_i\sum_{j=1}^n\E{\frac{\partial F_j(a_i)}{\partial a_{ij}}}\\&= \E{\sum_{i=1}^d\left(\lambda_i tr(A^{-k})-\lambda_i^2\left(\sum_{r=0}^{k-1}a_i^TA^{-k+r}a_itr(A^{-r-1})+k\,a_i^TA^{-k-1}a_i\right)\right)}\\&=L\E{tr(A^{-k})}-\lambda_{max}\E{\sum_{r=0}^{k-1}tr(A^{-r-1})\sum_{i=1}^dtr(A^{-k+r}\lambda_ia_ia_i^T)+k\sum_{i=1}^dtr(A^{-k-1}\lambda_ia_ia_i^T)}\\&=L\E{tr(A^{-k})}-\lambda_{max} \E{\sum_{r=0}^{k-1}tr(A^{-k+r+1})tr(A^{-r-1})+k\,tr(A^{-k})}\\&\geq L\E{tr(A^{-k})}-\lambda_{max}\E{tr(A^{-k})}(kn+k)
\end{aligned}
    \end{equation}
As a result, $\E{tr(A^{-k})}\leq \frac{\E{tr(A^{-k+1})}}{L-\lambda_{max}(kn+k)}=\frac{\E{tr(A^{-k+1})}}{L}\frac{1}{1-(k+1)\frac{n}{L}\lambda_{max}}$.

Noting that $\E{tr(A^{-1})}\leq \frac{n}{L}\frac{1}{1-2\frac{n}{L}\lambda_{max}}$, we can apply induction and obtain the following result:
\begin{align*}
\E{tr(A^{-k})}\leq \frac{n}{L^k}\left(\Pi_{r=1}^k\frac{1}{1-(r+1)\frac{n}{L}\lambda_{max}}\right)
\end{align*}
We note from Equ. \ref{equ:stein_lemma_expanded} and Prop. \ref{prop:cauchy_tr_tr_inverse}, $\E{tr(A^{-k})}\geq \frac{\E{tr(A^{-k+1})}}{L}\geq \frac{n}{L^k}$  

For the second result, we start back with Equ. \ref{equ:stein_lemma_expanded}.

We note that, $0\leq \sum_{i=1}^d\lambda_i^2\E{a_i^TA^{-k-1}a_i}\leq \lambda_{max}\E{tr(A^{-k})}=o(L)\E{tr(A^{-k})}$. Thus, we can focus on the middle set of terms. For some $0\leq r\leq k-1$, we have the following:
\begin{align*}
    \sum_{i=1}^d\lambda_i^2\E{a_i^TA^{-k+r}a_itr(A^{-r-1})}&\leq \sum_{i=1}^d\lambda_i^2\E{a_i^TA_i^{-k+r}a_itr(A_i^{-r-1})}\\&=
    \sum_{i=1}^d\lambda_i^2\E{tr(A_i^{-k+r})tr(A_i^{-r-1})}\leq \sum_{i=1}^d\lambda_i^2n\E{tr(A_i^{-k-1})}\\&\leq \frac{1}{(1-(k+2)\rho)^{k+1}}\frac{n}{(L-o(1))^{k+1}}n\sum_{i=1}^d\lambda_i^2\\&=\frac{1}{(1-(k+2)\rho)^{k+1}}\frac{n}{L^{k-1}}r(n)
\end{align*}

Where we note that each $o(1)$ is $O(\frac{1}{tr(\Sigma)}\lambda_1)$. As such, $$\E{tr(A^{-k})}\leq \frac{n/L^{k-1}+O(\frac{n^2}{L^{k-1}})r(n)}{L}=\frac{n}{L^k}+O(n^2\frac{r(n)}{L^k})$$.

Turning to the third statement, we start off with the $k=1$ case. First, the upper bound is obtained as follows:
\begin{align*}
\sum_{i=1}^d\lambda_i^2\E{a_i^TA^{-1}a_itr(A^{-1})}=\sum_{i=1}^d\lambda_i^2\E{a_i^TA_i^{-1}a_itr(A_i^{-1})}\leq \frac{n^2r(n)}{(1-2\rho)^2}(1+o(1))
\end{align*}

The lower bound comes directly from the following:
\begin{align*}
    \E{a_i^TA^{-1}a_itr(A^{-1})}&=\E{\left(a_i^TA_i^{-1}a_i-\lambda_i\frac{(a_i^TA_i^{-1}a_i)^2}{1+a_i^TA_i^{-1}a_i}\right)tr\left(A_i^{-1}-\lambda_i\frac{A_i^{-1}a_ia_i^TA_i^{-1}}{1+\lambda_ia_i^TA_i^{-1}a_i}\right)}\\&\geq
    \E{a_i^TA_i^{-1}a_itr(A_i^{-1})-\lambda_ia_i^TA_i^{-2}a_ia_i^TA_i^{-1}a_i-\lambda_i(a_i^TA_i^{-1}a_i)^2tr(A_i^{-1})}\\&=\E{tr(A_i^{-1})^2}-\lambda_i\E{2tr(A_i^{-3})+(tr(A_i^{-2})+tr(A_i^{-1})^2+2tr(A_i^{-2}))tr(A_i^{-1})}\\&\geq \frac{n^2}{L^2}(1+o(1))-\lambda_i\Theta\left(\frac{n^3}{L^3}\right)(1+o(1))=\frac{n^2}{L^2}(1-\Theta(1)\rho)(1+o(1))
\end{align*}

When $k=2$, we obtain the upper bound result from the following bounds:
\begin{align*}
\sum_{i=1}^d\lambda_i^2\E{a_i^TA^{-2}a_itr(A^{-1})}&\leq \sum_{i=1}^d\lambda_i^2\E{a_i^TA_i^{-2}a_itr(A_i^{-1})}\leq \sum_{i=1}^d\lambda_i^2\E{tr(A_i^{-1})tr(A_i^{-2})}\\&\leq \frac{n^2r(n)}{L(1-4\rho)^3}(1+o(1))    \\
\sum_{i=1}^d\lambda_i^2 \E{a_i^TA^{-1}a_itr(A^{-2})}&\leq \sum_{i=1}^d\lambda_i^2\E{a_i^TA_i^{-1}a_itr(A_i^{-2})}\leq \sum_{i=1}^d\lambda_i^2\E{tr(A_i^{-1})tr(A_i^{-2})}\\&\leq \frac{n^2r(n)}{L(1-4\rho)^3}(1+o(1))
\end{align*}

For the lower bound, we note the following (where $\Delta_\ell:=a_i^TA_i^{-\ell}a_i$):
\begin{align*}
    \E{a_i^TA^{-2}a_itr(A^{-1})}&=\E{\left(\Delta_2-\lambda_i\frac{2\Delta_2\Delta_1}{1+\lambda_i\Delta_1}+\lambda_i^2\frac{\Delta_2\Delta_1^2}{(1+\lambda_i\Delta_1)^2}\right)\left(tr(A_i^{-1})-\lambda_i\frac{\Delta_2}{1+\lambda_i\Delta_1}\right)}\\&\geq\E{\Delta_2 tr(A_i^{-1})-2\lambda_i\Delta_2\Delta_1tr(A_i^{-1})-\lambda_i\Delta_2^2-\lambda_i^3\Delta_2^2\Delta_1^2}\\&
    \geq \E{tr(A_i^{-2})tr(A_i^{-1})-\Omega\left(\lambda_intr(A_i^{-3})tr(A_i^{-1})-\lambda_intr(A_i^{-4})-\lambda_i^3n^3tr(A_i^{-6})\right)}
    \\&\geq \frac{n^2}{L^3}(1+o(1))-\Omega\left(\lambda_i\frac{n^3}{L^4}+\lambda_i^3\frac{n^4}{L^6}\right)=\frac{n^2}{L^3}(1+o(1))
\end{align*}
\begin{align*}
\E{a_i^TA^{-1}a_itr(A^{-2})}&=\E{\left(\Delta_1-\lambda_i\frac{\Delta_1^2}{1+\lambda_i\Delta_1}\right)\left(tr(A_i^{-2})-2\lambda_i\frac{\Delta_3\Delta_1}{1+\lambda_i\Delta_1}+\lambda_i^2\frac{\Delta_2^2}{(1+\lambda_i\Delta_2)^2}\right)}\\&\geq \E{\Delta_1tr(A_i^{-2})-2\lambda_i\frac{\Delta_3\Delta_1^2}{1+\lambda_i\Delta_1}-\lambda_i\frac{\Delta_1^2tr(A_i^{-2})}{1+\lambda_i\Delta_1}-\lambda_i^3\frac{\Delta_2^4}{(1+\lambda_i\Delta_2)^3}}\\&\geq \frac{n^2}{L^3}(1+o(1))-\Omega\left(\lambda_i\frac{n^3}{L^5}+\lambda_i\frac{n^3}{L^4}+\lambda_i^3\frac{n^4}{L^8}\right)=\frac{n^2}{L^3}(1+o(1))
\end{align*}
Where we note that $\E{tr(A_i^{-1})tr(A_i^{-2})}\geq \E{tr(A_i^{-1})tr(A_i^{-1})^2/n}\geq \frac{n^2}{L^3}(1+o(1))$
\end{proof}
\begin{proposition}\label{prop:proj_expect}
    Let $X\in\mathbb{R}^{n\times d}$ (where $1<n<d-4$) be a matrix such that each row is distributed i.i.d. $N(0,\Sigma)$ where $\Sigma$ is a covariance which has eigenvalues such that $\lambda_1\leq \frac{1}{4}\frac{d}{n}$ and $tr(\Sigma)>n+1$. Then, $\E{\Pi_X}=\Theta\left(\frac{n}{tr(\Sigma}\right)\Sigma$ and in particular,
    \begin{align*}
\frac{n}{tr(\Sigma)}\Sigma(1+o(1))+\frac{n^2\,r(n)}{tr(\Sigma)}\Sigma-\frac{n^2}{tr(\Sigma)^2}\Sigma^2-2\frac{n^3r(n)}{tr(\Sigma)^2}\preceq \E{\Pi_X}
\end{align*}
\begin{align*}
\E{\Pi_X}\preceq  \frac{n}{tr(\Sigma)}\Sigma(1+o(1))+\frac{n^2\,r(n)}{tr(\Sigma)(1-2\rho)^2}\Sigma -  \frac{n^2(1+nr(n)(1-\Theta(1))^2}{tr(\Sigma)^2(1+\frac{n}{d}\lambda_1)}\Sigma^2
    \end{align*}
Where the $o(1)$ term in the second expression is positive and $O(\frac{1}{n})$.
    
\end{proposition}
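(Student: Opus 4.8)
The plan is to diagonalize in the eigenbasis of $\Sigma$ and reduce to scalar quantities. Write $\Sigma=\sum_{i=1}^d\lambda_i v_i v_i^T$ and $X=Z\Sigma^{1/2}$ with $Z$ having i.i.d.\ $N(0,I_d)$ rows, so that $XX^T=\sum_i\lambda_i a_i a_i^T=:A$ with $a_i:=Zv_i\sim N(0,I_n)$ independent across $i$ --- this is exactly the matrix $A$ of Prop.~\ref{prop:tr_A_bounds}. Since $Xv_k=\sqrt{\lambda_k}\,a_k$ we get $v_j^T\Pi_X v_k=\sqrt{\lambda_j\lambda_k}\,a_j^T A^{-1}a_k$, and the reflection symmetry $a_j\mapsto-a_j$ (which fixes $A$) kills the off-diagonal expectations, so $\E{\Pi_X}=\sum_j c_j v_j v_j^T$ with $c_j:=\lambda_j\,\E{a_j^T A^{-1}a_j}$. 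In particular $\E{\Pi_X}$ is simultaneously diagonalizable with $\Sigma$ (so the Loewner comparisons in the statement are meaningful), and it suffices to sandwich each $c_j$ between affine-plus-quadratic expressions in $\lambda_j$ and sum using $\sum_j\lambda_j^k v_j v_j^T=\Sigma^k$ and $\sum_j\lambda_j^2 v_jv_j^T=\Sigma^2$.

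Next I would apply Sherman--Morrison with $A=A_j+\lambda_j a_j a_j^T$, $A_j:=\sum_{i\neq j}\lambda_i a_i a_i^T$ independent of $a_j$, to get the exact identity $\lambda_j\,a_j^T A^{-1}a_j=\frac{u_j}{1+u_j}$ where $u_j:=\lambda_j\,a_j^T A_j^{-1}a_j\ge 0$. The pointwise bound $u-u^2\le\frac{u}{1+u}$ (valid for all $u\ge0$, since the difference is $u^3/(1+u)$) and Jensen's inequality applied to the concave map $u\mapsto\frac{u}{1+u}=1-\frac{1}{1+u}$ give
\begin{equation*}
\E{u_j}-\E{u_j^2}\;\le\;c_j\;\le\;\frac{\E{u_j}}{1+\E{u_j}}\;=\;\E{u_j}-\frac{(\E{u_j})^2}{1+\E{u_j}}.
\end{equation*}
Conditioning on $A_j$ and using $\E{a_j^T A_j^{-1}a_j\mid A_j}=\tr(A_j^{-1})$ together with Prop.~\ref{prop:second_third_quadratic_forms}.1 gives $\E{u_j}=\lambda_j\E{\tr(A_j^{-1})}$ and $\E{u_j^2}=\lambda_j^2\E{2\tr(A_j^{-2})+\tr(A_j^{-1})^2}$. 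Now $A_j$ has exactly the structure of Prop.~\ref{prop:tr_A_bounds} with $d$ replaced by $d-1$ and $L$ by $L_j:=\tr(\Sigma)-\lambda_j$ (the spectral hypotheses persist since $n<d-4$ and $\lambda_1\le\frac14\frac dn$), so parts 3 and 4 of that proposition supply matching two-sided estimates $\E{\tr(A_j^{-1})}=\frac{n}{L_j}+\Theta\!\big(\frac{n^2 r_j(n)}{L_j}\big)$ and $\E{\tr(A_j^{-2})}=\frac{n}{L_j^2}(1+o(1))+\Theta\!\big(\frac{n^2 r_j(n)}{L_j^2}\big)$, and the cross term is controlled by $\tr(A_j^{-1})^2\le n\,\tr(A_j^{-2})$ (Cauchy--Schwarz on the eigenvalues of $A_j$). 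Substituting these, replacing $L_j$, $d-1$, $r_j(n)$ by $\tr(\Sigma)$, $d$, $r(n)$ at the cost of $(1+o(1))$ factors, and --- for the upper bound --- bounding the denominator $1+\E{u_j}\le 1+\tfrac nd\lambda_1(1+o(1))$ (using $\lambda_j\le\lambda_1$ and $\E{\tr(A_j^{-1})}\le\frac{n}{L_j}(1+o(1))$), gives per-$j$ inequalities whose weighted sum $\sum_j(\cdot)v_jv_j^T$ is precisely the two displayed Loewner bounds. (An alternative to Sherman--Morrison is to apply Stein's Lemma directly to $F(a_j)=A^{-1}a_j$ as in the proof of Prop.~\ref{prop:tr_A_bounds}, yielding $c_j=\lambda_j\E{\tr(A^{-1})}-\lambda_j^2\big(\E{a_j^T A^{-2}a_j}+\E{(a_j^T A^{-1}a_j)\tr(A^{-1})}\big)$; either route works.)

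The arithmetic of propagating the constants $(1-2\rho)$, $(1-3\rho)$, $\Theta(1)\rho$ through the substitutions is routine. The part that needs the most care is the bookkeeping between $A$ and $A_j$: one must check that $L_j=\tr(\Sigma)(1+o(1))$ and $r_j(n)=r(n)(1+o(1))$ and that all remainders are uniformly $O(\lambda_1/\tr(\Sigma))$ (as already flagged in Prop.~\ref{prop:tr_A_bounds}), so that after summing $\sum_j\lambda_j^k v_jv_j^T$ the error terms stay negligible against the main $\frac{n}{\tr(\Sigma)}\Sigma$ and $\frac{n^2}{\tr(\Sigma)^2}\Sigma^2$ scales. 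A secondary subtlety is that the crude lower bound $u-u^2$ is loose when $\lambda_j$ is tiny, so one must verify it still delivers the stated negative $\Sigma^2$- and $r(n)$-corrections in the lower Loewner bound after summation, and that the $(1+\frac nd\lambda_1)$ denominator in the upper bound is legitimate for every $j$ simultaneously.
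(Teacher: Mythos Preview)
Your proposal is correct and follows essentially the same route as the paper: diagonalize so that $\E{\Pi_X}=\sum_i\lambda_i\,\E{a_i^TA^{-1}a_i}\,v_iv_i^T$, apply Sherman--Morrison to rewrite $a_i^TA^{-1}a_i$ in terms of the leave-one-out matrix $A_i$, sandwich by Jensen/pointwise bounds, and feed in the trace estimates from Prop.~\ref{prop:tr_A_bounds}. The only cosmetic difference is that the paper phrases the upper bound via convexity of $z\mapsto z^2/(1+\lambda_i z)$ rather than concavity of $u\mapsto u/(1+u)$; since $\lambda_i a_i^TA^{-1}a_i=u_i/(1+u_i)=u_i-\lambda_i\cdot\frac{Z_i^2}{1+\lambda_i Z_i}$ with $Z_i=a_i^TA_i^{-1}a_i$, the two Jensen applications are literally the same inequality, and your formulation is arguably tidier.
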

\begin{proof}
We start off by noting that by Prop. \ref{prop:norm_decomp}, $X=\sum_{i=1}^da_i\sqrt{\lambda_i}v_i^T$ where $a_i\sim N(0,I_n)$. As such,
\begin{align*}
    XX^T&=(\sum_{i=1}^da_i\sqrt{\lambda_i}v_i^T)(\sum_{i=1}^dv_ia_i^T\sqrt{\lambda_i})=\sum_{i=1}^d\lambda_ia_ia_i^T=:A\\
    \Pi_X&=X^T(XX^T)^{-1}X=\sum_{ij}\sqrt{\lambda_i\lambda_j}a_i^TA^{-1}a_jv_iv_j^T
\end{align*}
As, $a_i\stackrel{d}{=}-a_i$ and $A$ is an even function for any $a_i$, $\E{a_i^TA^{-1}a_j}=0$ when $i\neq j$.  

Thus, $\E{\Pi_X}=\sum_{i=1}^d\lambda_i\E{a_i^TA^{-1}a_i}v_iv_i^T$. 

Now we note that,
\begin{align*}
a_i^TA^{-1}a_i=a_i^T\left(A_i^{-1}-\lambda_i\frac{A_i^{-1}a_ia_i^TA_i^{-1}}{1+\lambda_ia_i^TA_i^{-1}a_i}\right)a_i=a_i^TA_i^{-1}a_i-\lambda_i\frac{(a_i^TA_i^{-1}a_i)^2}{1+\lambda_ia_iA_i^{-1}a_i}    
\end{align*}

For our lower bound,
\begin{align*}
    \E{a_i^TA^{-1}a_i}\geq \E{a_i^TA^{-1}_ia_i}-\lambda_i\E{(a_i^TA^{-1}_ia_i)^2}
\end{align*}

For our upper bound, we use Jensen's inequality on $a_i^TA_i^{-1}a_i$ and noting that $\frac{z^2}{1+\lambda z}$ is convex:
\begin{align*}
\E{a_i^TA^{-1}a_i}&\leq \E{a_i^TA_i^{-1}a_i}  -\frac{\lambda_i}{1+2\frac{n}{tr(\Sigma)}\lambda_i}\E{(a_i^TA_i^{-1}a_i)^2} 
\end{align*}

As a result, we have the following upper and lower bounds on $\E{a_i^TA^{-1}a_i}$ from which the result follows:
\begin{align*}
    \E{a_i^TA^{-1}a_i}&\leq \E{tr(A_i^{-1})}\leq \frac{n}{tr(\Sigma)}(1+\frac{2}{n})+\frac{n\,r(n)}{tr(\Sigma)(1-2\rho)^2}-\frac{n^2(1+nr(n)(1-\Theta(1)\rho)^2}{tr(\Sigma)^2(1+\rho)}\lambda_i^2\\
    \E{a_i^TA^{-1}a_i}&\geq \frac{n}{tr(\Sigma)}(1+o(1))+\frac{n\,r(n)}{tr(\Sigma)}-\lambda_i\frac{n^2}{tr(\Sigma)^2}-2\frac{n^3r(n)}{tr(\Sigma)^2}\lambda_i^2
\end{align*}

\end{proof}
\begin{proposition}
Consider the same assumptions and notation as in Prop. \ref{prop:proj_expect}. Then,
\begin{align*}
   \Var[\beta^T\Pi_X\Sigma\eps]=\Theta\left(\frac{1}{n}(1+\sigma_{max}^2)\right)
\end{align*}
\end{proposition}
\begin{proof}
Let $c_k:=\beta^Tv_k$ (i.e. consider the eigenvectors of $\Sigma$ as a basis in $\R^d$). We begin by noting that, $\Sigma\beta\beta^T\Sigma=\sum_{k,l}c_kc_l\lambda_k\lambda_lv_kv_l^T$. As a result for $i\neq j$,
\begin{align*}
    \beta^T\Pi_X\Sigma\beta\beta^T\Sigma\Pi_X\beta=\sum_{k,l}\sqrt{\lambda_i\lambda_j\lambda_k\lambda_l}\lambda_k\lambda_lc_ic_jc_kc_la_i^TA^{-1}a_ka_l^TA^{-1}a_j
\end{align*}

We have the following options that are non-zero, when $i=k$ and $\ell=j$ or when $i=\ell$ and $k=j$. 

For the first case,
\begin{align*}
    \sum_{i,j}c_i^2c_j^2\lambda_i^2\lambda_j^2\E{a_i^TA^{-1}a_ia_j^TA^{-1}a_j}&\leq \sum_{i,j}c_i^2c_j^2\lambda_i^2\lambda_j^2\E{a_i^TA_{ij}^{-1}a_ia_j^TA_{ij}^{-1}a_j}
    \\&\leq \sum_{i,j}c_i^2c_j^2\lambda_i^2\lambda_j^2\E{tr(A_{ij}^{-1})^2}
    \\&\leq \Theta(1)\sum_{i,j}c_i^2c_j^2\lambda_i^2\lambda_j^2n\E{tr(A_{ij}^{-2})}\leq \Theta\left(\frac{n^2}{d^2}\right)(\beta^T\Sigma^2\beta)^2
\end{align*}

For the second case,
\begin{align*}
    \sum_{i,j}c_i^2c_j^2\lambda_i^2\lambda_j^2\E{a_i^TA^{-1}a_ja_j^TA^{-1}a_i}\leq \sum_{i,j}c_i^2c_j^2\lambda_i^2\lambda_j^2\E{a_i^TA_{ij}^{-2}a_i}\leq \Theta\left(\frac{n}{d^2}\right)(\beta^T\Sigma^2\beta)^2
\end{align*}

\end{proof}

\begin{proposition}\label{prop:var_Sig_proj}
Consider the same assumptions and notation as in Prop. \ref{prop:proj_expect}. Then, $\Var[\beta^T\Pi_X\Sigma\beta]=\Theta\left(\frac{n^3\rho r(n)}{tr(\Sigma)^2}\right)(\beta^T\Sigma^2\beta)^2+\Theta\left(\frac{n}{tr(\Sigma)^2}\right)\beta^T\Sigma\beta\beta^T\Sigma^3\beta+O\left(\frac{n^3}{tr(\Sigma)^3}\right)\beta^T\Sigma^3\beta\beta^T\Sigma^2\beta$    
\end{proposition}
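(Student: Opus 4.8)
As in the proof of Prop.~\ref{prop:proj_expect} write $X=\sum_i\sqrt{\lambda_i}\,a_iv_i^\top$ with $a_i\sim N(0,I_n)$ i.i.d., $A:=XX^\top=\sum_i\lambda_ia_ia_i^\top$, $c_k:=\beta^\top v_k$, and $P_{ij}:=a_i^\top A^{-1}a_j$ (so $P_{ij}=P_{ji}$), giving $\beta^\top\Pi_X\Sigma\beta=\sum_{i,j}\lambda_i^{1/2}\lambda_j^{3/2}c_ic_j\,P_{ij}$. The plan is to split this into its diagonal part $D:=\sum_i\lambda_i^2c_i^2P_{ii}$ and off-diagonal part $O:=\sum_{i\ne j}\lambda_i^{1/2}\lambda_j^{3/2}c_ic_j\,P_{ij}$. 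Since $A$ is even in every $a_m$ while $O$ is a sum of terms odd in two of the $a_m$'s, the flip $a_m\mapsto-a_m$ gives $\E{P_{ij}}=0$ for $i\ne j$, hence $\E{O}=0$ and $\E{DO}=0$; therefore $\Var[\beta^\top\Pi_X\Sigma\beta]=\Var[D]+\E{O^2}$, and it suffices to estimate these two pieces.

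For $\E{O^2}$, expanding the product and applying the same parity reduction used for $\E{(\beta^\top\Pi_X\Sigma\beta)^2}$ just above (only $\{i,j\}=\{k,l\}$ survives) gives $\E{O^2}=\sum_{i\ne j}(\lambda_i\lambda_j^3+\lambda_i^2\lambda_j^2)c_i^2c_j^2\,\E{P_{ij}^2}$. I would estimate $\E{P_{ij}^2}$ ($i\ne j$) by applying the Sherman--Morrison formula twice to peel $a_i$ and $a_j$ off of $A$: with $A_{ij}:=A-\lambda_ia_ia_i^\top-\lambda_ja_ja_j^\top$, independent of $a_i,a_j$, one gets $P_{ij}=a_i^\top A_{ij}^{-1}a_j\big/\big[(1+\lambda_ia_i^\top A_{ij}^{-1}a_i)(1+\lambda_ja_j^\top A_j^{-1}a_j)\big]$; conditionally on $A_{ij}$ the numerator is mean-zero with second moment $tr(A_{ij}^{-2})$, and the two denominators concentrate (exactly as in the proof of Prop.~\ref{prop:proj_expect}, using Lemma~\ref{lemma:hanson-wright} for concentration and Props.~\ref{prop:general_chi_square_moment}, \ref{prop:A_higher_moment_psd} for the complementary tail). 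This yields $\E{P_{ij}^2}=\Theta(1)\,\E{tr(A_{ij}^{-2})}$, of size $\Theta\!\big(n/tr(\Sigma)^2\big)$ by Prop.~\ref{prop:tr_A_bounds}. Using $\sum_{i\ne j}\lambda_i\lambda_j^3c_i^2c_j^2=\beta^\top\Sigma\beta\,\beta^\top\Sigma^3\beta-\sum_i\lambda_i^4c_i^4$ and $\sum_{i\ne j}\lambda_i^2\lambda_j^2c_i^2c_j^2=(\beta^\top\Sigma^2\beta)^2-\sum_i\lambda_i^4c_i^4$, this produces the stated $\Theta\!\big(n\,\beta^\top\Sigma\beta\,\beta^\top\Sigma^3\beta/tr(\Sigma)^2\big)$ term, plus a preliminary $(\beta^\top\Sigma^2\beta)^2$ contribution of the same order $n/tr(\Sigma)^2$.

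For $\Var[D]=\sum_i\lambda_i^4c_i^4\Var[P_{ii}]+\sum_{i\ne k}\lambda_i^2\lambda_k^2c_i^2c_k^2\Cov[P_{ii},P_{kk}]$, I would write $P_{ii}=W_i/(1+\lambda_iW_i)$ with $W_i:=a_i^\top A_i^{-1}a_i$ and $A_i:=A-\lambda_ia_ia_i^\top$, expand to second order about $\E{W_i}$, and compute the required (co)variances of the quadratic forms $W_i$ conditionally on $A_i$ via Prop.~\ref{prop:second_third_quadratic_forms}, closing the recursion with Prop.~\ref{prop:tr_A_bounds}; the cross term $\Cov[P_{ii},P_{kk}]$ uses that $A_i$ and $A_k$ differ in only two rank-one summands.

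The step I expect to be the main obstacle is pinning down the coefficient of $(\beta^\top\Sigma^2\beta)^2=\big(\sum_i\lambda_i^2c_i^2\big)^2$: the crude estimates above contribute to it at order $n/tr(\Sigma)^2$ from each of $\E{P_{ij}^2}$, $\Var[P_{ii}]$ and $\Cov[P_{ii},P_{kk}]$, whereas the asserted coefficient is the strictly smaller $\Theta\!\big(n^3\rho\,r(n)/tr(\Sigma)^2\big)$. The cancellation is forced by the two deterministic identities $\sum_m\lambda_mP_{mm}=tr(A^{-1}A)=n$ and $\sum_j\lambda_jP_{ij}^2=a_i^\top A^{-1}AA^{-1}a_i=P_{ii}$: these make the order-$n/tr(\Sigma)^2$ pieces of the three building blocks cancel inside $\Var[D]+\E{O^2}$, and one must carry the Sherman--Morrison expansions and the $\rho$- and $r(n)$-dependent sub-leading terms of Prop.~\ref{prop:tr_A_bounds} one further order to extract the surviving residual $\Theta\!\big(n^3\rho\,r(n)/tr(\Sigma)^2\big)(\beta^\top\Sigma^2\beta)^2$. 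The remaining error, which always carries an extra factor $\lambda_i=O(\rho\, tr(\Sigma)/n)$, collects into the $O\!\big(n^3/tr(\Sigma)^3\big)\beta^\top\Sigma^3\beta\,\beta^\top\Sigma^2\beta$ term. Everything apart from this bookkeeping is the routine quadratic-form algebra already used in Props.~\ref{prop:tr_A_bounds} and \ref{prop:proj_expect}.
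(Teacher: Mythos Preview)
Your decomposition $\Var[\beta^\top\Pi_X\Sigma\beta]=\Var[D]+\E{O^2}$, the parity reductions, and the estimate $\E{P_{ij}^2}=\Theta(n/tr(\Sigma)^2)$ are all correct, and this is essentially how the paper organizes the computation (it simply writes out the full second moment and subtracts $\E{\beta^\top\Pi_X\Sigma\beta}^2$).

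The problem is your diagnosis of the ``main obstacle'' and its proposed resolution. You claim the three $\Theta(n/tr(\Sigma)^2)(\beta^\top\Sigma^2\beta)^2$ contributions---from $\E{P_{ij}^2}$, from $\Var[P_{ii}]$, and from $\Cov[P_{ii},P_{kk}]$---must cancel inside $\Var[D]+\E{O^2}$ via the identities $\sum_m\lambda_mP_{mm}=n$ and $\sum_j\lambda_jP_{ij}^2=P_{ii}$. This cannot work: both $\Var[D]\ge 0$ and $\E{O^2}\ge 0$, so there is no cancellation between them; and your identities carry $\lambda_j$--weights, not the $c_i^2c_j^2\lambda_i^{\,\cdot}\lambda_j^{\,\cdot}$--weights that appear in these sums, so they do not act on the expressions at hand.

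What the paper actually does is simpler. First, the $\Theta(n/tr(\Sigma)^2)(\beta^\top\Sigma^2\beta)^2$ piece coming from $\E{O^2}$ (and likewise the diagonal piece $\sum_i\lambda_i^4c_i^4\,\Var[P_{ii}]$) is \emph{not} cancelled: it is simply absorbed into the $\Theta(n/tr(\Sigma)^2)\,\beta^\top\Sigma\beta\,\beta^\top\Sigma^3\beta$ term via Cauchy--Schwarz, $(\beta^\top\Sigma^2\beta)^2\le\beta^\top\Sigma\beta\cdot\beta^\top\Sigma^3\beta$. There is no obstacle here. Second, the $\Theta(n^3\rho\,r(n)/tr(\Sigma)^2)(\beta^\top\Sigma^2\beta)^2$ term in the statement comes entirely from $\sum_{i\ne j}\lambda_i^2\lambda_j^2c_i^2c_j^2\,\Cov[P_{ii},P_{jj}]$, and the paper extracts it by matching the sharp $r(n)$--dependent constants: one upper-bounds $\E{P_{ii}P_{jj}}\le\E{tr(A_{ij}^{-1})^2}\le\frac{n^2}{tr(\Sigma)^2}\bigl(1+2nr(n)/(1-2\rho)^2\bigr)(1+o(1))$ using Prop.~\ref{prop:tr_A_bounds}, lower-bounds $\E{P_{ii}}\E{P_{jj}}\ge\frac{n^2}{tr(\Sigma)^2}(1+nr(n))^2(1+o(1))$ from Prop.~\ref{prop:proj_expect}, and notes $(1+2nr(n)/(1-2\rho)^2)-(1+nr(n))^2=O(n\rho\,r(n))$. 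The $\lambda_i$--dependent corrections in Prop.~\ref{prop:proj_expect} then give the $O(n^3/tr(\Sigma)^3)\,\beta^\top\Sigma^3\beta\,\beta^\top\Sigma^2\beta$ error. So the only ``cancellation'' is the one already built into the definition of $\Cov[P_{ii},P_{jj}]$, handled by tracking the second-order constants in Prop.~\ref{prop:tr_A_bounds}---not by any deterministic trace identity.
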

\begin{proof}
 Let $c_k:=\beta^Tv_k$ (i.e. consider the eigenvectors of $\Sigma$ as a basis in $\R^d$). We begin by noting that, $\Sigma\beta\beta^T\Sigma=\sum_{k,l}c_kc_l\lambda_k\lambda_lv_kv_l^T$. As a result for $i\neq j$,
\begin{align*}
    v_i^T\Pi_X\Sigma\beta\beta^T\Sigma\Pi_Xv_j=\sqrt{\lambda_i\lambda_j}\sum_{k,l}\sqrt{\lambda_k\lambda_l}\lambda_k\lambda_lc_kc_la_i^TA^{-1}a_ka_l^TA^{-1}a_j
\end{align*}
The terms of the sum are mean zero unless $k=i$ and $l=j$ (or visa-versa). 
We note the following fact (and assuming $i\neq k$),
\begin{equation}\label{equ:offset_index_aI}
\begin{aligned}
a_i^TA^{-1}a_ka_k^TA^{-1}a_i&=\left(a_i^T\left(A_i^{-1}-\lambda_i\frac{A_i^{-1}a_ia_i^TA_i^{-1}}{1+\lambda_ia_i^TA_i^{-1}a_i}\right)a_k\right)^2\\&=\left(a_i^TA_i^{-1}a_k\left(1-\lambda_i\frac{a_i^TA_i^{-1}a_i}{1+\lambda_ia_i^TA_i^{-1}a_i}\right)\right)^2\leq (a_i^TA_i^{-1}a_k)^2    
\end{aligned}
\end{equation}

Thus, 
\begin{align*}
\E{v_i^T\Pi_X\Sigma\beta\beta^T\Sigma\Pi_Xv_j}&=c_ic_j\lambda_i^2\lambda_j^2\left(\E{a_i^TA^{-1}a_ia_j^TA^{-1}a_j}+\E{a_i^TA^{-1}a_ja_i^TA^{-1}a_j}\right)\\&\leq c_ic_j\lambda_i^2\lambda_j^2(\E{a_i^TA_j^{-1}a_ia_j^TA_j^{-1}a_j}+\E{a_i^TA_j^{-1}a_ja_j^TA_j^{-1}a_i})\\&\leq c_ic_j\lambda_i^2\lambda_j^2\E{a_i^TA_j^{-2}a_i}+c_ic_j\lambda_i^2\lambda_j^2\E{a_i^TA_{ij}^{-1}a_ia_j^TA_{ij}^{-1}a_j}
\\&\leq c_ic_j\lambda_i^2\lambda_j^2\Theta\left(\frac{n}{tr(\Sigma)^2}\right)+c_ic_j\lambda_i^2\lambda_j^2\left(\frac{n^2(1+2r(n)/(1-2\rho)^2)}{tr(\Sigma)^2}\right)(1+o(1))
\end{align*}

When $i=j$,
\begin{align*}
\E{v_i^T\Pi_X\Sigma\beta\beta^T\Sigma\Pi_Xv_i}&=\lambda_i\sum_{k,l}c_kc_l\sqrt{\lambda_k\lambda_l}\lambda_k\lambda_l\E{a_i^TA^{-1}a_ka_l^TA^{-1}a_i}\\&=\lambda_i\sum_{k}c_k^2\lambda_k^3\E{a_i^TA^{-1}a_ka_k^TA^{-1}a_i}\\&\leq\lambda_i^4c_i^2\E{(a_i^TA_i^{-1}a_i)^2}+\lambda_i\sum_{k\neq i}c_k^2\lambda_k^3\E{a_i^TA^{-1}a_ka_k^TA^{-1}a_i}\\&\leq \lambda_i^4c_i^2\E{(a_i^TA_i^{-1}a_i)^2}+\lambda_i\sum_{k\neq i}c_k^2\lambda_k^3\E{a_i^TA_k^{-1}a_ka_k^TA_k^{-1}a_i}\\&\leq \lambda_i^4c_i^2\E{(a_i^TA_i^{-1}a_i)^2}+\lambda_i\sum_{k\neq i}c_k^2\lambda_k^3\E{a_i^TA_k^{-2}a_i}
\\&\leq \lambda_i^4c_i^2\E{(a_i^TA_i^{-1}a_i)^2}+\lambda_i\E{a_i^TA_{ik}^{-2}a_i}\sum_{k}c_k^2\lambda_k^3
\\&\leq \lambda_i^4c_i^2\frac{n^2(1+2nr(n)/(1-2\rho)^2))}{tr(\Sigma)^2}(1+o(1))+\lambda_i\Theta\left(\frac{n}{tr(\Sigma)^2}\right)\beta^T\Sigma^3\beta
\end{align*}
Where we used the fact that $\E{a_i^TA^{-1}a_ka_l^TA^{-1}a_i}=0$ unless $k=l$.

Using this, we can thus obtain the second moment of $\beta^T\Pi_X\Sigma\beta$,
\begin{align*}
    \mathbb{E}[\beta^T\Pi_X\Sigma&\beta\beta^T\Sigma\Pi_X\beta]=\sum_{i\neq j}c_i^2c_j^2\lambda_i^2\lambda_j^2\left(\Theta\left(\frac{n}{tr(\Sigma)^2}\right)+\left(\frac{n^2(1+2r(n)/(1-2\rho)^2)}{tr(\Sigma)^2}\right)(1+o(1))\right)
    \\&+\sum_i\lambda_i^4c_i^4\left(\frac{n^2(1+2nr(n)/(1-2\rho)^2)}{tr(\Sigma)^2}\right)(1+o(1))+\Theta\left(\frac{n}{tr(\Sigma)^2}\right)\beta^T\Sigma\beta\beta^T\Sigma^3\beta
    \\&\leq \left(\frac{n^2(1+2nr(n)/(1-2\rho)^2)}{tr(\Sigma)^2}\right)(1+o(1))\sum_{ij}\lambda_i^2\lambda_j^2c_i^2c_j^2+\Theta\left(\frac{n}{tr(\Sigma)^2}\right)\beta^T\Sigma\beta\beta^T\Sigma^3\beta
    \\&\leq \left(\frac{n^2(1+2nr(n)/(1-2\rho)^2)}{tr(\Sigma)^2}\right)(1+o(1))(\beta^T\Sigma^2\beta)^2+\Theta\left(\frac{n}{tr(\Sigma)^2}\right)\beta^T\Sigma\beta\beta^T\Sigma^3\beta
\end{align*}

Finally, we look at the first moment squared:
\begin{align*}
    \E{\beta^T\Pi_X\Sigma\beta}^2&=\left(\frac{n(1+nr(n))}{tr(\Sigma)}\beta^T\Sigma^2\beta(1+o(1))-\Omega\left(\frac{n^2}{tr(\Sigma)^2}\right)\beta^T\Sigma^3\beta\right)^2\\&\geq \frac{n^2(1+nr(n))^2}{tr(\Sigma)^2}(\beta^T\Sigma^2\beta)^2(1+o(1))-2\Omega\left(\frac{n^3}{tr(\Sigma)^3}\right)\beta^T\Sigma^3\beta\beta^T\Sigma^2\beta
\end{align*}

Before taking the difference, we note the following difference:

\begin{align*}
    (1+2nr(n)/(1-2\rho)^2)-(1+nr(n))^2&=1+2nr(n)/(1-2\rho)^2-1-2nr(n)-n^2r(n)^2
    \\&\leq 16n\rho\,r(n)-n^2r(n)^2
\end{align*}

Thus,
\begin{align*}
    \Var[\beta^T\Pi_X\Sigma\beta]=\Theta\left(\frac{n^3\rho r(n)}{tr(\Sigma)^2}\right)(\beta^T\Sigma^2\beta)^2+\Theta\left(\frac{n}{tr(\Sigma)^2}\right)\beta^T\Sigma\beta\beta^T\Sigma^3\beta+O\left(\frac{n^3}{tr(\Sigma)^3}\right)\beta^T\Sigma^3\beta\beta^T\Sigma^2\beta
\end{align*}
\end{proof}

\begin{proposition}\label{prop:exp_sig_exp}
Consider the same assumptions and notation as in Prop. \ref{prop:proj_expect}. Then, \begin{align*}
        (1+o(1))\frac{n^2(1+nr(n))^2}{tr(\Sigma)^2}\Sigma^3-2\frac{n^3}{tr(\Sigma)^3}\Sigma^4+n(1+o(1))r(n)(1+2nr(n))(1-4\rho)\Sigma\preceq \E{\Pi_X\Sigma\Pi_X}
\end{align*}
\begin{align*}
    \E{\Pi_X\Sigma\Pi_X}\preceq (1+o(1))\frac{n^2(1+nr(n)/(1-4\rho)^3)^2}{tr(\Sigma)^2}\Sigma^3+(1+o(1))nr(n)(1+nr(n)/(1-2\rho)^2)\Sigma   
\end{align*}
\end{proposition}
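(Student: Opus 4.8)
The plan is to expand $\Pi_X\Sigma\Pi_X$ in the eigenbasis of $\Sigma$, take the expectation, and reduce both Loewner inequalities to scalar inequalities between diagonal entries. With $X=\sum_{i=1}^d a_i\sqrt{\lambda_i}v_i^{T}$, $a_i\sim N(0,I_n)$ i.i.d., and $A:=XX^{T}=\sum_i\lambda_i a_ia_i^{T}$ as in Prop.~\ref{prop:proj_expect}, using $v_j^{T}v_k=\delta_{jk}$ one gets
\[
\Pi_X\Sigma\Pi_X=\sum_{i,j,m}\sqrt{\lambda_i\lambda_m}\,\lambda_j^{2}\,(a_i^{T}A^{-1}a_j)(a_j^{T}A^{-1}a_m)\,v_iv_m^{T}.
\]
Because $A$ is invariant under $a_i\mapsto-a_i$, every term with $i\neq m$ has mean zero (flip the sign of $a_i$ or of $a_m$), so
\[
\E{\Pi_X\Sigma\Pi_X}=\sum_{i}\Big(\lambda_i^{3}\,\E{(a_i^{T}A^{-1}a_i)^{2}}+\lambda_i\sum_{j\neq i}\lambda_j^{2}\,\E{(a_i^{T}A^{-1}a_j)^{2}}\Big)v_iv_i^{T}.
\]
All of $\E{\Pi_X\Sigma\Pi_X},\Sigma,\Sigma^{3},\Sigma^{4}$ are diagonal in $\{v_i\}$, so (recalling the Loewner order here requires common eigenvectors) it suffices to produce matching upper and lower bounds for the scalar coefficient of each $v_iv_i^{T}$: the first sum feeds the $\Sigma^{3}$ and $\Sigma^{4}$ terms, while the second feeds the $\Sigma$ term since $\lambda_i\sum_{j\neq i}\lambda_j^{2}\approx\lambda_i\,tr(\Sigma^{2})=\lambda_i\,tr(\Sigma)^{2}r(n)$.

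For the diagonal moment, write $A_i:=A-\lambda_i a_ia_i^{T}$ (independent of $a_i$); Sherman--Morrison gives $a_i^{T}A^{-1}a_i=(a_i^{T}A_i^{-1}a_i)/(1+\lambda_i a_i^{T}A_i^{-1}a_i)$. For the upper bound, $\E{(a_i^{T}A^{-1}a_i)^{2}}\le\E{(a_i^{T}A_i^{-1}a_i)^{2}}$; conditioning on $A_i$, Prop.~\ref{prop:second_third_quadratic_forms}(1) with $B=C=A_i^{-1}$ gives $2\,tr(A_i^{-2})+tr(A_i^{-1})^{2}\le(n+2)\,tr(A_i^{-2})$ by the power-mean inequality, and Prop.~\ref{prop:tr_A_bounds}(4) applied to $A_i$ converts $\E{tr(A_i^{-2})}$ into $\frac{n}{tr(\Sigma)^{2}}(1+o(1))+\frac{2n^{2}r(n)}{tr(\Sigma)^{2}(1-3\rho)^{3}}$; multiplying by $\lambda_i^{3}$ and summing gives the claimed $\Sigma^{3}$ upper term with room to spare. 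For the lower bound I would use the elementary inequality $z^{2}/(1+\lambda z)^{2}\ge z^{2}-2\lambda z^{3}$ (valid for $z,\lambda\ge0$) to get $\E{(a_i^{T}A^{-1}a_i)^{2}}\ge\E{(a_i^{T}A_i^{-1}a_i)^{2}}-2\lambda_i\E{(a_i^{T}A_i^{-1}a_i)^{3}}$, evaluate the cube via Prop.~\ref{prop:second_third_quadratic_forms}(2), and bound the traces via Prop.~\ref{prop:tr_A_bounds}; the leading piece $\E{(a_i^{T}A_i^{-1}a_i)^{2}}\ge\E{tr(A_i^{-1})}^{2}$ together with the $\E{tr(A^{-1})}$ lower bound of Prop.~\ref{prop:tr_A_bounds}(3) produces the $\frac{n^{2}(1+nr(n))^{2}}{tr(\Sigma)^{2}}\Sigma^{3}$ term (up to the displayed $\rho$-dependent and $o(1)$ factors), and the correction $-2\lambda_i\E{(a_i^{T}A_i^{-1}a_i)^{3}}\approx-2\lambda_i\frac{n^{3}}{tr(\Sigma)^{3}}$, multiplied by $\lambda_i^{3}$ and summed, produces the subtracted $-2\frac{n^{3}}{tr(\Sigma)^{3}}\Sigma^{4}$.

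For the off-diagonal moment $\E{(a_i^{T}A^{-1}a_j)^{2}}$ with $j\neq i$, I would peel off $a_i$ and then $a_j$: Eq.~\eqref{equ:offset_index_aI} already gives $(a_i^{T}A^{-1}a_j)^{2}=(a_i^{T}A_i^{-1}a_j)^{2}/(1+\lambda_i a_i^{T}A_i^{-1}a_i)^{2}$, and a second Sherman--Morrison step with $A_{ij}:=A-\lambda_i a_ia_i^{T}-\lambda_j a_ja_j^{T}$ (independent of $(a_i,a_j)$) gives $a_i^{T}A_i^{-1}a_j=(a_i^{T}A_{ij}^{-1}a_j)/(1+\lambda_j a_j^{T}A_{ij}^{-1}a_j)$; since $a_i\perp a_j\perp A_{ij}$, $\E{(a_i^{T}A_{ij}^{-1}a_j)^{2}\mid A_{ij}}=tr(A_{ij}^{-2})$. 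This yields the upper bound $\E{(a_i^{T}A^{-1}a_j)^{2}}\le\E{tr(A_{ij}^{-2})}$ directly; for the lower bound use $(1+t)^{-2}\ge1-2t$ twice, picking up corrections of the form $\lambda_i\E{(a_i^{T}A_{ij}^{-1}a_i)(a_i^{T}A_{ij}^{-1}a_j)^{2}}$ and $\lambda_j\E{(a_j^{T}A_{ij}^{-1}a_j)(a_i^{T}A_{ij}^{-1}a_j)^{2}}$, which are evaluated using $\E{(a^{T}Ba)\,aa^{T}}=tr(B)I+2B$ and Prop.~\ref{prop:second_third_quadratic_forms}(1) and bounded as $O\!\big((\lambda_i+\lambda_j)\,n^{2}/tr(\Sigma)^{3}\big)$ via Prop.~\ref{prop:tr_A_bounds}. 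Then $\lambda_i\sum_{j\neq i}\lambda_j^{2}\E{(a_i^{T}A^{-1}a_j)^{2}}$ is, up to lower-order terms, $\lambda_i\,tr(\Sigma)^{2}r(n)\,\E{tr(A^{-2})}$, and Prop.~\ref{prop:tr_A_bounds}(4) turns $tr(\Sigma)^{2}r(n)\E{tr(A^{-2})}$ into $nr(n)(1+2nr(n))$ up to $(1-c\rho)$ factors; summing over $i$ and absorbing the $O((\lambda_i+\lambda_j)n^{2}/tr(\Sigma)^{3})$ pieces into the $(1-4\rho)$ factor yields the $\Sigma$ terms on both sides.

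One then collects the three contributions, noting that to apply Prop.~\ref{prop:tr_A_bounds} to $A_i$ and $A_{ij}$ one only needs that deleting one or two of the $\lambda_k$'s changes $tr(\Sigma)$ and $r(n)$ by relatively $o(1)$ amounts, which holds since $\lambda_1\le\frac14\frac dn$. The main obstacle is not conceptual but the bookkeeping: tracking which power of $(1-c\rho)$ each Sherman--Morrison denominator contributes, and checking that every term not displayed --- the $2\,tr(A_i^{-2})$-type pieces, the higher-order remainders in $z^{2}/(1+\lambda z)^{2}$, the $A_i$-versus-$A_{ij}$ discrepancies, and the gap $\E{tr(A_i^{-1})^{2}}-\E{tr(A_i^{-1})}^{2}$, which we control only crudely by $tr(A_i^{-1})^{2}\le n\,tr(A_i^{-2})$ in the absence of a sharp variance bound for $tr(A^{-1})$ --- is of smaller order than, or absorbable into, the displayed $(1+o(1))$ and $(1-4\rho)$ factors and the explicit $-2\frac{n^{3}}{tr(\Sigma)^{3}}\Sigma^{4}$ correction; this may require regrouping the $\rho$-powers slightly differently from the way I have sketched them. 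Structurally this is the same pattern of estimates as in Props.~\ref{prop:proj_expect}--\ref{prop:var_Sig_proj}, of which the present argument is essentially a lengthier elaboration.
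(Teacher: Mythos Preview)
Your proposal is correct and follows essentially the same approach as the paper: the same eigenbasis expansion and parity argument reducing to diagonal entries, the same split into $i=j$ and $i\neq j$ contributions, and the same Sherman--Morrison peeling combined with Props.~\ref{prop:second_third_quadratic_forms} and~\ref{prop:tr_A_bounds}. The only cosmetic differences are that the paper peels a single index for the off-diagonal term (using Eq.~\eqref{equ:offset_index_aI} and then integrating over $a_j$ to land on $\E{a_i^{T}A_j^{-2}a_i}$) rather than your double peel to $A_{ij}$, and that the paper writes the diagonal lower bound as $\E{(a_i^{T}A_i^{-1}a_i)^{2}}-\lambda_i\E{(a_i^{T}A_i^{-1}a_i)^{3}}$ whereas your inequality $z^{2}/(1+\lambda z)^{2}\ge z^{2}-2\lambda z^{3}$ carries the constant that actually matches the displayed $-2\,n^{3}/tr(\Sigma)^{3}$ coefficient.
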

\begin{proof}
We note that, $\Pi_X\Sigma\Pi_X=\sum_{i,j,k} \lambda_j^2\sqrt{\lambda_i}\sqrt{\lambda_k} a_i^TA^{-1}a_ja_j^TA^{-1}a_kv_iv_k^T$.

When $i\neq k$, the terms are mean zero. 

As such (noting Equ. \ref{equ:offset_index_aI}),
\begin{align*}
\E{\Pi_X\Sigma\Pi_X}&=\sum_{i,j}\lambda_i\lambda_j^2\E{a_i^TA^{-1}a_ja_j^TA^{-1}a_i}v_iv_i^T\\&\leq \sum_{i=1}^d\lambda_i^3\E{a_i^TA_i^{-1}a_ia_i^TA_i^{-1}a_i}v_iv_i^T+\sum_{i\neq j}\lambda_i\lambda_j^2\E{a_i^TA_j^{-1}a_ja_j^TA_j^{-1}a_i}v_iv_i^T
\\&=\sum_{i\neq j}\lambda_i\lambda_j^2\E{a_i^TA_j^{-2}a_i}v_iv_i^T+\sum_{i=1}^d\lambda_i^3\E{(a_i^TA_i^{-1}a_i)^2}
\\&\leq (1+o(1))\frac{n(1+nr(n)/(1-2\rho)^2)}{tr(\Sigma)^2}\sum_{i,j} \lambda_i\lambda_j^2v_iv_i^T+\Theta\left(\frac{n^2}{tr(\Sigma)^2}\right)\sum_{i=1}^d\lambda_i^3v_iv_i^T
\\&=(1+o(1))\frac{n(1+nr(n)/(1-2\rho)^2)}{tr(\Sigma)^2} tr(\Sigma^2)\Sigma + \Theta\left(\frac{n^2}{tr(\Sigma)^2}\right)\Sigma^3
\end{align*}

Conversely,
\begin{align*}
    \E{(a_i^TA^{-1}a_i)^2}&=\E{\left(a_i^TA_i^{-1}a_i-\lambda_i\frac{(a_i^TA_i^{-1}a_i)^2}{1+\lambda_ia_i^TA_i^{-1}a_i}\right)^2}
    \\&\geq \E{(a_i^TA_i^{-1}a_i)^2-\lambda_i(a_i^TA_i^{-1}a_i)^3}
    \\&\geq (1+o(1))\frac{n^2(1+r(n))^2}{tr(\Sigma)^2}-2\lambda_i\frac{n^3}{tr(\Sigma)^3}
\end{align*}
And for $i\neq j$
\begin{align*}
    \E{a_i^TA^{-1}a_ja_j^TA^{-1}a_i}&=\E{(a_i^TA_i^{-1}a_i)^2\left(1-\frac{\lambda_ia_i^TA_i^{-1}a_i}{1+\lambda_ia_i^TA_i^{-1}a_i}\right)^2}
    \\&\geq \E{a_j^TA_i^{-2}a_j}-2\lambda_i\E{a_j^TA_i^{-2}a_jtr(A_i^{-1})+2a_j^TA_i^{-3}a_j}
    \\&\geq \E{a_j^TA_i^{-2}a_j}-2\lambda_i(n+2)\E{tr(A_{ij})^{-3}}
    \\&\geq \frac{n(1+2nr(n))}{tr(\Sigma)^2}(1+o(1))-4\lambda_i\frac{n^2}{tr(\Sigma)^3}
\end{align*}
Thus, putting it together,
\begin{align*}
    (1+o(1))\frac{n^2(1+r(n))^2}{tr(\Sigma)^2}\Sigma^3&-\Omega\left(\frac{n^3}{tr(\Sigma)^3}\right)\Sigma^4+(1+o(1))r(n)(1+2r(n))(1-4\rho)\Sigma\\&\preceq \E{\Pi_X\Sigma\Pi_X} \preceq \\
    \Theta\left(\frac{n^2}{tr(\Sigma)^2}\right)\Sigma^3&+(1+o(1))nr(n)(1+nr(n)/(1-2\rho)^2)\Sigma
\end{align*}

\end{proof}
\begin{proposition}\label{prop:exp_noise_first_order}
Consider the same assumptions and notation as in Prop. \ref{prop:proj_expect}. Further, assume $\eps=(\eps_1,...,\eps_d)$ where $\E{\eps_i\,|\,X_i}=0$ and $\E{\eps_i^2\,|\,X_i}\leq \sigma_{max}^2$ where $\sigma_{max}^2$ does not depend on $X$. Let $\sigma^2$ be as in Def. 2.6. of the main text. Then, 
\begin{align*}
 \E{\eps^T(XX^T)^{-1}X\Sigma X^T(XX^T)^{-1}\eps}&\leq \sigma^2r(n)n\left(1+2nr(n)/(1-4\rho)^2\right)(1+o(1))\\   
  \E{\eps^T(XX^T)^{-1}X\Sigma X^T(XX^T)^{-1}\eps}&\geq \sigma^2r(n)n(1+2nr(n))(1-4\rho)
\end{align*}
\end{proposition}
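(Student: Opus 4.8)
The plan is to first strip off the noise vector, reducing the left-hand side to a pure trace expectation, and then estimate that expectation with the leave-one-out / Sherman--Morrison machinery behind Proposition \ref{prop:tr_A_bounds}.

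First I would remove $\eps$. Set $P:=(XX^T)^{-1}X\Sigma X^T(XX^T)^{-1}\in\mathbb R^{n\times n}$ and condition on $X$. Since the pairs $(x_i,y_i)$ are i.i.d., conditionally on $X$ the noises $\eps_i$ are independent with $\E{\eps_i\mid X}=\E{\eps_i\mid x_i}=0$ and $\E{\eps_i^2\mid X}=\E{\eps_i^2\mid x_i}$; hence $\E{\eps^TP\eps\mid X}=\sum_i P_{ii}\,\E{\eps_i^2\mid x_i}=\operatorname{tr}(P\Lambda(X))$, with $\Lambda(X)=\operatorname{diag}(\E{\eps_1^2\mid x_1},\dots,\E{\eps_n^2\mid x_n})$. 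By cyclicity of the trace this equals $\operatorname{tr}(\Sigma X^T(XX^T)^{-1}\Lambda(X)(XX^T)^{-1}X)$, so taking expectations and invoking the definition of $\sigma^2$ (Def. \ref{def:mean_concentration}) gives $\E{\eps^TP\eps}=\sigma^2\,\E{\operatorname{tr}(\Sigma X^T(XX^T)^{-2}X)}$. It thus suffices to sandwich $T:=\E{\operatorname{tr}(\Sigma X^T(XX^T)^{-2}X)}$ between $r(n)n(1+2nr(n))(1-4\rho)$ and $r(n)n(1+2nr(n)/(1-4\rho)^2)(1+o(1))$, where $r(n)=\operatorname{tr}(\Sigma^2)/\operatorname{tr}(\Sigma)^2$ as in Prop. \ref{prop:tr_A_bounds}.

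Next I would diagonalize. Writing $X=\sum_{i=1}^d\sqrt{\lambda_i}\,a_iv_i^T$ with i.i.d. $a_i\sim N(0,I_n)$ (Prop. \ref{prop:norm_decomp}) and $A:=XX^T=\sum_i\lambda_i a_ia_i^T$, the orthogonality computation from the proof of Prop. \ref{prop:proj_expect} gives $\operatorname{tr}(\Sigma X^T A^{-2}X)=\sum_{i=1}^d\lambda_i^2\,a_i^TA^{-2}a_i$. With $A_i:=A-\lambda_i a_ia_i^T$ independent of $a_i$, a short Sherman--Morrison computation (exactly as used for $a_i^TA^{-1}a_i$ in the proof of Prop. \ref{prop:proj_expect}) yields $a_i^TA^{-2}a_i=a_i^TA_i^{-2}a_i\big/(1+\lambda_i a_i^TA_i^{-1}a_i)^2$, so $T=\sum_i\lambda_i^2\,\E{a_i^TA_i^{-2}a_i/(1+\lambda_i a_i^TA_i^{-1}a_i)^2}$. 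For the upper bound I would simply drop the denominator ($\ge 1$), getting $\E{a_i^TA^{-2}a_i}\le\E{\operatorname{tr}(A_i^{-2})}$, and estimate the latter by Prop. \ref{prop:tr_A_bounds}.4 applied to $A_i$ (a fresh instance with $d-1$ eigenvalues summing to $L-\lambda_i$, where $L:=\operatorname{tr}(\Sigma)$, still satisfying the spectral hypotheses as $\lambda_1\le\tfrac14\tfrac dn$); multiplying by $\lambda_i^2$ and summing, and using $\sum_i\lambda_i^2/(L-\lambda_i)^2=(1+o(1))\operatorname{tr}(\Sigma^2)/\operatorname{tr}(\Sigma)^2$ together with $\sum_i\lambda_i^2(\sum_{k\ne i}\lambda_k^2)/(L-\lambda_i)^4\le(1+o(1))(\operatorname{tr}(\Sigma^2)/\operatorname{tr}(\Sigma)^2)^2$, I obtain the stated upper bound (with every $o(1)$ of size $O(\lambda_1/\operatorname{tr}(\Sigma))$, as in Prop. \ref{prop:tr_A_bounds}). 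For the lower bound I would use $\tfrac1{(1+x)^2}\ge 1-2x$ for $x\ge0$ to get $a_i^TA^{-2}a_i\ge a_i^TA_i^{-2}a_i-2\lambda_i(a_i^TA_i^{-2}a_i)(a_i^TA_i^{-1}a_i)$; conditioning on $A_i$ and using Prop. \ref{prop:second_third_quadratic_forms}.1, the correction becomes $2\lambda_i(2\operatorname{tr}(A_i^{-3})+\operatorname{tr}(A_i^{-2})\operatorname{tr}(A_i^{-1}))$, while the main term $\E{\operatorname{tr}(A_i^{-2})}$ is bounded below by Prop. \ref{prop:tr_A_bounds}.4; since $\lambda_i\,\E{\operatorname{tr}(A_i^{-1})}\le\rho(1+o(1))$ and $\E{\operatorname{tr}(A_i^{-3})}=\Theta(n/L^3)$, the correction is at most a $4\rho$-fraction of the main term, and summing against $\lambda_i^2$ produces the lower bound.

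The hard part will be controlling $\E{\operatorname{tr}(A_i^{-2})\operatorname{tr}(A_i^{-1})}$ in the lower-bound correction at the sharp order $n^2/\operatorname{tr}(\Sigma)^3$: the crude estimate $\operatorname{tr}(A_i^{-2})\le\operatorname{tr}(A_i^{-1})^2$ only gives $n^3/\operatorname{tr}(\Sigma)^3$, a spurious extra factor of $n$ that would swamp the $(1-4\rho)$ factor. This needs a short standalone second-moment bound --- Cauchy--Schwarz together with estimates of $\E{\operatorname{tr}(A_i^{-2})^2}$ and $\E{\operatorname{tr}(A_i^{-1})^2}$ obtained by the same Stein's-lemma argument as in the proof of Prop. \ref{prop:tr_A_bounds}. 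The remaining effort is purely bookkeeping: tracking the $\rho$-dependent factors and the $O(\lambda_1/\operatorname{tr}(\Sigma))$ error terms as they pass through the leave-one-out replacement $A\mapsto A_i$ and the weighted sum over $i$.
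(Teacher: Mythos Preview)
Your proposal is correct and follows the paper's proof essentially step for step: reduce to $\sigma^2\,\E{\operatorname{tr}(\Sigma X^T(XX^T)^{-2}X)}$ via the definition of $\sigma^2$, diagonalize to $\sum_i\lambda_i^2\,\E{a_i^TA^{-2}a_i}$, bound $a_i^TA^{-2}a_i\le a_i^TA_i^{-2}a_i$ for the upper bound, and use $(1+x)^{-2}\ge 1-2x$ followed by Prop.~\ref{prop:second_third_quadratic_forms} for the lower bound. The one place you overcomplicate things is the ``hard part'': controlling $\E{\operatorname{tr}(A_i^{-2})\operatorname{tr}(A_i^{-1})}$ does not need any second-moment argument, since Prop.~\ref{prop:tr(AB)_leq_ntr(A)tr(B)} gives the pointwise bound $\operatorname{tr}(A_i^{-2})\operatorname{tr}(A_i^{-1})\le n\,\operatorname{tr}(A_i^{-3})$, whence $\E{\operatorname{tr}(A_i^{-2})\operatorname{tr}(A_i^{-1})}\le n\,\E{\operatorname{tr}(A_i^{-3})}=\Theta(n^2/\operatorname{tr}(\Sigma)^3)$ directly from Prop.~\ref{prop:tr_A_bounds}.1.
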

\begin{proof}

We begin the upper bound proof by noting that:
\begin{align*}
    \E{v_i^TX^T(XX^T)^{-2}Xv_i}&=\lambda_i\E{a_i^TA^{-2}a_i}\leq \lambda_i\E{a_i^TA_i^{-2}a_i}\\&=\lambda_i\E{tr(A_i^{-2})}\leq \lambda_i\frac{n(1+2r(n)n/(1-4\rho)^2)}{tr(\Sigma)^2}(1+o(1))
\end{align*}

Thus, by trace properties to obtain the following when $\eps \perp X$:
\begin{align*}
    \E{\eps^T(XX^T)^{-1}X\Sigma X^T(XX^T)^{-1}\eps}&=\sigma^2\E{tr(\Sigma X^T(XX^T)^{-2}X)}=\sigma^2\sum_{i=1}^d\lambda_i^2\E{v_i^TX^T(XX^T)^{-2}Xv_i}
    \\&\leq \sigma^2\sum_{i=1}^d\lambda_i^2\frac{n(1+2nr(n)/(1-4\rho)^2)}{tr(\Sigma)^2}(1+o(1))\\&=\sigma^2r(n)n(1+2r(n)n/(1-4\rho)^2)(1+o(1))
\end{align*}

We note that for general $\eps$:
\begin{align*}
    \E{\eps^T(XX^T)^{-1}X\Sigma X^T(XX^T)^{-1}\eps}&=\E{tr(\Sigma X^T(XX^T)^{-1}\Lambda(X)(XX^T)^{-1}X)}\\&=\sigma^2\E{tr(\Sigma X^T(XX^T)^{-2}X)}
\end{align*}

For the lower bound, 
\begin{align*}
 \E{v_i^TX^T(XX^T)^{-2}Xv_i}&=\lambda_i\E{a_i^TA^{-2}a_i}\geq \E{a_i^TA_i^{-2}a_i-2\lambda_ia_i^TA_i^{-2}a_ia_i^TA_i^{-1}a_i}\\&\geq \lambda_i\frac{n(1+2nr(n))}{tr(\Sigma)^2}(1+o(1))   -4\lambda_i^2\frac{n^2}{tr(\Sigma)^3}\\&\geq \lambda_i\frac{n(1+2nr(n))}{tr(\Sigma)^2}(1+o(1))(1-4\rho)
\end{align*}

The result follows. 
\end{proof}
\begin{proposition}\label{prop:var_noise_pseudo_inverse}
Consider the same assumptions and notation as in Prop. \ref{prop:exp_noise_first_order}. Then, $\Var[\eps^T(XX^T)^{-1}X\Sigma X^T(XX^T)^{-1}\eps]\leq \sigma_{max}^4\Theta(1)r(n)^2n^2\beta^T\Sigma\beta$
\end{proposition}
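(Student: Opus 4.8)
Set $M:=(XX^T)^{-1}X\Sigma X^T(XX^T)^{-1}$, a random positive semidefinite matrix depending only on $X$, so that the quantity to be bounded is $\Var[\eps^TM\eps]$. The plan is to dispose of the $\eps$--randomness by conditioning on $X$, reducing the claim to a second-moment bound on $tr(M)$; then to rewrite $tr(M)$ in the Gaussian-projection coordinates $A=XX^T=\sum_i\lambda_i a_ia_i^T$; and finally to control the resulting sum of correlated rank-one quadratic forms via Props \ref{prop:A_higher_moment_psd}, \ref{prop:second_third_quadratic_forms} and the sharp trace estimates of Prop \ref{prop:tr_A_bounds}.

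First I would reduce to a trace bound. Since $\Var[\eps^TM\eps]\le\E{(\eps^TM\eps)^2}=\E{\E{(\eps^TM\eps)^2\mid X}}$, and, conditionally on $X$, the $\eps_i$ are independent with $\E{\eps_i\mid X}=0$, $\E{\eps_i^2\mid X}\le\sigma_{max}^2$ and $\E{\eps_i^4\mid X}\le C\sigma_{max}^4$ (under the standing sub-Gaussian noise assumption), only the index pairings survive in $\E{\eps_i\eps_j\eps_k\eps_\ell\mid X}$, so
\begin{align*}
\E{(\eps^TM\eps)^2\mid X}=\sum_i M_{ii}^2\,\E{\eps_i^4\mid X}+\sum_{i\neq j}\big(M_{ii}M_{jj}+2M_{ij}^2\big)\E{\eps_i^2\mid X}\E{\eps_j^2\mid X}\lesssim\sigma_{max}^4\,tr(M)^2,
\end{align*}
where I used $\sum_i M_{ii}^2\le tr(M^2)\le tr(M)^2$ (valid since $M\succeq0$) and $\sum_{i\neq j}M_{ii}M_{jj}\le tr(M)^2$. (If only $\E{\eps_i^2\mid X}\le\sigma_{max}^2$ is available, truncating each $\eps_i$ at level $\sigma_{max}\sqrt{\log n}$ absorbs the tail into lower-order terms.) Hence it suffices to show $\E{tr(M)^2}\lesssim\Theta(1)\,n^2 r(n)^2$, which, under the normalization $\beta^T\Sigma\beta=1$, is the asserted bound.

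Next I would evaluate $\E{tr(M)^2}$. Writing $X=\sum_i\sqrt{\lambda_i}\,a_iv_i^T$ (Prop \ref{prop:norm_decomp}) and using cyclicity, $tr(M)=tr(\Sigma X^T(XX^T)^{-2}X)=\sum_{i=1}^d\lambda_i^2\,a_i^TA^{-2}a_i$, so $\E{tr(M)^2}=\sum_{i,j}\lambda_i^2\lambda_j^2\,\E{(a_i^TA^{-2}a_i)(a_j^TA^{-2}a_j)}$. By Cauchy--Schwarz on the expectation it is enough to establish the \emph{uniform} estimate $\E{(a_i^TA^{-2}a_i)^2}\lesssim n^2(1+nr(n))/tr(\Sigma)^4$, for then
\begin{align*}
\E{tr(M)^2}\le\Big(\sum_i\lambda_i^2\Big)^2\max_i\E{(a_i^TA^{-2}a_i)^2}\lesssim\frac{n^2(1+nr(n))}{tr(\Sigma)^4}\,tr(\Sigma^2)^2=n^2(1+nr(n))\,r(n)^2,
\end{align*}
which collapses to $\Theta(1)\,n^2 r(n)^2$ once $nr(n)=O(1)$ (Assumption \ref{assumpt:weak_canonical_case}.4). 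For the uniform estimate: by Prop \ref{prop:A_higher_moment_psd}.2, $a_i^TA^{-2}a_i\le a_i^TA_i^{-2}a_i$ with $a_i\perp A_i$, so by Prop \ref{prop:second_third_quadratic_forms}.1 (with $B=C=A_i^{-2}$), $\E{(a_i^TA^{-2}a_i)^2}\le\E{tr(A_i^{-2})^2+2\,tr(A_i^{-4})}\le 3\,\E{tr(A_i^{-2})^2}$ (using $tr(A_i^{-4})\le tr(A_i^{-2})^2$); then $tr(A_i^{-2})^2\le n\,tr(A_i^{-4})$ by Cauchy--Schwarz on the $n$ eigenvalues of $A_i^{-1}$, and Prop \ref{prop:tr_A_bounds}(1)--(2) applied to $A_i$ (for which $\sum_{j\neq i}\lambda_j=tr(\Sigma)(1-O(1/n))$) gives $\E{tr(A_i^{-4})}\lesssim n(1+nr(n))/tr(\Sigma)^4$. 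Chaining the pieces yields $\Var[\eps^TM\eps]\lesssim\sigma_{max}^4\,\E{tr(M)^2}\lesssim\sigma_{max}^4\,\Theta(1)\,n^2 r(n)^2$.

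The hard part is the uniform second-moment estimate $\E{tr(A_i^{-2})^2}\lesssim n^2(1+nr(n))/tr(\Sigma)^4$ carried out without an extraneous power of $n$: the naive route $tr(A_i^{-2})^2\le\|A_i^{-1}\|_{op}^2\,tr(A_i^{-1})^2$ requires matrix-concentration control of the smallest eigenvalue of $A_i$, which is outside the scalar Hanson--Wright toolbox used here, so one must instead pass to $tr(A_i^{-4})$ and invoke Prop \ref{prop:tr_A_bounds} at $k=4$ --- which entails tracking the $(1+nr(n))$ correction terms and the mild eigenvalue condition $\lambda_1\le\tfrac15\tfrac dn$ it needs (automatic under Assumption \ref{assumpt:weak_canonical_case}.2). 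A secondary subtlety is that the argument collapses the $(1+nr(n))$ factor using $nr(n)=O(1)$; to get the bound under the assumptions of Prop \ref{prop:proj_expect} alone one would instead have to prove genuine concentration of $tr(M)$ (bound $\Var[tr(M)]$ directly, not just $\E{tr(M)^2}$), a strictly harder variance computation in the same spirit.
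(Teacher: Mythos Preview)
Your reduction and your second-moment computation are correct, but the route is genuinely different from the paper's, and it is worth spelling out what each buys.

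The paper does \emph{not} reduce to $\E{tr(M)^2}$. Instead it uses $\E{(\eps^TM\eps)^2}\le\sigma_{max}^4(n+2)\,\E{tr(M^2)}$ (via $tr(M)^2\le n\,tr(M^2)$), and then bounds $\E{tr(M^2)}=\sum_{i,j}\lambda_i^2\lambda_j^2\,\E{a_i^TA^{-2}a_ja_j^TA^{-2}a_i}$ by treating the off-diagonal terms $i\neq j$ directly: a Sherman--Morrison expansion (the long display leading to Eq.~\eqref{equ:A_2_diff_index}) shows $\E{(a_i^TA^{-2}a_j)^2}\le\Theta(1)\,\E{tr(A_{ij}^{-4})}=\Theta(n/tr(\Sigma)^4)$, one power of $n$ smaller than the diagonal term. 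Summing gives $\E{tr(M^2)}\le\Theta(n/tr(\Sigma)^4)[tr(\Sigma^2)^2+n\,tr(\Sigma^4)]$, and the key inequality $tr(\Sigma^4)/tr(\Sigma)^4\le\rho^2 r(n)^2/n$ (Eq.~\eqref{equ:tr_sigma_4_normalized}) then collapses the second summand without invoking $nr(n)=O(1)$. After restoring the factor $(n+2)$ from the first step one lands on $\Theta(n^2)r(n)^2$.

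Your approach trades the off-diagonal Sherman--Morrison work for a single Cauchy--Schwarz on $\E{X_iX_j}$, which is much shorter but costs you exactly the $(1+nr(n))$ factor you flagged: Cauchy--Schwarz replaces the off-diagonal scale $n/tr(\Sigma)^4$ by the diagonal scale $n^2/tr(\Sigma)^4$. Under Assumption~\ref{assumpt:weak_canonical_case}.4 this is harmless (and is the only place the proposition is used downstream), so your argument suffices for the paper's applications; but it does not establish the proposition under the bare hypotheses of Prop.~\ref{prop:exp_noise_first_order}, whereas the paper's does. Your remark that recovering the sharp bound via your route would require a genuine variance computation for $tr(M)$ is accurate; the paper sidesteps this entirely by working with $tr(M^2)$ instead.

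One caveat that applies to both arguments: the first step needs fourth-moment control on $\eps$ (your $\E{\eps_i^4\mid X}\lesssim\sigma_{max}^4$, or the paper's implicit Gaussian-type bound). The stated hypotheses only give $\E{\eps_i^2\mid x_i}\le\sigma_{max}^2$, so neither proof is fully self-contained on this point; your parenthetical about sub-Gaussianity or truncation is at least explicit about the gap.
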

\begin{proof}

\begin{align*}
    \E{(\eps^T(XX^T)^{-1}X\Sigma X^T(XX^T)^{-1}\eps)^2}\leq \sigma_{max}^4(n+2)\E{tr\left(\left((XX^T)^{-1}X\Sigma X^T(XX^T)^{-1}\right)^2\right)}
\end{align*}

We can simply the trace expression as follows,
\begin{align*}
    tr\left(\left((XX^T)^{-1}X\Sigma X^T(XX^T)^{-1}\right)^2\right)&=tr\left((XX^T)^{-1}X\Sigma X^T(XX^T)^{-2}X\Sigma X^T(XX^T)^{-1}\right)
    \\&=tr(\Sigma X^T(XX^T)^{-2}X\Sigma X^T(XX^T)^{-2}X)
\end{align*}

We note that,
    \begin{align*}
        X^T(XX^T)^{-2}X\Sigma X^T(XX^T)^{-2}X=\sum_{ijk}\sqrt{\lambda_i\lambda_k}\lambda_j^2a_i^TA^{-2}a_ja_j^TA^{-2}a_kv_iv_k^T
    \end{align*}
    As all terms except when $k=i$ are mean zero, we have that, 
    \begin{align*}
    \E{X^T(XX^T)^{-2}X\Sigma X^T(XX^T)^{-2}X}=\sum_{ij}\lambda_i\lambda_j^2\E{a_i^TA^{-2}a_ja_j^TA^{-2}a_i}v_iv_i^T    
    \end{align*}

Now, we consider the two cases when $i=j$ and when $i\neq j$. Suppose $i=j$, then
\begin{align*}
    \E{a_i^TA^{-2}a_ia_i^TA^{-2}a_i}\leq \E{a_i^TA_i^{-2}a_ia_i^TA_i^{-2}a_i}\leq (n+2)\E{tr(A_i^{-4})}
\end{align*}

Suppose $i\neq j$. We begin by noting that:
\begin{align*}
    &a_i^TA^{-2}a_ja_j^TA^{-2}a_i=(a_i^TA^{-2}a_j)^2\\&=\left(a_i^T\left(A_i^{-2}-\lambda_i\frac{A_i^{-2}a_ia_i^TA_i^{-1}}{1+\lambda_ia_i^TA_i^{-1}a_i}-\lambda_i\frac{A_i^{-1}a_ia_i^TA_i^{-2}}{1+\lambda_ia_i^TA_i^{-1}a_i}+\lambda_i^2\frac{A_i^{-1}a_ia_i^TA^{-2}_ia_ia_i^TA_i^{-1}}{(1+\lambda_ia_i^TA_i^{-1}a_i)^2}\right)a_j\right)^2
    \\&=\left(a_i^TA_i^{-2}a_j-\lambda_i\frac{a_i^TA_{i}^{-2}a_ia_i^TA_i^{-1}a_j+a_i^TA_i^{-1}a_ia_i^TA_i^{-2}a_j}{1+\lambda_ia_i^TA_i^{-1}a_i}+\lambda_i^2\frac{a_i^TA_i^{-1}a_ia_i^TA_i^{-2}a_ia_i^TA_i^{-1}a_j}{(1+\lambda_ia_i^TA^{-1}_ia_i)^2}\right)^2
    \\&=\left(a_i^TA_i^{-2}a_j\left(1-\frac{\lambda_ia_i^TA_i^{-1}a_i}{1+\lambda_ia_i^TA_i^{-1}a_i}\right)-\frac{\lambda_i a_i^TA_i^{-1}a_ja_i^TA_i^{-2}a_i}{1+\lambda_ia_i^TA_i^{-1}a_i}\left(1-\frac{\lambda_ia_i^TA_i^{-1}a_i}{1+\lambda_ia_i^TA_i^{-1}a_i}\right)\right)^2
    \\&=\left(1-\frac{\lambda_ia_i^TA_i^{-1}a_i}{1+\lambda_ia_i^TA_i^{-1}a_i}\right)^2\left(a_i^TA_i^{-2}a_j-\frac{\lambda_i a_i^TA_i^{-1}a_ja_i^TA_i^{-2}a_i}{1+\lambda_ia_i^TA_i^{-1}a_i}\right)^2\leq \left(a_i^TA_i^{-2}a_j-\frac{\lambda_i a_i^TA_i^{-1}a_ja_i^TA_i^{-2}a_i}{1+\lambda_ia_i^TA_i^{-1}a_i}\right)^2
    \\&\leq a_i^TA_{i}^{-2}a_ja_j^TA_{i}^{-2}a_i+\frac{\lambda_i^2(a_i^TA_i^{-1}a_j)^2(a_i^TA_i^{-2}a_i)^2}{(1+\lambda_ia_i^TA_i^{-1}a_i)^2}
\end{align*}

Therefore,
\begin{align*}
    \E{a_i^TA^{-2}a_ja_j^TA^{-2}a_i}&\leq \E{a_i^TA_{i}^{-2}a_ja_j^TA_{i}^{-2}a_i+\frac{\lambda_i^2(a_i^TA_i^{-1}a_j)^2(a_i^TA_i^{-2}a_i)^2}{(1+\lambda_ia_i^TA_i^{-1}a_i)^2}}
    \\&\leq \E{a_i^TA_{ij}^{-2}a_ja_j^TA_{ij}^{-2}a_i}+\lambda_1^2\E{a_i^TA_{ij}^{-1}a_ja_j^TA_{ij}^{-1}(a_i^TA_{ij}^{-2}a_i)^2}
    \\&\leq \Theta(1)\E{tr(A_{ij}^{-4})}+\lambda_1^2\E{a_i^TA_{ij}^{-1}a_ja_j^TA_{ij}^{-1}a_i(a_i^TA_{ij}^{-2}a_i)^2}
\end{align*}

Now, we note that,
\begin{align*}
    (a_i^TA^{-2}_ia_i)^2&\leq(a_i^T\left(A_{ij}^{-2}-2\lambda_j\frac{A_{ij}^{-2}a_ja_j^TA_{ij}^{-1}}{1+\lambda_j a_j^TA_{ij}^{-1}a_j}+\lambda_j^2\frac{A_{ij}^{-1}a_ja_j^TA_{ij}^{-2}a_ja_j^TA_{ij}^{-1}}{(1+\lambda_ja_j^TA_{ij}^{-1}a_j)^2}\right)a_i)^2
    \\&\leq 2(a_i^TA_{ij}^{-2}a_i)^2+4\lambda_1^2(a_i^TA_{ij}^{-2}a_i)^2(a_j^TA_{ij}^{-1}a_i)^2+2\lambda_1^4(a_i^TA_{ij}^{-1}a_j)^4(a_j^TA_{ij}^{-2}a_j)^2\\
\end{align*}

Taking expectations yields:
\begin{align*}
\E{a_i^TA_{ij}^{-1}a_ja_j^TA^{-1}_{ij}a_i(a_i^TA_{ij}^{-2}a_i)^2}&=\Theta(1)\E{a_j^TA_{ij}^{-2}a_jtr(A_{ij}^{-2})^2}\leq \Theta(n^2)\E{tr(A_{ij}^{-6})}\\
    \E{a_i^TA_{ij}^{-1}a_ja_j^TA^{-1}_{ij}a_i(a_i^TA_{ij}^{-2}a_i)^2(a_j^TA_{ij}^{-2}a_j)^2}&\leq \Theta(1)\E{a_j^TA_{ij}^{-2}a_jtr(A_{ij}^{-2})^2(a_j^TA_{ij}^{-2}a_j)^2}\\&\leq \Theta(n^4)\E{tr(A_{ij}^{-10})}\\
    \E{a_i^TA_{ij}^{-1}a_ja_j^TA^{-1}_{ij}a_i(a_i^TA_{ij}^{-1}a_j)^4(a_j^TA_{ij}^{-2}a_j)^2}&= \E{(a_i^TA_{ij}^{-1}a_ja_j^TA^{-1}_{ij}a_i)^3(a_j^TA_{ij}^{-2}a_j)^2}
    \\&\leq \Theta(1)\E{(a_j^TA_{ij}^{-2}a_j)^5}\leq \Theta(n^4)\E{tr(A_{ij}^{-10})}
\end{align*}

As a result,
\begin{align}\label{equ:A_2_diff_index}
\E{a_i^TA^{-2}a_ja_j^TA^{-2}a_i}\leq \Theta(1)\E{tr(A_{ij}^{-4})} 
\end{align}

Combining the two cases yields:
\begin{align*}
    tr&\left(\Sigma\E{X^T(XX^T)^{-2}X\Sigma X^T(XX^T)^{-2}X}\right)=\sum_{ij}\lambda_i^2\lambda_j^2\E{a_i^TA^{-2}a_ja_j^TA^{-2}a_i}
    \\&\leq \sum_{ij}\lambda_i^2\lambda_j^2\E{tr(A_{ij}^{-4})}+\Theta(n)\sum_{i}\lambda_i^4\E{tr(A_{ij}^{-4})}
    \\&\leq \Theta(\frac{n}{tr(\Sigma)^4})\left[tr(\Sigma^2)^2+ntr(\Sigma^4)\right]\leq O(1)r(n)^2n^2
\end{align*}

Where we note that by Prop. \ref{prop:bounds_effective_ranks} and Prop. \ref{prop:tr(AB)_leq_ntr(A)tr(B)},
\begin{align}\label{equ:tr_sigma_4_normalized}
    \frac{tr(\Sigma^4)}{tr(\Sigma)^4}=\frac{tr(\Sigma^4)}{tr(\Sigma^2)^2}\frac{tr(\Sigma^2)^2}{tr(\Sigma)^4}=r(n)^2\frac{tr(\Sigma^4)}{tr(\Sigma^2)^2}\leq r(n)^2\frac{\lambda_1^2}{tr(\Sigma^2)}\leq \rho^2r(n)^2\frac{tr(\Sigma)^2}{n^2}\frac{1}{tr(\Sigma)^2/n}\leq \rho^2r(n)^2/n
\end{align}
The result then follows.

\end{proof}
\begin{proposition}\label{prop:var_noise_proj}
Consider the same assumptions and notation as in Prop. \ref{prop:exp_noise_first_order}. Then, $\Var[\beta^T\Pi_X\Sigma X^T(XX^T)^{-1}\eps]\leq\sigma_{max}^2\Theta(n^2r(n)^2)\beta^T\Sigma\beta$
\end{proposition}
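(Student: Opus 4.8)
The plan is to reduce the variance to an expectation bound and then evaluate that expectation by the eigenbasis-expansion machinery already used for Propositions~\ref{prop:var_noise_pseudo_inverse} and \ref{prop:var_Sig_proj}. Since $\E{\vec\eps\mid X}=0$, the quantity $\beta^T\Pi_X\Sigma X^T(XX^T)^{-1}\vec\eps$ is mean zero, and because the samples are i.i.d.\ we have $\E{\vec\eps\vec\eps^T\mid X}=\Lambda(X)\preceq\sigma_{max}^2 I_n$; conditioning on $X$ therefore gives
\begin{align*}
\Var[\beta^T\Pi_X\Sigma X^T(XX^T)^{-1}\vec\eps]\;\leq\;\sigma_{max}^2\,\E{\beta^T\Pi_X\Sigma X^T(XX^T)^{-2}X\Sigma\Pi_X\beta}.
\end{align*}
It thus suffices to bound the remaining expectation by $O(n^2 r(n)^2)\,\beta^T\Sigma\beta$. (A shortcut via $X^T(XX^T)^{-2}X\preceq\lambda_{\min}(XX^T)^{-1}\Pi_X$ and operator norms would force $\lambda_{\min}(XX^T)$ into the bound and discard the alignment of $\beta$ with $\Sigma$, so I would avoid it.)

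Writing $X=\sum_i\sqrt{\lambda_i}a_iv_i^T$ with $a_i\sim N(0,I_n)$ i.i.d., $A:=XX^T=\sum_i\lambda_ia_ia_i^T$, and $c_p:=v_p^T\beta$, a direct computation gives
\begin{align*}
\beta^T\Pi_X\Sigma X^T(XX^T)^{-1}=\sum_{p,q}\sqrt{\lambda_p}\,\lambda_q^2\,c_p\,(a_p^TA^{-1}a_q)\,a_q^TA^{-1},
\end{align*}
so that $\beta^T\Pi_X\Sigma X^T(XX^T)^{-2}X\Sigma\Pi_X\beta=\sum_{p,q,s,t}\sqrt{\lambda_p\lambda_s}\,\lambda_q^2\lambda_t^2\,c_pc_s\,(a_p^TA^{-1}a_q)(a_s^TA^{-1}a_t)(a_q^TA^{-2}a_t)$, a quadruple sum over eigendirections.

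For each fixed tuple $(p,q,s,t)$ I would peel $a_p,a_q,a_s,a_t$ out of $A$ by Sherman--Morrison, replacing $A$ by $A_{\{p,q,s,t\}}$ (independent of those vectors) and bounding each correction factor by $0\leq\lambda\,a^T(A')^{-1}a/(1+\lambda\,a^T(A')^{-1}a)\leq 1$, exactly as in the displayed computations of Proposition~\ref{prop:var_noise_pseudo_inverse}; the corrections are of strictly smaller order in $\rho:=\tfrac nd\lambda_1$ and $\lambda_1/\tr(\Sigma)$. A parity argument in the now-independent Gaussians leaves only the patterns where the singly-occurring indices $p,s$ coincide (or where all four indices coincide). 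For each such pattern, Isserlis' formula (Proposition~\ref{prop:second_third_quadratic_forms} together with the product-of-quadratic-forms estimates) reduces the Gaussian integral to moments of $\tr((A')^{-k})$, which Proposition~\ref{prop:tr_A_bounds} controls (each such trace being $\Theta(n/\tr(\Sigma)^k)$, products of traces handled via their concentration); the leftover eigenvalue sums then collapse into $\tr(\Sigma^2)$, $\tr(\Sigma^4)$, $\beta^T\Sigma\beta$, $\beta^T\Sigma^3\beta$ and $\beta^T\Sigma^5\beta$. Finally, using $\rho\leq\tfrac18$, the elementary bounds $\beta^T\Sigma^{j+2}\beta\leq\lambda_1^2\beta^T\Sigma^j\beta$ and $\lambda_1^2\leq\tr(\Sigma^2)$, and \eqref{equ:tr_sigma_4_normalized}, every contribution is absorbed into $O(n^2 r(n)^2)\,\beta^T\Sigma\beta$, which yields the claim.

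The hard part is purely the bookkeeping: several index-coincidence patterns must be treated, and each Sherman--Morrison expansion spawns several error terms, each of which must be checked to be lower order than its leading term. Since this is structurally identical to the computation already carried out for the closely related quantity in Proposition~\ref{prop:var_noise_pseudo_inverse}, I would follow that proof almost verbatim, the only genuinely new ingredient being the bookkeeping of the extra factor of $\Sigma$ (which is why $\beta^T\Sigma^3\beta$ and $\beta^T\Sigma^5\beta$ appear before being re-absorbed).
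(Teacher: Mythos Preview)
Your proposal is correct and begins identically to the paper: mean zero, then $\E{\vec\eps\vec\eps^T\mid X}\preceq\sigma_{max}^2 I_n$ reduces matters to bounding $\E{\beta^T\Pi_X\Sigma X^T(XX^T)^{-2}X\Sigma\Pi_X\beta}$, and your parity observation $p=s$ is exactly the paper's passage to the diagonal of $\E{\Pi_X\Sigma X^T(XX^T)^{-2}X\Sigma\Pi_X}$ in the eigenbasis, leaving the triple sum $\sum_{k,\ell}\lambda_i\lambda_k^2\lambda_\ell^2\,\E{a_i^TA^{-1}a_k\,a_k^TA^{-2}a_\ell\,a_\ell^TA^{-1}a_i}$.

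From there the routes diverge. You propose a full Sherman--Morrison peel followed by Isserlis on the now-independent Gaussians, tracking all index-coincidence patterns; this is why you anticipate intermediate appearances of $\beta^T\Sigma^3\beta$ and $\beta^T\Sigma^5\beta$. The paper instead short-circuits the casework with two applications of Cauchy--Schwarz: first
\[
\E{a_i^TA^{-1}a_k\,a_k^TA^{-2}a_\ell\,a_\ell^TA^{-1}a_i}\leq\E{(a_i^TA^{-1}a_k)^2(a_\ell^TA^{-1}a_i)^2}^{1/2}\E{(a_k^TA^{-2}a_\ell)^2}^{1/2},
\]
and then a second Cauchy--Schwarz on the first factor. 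Each resulting expectation is of the form $\E{(a^TA_\alpha^{-m}a)^2}$ and is bounded uniformly in the coincidence pattern by $\Theta(n^2/\tr(\Sigma)^4)$ via Proposition~\ref{prop:tr_A_bounds} (with the bound \eqref{equ:A_2_diff_index} for the $k\neq\ell$ case of the second factor). The eigenvalue sum then collapses directly to $\lambda_i\,\tr(\Sigma^2)^2\cdot\Theta(n^2/\tr(\Sigma)^4)=\lambda_i\,\Theta(n^2r(n)^2)$, with no intermediate $\beta^T\Sigma^3\beta$ or $\beta^T\Sigma^5\beta$ terms to reabsorb. Your approach would reach the same bound but with substantially more bookkeeping; the Cauchy--Schwarz decoupling is the shortcut worth noting.
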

\begin{proof}
We first note that the first moment is zero and so we need only to focus on the second moment.
\begin{align*}
\E{\beta^T\Pi_X\Sigma X^T(XX^T)^{-1}\eps\eps^T(XX^T)^{-1}X\Sigma\Pi_X\beta}\leq \sigma_{max}^2\E{\beta^T\Pi_X\Sigma X^T(XX^T)^{-2}X\Sigma\Pi_X\beta}
\end{align*}

Now, by the mean zero property and independence of the $a_i$, we need only to consider,
\begin{align*}
    v_i^T\E{\Pi_X\Sigma X^T(XX^T)^{-2}X\Sigma\Pi_X}v_i&=\sum_{k,\ell}\lambda_i\lambda_k^2\lambda_\ell^2\E{a_i^TA^{-1}a_ka_k^TA^{-2}a_\ell a_\ell^TA^{-1}a_i}\\&\leq \sum_{k,\ell}\lambda_i\lambda_k^2\lambda_\ell^2\E{(a_i^TA^{-1}a_k)^2(a_\ell^TA^{-1}a_i)^2}^{1/2}\E{a_k^TA^{-2}a_\ell a_\ell^TA^{-2}a_k}^{1/2}
\end{align*}

We address each of the two expectations. For the first term, we use Cauchy-Schwartz again,
\begin{align*}
    \E{(a_i^TA^{-1}a_k)^2(a_i^TA^{-1}a_\ell)^2}&\leq \E{(a_i^TA^{-1}a_ka_k^TA^{-1}a_i)^2}^{1/2}\E{(a_i^TA^{-1}a_\ell a_\ell^TA^{-1}a_i)^2}^{1/2}
    \\&\leq \E{(a_i^TA_{ik}^{-1}a_ka_k^TA_{ik}^{-1}a_i)^2}^{1/2}\E{(a_i^TA_{i\ell}^{-1}a_\ell a_\ell^TA_{i\ell}^{-1}a_i)^2}^{1/2}
    \\&\leq \E{(a_i^TA^{-2}_{ik}a_i)^2}^{1/2}\E{(a_i^TA^{-2}_{i\ell}a_i)^2}^{1/2}\leq \Theta(\frac{n^2}{tr(\Sigma)^4})
\end{align*}

For the second term. Suppose $k=\ell$,
\begin{align*}
    \E{a_k^TA^{-2}a_ka_k^TA^{-2}a_k}=\Theta(n)\E{tr(A_k^{-4})}=\Theta(\frac{n^2}{tr(\Sigma)^4})
\end{align*}

Now, suppose, $k\neq \ell$, by Equ. \ref{equ:A_2_diff_index},

\begin{align*}
    \E{a_k^TA^{-2}a_\ell a_\ell^TA^{-2}a_k}\leq \Theta(1)\E{tr(A_{k\ell}^{-4})}\leq \Theta(\frac{n}{tr(\Sigma)^4})
\end{align*}

Putting it all together,
\begin{align*}
    v_i^T\E{\Pi_X\Sigma X^T(XX^T)^{-2}X\Sigma\Pi_X}v_i\leq \Theta(\frac{n^2}{tr(\Sigma)^4})\sum_{k,\ell}\lambda_i\lambda_k^2\lambda_\ell^2=\lambda_i\Theta(\frac{n^2tr(\Sigma^2)^2}{tr(\Sigma)^4})=\lambda_i\Theta(n^2r(n)^2)
\end{align*}

\end{proof}

\begin{proposition}\label{prop:var_proj_sig_proj}
Consider the same assumptions and notation as in Prop. \ref{prop:proj_expect}. Then,
\begin{align*}
\Var[\beta^T\Pi_X\Sigma\Pi_X\beta]&=  \Theta(\frac{1}{tr(\Sigma)^4})(n^2tr(\Sigma^2)(\beta^T\Sigma\beta)^2+n^3tr(\Sigma^2)\beta^T\Sigma^3\beta\beta^T\Sigma\beta\\&+n^4(\beta^T\Sigma^3\beta)^2+n^4\beta^T\Sigma^5\beta\beta^T\Sigma\beta)   
\end{align*}    
\end{proposition}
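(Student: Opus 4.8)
The plan is to work in the eigenbasis of $\Sigma$. Writing $X=\sum_{i=1}^d a_i\sqrt{\lambda_i}v_i^T$ with $a_i\sim N(0,I_n)$ i.i.d., $A:=XX^T=\sum_i\lambda_ia_ia_i^T$ and $c_k:=\beta^Tv_k$, the computation behind Prop.~\ref{prop:exp_sig_exp} gives $Y:=\beta^T\Pi_X\Sigma\Pi_X\beta=\sum_{i,j,l}\sqrt{\lambda_i\lambda_l}\,\lambda_j^2\,c_ic_l\,(a_i^TA^{-1}a_j)(a_j^TA^{-1}a_l)$, with the assumptions and notation of Prop.~\ref{prop:proj_expect}. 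Rather than expanding $\E{Y^2}-\E{Y}^2$ term by term, I would organize the variance as $\Var[Y]=\sum_{u,u'}\mathrm{coef}(u)\,\mathrm{coef}(u')\,\Cov[\psi_u,\psi_{u'}]$, with $u=(i,j,l)$, $\mathrm{coef}(u):=\sqrt{\lambda_i\lambda_l}\lambda_j^2c_ic_l$ and $\psi_u:=(a_i^TA^{-1}a_j)(a_j^TA^{-1}a_l)$; this cancels the large mean-squared part of $\E{Y^2}$ structurally, which matters because the dominant terms of $\E{Y^2}$ are of order $tr(\Sigma^2)$ times some of the target monomials and coincide, to leading order, with the leading terms of $\E{Y}^2$ obtained from Prop.~\ref{prop:exp_sig_exp}.

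Next, a parity argument reduces the double sum to a bounded number of overlap patterns. Since $a_\ell\ed-a_\ell$ and, after peeling $a_\ell$ out of $A^{-1}$ by Sherman--Morrison (as in Eq.~\eqref{equ:offset_index_aI}), each factor $a_\ell^TA^{-1}a_m$ with $\ell\neq m$ is odd in both $a_\ell$ and $a_m$ whereas $a_\ell^TA^{-1}a_\ell$ is even, $\E{\psi_u\psi_{u'}}=0$ unless every index value in the combined multiset $\{i,j,j,l\}\cup\{i',j',j',l'\}$ has even total multiplicity, and $\Cov[\psi_u,\psi_{u'}]=0$ unless $u$ and $u'$ share an index. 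The patterns that survive are: both words ``diagonal'' ($i=j=l$ and $i'=j'=l'$); one or both words of $(a_i^TA^{-1}a_j)^2$-type ($i=l\neq j$); and words that share one, two, or three index values in the admissible ways. I would enumerate this finite list explicitly.

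For each surviving pattern, let $\alpha$ be the set of active indices, write $A=A_\alpha+\sum_{\ell\in\alpha}\lambda_\ell a_\ell a_\ell^T$, and use Sherman--Morrison to replace $A^{-1}$ by $A_\alpha^{-1}$ up to multiplicative factors $1+\lambda_\ell a_\ell^TA_\alpha^{-1}a_\ell$, which are $\Theta(1)$ since $\lambda_\ell\ls\tfrac{d}{8n}$ and $a_\ell^TA_\alpha^{-1}a_\ell\approx tr(A_\alpha^{-1})\approx n/tr(\Sigma)$. Integrating out the active $a_\ell$'s with $\E{a_\ell a_\ell^T}=I_n$ and the quadratic-form identities of Prop.~\ref{prop:second_third_quadratic_forms} and the corollary to Prop.~\ref{prop:moments_z_quadratics}, one is left with expectations of products of traces $tr(A_\alpha^{-k})$, controlled by Prop.~\ref{prop:tr_A_bounds} via $\E{tr(A_\alpha^{-k})}=\Theta(n/tr(\Sigma)^k)$ and, when a variance of a trace is needed, by the Hanson--Wright estimates behind Prop.~\ref{prop:general_chi_square_moment}. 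Collecting the $\lambda$-weights, every contribution becomes a product of the scalars $\beta^T\Sigma\beta$, $\beta^T\Sigma^3\beta$, $\beta^T\Sigma^5\beta$ and $tr(\Sigma^2)$ with a power of $n/tr(\Sigma)$, and the degree constraints force the only products that can appear at the claimed order to be $n^2tr(\Sigma^2)(\beta^T\Sigma\beta)^2$, $n^3tr(\Sigma^2)\beta^T\Sigma^3\beta\beta^T\Sigma\beta$, $n^4(\beta^T\Sigma^3\beta)^2$ and $n^4\beta^T\Sigma^5\beta\beta^T\Sigma\beta$, each over $tr(\Sigma)^4$. The matching $\Omega$-bounds come from retaining the leading piece of one sub-family per monomial and applying the lower bounds in Prop.~\ref{prop:tr_A_bounds}.

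The main obstacle is exactly this bookkeeping. Several overlap patterns have naive size $tr(\Sigma^2)$ times a target monomial, and one must check that these oversized pieces are precisely the mean-squared contributions removed by the covariance organization, so that the surviving contribution to the first monomial is governed by $\Var[tr(A_\alpha^{-1})]$ and $\Var[tr(A_\alpha^{-2})]$ rather than by $\E{tr(A_\alpha^{-k})}^2$, while the patterns producing the other three monomials must be shown to survive with a strictly larger constant in $\E{Y^2}$ than in $\E{Y}^2$. Carrying the $(1+o(1))$ and $\rho$-dependent corrections (as in Props.~\ref{prop:exp_sig_exp} and~\ref{prop:tr_A_bounds}) with enough precision to both keep the stated leading order and obtain the lower bound is where essentially all the work lies.
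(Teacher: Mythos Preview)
Your plan is sound and would work, but it is considerably more elaborate than what the paper actually does, and in one place it overstates a cancellation.

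The paper does not organize the computation through covariances at all. It simply expands the second moment $\E{(\beta^T\Pi_X\Sigma\Pi_X\beta)^2}$ as a six-index sum, uses the parity reduction you describe to cut down to the admissible matchings of the outer indices ($i=k,\ell=r$ versus $i=\ell,k=r$, etc.), and then bounds each surviving sub-case by Cauchy--Schwarz together with the crude monotonicity $a_i^TA^{-1}a_j a_j^TA^{-1}a_i\le a_i^TA_j^{-2}a_i$ and $a_i^TA^{-1}a_i\le a_i^TA_i^{-1}a_i$ from Prop.~\ref{prop:A_higher_moment_psd}. No Sherman--Morrison expansion beyond the one-step peel in Eq.~\eqref{equ:offset_index_aI}, no tracking of $(1+o(1))$ constants, and no subtraction of $\E{Y}^2$: the displayed bound is literally a bound on the second moment, used downstream only as an upper bound on the variance. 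The $\Theta$ in the statement is not matched by a lower-bound argument in the proof; everywhere the proposition is invoked (e.g.\ in Prop.~\ref{prop:straight_var_in_ds}) only the $O$ direction is used.

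Your covariance organization is the right thing to do if you genuinely want the $\Theta$, and your identification of the main obstacle---that the naive second-moment terms of size $tr(\Sigma^2)$ times a target monomial must be cancelled by $\E{Y}^2$---is exactly the issue the paper sidesteps. One correction: the claim ``$\Cov[\psi_u,\psi_{u'}]=0$ unless $u$ and $u'$ share an index'' is not literally true, because $A^{-1}$ depends on every $a_\ell$, so even index-disjoint words are weakly correlated through the shared resolvent. That residual covariance is what produces the trace-fluctuation terms you mention later; it is small but not zero, and your enumeration needs to keep those pairs rather than discard them.
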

\begin{proof}
We recall that, $\Pi_X\Sigma\Pi_X=\sum_{i,j,k} \lambda_j^2\sqrt{\lambda_i}\sqrt{\lambda_k} a_i^TA^{-1}a_ja_j^TA^{-1}a_kv_iv_k^T$

Thus, $\beta^T\Pi_X\Sigma\Pi_X\beta=\sum_{i,j,k} c_ic_k\lambda_j^2\sqrt{\lambda_i}\sqrt{\lambda_k} a_i^TA^{-1}a_ja_j^TA^{-1}a_k$.

As a result,
\begin{equation}\label{equ:beta_proj_sig_pro_beta_squared}
\begin{aligned}
\beta^T\Pi_X&\Sigma\Pi_X\beta\beta^T\Pi_X\Sigma\Pi_X\beta=
\\&\sum_{i,j,k,\ell,m,r} c_ic_kc_\ell c_r \lambda_j^2\sqrt{\lambda_i}\sqrt{\lambda_k} \lambda_m^2\sqrt{\lambda_\ell}\sqrt{\lambda_r}a_i^TA^{-1}a_ja_j^TA^{-1}a_ka_\ell^TA^{-1}a_ma_m^TA^{-1}a_r        
\end{aligned}
\end{equation}

We note by symmetry of the Gaussian random variates and their mean zero property, that either (i) $i=k$ and $\ell=r$, (ii) $i=\ell$ and $k=r$, and (iii) $i=r$ and $k=\ell$. By the i.i.d. nature of $a_i$, the cases (ii) and (iii) are the same.

For each case, we will handle all the corresponding sub-cases of indexes being equal.

We note that, if $i=\ell$, then for $i\neq j$ and $i\neq m\neq j$,
\begin{align*}
    &\sum_{i\neq j\neq m, i\neq m}c_i^4\lambda_i^2\lambda_j^2\lambda_m^2\E{a_i^TA^{-1}a_ja_j^TA^{-1}a_ia_i^TA^{-1}a_ma_m^TA^{-1}a_i}
    \\&\leq \sum_{i\neq j\neq m, i\neq m}c_i^4\lambda_i^2\lambda_j^2\lambda_m^2\E{a_j^TA_i^{-2}a_ja_m^TA_i^{-2}a_m}\leq \Theta(\frac{n^2tr(\Sigma^2)}{tr(\Sigma)^4})(\beta^T\Sigma\beta)^2
\end{align*}

For $i=j\neq m$
\begin{align*}
&\sum_{i\neq m}c_i^4\lambda_i^4\lambda_m^2\E{a_i^TA^{-1}a_ia_i^TA^{-1}a_ia_i^TA^{-1}a_ma_m^TA^{-1}a_i}\\&\leq 
\sum_{i,m}c_i^4\lambda_i^4\lambda_m^2\E{a_i^TA_i^{-1}a_ia_i^TA_i^{-1}a_ia_i^TA_i^{-1}a_ma_m^TA_i^{-1}a_i}
\\&\leq \Theta(1)\sum_{i,m}c_i^4\lambda_i^4\lambda_m^2\E{tr(A_i^{-1})^2a_m^TA_i^{-2}a_m}\leq \Theta(\frac{n^3}{tr(\Sigma)^4})\beta^T\Sigma^2\beta
\end{align*}

For $i=j=m$,
\begin{align*}
&\sum_{i}c_i^4\lambda_i^6\E{a_i^TA^{-1}a_ia_i^TA^{-1}a_ia_i^TA^{-1}a_ia_i^TA^{-1}a_i}\\&\leq \Theta(1)
\sum_{i}c_i^4\lambda_i^6\leq \Theta(\frac{n^4}{tr(\Sigma)^4})(\beta^T\Sigma^3\beta)^2
\end{align*}

For $i\neq j=m$,
\begin{align*}
&\sum_{i\neq j}c_i^4\lambda_i^2\lambda_j^4\E{a_i^TA^{-1}a_ja_j^TA^{-1}a_ia_i^TA^{-1}a_ja_j^TA^{-1}a_i}
\\&\leq \Theta(1)\sum_{i,j}c_i^4\lambda_i^2\lambda_j^4\E{(a_j^TA^{-2}a_j)^2}\leq \Theta(\frac{n^2}{tr(\Sigma)^4})tr(\Sigma^4)(\beta^T\Sigma\beta)^2 
\end{align*}

If $i\neq \ell$, then we have the following distinct cases:
For $\ell \neq j\neq i$, $i\neq m\neq \ell$ (we will denote this $(i,j,\ell,m)\in E$),
\begin{align*}
    &\sum_{(i,j,\ell,m)\in E}c_i^2c_\ell^2\lambda_j^2\lambda_m^2\lambda_i\lambda_\ell \E{a_i^TA^{-1}a_ja_j^TA^{-1}a_ia_\ell^TA^{-1}a_ma_m^TA^{-1}a_\ell}
    \\&\leq \sum_{(i,j,\ell,m)\in E}c_i^2c_\ell^2\lambda_j^2\lambda_m^2\lambda_i\lambda_\ell \E{(a_i^TA^{-1}a_ja_j^TA^{-1}a_i)^2}^{1/2}\E{(a_\ell^TA^{-1}a_ma_m^TA^{-1}a_\ell)^2}^{1/2}
    \\&\leq \sum_{(i,j,\ell,m)\in E}c_i^2c_\ell^2\lambda_j^2\lambda_m^2\lambda_i\lambda_\ell\Theta(\frac{n^2}{tr(\Sigma)^4})\leq \Theta(1)n^2r(n)^2(\beta^T\Sigma\beta)^2
\end{align*}

For $i=j$, $j\neq m$ and $\ell\neq m$,
\begin{align*}
&\sum_{i,\ell, m}c_i^2c_\ell^2\lambda_i^3\lambda_{\ell}\lambda_m^2\E{a_i^TA^{-1}a_ia_i^TA^{-1}a_ia_\ell^TA^{-1}a_ma_m^TA^{-1}a_\ell} \\&\leq \sum_{i,\ell, m}c_i^2c_\ell^2\lambda_i^3\lambda_{\ell}\lambda_m^2\E{(a_i^TA^{-1}a_ia_i^TA^{-1}a_i)^2}^{1/2}\E{(a_\ell^TA^{-1}a_ma_m^TA^{-1}a_\ell)^2}^{1/2}
\\&\leq \sum_{i,\ell, m}c_i^2c_\ell^2\lambda_i^3\lambda_{\ell}\lambda_m^2\E{(a_i^TA^{-1}a_ia_i^TA^{-1}a_i)^2}^{1/2}\E{(a_\ell^TA^{-1}a_ma_m^TA^{-1}a_\ell)^2}^{1/2}\\&\leq \Theta(\frac{n^3tr(\Sigma^2)}{tr(\Sigma)^4})\beta^T\Sigma^3\beta\beta^T\Sigma\beta
\end{align*}
For $i=j$ and $\ell=m$,
\begin{align*}
&\sum_{i,\ell}c_i^2c_\ell^2\lambda_i^3\lambda_{\ell}^3\E{a_i^TA^{-1}a_ia_i^TA^{-1}a_ia_\ell^TA^{-1}a_\ell a_\ell^TA^{-1}a_\ell} \\&\leq  \sum_{i,\ell}c_i^2c_\ell^2\lambda_i^3\lambda_{\ell}^3\E{tr(A_{i\ell}^{-1})^4}\leq \Theta(\frac{n^4}{tr(\Sigma)^4})(\beta^T\Sigma^3\beta)^2 
\end{align*}
For $i=j=m$,
\begin{align*}
&\sum_{i,\ell}c_i^2c_\ell^2\lambda_i^5\lambda_\ell\E{a_i^TA^{-1}a_ia_i^TA^{-1}a_ia_\ell^TA^{-1}a_ia_i^TA^{-1}a_\ell}
\\&\leq \sum_{i,\ell}c_i^2c_\ell^2\lambda_i^5\lambda_\ell\E{tr(A_{i\ell}^{-1})^2tr(A^{-2}_{i\ell})}\leq \Theta(\frac{n^4}{tr(\Sigma)^4})\beta^T\Sigma^5\beta\beta^T\Sigma\beta
\end{align*}

As a result.
\begin{align*}
&\sum_{i,j,m,\ell} c_i^2c_\ell^2\lambda_j^2\lambda_m^2\lambda_i\lambda_\ell\E{a_i^TA^{-1}a_ja_j^TA^{-1}a_ia_\ell^TA^{-1}a_ma_m^TA^{-1}a_\ell}\\&\leq \Theta(\frac{1}{tr(\Sigma)^4})(n^2tr(\Sigma^2)(\beta^T\Sigma\beta)^2+n^3tr(\Sigma^2)\beta^T\Sigma^3\beta\beta^T\Sigma\beta+n^4(\beta^T\Sigma^3\beta)^2+n^4\beta^T\Sigma^5\beta\beta^T\Sigma\beta)    
\end{align*}

For case (ii), we do the same procedure. We begin by noting that if $i=k$ or $j=m$, then we have already covered those cases.

For $i,k,j,m$ distinct case (denoted $(i,j,k,m)\in E$),
\begin{align*}
    &\sum_{(i,j,k,m)\in E}c_i^2c_k^2\lambda_i\lambda_k\lambda_j^2\lambda_m^2\E{a_i^TA^{-1}a_ja_j^TA^{-1}a_ka_k^TA^{-1}a_ja_j^TA^{-1}a_ma_m^TA^{-1}a_i}
    \\&\leq \sum_{(i,j,k,m)\in E}c_i^2c_k^2\lambda_i\lambda_k\lambda_j^2\lambda_m^2\E{(a_i^TA^{-1}a_ja_j^TA^{-1}a_i)^2}^{1/2}\E{(a_k^TA^{-1}a_ma_m^TA^{-1}a_k)^2}^{1/2}
    \\&\leq \sum_{(i,j,k,m)\in E}c_i^2c_k^2\lambda_i\lambda_k\lambda_j^2\lambda_m^2\E{(a_i^TA^{-1}a_ja_j^TA^{-1}a_i)^2}^{1/2}\E{(a_k^TA^{-1}a_ma_m^TA^{-1}a_k)^2}^{1/2}
    \\&\leq \sum_{(i,j,k,m)\in E}c_i^2c_k^2\lambda_i\lambda_k\lambda_j^2\lambda_m^2\Theta(\frac{n^2}{tr(\Sigma)^4})\leq \Theta(1)r(n)^2n^2(\beta^T\Sigma\beta)^2
\end{align*}

For the $i=j$ ($j\neq m$ and $i\neq k$) case which we will denote $(i,j,k,m)\in E$,
\begin{align*}
    &\sum_{(i,k,m)\in E}c_i^2c_k^2\lambda_i^3\lambda_k\lambda_m^2\E{a_i^TA^{-1}a_ia_i^TA^{-1}a_ka_k^TA^{-1}a_ma_m^TA^{-1}a_i}
    \\&\leq \sum_{(i,k,m)\in E}c_i^2c_k^2\lambda_i^3\lambda_k\lambda_m^2\E{(a_iA^{-1}a_i)^2a_i^TA^{-1}a_ka_k^TA^{-1}a_i}^{1/2}\E{a_m^TA^{-1}a_ka_k^TA^{-1}a_ma_i^TA^{-1}a_ka_k^TA^{-1}a_i}^{1/2}
 \\&\leq \sum_{(i,k,m)\in E}c_i^2c_k^2\lambda_i^3\lambda_k\lambda_m^2\E{tr(A_{ik}^{-1})^2tr(A^{-2}_{ik})}^{1/2}\E{a_m^TA_k^{-2}a_ma_i^TA_{k}^{-2}a_i}^{1/2}   
  \\&\leq \sum_{(i,k,m)\in E}c_i^2c_k^2\lambda_i^3\lambda_k\lambda_m^2\Theta(\frac{n^3}{tr(\Sigma)^4})\leq \Theta(\frac{n^3tr(\Sigma^2)}{tr(\Sigma)^4})\beta^T\Sigma^3\beta\beta^T\Sigma\beta 
\end{align*}

The conclusion follows.

\end{proof}

\begin{proposition}\label{prop:noise_nN_v_n_gen}
We assume Assumption \ref{assumpt:weak_canonical_case} holds. Let $X_s$ be a sub-sample of $X$ with size $k$. Assume $\sigma_{max}^2=\exp(o(n^{1/3}\rho))$. Finally, let,
\begin{align*}
    \tilde{\sigma}^2=\E{tr((X_sX_s^T)^{-1}X_s\Sigma X_s^T(X_sX_s^T)^{-1}\Lambda(X_s))}/\E{tr(\Sigma X_s^T(X_sX_s^T)^{-2}X_s)}
\end{align*}
Then, 
\begin{align*}
\tilde{\sigma}^2&\geq (1-6\rho^2(\frac{k}{n})^{1/3})\E{\frac{x_1^T\Sigma x_1}{\norm{x_1}^4}\E{\eps_1^2|x_1}}/(r(n)(1+kr(n))\\   
\tilde{\sigma}^2&\leq \frac{1+6\rho^2(\frac{k}{n})^{1/3}}{1-3\rho(\frac{k}{n})^{1/3}}\E{\frac{x_1^T\Sigma x_1}{\norm{x_1}^4}\E{\eps_1^2|x_1}}/(r(n)(1+3kr(n))+o(1)
\end{align*}
\end{proposition}
\begin{proof}
    Let $B(X_s)=(X_sX_s^T)^{-1}X_s\Sigma X_s^T(X_sX_s^T)^{-1}$. 

We begin by noting that,
\begin{align}\label{equ:tr_decomp_noise_diff}
    \E{tr(B\Lambda(X_s))}=\sum_{i=1}^{\text{rank}(X_s)} \E{B_{ii}(X_s)\E{\eps_i^2|X_i}}=\text{rank}(X_s)\E{B_{11}(X_s)\E{\eps_1^2|X_1}}
\end{align}
This is because any permutation of the rows of $X$ is equal in distribution. Let $k:=\text{rank}(X_s)$.

Now consider, the following block matrix:
$X_s=\begin{bmatrix}
    \tilde{X}\\
    \tilde{x}
\end{bmatrix}$.

By properties of block matrices we update the following expression:
\begin{align*}
    (X_s&X_s^T)^{-1}=\begin{bmatrix}
        \tilde{X}\tilde{X}^T & \tilde{X}\tilde{x}\\
        \tilde{x}X^T & \tilde{x}^T\tilde{x} 
    \end{bmatrix}^{-1}\\&=\begin{bmatrix}
        (\tilde{X}\tilde{X}^T)^{-1}+(\tilde{X}\tilde{X}^T)^{-1} \tilde{x}(\tilde{x}^T(I-\Pi_{\tilde{X}})\tilde{x})^{-1}\tilde{x}^T\tilde{X}^T(\tilde{X}\tilde{X}^T)^{-1}&-(\tilde{X}\tilde{X}^T)^{-1}\tilde{X}\tilde{x}(\tilde{x}^T(I-\Pi_X)\tilde{x})^{-1} \\
        -(\tilde{x}^T(I-\Pi_{\tilde{X}})\tilde{x})^{-1}\tilde{x}^T\tilde{X}^T(\tilde{X}\tilde{X}^T)^{-1}& (\tilde{x}^T(I-\Pi_{\tilde{X}})\tilde{x})^{-1}
    \end{bmatrix}
\end{align*}

As a result,
\begin{align*}
    e_k^TB_{11}(X_s)e_k=\frac{\tilde{x}^T(I-\Pi_{\tilde{X}})\Sigma(I-\Pi_{\tilde{X}})\tilde{x}}{(\tilde{x}^T(I-\Pi_{\tilde{X}})\tilde{x})^{2}}
\end{align*}

We note that we have the following objects to bound: $\frac{\tilde{x}^T\Sigma\tilde{x}}{(\tilde{x}^T(I-\Pi_{\tilde{x}})\tilde{x})^2}$, $\frac{\tilde{x}^T\Pi_{\tilde{X}}\Sigma\tilde{x}}{(\tilde{x}^T(I-\Pi_{\tilde{x}})\tilde{x})^2}$, and $\frac{\tilde{x}^T\Pi_{\tilde{X}}\Sigma\Pi_{\tilde{X}}\tilde{x}}{(\tilde{x}^T(I-\Pi_{\tilde{x}})\tilde{x})^2}$. We note our goal is to get terms of the form, $\E{\frac{\tilde{x}^T\Sigma\tilde{x}}{(\tilde{x}^T\tilde{x})^2}\E{\eps(\tilde{x})^2|\tilde{x}}}$.

Starting with the first term, for its lower bound we have that,
\begin{align*}
    \E{\frac{\tilde{x}^T\Sigma\tilde{x}}{(\tilde{x}^T(I-\Pi_{\tilde{x}})\tilde{x})^2}\E{\eps(\tilde{x})^2|\tilde{x}}}\geq\E{\frac{\tilde{x}^T\Sigma\tilde{x}}{(\tilde{x}^T\tilde{x})^2}\E{\eps(\tilde{x})^2|\tilde{x}}}
\end{align*}

Turning to the upper bound, we need to use Lemma \ref{lemma:hanson-wright}. So, let $E:=\{\tilde{x}^T\Pi_{\tilde{X}}\tilde{x}\geq 2\rho b_n\norm{\tilde{x}}^2\}$ where $b_n:=\left(\frac{k}{n}\right)^{1/3}$

We note that on the event $E^c$, we have that,
\begin{align*}
\E{\frac{\tilde{x}^T\Sigma\tilde{x}}{(\tilde{x}^T(I-\Pi_{\tilde{x}})\tilde{x})^2}\E{\eps(\tilde{x})^2|\tilde{x}}\mathbf{1}(E^c)}\leq \frac{1}{\left(1-\rho b_n\right)^2}\E{\frac{\tilde{x}^T\Sigma\tilde{x}}{\norm{\tilde{x}}^4}\E{\eps(\tilde{x})^2|\tilde{x}}}
\end{align*}

For the event, $E$, we use Cauchy-Schwartz and Prop. \ref{prop:general_chi_square_moment},
\begin{align*}
\E{\frac{\tilde{x}^T\Sigma\tilde{x}}{(\tilde{x}^T(I-\Pi_{\tilde{x}})\tilde{x})^2}\E{\eps(\tilde{x})^2|\tilde{x}}\mathbf{1}(E)}&\leq \sigma_{max}^2\E{\frac{(\tilde{x}^T\Sigma\tilde{x})^2}{(\tilde{x}^T(I-\Pi_{\tilde{X}})\tilde{x})^4}}^{1/2}\mathbb{P}(E)^{1/2} 
\\&\leq \sigma^2_{max}\E{(\tilde{x}^T\Sigma\tilde{x})^4}^{1/4}\E{\frac{1}{(\tilde{x}^T(I-\Pi_{\tilde{X}})\tilde{x})^8}}^{1/4}\mathbb{P}(E)^{1/2}
\\&\leq \Theta(1)\sigma_{max}^2\frac{tr(\Sigma^2)}{d^2}\mathbb{P}(E)^{1/2}=o(r(n))
\end{align*}

Where we note that $tr(\Sigma^{1/2}(I-\Pi_{\tilde{X}})\Sigma^{1/2})\geq d-\rho d\frac{k}{n}\geq (1-\rho b_n)d$. 

For the probability term, by Lemma \ref{lemma:hanson-wright}, we have the following
\begin{align*}
    \mathbb{P}\left(\tilde{x}^T\Pi_{\tilde{X}}\tilde{x}>2\rho b_n\tilde{x}^T\tilde{x}\right)&\leq \mathbb{P}\left(\tilde{x}^T\Pi_{\tilde{X}}\tilde{x}>1.5\rho b_n d\right)+\mathbb{P}\left(|\tilde{x}^T\tilde{x}-\E{\tilde{x}^T\tilde{x}}|>.5\rho b_nd\right)
    \\&\leq \mathbb{P}\left(|\tilde{x}^T\Pi_{\tilde{X}}\tilde{x}-\E{\tilde{x}^T\Pi_{\tilde{X}}\tilde{x}}|>.5\rho  b_nd\right)+ \mathbb{P}\left(|\tilde{x}^T\tilde{x}-\E{\tilde{x}^T\tilde{x}}|>.5\rho b_nd\right)\\&
    \leq 2\exp\left[-\eta\min\left(\frac{b_n^2\rho^2d^2}{4tr(\Pi_{\tilde{X}}\Sigma\Pi_{\tilde{X}}\Sigma)},\frac{b_n\rho d}{2\norm{\Sigma^{1/2}\Pi_{\tilde{X}}\Sigma^{1/2}}_2}\right)\right]\\&+2\exp\left[-\eta \min(\frac{b_n^2\rho^2d^2}{4tr(\Sigma^2)},\frac{b_n\rho d}{2\lambda_1})\right]
    \leq 4\exp\left[-\frac{\eta}{4}n^{1/3}\rho\right]
\end{align*}

Where we note that,
\begin{align*}
    \norm{\Sigma^{1/2}\Pi_{\tilde{X}}\Sigma^{1/2}}_2&\leq \norm{\Sigma^{1/2}}_2^2=\rho\frac{d}{n}\\
    tr((\Sigma^{1/2}\Pi_{\tilde{X}}\Sigma^{1/2})^2)
    &=tr(\Sigma\Pi_{\tilde{X}}\Sigma\Pi_{\tilde{X}})\leq tr(\Sigma^2)^{1/2}tr(\Pi_{\tilde{X}}\Sigma\Pi_{\tilde{X}}\Sigma)^{1/2}=tr(\Sigma^2)^{1/2}tr((\Sigma^{1/2}\Pi_{\tilde{X}}\Sigma^{1/2})^2)^{1/2}
\end{align*}

By the same reasoning as above, for the third term, we have,
\begin{align*}
    \E{\frac{\tilde{x}^T\Pi_{\tilde{X}}\Sigma\Pi_{\tilde{X}}\tilde{x}}{(\tilde{x}^T(I-\Pi_{\tilde{X}})\tilde{x})^2}\E{\eps(\tilde{x})^2|\tilde{x}}}&\leq \E{\frac{\tilde{x}^T\Pi_{\tilde{X}}\Sigma\Pi_{\tilde{X}}\tilde{x}}{\norm{\tilde{x}}^4}\E{\eps(\tilde{x})^2|\tilde{x}}}+o(r(n))\\&\leq 3\rho^2(\frac{k^2}{n^2}+\frac{k}{n})\E{\frac{\tilde{x}^T\Sigma\tilde{x}}{\norm{\tilde{x}}^4}\E{\eps(\tilde{x})^2|\tilde{x}}}+o(r(n))
\end{align*}

Turning to the second term, because its not guaranteed to always be positive, we need to do more work and we will just use the negative of its upper bound as its lower bound. We thus, consider $F:=\{\tilde{x}^T\Pi_{\tilde{X}}\Sigma\tilde{x}>3\rho^2 b_n\tilde{x}^T\Sigma\tilde{x}\}$.

We note that,
\begin{align*}
    \mathbb{P}(F)&\leq \mathbb{P}(|\tilde{x}^T\Pi_{\tilde{X}}\Sigma\tilde{x} -tr(\Pi_{\tilde{X}}\Sigma^2)|>3\rho^2 b_n\tilde{x}^T\Sigma\tilde{x}-tr(\Pi_{\tilde{X}}\Sigma^2))
    \\&\leq\mathbb{P}(|\tilde{x}^T\Pi_{\tilde{X}}\Sigma\tilde{x} -tr(\Pi_{\tilde{X}}\Sigma^2)|>3\rho^2 b_n\tilde{x}^T\Sigma\tilde{x}-\rho^2 b_n\frac{d^2}{n})
    \\&\leq \mathbb{P}(|\tilde{x}^T\Pi_{\tilde{X}}\Sigma\tilde{x} -tr(\Pi_{\tilde{X}}\Sigma^2)|>.5\rho^2 b_n\frac{d^2}{n})+\mathbb{P}(|\tilde{x}^T\Sigma\tilde{x}-\E{\tilde{x}^T\Sigma\tilde{x}}|>.5\rho^2 b_n\frac{d^2}{n})
\end{align*}

Now, because $\lambda_1(\Sigma^2)=\rho^2\frac{d^2}{n^2}$ and $tr(\Sigma^4)\leq \rho^3\frac{d^4}{n^3}$,
\begin{align*}
\mathbb{P}(|\tilde{x}^T\Sigma\tilde{x}-\E{\tilde{x}^T\Sigma\tilde{x}}|>.5\rho^2 b_n\frac{d^2}{n})&\leq 2\exp\left(-\eta\min(\frac{\rho^4d^4/n^2b_n^2}{4\rho^4d^4/n^3},\frac{\rho^2b_nd^2/n}{2\rho^2d^2/n^2})\right)
\\&\leq 2\exp(-\frac{\eta}{4}\rho n) 
\end{align*}

For the remaining term, we start off by noting the following:
\begin{align*}
\norm{(\Sigma^{1/2}\Pi_{\tilde{X}}\Sigma^{3/2})^T(\Sigma^{1/2}\Pi_{\tilde{X}}\Sigma^{3/2})}_2&= \norm{\Sigma^{3/2}\Pi_{\tilde{X}}\Sigma\Pi_{\tilde{X}}\Sigma^{3/2}}_2\leq \norm{\Sigma^{3/2}}_2^2\norm{\Pi_{\tilde{X}}\Sigma\Pi_{\tilde{X}}}_2\\&\leq \lambda_1^{3}\norm{\Pi_{\tilde{X}}}_2^2\norm{\Sigma}_2\leq \rho^4\frac{d^4}{n^4}\\
tr((\Sigma^{1/2}\Pi_{\tilde{X}}\Sigma^{3/2})^T(\Sigma^{1/2}\Pi_{\tilde{X}}\Sigma^{3/2}))&=tr(\Sigma^{3}\Pi_{\tilde{X}}\Sigma\Pi_{\tilde{X}})\leq tr(\Sigma^6)^{1/2}tr(\Pi_{\tilde{X}}\Sigma\Pi_{\tilde{X}}\Sigma)^{1/2}
\\&\leq tr(\Sigma^6)^{1/2}tr(\Sigma^2)^{1/2}\leq (\rho^5\frac{d^6}{n^5}\rho\frac{d^2}{n})^{1/2}=\rho^3\frac{d^4}{n^3}
\end{align*}

As a result,
\begin{align*}
 \mathbb{P}(|\tilde{x}^T\Pi_{\tilde{X}}\Sigma\tilde{x} -tr(\Pi_{\tilde{X}}\Sigma^2)|>.5\rho^2 b_n\frac{d^2}{n})&\leq 2\exp\left[-\eta\min(\frac{\rho^4b_n^2d^4/n^2}{4\norm{\Sigma^{1/2}\Pi_{\tilde{X}}\Sigma^{3/2}}_F^2},\frac{\rho^2 b_nd^2/n}{2\norm{\Sigma^{1/2}\Pi_{\tilde{X}}\Sigma^{3/2}}_2^{1/2}})\right]   
 \\&\leq 2\exp\left[-\eta\min(\frac{1}{4}\rho b_n^2n, \frac{1}{2}b_nn^2)\right]\leq 2\exp\left[-\frac{\eta}{4}\rho n^{1/3}\right]
\end{align*}

Thus, on the event $E^c\cap F^c$,
\begin{align*}
    \E{\frac{\tilde{x}^T\Pi_{\tilde{X}}\Sigma\tilde{x}}{(\tilde{x}^T(I-\Pi_{\tilde{X}})\tilde{x})^2}\E{\eps(\tilde{x})^2|\tilde{x}}\mathbf{1}(E^c\cap F^c)}\leq 3\frac{\rho^2b_n}{(1-b_n\rho)^2}\E{\frac{\tilde{x}^T\Sigma\tilde{x}}{\norm{\tilde{x}}^4}\E{\eps(\tilde{x})^2|\tilde{x}}}
\end{align*}

For $(E^c\cap F^c)^{c}$, we first note that $\mathbb{P}((E^c\cap F^c)^{c})\leq 2\mathbb{P}(E)+2\mathbb{P}(F)$. Thus, 
\begin{align*}
&\E{\frac{\tilde{x}^T\Pi_{\tilde{X}}\Sigma\tilde{x}}{(\tilde{x}^T(I-\Pi_{\tilde{X}})\tilde{x})^2}\E{\eps(\tilde{x})^2|\tilde{x}}\mathbf{1}\left((E^c\cap F^c)^c\right)}\\&\leq \sigma_{max}^2\E{(\tilde{x}^T\Pi_{\tilde{X}}\Sigma\tilde{x})^4}^{1/2}\E{\frac{1}{(\tilde{x}^T(I-\Pi_{\tilde{X}})\tilde{x})^8}}^{1/4}(\mathbb{P}((E^c\cap F^c)^{c}))^{1/4}
\\&\leq 16\sigma_{max}^2\exp[-\frac{c}{16}n^{1/3}\rho]\frac{tr(\Sigma^2)}{d^2}(1+o(1))=o(r(n))
\end{align*}

Where we note that, $\E{(\tilde{x}^T\Pi_{\tilde{X}}\Sigma\tilde{x})^2}\leq $

This comes from the following. We note that by Prop. \ref{prop:second_third_quadratic_forms},
\begin{align*}
    \E{(\tilde{x}^T\Pi_{\tilde{X}}\Sigma\tilde{x})^2}&\leq \E{tr(\Sigma\Pi_{\tilde{X}}\Sigma)^2+2tr(\Sigma^{1/2}\Pi_{\tilde{X}}\Sigma^2\Pi_{\tilde{X}}\Sigma^{1/2})}
    \\&\leq \Theta(\frac{n^2}{d^2})\rho^2\frac{d^2}{n^2}tr(\Sigma^2)+2\E{tr(\Sigma^2\Pi_{\tilde{X}}\Sigma\Pi_{\tilde{X}})}
    \\&\leq \Theta(\frac{n^2}{d^2})tr(\Sigma^2)^2+4\frac{n^2}{d^2}tr(\Sigma^2\Sigma^3)+4nr(n)tr(\Sigma^3)
    \\&\leq \Theta(\frac{n^2}{d^2}+\frac{n}{d}+nr(n))tr(\Sigma^2)^2
\end{align*}

Where we use the fact that,
\begin{align*}
    \E{tr(\Pi_{\tilde{X}}\Sigma)^2}&\leq \sum_{i,j}\lambda_i^2\lambda_j^2\E{a_i^TA^{-1}a_ia_j^TA^{-1}a_j}
    \\&\leq \sum_{i}\lambda_i^4\E{a_i^TA^{-1}a_ia_i^TA^{-1}a_i}+\sum_{i\neq j}\lambda_i^2\lambda_j^2\E{tr(A^{-1})^2}
    \\&\leq \sum_i \lambda_i^4\Theta(\frac{n^2}{d^2})+\Theta(\frac{n^2}{d^2})tr(\Sigma^2)^2
    \\&\leq \Theta(\frac{n^2}{d^2})tr(\Sigma^2)^2
\end{align*}

The result follows when we note that by the proof of Prop. \ref{prop:exp_noise_first_order},
\begin{align*}
    \E{tr(\Sigma X^T(XX^T)^{-2}X)}&\leq r(n)k(1+3kr(n))(1+o(1))\\
    \E{tr(\Sigma X^T(XX^T)^{-2}X)}&\geq r(n)k(1+kr(n))(1+o(1))    
\end{align*}

\end{proof}

\begin{proposition}\label{prop:noise_nN_v_n}
Consider the assumptions and notation of Prop. \ref{prop:noise_nN_v_n_gen}.
Then, $|\sigma^2-\tilde{\sigma}^2|\leq 10\rho\sigma^2$

\end{proposition}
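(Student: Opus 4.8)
The plan is to recognize both $\sigma^2$ and $\tilde\sigma^2$ as instances of the single quantity for which Prop.~\ref{prop:noise_nN_v_n_gen} supplies matching two-sided bounds, and then to divide those bounds so that the common factor cancels. First I would observe that $\sigma^2$ is simply the ``full-sample'' ($X_s=X$) case of $\tilde\sigma^2$: by cyclicity of the trace,
\begin{align*}
\tr\big((XX^T)^{-1}X\Sigma X^T(XX^T)^{-1}\Lambda(X)\big)=\tr\big(\Sigma X^T(XX^T)^{-1}\Lambda(X)(XX^T)^{-1}X\big),
\end{align*}
so the ratio defining $\tilde\sigma^2$ evaluated at $X_s=X$ equals $\sigma^2$ of Def.~\ref{def:mean_concentration}. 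Hence Prop.~\ref{prop:noise_nN_v_n_gen} applies to $\sigma^2$ with $k=n$ (so that $(k/n)^{1/3}=1$) and to $\tilde\sigma^2$ with the sub-sample size $k\le n$ (so that $b_k:=(k/n)^{1/3}\le1$). Crucially, both sets of bounds are expressed through the same $k$-free quantity $T:=\E{\frac{x_1^T\Sigma x_1}{\norm{x_1}^4}\E{\eps_1^2\mid x_1}}$ and the same $r(n)$; the only $k$-dependence lives in the factors $1+\Theta(kr(n))$ in the denominators and in $b_k$ inside the $\rho$-error terms.

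Next I would dispose of these mismatched factors. The key elementary estimate is $nr(n)\le\rho$: since $\tr(\Sigma^2)\le\lambda_1\tr(\Sigma)$ and $\tr(\Sigma)=d$ (Assumption~\ref{assumpt:weak_canonical_case}.1), we have $r(n)=\tr(\Sigma^2)/\tr(\Sigma)^2\le\lambda_1/d$, hence $kr(n)\le nr(n)\le n\lambda_1/d=\rho\le\tfrac18$. Consequently each of $(1+nr(n))^{-1},(1+kr(n))^{-1},(1+3nr(n))^{-1},(1+3kr(n))^{-1}$ lies in $[1-3\rho,1]$, and $b_k\le1$ bounds the $6\rho^2b_k$ and $3\rho b_k$ contributions by $6\rho^2$ and $3\rho$. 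I would then divide the upper bound for $\tilde\sigma^2$ by the lower bound for $\sigma^2$, and conversely: the factor $T/r(n)$ cancels and the remaining product of $(1\pm O(\rho))$ and $(1\pm O(\rho^2))$ terms, evaluated with $\rho\le\tfrac18$, is checked to collapse to $\tilde\sigma^2\le(1+10\rho)\sigma^2$ and $\tilde\sigma^2\ge(1-10\rho)\sigma^2$, i.e.\ $|\sigma^2-\tilde\sigma^2|\le10\rho\sigma^2$; in the regime where $\rho$ is not small the lower inequality is in any case immediate from $\tilde\sigma^2\ge0$.

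The main obstacle I anticipate is the constant bookkeeping rather than any conceptual difficulty: one must verify that the accumulated multiplicative errors — from $\tfrac{1+6\rho^2}{1-3\rho}$, from swapping a $1+3kr(n)$ denominator for a $1+nr(n)$ denominator between the two bounds, and from the additive $o(1)$ term in Prop.~\ref{prop:noise_nN_v_n_gen} — actually fit inside the stated slack $10\rho$ and not a larger constant, which is tight near $\rho=\tfrac18$. To absorb the additive $o(1)$ I would recall from the proof of Prop.~\ref{prop:noise_nN_v_n_gen} that it arises as an $o(r(n))$ contribution divided by a $\Theta(r(n)k)$ denominator, hence is $o(1)$ relative to the $\Theta(T/r(n))=\Theta(\sigma^2)$ leading term, so it does not disturb the multiplicative comparison.
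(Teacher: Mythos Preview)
Your proposal is correct and follows essentially the same route as the paper: apply the two-sided bounds of Prop.~\ref{prop:noise_nN_v_n_gen} to both $\tilde\sigma^2$ and to $\sigma^2$ (the latter being the full-sample $k=n$ instance), then compare and collapse the $\rho$-errors. The only minor deviation is that the paper phrases the comparison additively (bounding $|\tilde\sigma^2-\sigma^2|\le 8\rho\,T$ with $T=\E{\tfrac{x_1^T\Sigma x_1}{\|x_1\|^4}\E{\eps_1^2\mid x_1}}$ and then $T\le\tfrac{5}{4}\sigma^2$) rather than multiplicatively, and invokes Assumption~\ref{assumpt:weak_canonical_case}.4 to get the sharper $nr(n)\le\rho^2$ in place of your $nr(n)\le\rho$; either bound suffices for the stated constant.
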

\begin{proof}
Using the inequalities in Prop. \ref{prop:noise_nN_v_n_gen} and noting that $nr(n)\leq \rho^2$ by Assumption \ref{assumpt:weak_canonical_case}.4.
\begin{align*}
    \tilde{\sigma}^2-\sigma^2&\leq \E{\frac{x_1^T\Sigma x_1}{\norm{x_1}^4}\E{\eps_1^2|x_1}}(8\rho)\\
    \sigma^2-\tilde{\sigma}^2&\geq \E{\frac{x_1^T\Sigma x_1}{\norm{x_1}^4}\E{\eps_1^2|x_1}}(-8\rho)
\end{align*}

As a result,
\begin{align*}
    |\sigma^2-\tilde{\sigma}^2|\leq 8\rho\E{\frac{x_1^T\Sigma x_1}{\norm{x_1}^4}\E{\eps_1^2|x_1}}\leq 10\rho\sigma^2 
\end{align*}
    
\end{proof}
\begin{proposition}\label{prop:noise_nN_v_n_o_1}
Consider the assumptions and notation of Prop. \ref{prop:noise_nN_v_n_gen}. Let, $X^*$ be a second sub-sample of $X$ of $k'$ data points. Let,
\begin{align*}
    \tilde{\sigma}_*^2=\E{tr\left((X^*(X^*)^T)^{-1}X^*\Sigma (X^*)^T(X^*(X^*)^T)^{-1}\Lambda(X^*)\right)}/\E{tr(\Sigma (X^*)^T(X^*(X^*)^T)^{-2}X^*)}
\end{align*}
Then, when $k=o(n)$ and $k'=o(n)$, $|\tilde{\sigma}^2-\tilde{\sigma}_*^2|= o(\sigma^2)$.
\end{proposition}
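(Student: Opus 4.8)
The plan is to read off both $\tilde\sigma^2$ and $\tilde\sigma_*^2$ from the two--sided estimates in Proposition \ref{prop:noise_nN_v_n_gen} and to observe that their leading terms are literally the same. Set
\begin{equation*}
A:=\E{\frac{x_1^T\Sigma x_1}{\norm{x_1}^4}\E{\eps_1^2\mid x_1}}.
\end{equation*}
This is a functional of a single draw $(x_1,\eps_1)$ and does not see the subsample at all, so \emph{exactly the same} $A$ appears in the bounds for $\tilde\sigma^2$ (subsample size $k$) and for $\tilde\sigma_*^2$ (subsample size $k'$; note $X^*$ is itself a legitimate subsample of $X$, so Proposition \ref{prop:noise_nN_v_n_gen} applies to it verbatim with $k$ replaced by $k'$). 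Concretely, that proposition gives
\begin{equation*}
\bigl(1-6\rho^2(k/n)^{1/3}\bigr)\frac{A}{r(n)(1+kr(n))}\le\tilde\sigma^2\le\frac{1+6\rho^2(k/n)^{1/3}}{1-3\rho(k/n)^{1/3}}\,\frac{A}{r(n)(1+3kr(n))}+o(1),
\end{equation*}
and the identical pair of inequalities holds for $\tilde\sigma_*^2$ with $k'$ in place of $k$.

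Next I would check that, under $k=o(n)$ and $k'=o(n)$, every multiplicative correction factor above is $1+o(1)$ and the additive term is $o(1)$. Since $\lambda_1\le\frac18\frac dn$ we have $\rho=\frac nd\lambda_1\le\frac18$, so $\rho$ is bounded and hence $\rho^2(k/n)^{1/3},\ \rho(k/n)^{1/3}\to0$ (and the same with $k'$). Moreover $r(n)=tr(\Sigma^2)/tr(\Sigma)^2\le\lambda_1 tr(\Sigma)/tr(\Sigma)^2=\lambda_1/d\le\frac1{8n}$, so $nr(n)\le\frac18$, whence $kr(n)=(k/n)\,nr(n)\le\frac18(k/n)\to0$ and likewise $k'r(n)\to0$. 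Therefore
\begin{equation*}
\tilde\sigma^2=\frac{A}{r(n)}\bigl(1+o(1)\bigr)+o(1),\qquad \tilde\sigma_*^2=\frac{A}{r(n)}\bigl(1+o(1)\bigr)+o(1),
\end{equation*}
and subtracting, $|\tilde\sigma^2-\tilde\sigma_*^2|\le\frac{A}{r(n)}\,o(1)+o(1)$.

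It remains to convert this into the advertised $o(\sigma^2)$. By Proposition \ref{prop:noise_nN_v_n}, $|\sigma^2-\tilde\sigma^2|\le10\rho\sigma^2$, so $\tilde\sigma^2=O(\sigma^2)$, and combining with the displayed expansion for $\tilde\sigma^2$ we get $\frac{A}{r(n)}=O(\sigma^2)$. Feeding this back into the bound of the previous paragraph, $|\tilde\sigma^2-\tilde\sigma_*^2|=O(\sigma^2)\,o(1)+o(1)=o(\sigma^2)$, which is the claim (in the degenerate case $\sigma^2\to0$ all of $A/r(n)$, $\tilde\sigma^2$, $\tilde\sigma_*^2$ vanish and the statement is immediate).

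The step I expect to be the main obstacle is the error bookkeeping in the second paragraph: one must be certain that the correction terms vanish purely from $k/n,k'/n\to0$ \emph{together with} the structural bounds $\rho\le\frac18$ and $nr(n)\le\frac18$ coming from Assumption \ref{assumpt:weak_canonical_case}.2, rather than from any largeness of $k$ or $k'$, and one must make sure the stand-alone $o(1)$ produced by Proposition \ref{prop:noise_nN_v_n_gen} does not contaminate the multiplicative estimate. Tracing that proposition's proof, this $o(1)$ has the form $\bigl(k\cdot o(r(n))\bigr)/\E{tr(\Sigma X_s^T(X_sX_s^T)^{-2}X_s)}$ with denominator $\Theta(k\,r(n))$, so it is uniformly $o(1)$ and, in the regime of interest, $o(\sigma^2)$; hence it is harmless.
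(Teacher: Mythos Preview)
Your proposal is correct and follows essentially the same approach as the paper: invoke Proposition \ref{prop:noise_nN_v_n_gen} for both subsamples, observe that the leading term $A/r(n)$ is common, and check that the multiplicative corrections are $1+o(1)$ when $k,k'=o(n)$. The paper's proof is extremely terse (and in fact appears to omit the $1/r(n)$ factor in its display), whereas you carry out the error bookkeeping and the conversion to $o(\sigma^2)$ explicitly via Proposition \ref{prop:noise_nN_v_n}; this extra care is warranted and fills in what the paper leaves implicit.
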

\begin{proof}
By Prop. \ref{prop:noise_nN_v_n_gen},
\begin{align*}
    \tilde{\sigma}^2&=\E{\frac{x_1^T\Sigma x_1}{\norm{x_1}^4}\E{\eps_1^2|x_1}}(1+o(1))+o(1)\\
    \tilde{\sigma}_*^2&=\E{\frac{x_1^T\Sigma x_1}{\norm{x_1}^4}\E{\eps_1^2|x_1}}(1+o(1))+o(1)
\end{align*}

The result follows.
\end{proof}
\begin{proposition}\label{prop:var_exp_beta}
 
 We assume Assumption \ref{assumpt:weak_canonical_case} holds. Let $X_s$ be a sub-sample of $X$ with size $k$. Let $b_n$ be an increasing sequence and assume $$\sigma_{max}^2=\min\left(o(b_n^{-\eta/12}),\exp(-o(n^{1/3})\right)$$ where $\eta$ is the constant from Lemma \ref{lemma:hanson-wright}. Then,
 \begin{align*}
     \Var[\beta^T\Sigma X^T(XX^T)^{-1}\eps]\leq \Theta\left((\sigma^2+o(1))\frac{k\log(b_n)^2}{tr(\Sigma)^2}\right)\beta^T\Sigma^3\beta
 \end{align*}
\end{proposition}
\begin{proof}
 We can attain an upper bound in the following manner:
\begin{align*}
    \E{(\beta^T\Sigma X^T(XX^T)^{-1}\eps\eps^T(XX^T)^{-1}X\Sigma\beta)}&=\E{(\beta^T\Sigma X^T(XX^T)^{-1}\Lambda(X)(XX^T)^{-1}X\Sigma\beta)}
    \\&= n \E{e_n^T(XX^T)^{-1}X\Sigma\beta\beta^T\Sigma X^T(XX^T)^{-1}e_n\E{\eps_n^2|X_n}}
\end{align*}

Let $X=\begin{bmatrix}
    \tilde{X}\\ X_k
\end{bmatrix}$. For appropriate $\tilde{X}$ and we define $\tilde{x}=X_k$

Now, using the technique in the proof of Prop. \ref{prop:noise_nN_v_n_gen}, we note that,
\begin{align*}
  e_n^T(XX^T)^{-1}X\Sigma\beta\beta^T\Sigma X^T(XX^T)^{-1}e_n=\frac{\tilde{x}^T(I-\Pi_{\tilde{X}})\Sigma\beta\beta^T\Sigma (I-\Pi_{\tilde{X}})\tilde{x}}{(\tilde{x}^T(I-\Pi_{\tilde{X}})\tilde{x})^2}  
\end{align*}

Using event $E$ in the proof Prop. \ref{prop:noise_nN_v_n_gen}, we can see that,
\begin{align*}
   \E{e_n^T(XX^T)^{-1}X\Sigma\beta\beta^T\Sigma X^T(XX^T)^{-1}e_n\E{\eps_n^2|X_n}}\leq  \E{\frac{\tilde{x}^T\Sigma\beta\beta^T\Sigma \tilde{x}}{\norm{\tilde{x}}^4}\E{\eps_n^2|X_n}}+o(r(n))
\end{align*}

For the term, $\frac{\tilde{x}^T\Sigma\beta^T\beta^T\Sigma\tilde{x}}{\norm{\tilde{x}}^4}$. Consider the event,
\begin{align*}
    E:=\left\{\tilde{x}^T\Sigma\beta\beta^T\Sigma\tilde{x}>2
    \log(b_n)^2\frac{\beta^T\Sigma^3\beta}{tr(\Sigma^2)}\tilde{x}^T\Sigma\tilde{x}\right\}
\end{align*}

On the event $E^c$,
\begin{align*}
\E{\frac{\tilde{x}^T\Sigma\beta\beta^T\Sigma\tilde{x}}{\norm{\tilde{x}}^4}\E{\eps_k^2|X_k}\mathbf{1}(E^c)}&= \E{\frac{\tilde{x}^T\Sigma\beta\beta^T\Sigma\tilde{x}}{\tilde{x}^T\Sigma\tilde{x}}\frac{\tilde{x}^T\Sigma\tilde{x}}{\norm{\tilde{x}}^4}\E{\eps_k^2|X_k}\mathbf{1}(E^c)}
\\& \leq 2\log(b_n)^2\frac{\beta^T\Sigma^3\beta}{tr(\Sigma^2)}\E{\frac{\tilde{x}^T\Sigma\tilde{x}}{\norm{\tilde{x}}^4}}\leq 4\sigma^2\frac{\beta^T\Sigma^3\beta}{d^2}\log(b_n)^2
\end{align*}

Now, we note that,
\begin{align*}
    \mathbb{P}(E)&=\mathbb{P}\left(\tilde{x}^T\Sigma\beta\beta^T\Sigma\tilde{x}-\E{\tilde{x}^T\Sigma\beta\beta^T\Sigma\tilde{x}}>2\log(b_n)\frac{\beta^T\Sigma^3\beta}{tr(\Sigma^2)}\tilde{x}^T\Sigma\tilde{x}-\beta^T\Sigma^3\beta\right)
    \\&\leq \mathbb{P}\left(|\tilde{x}^T\Sigma\beta\beta^T\Sigma\tilde{x}-\E{\tilde{x}^T\Sigma\beta\beta^T\Sigma\tilde{x}}|>\frac{1}{2}\log(b_n)\beta^T\Sigma^3\beta\right)\\&+\mathbb{P}(|\tilde{x}^T\Sigma\tilde{x}-\E{\tilde{x}^T\Sigma\tilde{x}}|>\log(b_n)tr(\Sigma^2))
    \leq 4b_n^{-\eta/2}
\end{align*}

Where we note that by Lemma \ref{lemma:hanson-wright}
\begin{align*} 
 \mathbb{P}\left(|\tilde{x}^T\Sigma\beta\beta^T\Sigma\tilde{x}-\E{\tilde{x}^T\Sigma\beta\beta^T\Sigma\tilde{x}}|>\frac{\log(b_n)\beta^T\Sigma^3\beta}{2}\right)&\leq  2\exp(-\frac{\eta}{2}\log(b_n))\leq 2b_n^{-\eta/2}\\
 \mathbb{P}(|\tilde{x}^T\Sigma\tilde{x}-\E{\tilde{x}^T\Sigma\tilde{x}}|>\log(b_n)tr(\Sigma^2))&\leq 2\exp(-\eta\min(\frac{\log(b_n)^2tr(\Sigma^2)^2}{tr(\Sigma^4)},\frac{\log(b_n)tr(\Sigma^2)}{\lambda_1^2}))
 \\&\leq 2 \exp(-c\log(b_n))=2b_n^{-\eta}
\end{align*}

Where we use that $\min(\frac{\log(b_n)^2(\beta^T\Sigma^3\beta)^2}{4(\beta^T\Sigma^3\beta)^2},\frac{\log(b_n)\beta^T\Sigma^3\beta}{2\beta^T\Sigma^3\beta})\geq \log(b_n)/2$.

We note on the event $E$ by Cauchy-Schwartz, we have as follows,
\begin{align*}
\E{\frac{\tilde{x}^T\Sigma\beta\beta^T\Sigma\tilde{x}}{\norm{\tilde{x}}^4}\E{\eps_k^2|X_k}\mathbf{1}(E)}&\leq \mathbb{P}(E)^{1/4}\E{\frac{1}{\norm{\tilde{x}}^8}}^{1/4}\E{(\tilde{x}^T\Sigma\beta\beta^T\Sigma\tilde{x})^2}^{1/2}
\\&\leq \Theta(1)\sigma^2_{max}\frac{b_n^{-\eta/8}}{d^2}(1+o(1))\beta^T\Sigma^3\beta=o(\frac{\beta^T\Sigma^3\beta}{d^2})
\end{align*}

We note that applying the same technique to, the remaining terms, we get that,
\begin{align*}
    \E{\frac{\tilde{x}^T\Pi_{\tilde{X}}\Sigma\beta\beta^T\Sigma\tilde{x}}{(\tilde{x}^T(I-\Pi_{\tilde{X}})\tilde{x})^2}\E{\eps(\tilde{x})^2|\tilde{x}}}&\leq 8\rho^2(\frac{k}{n})^{1/3}\log(b_n)^2\frac{\beta^T\Sigma^3\beta}{tr(\Sigma^2)}\E{\frac{\tilde{x}^T\Sigma\tilde{x}}{\norm{\tilde{x}}^4}}\leq 8\rho^2(\frac{k}{n})^{1/3}\log(b_n)^2\frac{\beta^T\Sigma^3\beta}{d^2}\sigma^2\\
    \E{\frac{\tilde{x}^T\Pi_{\tilde{X}}\Sigma\beta\beta^T\Sigma\Pi_{\tilde{X}}\tilde{x}}{(\tilde{x}^T(I-\Pi_{\tilde{X}})\tilde{x})^2}} &\leq 8\rho^2\frac{k}{n}\sigma^2\log(b_n)^2\frac{\beta^T\Sigma^3\beta}{d^2}
\end{align*}

For the $\tilde{x}^T\Pi_{\tilde{X}}\Sigma\Pi_{\tilde{X}}\tilde{x}$ term, we apply its bounds the proof of Prop. \ref{prop:straight_var_in_ds} as follows:
\begin{align*}
    \E{\frac{\tilde{x}^T\Pi_{\tilde{X}}\Sigma\beta\beta^T\Sigma\Pi_{\tilde{X}}\tilde{x}}{(\tilde{x}^T(I-\Pi_{\tilde{X}})\tilde{x})^2}\E{\eps(\tilde{x})^2|\tilde{x}}}\leq \E{\frac{\tilde{x}^T\Pi_{\tilde{X}}\Sigma\Pi_{\tilde{X}}\tilde{x}}{(\tilde{x}^T(I-\Pi_{\tilde{X}})\tilde{x})^2}\E{\eps(\tilde{x})^2|\tilde{x}}}\beta^T\Sigma\beta\leq 6\rho^2\frac{k}{n}\sigma^2r(n)
\end{align*}

\end{proof}
\begin{proposition}
    We assume Assumption \ref{assumpt:weak_canonical_case} holds. Then,
    \begin{align*}
        c_{opt}=\frac{\frac{n}{d}\beta^T\Sigma^2\beta}{\frac{n^2}{d^2}\beta^T\Sigma^3\beta+(1+\sigma^2)r(n)n}(1+O(q))
    \end{align*}
\end{proposition}
\begin{proof}
We note that $c_{opt}=\frac{\E{\theta_{MN}^T\Sigma\beta}}{\E{\theta_{MN}^T\Sigma\theta_{MN}}}$.

We begin by bounding the denominator.
\begin{align*}
    \E{\theta_{MN}^T\Sigma\theta_{MN}}&\leq (1+\sigma^2)r(n)n(1+2r(n))/(1-\Theta(1)\rho)+(1+o(1))\frac{n^2(1+r(n)/(1-4\rho)^3)^2}{d^2}\beta^T\Sigma^2\beta
    \\&\leq (1+\sigma^2)r(n)n(1+\Theta(1)q^2)+\frac{n^2}{d^2}\beta^T\Sigma^3\beta(1+\Theta(1)q^2)
    \\\E{\theta_{MN}^T\Sigma\theta_{MN}}&\geq (1+\sigma^2)r(n)n+\frac{n^2}{d^2}\beta^T\Sigma^3\beta(1+O(1)q^2)-\Omega\left(\frac{n^3}{d^3}\right)\beta^T\Sigma^4\beta\\&\geq (1+\sigma^2)r(n)n+\frac{n^2}{d^2}\beta^T\Sigma^3\beta(1+O(1)q^2) -\Omega(1)q^3
\end{align*}

As a result,
\begin{align*}
   \E{\theta_{MN}^T\Sigma\theta_{MN}}=(1+\sigma^2)r(n)+\frac{n^2}{d^2}\beta^T\Sigma^3\beta(1+\Theta(1)q^2)+\Theta(1)q^3
\end{align*}

Similarly, for the numerator, we have the following,
\begin{align*}
    \E{\theta_{MN}^T\Sigma\beta}&\leq \frac{n}{d}\beta^T\Sigma^2\beta(1+o(1))+\Theta(1)r(n)\frac{n}{d}\beta^T\Sigma^2\beta\leq \frac{n}{d}\beta^T\Sigma^2\beta (1+\Theta(1)q)
    \\\E{\theta_{MN}^T\Sigma\beta}&\geq \frac{n}{d}\beta^T\Sigma^2\beta(1+o(1))+r(n)\frac{n}{d}\beta^T\Sigma^2\beta-\Omega(\frac{n^2}{d^2})\beta^T\Sigma^3\beta\geq \frac{n}{d}\beta^T\Sigma^2\beta(1+\Theta(1)q)
\end{align*}

As such,
\begin{align*}
    c_{opt}=\frac{q-\Theta(1)q^2}{\frac{n^2}{d^2}\beta^T\Sigma^3\beta+r(n)n(1+\sigma^2)+\Theta(1)q^3}=\frac{q}{\frac{n^2}{d^2}\beta^T\Sigma^3\beta+r(n)n(1+\sigma^2)}(1+\Theta(1)q)
\end{align*}

Where we note that,
\begin{align*}
&\frac{q}{\frac{n^2}{d^2}\beta^T\Sigma^3\beta+r(n)n(1+\sigma^2)+\Theta(1)q^3}-\frac{q}{\frac{n^2}{d^2}\beta^T\Sigma^3\beta+r(n)n(1+\sigma^2)}\\&=\frac{q}{\frac{n^2}{d^2}\beta^T\Sigma^3\beta+r(n)n(1+\sigma^2)}\left(\frac{1}{1+\Theta(1)q^3/(\frac{n^2}{d^2}\beta^T\Sigma^3\beta+r(n)n(1+\sigma^2))}-1\right)
\\&=\frac{q}{\frac{n^2}{d^2}\beta^T\Sigma^3\beta+r(n)n(1+\sigma^2)}\left(\frac{1}{1+\Theta(1)q^3/(\Theta(1)q^2)}-1\right)
\\&=\frac{q}{\frac{n^2}{d^2}\beta^T\Sigma^3\beta+r(n)n(1+\sigma^2)}\left(\Theta(1)q\right)
\end{align*}

\end{proof}
\begin{proposition}\label{prop:weak_implies_strong}
If Assumption \ref{assumpt:strong_canonical_case} is satisfied, then Assumption \ref{assumpt:weak_canonical_case} is satisfied.     
\end{proposition}
\begin{proof}
We note that Assumptions 2.11.1-2 are trivially satisfied.

We also note that $q:=\frac{n}{d}\beta^T\Sigma^2\beta\leq q_{max}$ and $\frac{n^2}{d^2}\beta^T\Sigma^3\beta \leq q_{max}^2$, through the identity that $\Sigma^{k+1}\preceq \lambda_1\Sigma^{k}$ which in turn implies Assumption 2.11.3 as well as that $q\ll 1$.

To finish the proof, we need to show that $\frac{n}{d}\beta^T\Sigma^2\beta\geq cq_{max}$. This comes as follows:
\begin{align*}
    \frac{n}{d}\beta^T\Sigma^2\beta=\frac{n}{d}\sum_{i\in K_n} \lambda_i^2(\beta^Tv_i)^2+\frac{n}{d}\sum_{i\notin K_n} \lambda_i^2(\beta^Tv_i)^2\geq cq_{max}\sum_{i\in K_n} \lambda_i(\beta^Tv_i)^2\geq c'q_{max}
\end{align*}
Where we use the non-vanishing of $\sum_{i\in K_n} \lambda_i(\beta^Tv_i)^2$ (Assumption 2.9.5).
\end{proof}

\begin{proposition}\label{prop:ds_MN_close_expect}
Consider Assumption \ref{assumpt:weak_canonical_case} and $N\to\infty$. Let $\rho:=\frac{n}{d}\lambda_1$. Then, for $c>0$, 
\begin{align*}
|G(c\theta_{ds})-G(c\theta_{MN})|\leq 3c\rho q+15(C_1+C_{noise})c^2\rho q^2    
\end{align*}. 

Further, as $c_{opt}\leq \frac{4}{3(C_1+C_{noise})q}$, 
\begin{align*}
|G(c_{opt}\theta_{ds})-G(c_{opt}\theta_{MN})|\leq 30\rho    
\end{align*}

Further,
\begin{align*}
    |G(c^*\theta_{ds})-G(c_{opt}\theta_{MN})|\leq 3q
\end{align*}

\end{proposition}
\begin{proof}
We note that from the proof of Thm. \ref{theorem:canon_multiplicative}, $c_{opt}\leq O(1)\frac{1}{q}$.   
We start by noting,
\begin{align*}
    G(c\theta_{ds})&=\E{(\beta-c\theta_{ds})^T\Sigma(\beta-c\theta_{ds})}
    \\&=\beta^T\Sigma\beta-2c\beta^T\Sigma\E{\theta_{ds}}+c^2\E{\theta_{ds}^T\Sigma\theta_{ds}}
\end{align*}

We note that by Props. \ref{prop:proj_expect}, \ref{prop:straight_var_in_ds}, we obtain the following bounds:
\begin{align*}
|\beta^T\Sigma\E{\theta_{MN}}-\beta^T\Sigma\E{\theta_{ds}}|\leq 6\rho q
\end{align*}

For the quadratic term, we turn to Props. \ref{prop:straight_var_in_ds}, \ref{prop:var_proj_sig_proj}, \ref{prop:exp_noise_first_order}, and \ref{prop:noise_nN_v_n} to obtain the following bounds,
\begin{align*}
    |\E{\theta_{ds}^T\Sigma\theta_{ds}}-\E{\theta_{MN}^T\Sigma\theta_{MN}}|&\leq 10\rho\sigma^2nr(n)+4(C_1+C_{noise})\rho q^2
    \\&\leq 15(C_1+C_{noise})\rho q^2
\end{align*}

Now, for the last claim,
\begin{align*}
    G(c^*\theta_{ds})-G(c_{opt}\theta_{MN})=\frac{\E{\theta_{MN}^T\Sigma\beta}^2}{\E{\theta_{MN}^T\Sigma\theta_{MN}}}-\frac{\E{\theta_{ds}^T\Sigma\theta_{ds}}^2}{\E{\theta_{ds}^T\Sigma\theta_{ds}}}
\end{align*}

We can see that by Prop. \ref{prop:straight_var_in_ds},
\begin{align*}
    \frac{\E{\theta_{ds}^T\Sigma\theta_{ds}}^2}{\E{\theta_{ds}^T\Sigma\theta_{ds}}}=\frac{(\frac{n}{d}\beta^T\Sigma^2\beta)^2}{\frac{n^2}{d^2}\beta^T\Sigma^3\beta+(1+\sigma^2)nr(n)}(1+o(1))
\end{align*}

We note the following lower bound, (using Props. \ref{prop:proj_expect}, \ref{prop:exp_sig_exp}, and \ref{prop:exp_noise_first_order})
\begin{align*}
   \frac{\E{\theta_{MN}^T\Sigma\beta}^2}{\E{\theta_{MN}^T\Sigma\theta_{MN}}}&\geq \frac{\left(\frac{n}{d}\beta^T\Sigma^2\beta(1+O(1)nr(n))-\frac{n^2}{d^2}\beta^T\Sigma^3\beta(1+O(1)nr(n))\right)^2}{(1+\sigma^2)nr(n)(1+O(1)nr(n))+\frac{n^2}{d^2}\beta^T\Sigma^3\beta(1+O(1)nr(n))} 
   \\&\geq \frac{(\frac{n}{d}\beta^T\Sigma^2\beta)^2}{(1+\sigma^2)nr(n)+\frac{n^2}{d^2}\beta^T\Sigma^3\beta}(1+O(1)nr(n))(1-2C_1q)
   \\&\geq \frac{(\frac{n}{d}\beta^T\Sigma^2\beta)^2}{(1+\sigma^2)nr(n)+\frac{n^2}{d^2}\beta^T\Sigma^3\beta}(1-3C_1q)
\end{align*}

As a result,
\begin{align*}
    G(c^*\theta_{ds})-G(c_{opt}\theta_{MN})\geq -3\frac{1}{C_1+C_{noise}}C_1q\geq -3q
\end{align*}

Now for the upper bound,
\begin{align*}
    \frac{\E{\theta_{MN}^T\Sigma\beta}^2}{\E{\theta_{MN}^T\Sigma\theta_{MN}}}&\leq \frac{(\frac{n}{d}\beta^T\Sigma^2\beta)^2}{\frac{n^2}{d^2}\beta^T\Sigma^3\beta-2C_1\rho q^2}(1+O(nr(n))+o(1))
    \\&\leq \frac{(\frac{n}{d}\beta^T\Sigma^2\beta)^2}{\frac{n^2}{d^2}\beta^T\Sigma^3\beta}(1+O(nr(n))+o(1))(1+3C_1\rho q^2)
\end{align*}

The result follows.

\end{proof}

\begin{proposition}\label{prop:weak_c_opt_bounds}
    Consider Assumption \ref{assumpt:weak_canonical_case}, $\lambda_1\leq\rho\frac{d}{n}$. Then, 
    \begin{align*}
        |c^*-c_{opt}|\leq\kappa \rho
    \end{align*}
    Where $\kappa$ is universal. 
\end{proposition}
\begin{proof}
First, we begin by noting that from Prop. \ref{prop:bounds_effective_ranks} and terms of our assumptions, $\Theta(1)\rho^2/n\leq r(n)n\leq \Theta(1)\rho$. Now, using Props. \ref{prop:proj_expect}, \ref{prop:exp_noise_first_order}, and \ref{prop:exp_sig_exp}, we can see,
\begin{align*}
    c_{opt}&\leq \frac{\frac{n}{d}\beta^T\Sigma^2\beta}{(1+\sigma^2)nr(n)+\frac{n^2}{d^2}\beta^T\Sigma^3\beta}(1+\kappa_1\rho)\\
    c_{opt}&\geq \frac{\frac{n}{d}\beta^T\Sigma^2\beta}{(1+\sigma^2)nr(n)+\frac{n^2}{d^2}\beta^T\Sigma^3\beta}(1-\kappa_2\rho)
\end{align*}

For universal constants $\kappa_1,\kappa_2$.

From Prop. \ref{prop:straight_var_in_ds}, we see that
\begin{align*}
    c^*=\frac{\frac{n}{d}\beta^T\Sigma^2\beta}{(1+\sigma^2)nr(n)+\frac{n^2}{d^2}\beta^T\Sigma^3\beta}
\end{align*}

The result follows.
\end{proof}

\subsection{Main Results}
\begin{theorem}\label{thm:add_improve}
Under Assumption \ref{assump:additive}, whenever $\frac{n}{d}\lambda_1\leq \frac{1}{32}$ and $n\gg 1$,
\begin{equation}\label{equ:additive_gen_err}
G(\theta_{MN})>G(c\theta_{MN})\; \forall c\in(1,2c_{opt}-1),\;c_{opt}>1    
\end{equation}
 Further, when $\lambda_1=o(\frac{d}{n})$ and $\sigma^2\equiv\sigma_{max}^2$, Assumption \ref{assump:additive}.3 is also a necessary condition for Equ. \ref{equ:additive_gen_err} to hold and the universal constant condition can be dropped.  
\end{theorem}
\begin{proof}
We note that by Props. \ref{prop:proj_expect} and \ref{prop:exp_sig_exp}, 
\begin{align*}
\beta^T\E{\Sigma\theta_{MN}}=\beta^T\E{\Pi_X\Sigma}\beta\geq \frac{n(1+r(n))}{tr(\Sigma)}\beta^T\Sigma^2\beta(1+o(1))-\Omega(\frac{n^2}{tr(\Sigma)^2})\beta^T\Sigma^3\beta   
\end{align*}
\begin{align*}
\E{\theta_{MN}^T\Sigma\theta_{MN}}&=\E{\eps^T(XX^T)^{-1}X\Sigma X^T(XX^T)^{-1}\eps}+\beta^T\E{\Pi_X\Sigma\Pi_X}\beta\\&  
\leq (1+\sigma_{max}^2)nr(n)\left(1+O(rn(n)\right)+\Theta\left(\frac{n^2}{tr(\Sigma)^2}\right)\beta^T\Sigma^3\beta
\end{align*}

Using the fact that $G(\theta_{MN})-G(c\theta_{MN})$ is a quadratic with a zero at one.
\begin{align*}
\beta^T\E{\Sigma\theta_{MN}}-\E{\theta_{MN}^T\Sigma\theta_{MN}}&\geq \frac{n}{d}\beta^T\Sigma^2\beta-nr(n)(1+\sigma_{max}^2)\\&-\Omega(\frac{n^2}{tr(\Sigma)^2})\beta^T\Sigma^3\beta-O(n^2r(n)^2)(1+\sigma_{max}^2)  
\\&\geq \frac{n}{d}\beta^T\Sigma^2\beta-nr(n)(1+\sigma_{max}^2)\\&-(\frac{n}{d}\lambda_1)\Theta(1)\frac{n}{d}\beta^T\Sigma^2\beta-\Theta(1)(1+\sigma^2_{max})(\frac{n}{d}\lambda_1)^2nr(n) 
\end{align*}

The sufficient result follows because when $\frac{n}{d}\lambda_1$ is sufficiently small,
\begin{align*}
    (\frac{n}{d}\lambda_1)\Theta(1)\beta^T\Sigma^2\beta+\Theta(1)(1+\sigma^2_{max})(\frac{n}{d}\lambda_1)^2nr(n)\ll \frac{n}{d}\beta^T\Sigma^2\beta-nr(n)(1+\sigma_{max}^2)
\end{align*}

For the necessary part, this follows by,
\begin{align*}
    \lim_{n\to\infty} \frac{(\frac{n}{d}\lambda_1)\Theta(1)\beta^T\Sigma^2\beta+\Theta(1)(1+\sigma^2_{max})(\frac{n}{d}\lambda_1)^2nr(n)}{\frac{n}{d}\beta^T\Sigma^2\beta-nr(n)(1+\sigma_{max}^2)}=0
\end{align*}

\end{proof}

\begin{theorem}\label{theorem:canon_multiplicative}
    Consider Assumption \ref{assumpt:weak_canonical_case}. There exists an $\alpha\in (0,1)$ (not depending on $n$ but possibly on $C_1$ or $C_{noise}$) such that when $q<\frac{1}{216(C_1+C_{noise})}$ and $n$ sufficiently large, $\alpha G(\theta_{MN})\geq G(c_{opt}\theta_{MN})$ and $c_{opt}>1$. Further, when $r(n)=o(1/n)$, then, $\alpha$ only depends on $C_1$ and the restriction on $q$ becomes $q<\frac{1}{216C_1}$. 
\end{theorem}
\begin{proof}

We first focus on $c_{opt}$. We note that the numerator of $c_{opt}$ has the following lower bound:
\begin{align*}
    \E{\beta^T\Sigma\theta_{MN}}\geq \frac{n}{d}\beta^T\Sigma^2\beta-3\frac{n^2}{d^2}\beta^T\Sigma^3\beta=q-3\rho q
\end{align*}

Where we use the fact that $\Sigma^3\preceq \rho\frac{d}{n} \Sigma^2$. 

In a similar vein, we note that
\begin{align*}
    \E{\theta_{MN}^T\Sigma\theta_{MN}}&=(1+o(1))\frac{n^2}{d^2}\beta^T\Sigma^3\beta\left(1+nr(n)/(1-4\rho)^3\right)+(1+\sigma^2)nr(n)(1+nr(n)/(1-2\rho)^2)\\
    &\leq 2\frac{n^2}{d^2}\beta^T\Sigma^3\beta+2(1+\sigma^2)nr(n)\leq 2\rho q+2C_{noise}q^2
\end{align*}
As a result,
\begin{align*}
    c_{opt}&=\frac{\E{\beta^T\Sigma\theta_{MN}}}{\E{\theta_{MN}^T\Sigma\theta_{MN}}}\geq \frac{q-\rho q}{2\rho q+2C_{noise}q^2}
    \\&\geq \frac{1}{2\rho+2C_{noise}q}-\frac{\rho}{2\rho+2C_{noise}}
    \\&\geq \frac{1}{2(\rho+C_{noise}q)}-\frac{1}{2}
\end{align*}

Thus, when $q<\frac{1}{C_{noise}}$, $c_{opt}>1$. We note that when $r(n)=o(1/n)$,
\begin{align*}
  c_{opt}\geq \frac{q-\rho q}{2\rho q+o(1)}=\frac{1-\rho}{2\rho +o(1)}
\end{align*}

Since $\rho\leq \frac{1}{8}$ for $n$ sufficiently large, $c_{opt}>1$.

We note that our objective is to show that for fixed $C_1$, $C_{noise}$ and some $\alpha$ not depending on $n$ or $q$,
\begin{align*}
1-\frac{\E{\beta^T\Sigma\theta_{MN}}^2}{\E{\theta_{MN}^T\Sigma\theta_{MN}}}\leq \alpha-2\alpha \E{\beta^T\Sigma\theta_{MN}}+\alpha\E{\theta_{MN}^T\Sigma\theta_{MN}}
\end{align*}

Which is equivalent to showing that,
\begin{align*}
    0\leq -(1-\alpha)+\frac{\E{\beta^T\Sigma\theta_{MN}}^2}{\E{\theta_{MN}^T\Sigma\theta_{MN}}}-2\alpha\E{\beta^T\Sigma\theta_{MN}}+\alpha^2\E{\theta_{MN}^T\Sigma\theta_{MN}}
\end{align*}

Now we note that by Props. \ref{prop:proj_expect}, \ref{prop:exp_sig_exp}, and \ref{prop:exp_noise_first_order}, 
\begin{align*}
\frac{1}{3}q^2\leq& \E{\theta_{MN}^T\Sigma\theta_{MN}}\leq 3(C_1+C_{noise})q^2\\
\frac{1}{2}q\leq &\E{\theta_{MN}^T\Sigma\beta}\leq 2q
\end{align*}

As a result,
\begin{align*}
\frac{\E{\beta^T\Sigma\theta_{MN}}^2}{\E{\theta_{MN}^T\Sigma\theta_{MN}}}-2\alpha\E{\beta^T\Sigma\theta_{MN}}+\alpha\E{\theta_{MN}^T\Sigma\theta_{MN}}&\geq \frac{1}{9(C_1+C_{noise})}\\&-6\alpha q+\frac{1}{3}\alpha^2q^2    
\end{align*}

Therefore, let $\alpha=1-\frac{1}{18}\frac{1}{C_1+C_{noise}}$.

We can see that,
\begin{align*}
    \alpha G(\theta_{MN})-G(c_{opt}\theta_{MN})&\geq -(1-\alpha)+\frac{1}{9(C_1+C_{noise})}-6\alpha q+\frac{1}{3}\alpha^2q^2 
    \\& =-\frac{1}{18(C_1+C_{noise})}+\frac{1}{9(C_1+C_{noise})}-6q+\frac{2}{3(C_1+C_{noise})}q
    \\&-\frac{1}{3}q^2+\frac{1}{27(C_1+C_{noise})}q^2+\frac{1}{972(C_1+C_{noise})^2}q^2
    \\&=\frac{1}{18(C_1+C_{noise})}-\left(6+\frac{2}{3(C_1+C_{noise})}\right)q\\&+\left(\frac{1}{27(C_1+C_{noise})}-\frac{1}{3}+\frac{1}{972(C_1+C_{noise})^2}\right)q^2
\end{align*}

We note that when $q<\frac{1}{216}\frac{1}{C_1+C_{noise}}$, $\alpha G(\theta_{MN})-G(c_{opt}\theta_{MN})>0$.

When $r(n)=o(1/n)$, then, for $n\gg 1$,
\begin{align*}
    c_{opt}&\geq \frac{q-\rho q}{2\rho q+o(1)}\geq \frac{1}{3\rho}-1\geq \frac{5}{3}\\
    \frac{1}{3}q^2&\leq \E{\theta_{MN}^T\Sigma\theta_{MN}}
\leq 3C_1q^2
\end{align*}

By repeating the process from above, we can obtain the remaining results.

\end{proof}
\begin{proposition}\label{prop:simple_canon_multiplicative}
    Consider Assumption \ref{assumpt:strong_canonical_case}. There exists an $\alpha\in (0,1)$ (not depending on $n$ but possibly on $\alpha'$, $\alpha_{min}$, or $C_{noise}$) such that when $q_{max}<\frac{(\alpha'\alpha_{min})^2}{24(1+C_{noise})}$ and $n$ sufficiently large, $\alpha G(\theta_{MN})\geq G(c_{opt}\theta_{MN})$ and $c_{opt}>1$. Further, when $r(n)=o(1/n)$, then, $\alpha$ only depends on $\alpha'\alpha_{min}$ and the restriction on $q_{max}$ becomes $q_{max}<\frac{(\alpha'\alpha_{min})^2}{24}$. 
\end{proposition}
\begin{proof}
    We follow the proof structure of Theorem \ref{theorem:canon_multiplicative}.

    The key idea to note is that,
    \begin{align*}
        \frac{n}{d}\beta^T\Sigma^2\beta\geq q_{max}\alpha'\alpha_{min}-3q_{max}^2
    \end{align*}

    Similarly, we can see that,
    \begin{align*}
        \E{\theta_{MN}^T\Sigma\theta_{MN}}\leq 2q_{max}^2+2C_{noise}q_{max}^2
    \end{align*}

    As a result,
    \begin{align*}
        c_{opt}\geq \frac{q_{max}\alpha'\alpha_{min}-3 q_{max}^2}{2q_{max}^2+2C_{noise}q_{max}^2}=\frac{\alpha'\alpha_{min}-3q_{max}}{2(1+C_{noise})q_{max}}
    \end{align*}

    Thus, when $q_{max}<\frac{\alpha'\alpha_{min}}{10(C_{noise}+1)}$, $c_{opt}>1$.

    Likewise, if $nr(n)=o(1)$, when $q_{max}<\frac{\alpha'\alpha_{min}}{10}$, we have that $c_{opt}>1$.

    We also note that,
    \begin{align*}
        \E{\theta_{MN}^T\Sigma\beta}&\leq q_{max}\\
        \alpha'\alpha _{min}^2q_{max}^2&\leq\E{\theta_{MN}^T\Sigma\theta_{MN}} 
    \end{align*}
    For the multiplicative improvement, we begin by noting that:
    \begin{align*}
        \frac{\E{\beta^T\Sigma\theta_{MN}}^2}{\E{\theta_{MN}^T\Sigma\theta_{MN}}}-2\alpha\E{\beta^T\Sigma\theta_{MN}}+\alpha\E{\theta_{MN}^T\Sigma\theta_{MN}} &\geq \frac{(q_{max}\alpha'\alpha_{min}-3q_{max}^2)^2}{2(1+C_{noise})q_{max}^2}-2\alpha q_{max}
        \\&\geq \frac{q_{max}^2(\alpha'\alpha_{min})^2-6(\alpha'\alpha_{min})q_{max}^3+9q_{max}^4}{2(1+C_{noise})q_{max}^2}-2\alpha q_{max}
        \\&\geq \frac{(\alpha'\alpha_{min})^2-6q_{max}(\alpha'\alpha_{min})+9q_{max}^2}{2(1+C_{noise})}-2\alpha q_{max}
    \end{align*}
Let $\alpha=1-\frac{1}{4}\frac{(\alpha'\alpha_{min})^2}{1+C_{noise}}$ and suppose $q_{max}\leq \frac{1}{24}\frac{(\alpha'\alpha_{min})^2}{1+C_{noise}}$
As a result,
\begin{align*}
    \alpha G(\theta_{MN})-G(c_{opt}\theta_{MN})&\geq -(1-\alpha)+\frac{(\alpha'\alpha_{min})^2-6q_{max}(\alpha'\alpha_{min})+9q_{max}^2}{2(1+C_{noise})}-2\alpha q_{max}
    \\&\geq \frac{(\alpha'\alpha_{min})^2}{(1+C_{noise})}\left(\frac{1}{4}-\frac{3}{24}-\frac{1}{12}\right)\geq 0
\end{align*}

    For similar reasons, we have the analogous result when $nr(n)=o(1)$.
\end{proof}

\begin{proposition}\label{prop:straight_var_in_ds}
Under Assumption \ref{assumpt:weak_canonical_case} and assume $\sigma_{max}^2$ is polynomial in $n$, then
\begin{align*}
    \frac{n}{d}\beta^T\Sigma^2\beta(1+o(1)-\frac{3\rho}{N})&\leq\E{\theta_{ds}^T\Sigma\beta}\leq \frac{n}{d}\beta^T\Sigma^2\beta(1+\Theta(\frac{1}{N})-\frac{\rho}{2N})\\
    \E{\theta_{ds}^T\Sigma\theta_{ds}}&\leq (1+\tilde{\sigma}^2)nr(n)\left(1+\frac{3}{N}\right)(1+o(1))+\frac{n^2}{d^2}(1+o(1)+\frac{1}{N})\beta^T\Sigma^3\beta\\
    \E{\theta_{ds}^T\Sigma\theta_{ds}}&\geq (1+\tilde{\sigma}^2)nr(n)(1-\frac{1}{N})(1+o(1))+\frac{n^2}{d^2}(1+o(1)-\frac{6}{N})\beta^T\Sigma^3\beta
\end{align*}

Where, $\tilde{\sigma}^2$ is as defined in Prop. \ref{prop:noise_nN_v_n}.
\begin{align*}
    \Var[\beta^T\Sigma\theta_{ds}]&\leq \Theta(\frac{1}{n}(1+\sigma^2\log(n)^2)\\
    \Var[\theta_{ds}^T\Sigma\theta_{ds}]&\leq \Theta\Big(nr(n)^2(\sigma_{max}^2+\sigma_{max}^4)+\frac{\sigma_{max}^2}{Nn}+\frac{1}{N^3}+r(n)^2\\&+\frac{nr(n)\sigma_{max}^2}{\sqrt{Nn}}+\frac{n^2r(n)^2(\sigma_{max}^2+\sigma_{max}^4)}{N}+\frac{n^2r(n)}{Nd^2}\Big)
\end{align*}

\end{proposition}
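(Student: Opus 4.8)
The plan is to exploit that the blocks $X^{(1)},\dots,X^{(N-1)}$ are disjoint, so $\theta_{MN}^{(1)},\dots,\theta_{MN}^{(N-1)}$ are i.i.d.\ random vectors, each an $\ell_2$ minimum-norm interpolator built from $k:=\lfloor n/N\rfloor$ i.i.d.\ samples of the model in Def.\ \ref{def:regress_setup}. Consequently every preparatory estimate for $\theta_{MN}$ (Props.\ \ref{prop:proj_expect}, \ref{prop:exp_sig_exp}, \ref{prop:exp_noise_first_order}, \ref{prop:var_Sig_proj}, \ref{prop:var_proj_sig_proj}, \ref{prop:var_noise_pseudo_inverse}, \ref{prop:var_noise_proj}, \ref{prop:var_exp_beta}) applies with $n$ replaced by $k$; the one wrinkle is the noise normalization, where the subsample constant $\tilde\sigma^2$ of Prop.\ \ref{prop:noise_nN_v_n_gen} replaces $\sigma^2$ (the gap being absorbed later via Prop.\ \ref{prop:noise_nN_v_n}). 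I would also record the elementary arithmetic $(N-1)k=n+O(N)$, $(N-1)k^2\approx n^2/N$, $(N-1)(N-2)k^2\approx n^2$, which is exactly what turns the error terms of the underlying $k$-sample estimates into the $O(1/N)$ corrections displayed in the statement; the polynomial hypothesis on $\sigma_{max}^2$ enters only so that Prop.\ \ref{prop:var_exp_beta} can be invoked with $b_n$ a sufficiently large polynomial in $n$, producing the $\log(n)^2$ factor.

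For the first two moments I would use linearity. Writing $\theta_{MN}^{(i)}=\Pi_{X^{(i)}}\beta+(X^{(i)})^\dagger\vec\eps^{(i)}$ and $\E{\vec\eps^{(i)}\mid X^{(i)}}=0$, linearity gives $\E{\theta_{ds}^T\Sigma\beta}=(N-1)\,\beta^T\E{\Pi_{X^{(1)}}}\Sigma\beta$, which Prop.\ \ref{prop:proj_expect} (with $n\to k$, using $\tr(\Sigma)=d$) bounds two-sidedly. For $\E{\theta_{ds}^T\Sigma\theta_{ds}}=\sum_{i,j}\E{(\theta_{MN}^{(i)})^T\Sigma\theta_{MN}^{(j)}}$ I split into the $N-1$ diagonal and $(N-1)(N-2)$ off-diagonal terms. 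Each diagonal term is $\beta^T\E{\Pi_{X^{(1)}}\Sigma\Pi_{X^{(1)}}}\beta+\E{(\vec\eps^{(1)})^T(X^{(1)}(X^{(1)})^T)^{-1}X^{(1)}\Sigma (X^{(1)})^T(X^{(1)}(X^{(1)})^T)^{-1}\vec\eps^{(1)}}$ (the cross term vanishing by mean-zero noise), bounded by Props.\ \ref{prop:exp_sig_exp} and \ref{prop:exp_noise_first_order} with $n\to k$, $\sigma^2\to\tilde\sigma^2$; each off-diagonal term equals $(\E{\Pi_{X^{(1)}}}\beta)^T\Sigma(\E{\Pi_{X^{(1)}}}\beta)$ by independence, again bounded by Prop.\ \ref{prop:proj_expect}. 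Collecting and simplifying with the arithmetic above yields the stated bounds.

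For $\Var[\beta^T\Sigma\theta_{ds}]$, independence gives $\Var[\beta^T\Sigma\theta_{ds}]=(N-1)\Var[\beta^T\Sigma\theta_{MN}^{(1)}]$; splitting $\beta^T\Sigma\theta_{MN}^{(1)}=\beta^T\Pi_{X^{(1)}}\Sigma\beta+(\vec\eps^{(1)})^T(X^{(1)}(X^{(1)})^T)^{-1}X^{(1)}\Sigma\beta$ and using $\Var[A+B]\le 2\Var[A]+2\Var[B]$, I would bound the first piece by Prop.\ \ref{prop:var_Sig_proj} and the second by Prop.\ \ref{prop:var_exp_beta}, each with $n\to k$; multiplying by $N-1$ and using $(N-1)k=n$, $\beta^T\Sigma\beta=1$, $nr(n)=o(1)$ collapses everything to $\Theta(\tfrac1n(1+\sigma^2\log(n)^2))$.

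The genuinely laborious part, and the \emph{main obstacle}, is $\Var[\theta_{ds}^T\Sigma\theta_{ds}]$: here $\theta_{ds}^T\Sigma\theta_{ds}=\sum_{i,j}W_{ij}$ with $W_{ij}:=(\theta_{MN}^{(i)})^T\Sigma\theta_{MN}^{(j)}$ is a quadratic form in $N-1$ i.i.d.\ blocks, so $\Var=\sum_{i,j,k,l}\Cov[W_{ij},W_{kl}]$ and I must enumerate the index-coincidence patterns of the quadruple $(i,j,k,l)$. Quadruples with $\{i,j\}\cap\{k,l\}=\emptyset$ contribute $0$ by independence, as do the patterns $i=j,\;k=l,\;i\ne k$. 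The pattern $i=j=k=l$ ($\Theta(N)$ quadruples) reduces to $\Var[(\theta_{MN}^{(1)})^T\Sigma\theta_{MN}^{(1)}]$, which itself decomposes through the signal/noise split and a Cauchy--Schwarz cross term into the contributions of Props.\ \ref{prop:var_proj_sig_proj}, \ref{prop:var_noise_pseudo_inverse}, \ref{prop:var_noise_proj}. The patterns $\{i,j\}=\{k,l\}$ with $i\ne j$ ($\Theta(N^2)$ quadruples) reduce, after conditioning on one block, to $\E{(\theta_{MN}^{(1)})^T\Sigma M\Sigma\theta_{MN}^{(1)}}-(\mu^T\Sigma\mu)^2$ with $M=\E{\theta_{MN}^{(1)}(\theta_{MN}^{(1)})^T}$ and $\mu=\E{\theta_{MN}^{(1)}}$, estimated from $\E{\Pi_X\Sigma\Pi_X}$-type bounds and Prop.\ \ref{prop:var_x_y}. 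The single-overlap patterns such as $i=k$ with $j,l,i$ otherwise distinct ($\Theta(N^3)$ quadruples) reduce to the linear-functional variance $\Var[\mu^T\Sigma\theta_{MN}^{(1)}]$ with $\mu\propto\tfrac{k}{d}\Sigma\beta$, handled exactly as in the previous paragraph; the mixed patterns $i=j=k\ne l$ (and their permutations) reduce to $\Cov[(\theta_{MN}^{(i)})^T\Sigma\theta_{MN}^{(i)},\mu^T\Sigma\theta_{MN}^{(i)}]$. Tracking the number of quadruples in each pattern ($\Theta(N)$, $\Theta(N^2)$ or $\Theta(N^3)$) against the order in $k$, $r(n)$ and $\sigma_{max}^2$ of the corresponding (co)variance, then simplifying with $k=n/N$, $\tr(\Sigma)=d$, $nr(n)=o(1)$, $\rho\le\tfrac18$, is precisely the bookkeeping that assembles the seven terms in the claimed bound on $\Var[\theta_{ds}^T\Sigma\theta_{ds}]$; essentially all the work of the proof lives in this step.
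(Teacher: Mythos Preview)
Your proposal is essentially correct and follows the same architecture as the paper: linearity plus i.i.d.-ness of the blocks reduces the first-moment claims to Props.\ \ref{prop:proj_expect}, \ref{prop:exp_sig_exp}, \ref{prop:exp_noise_first_order} applied at sample size $k=n/N$; $\Var[\beta^T\Sigma\theta_{ds}]$ is a sum of $N-1$ independent copies handled by Props.\ \ref{prop:var_Sig_proj} and \ref{prop:var_exp_beta}; and $\Var[\theta_{ds}^T\Sigma\theta_{ds}]$ is attacked by enumerating index-coincidence patterns of the quadruple sum $\sum_{i,j,k,\ell}\Cov[W_{ij},W_{k\ell}]$, with the diagonal $i=j=k=\ell$ pattern fed to Props.\ \ref{prop:var_proj_sig_proj}, \ref{prop:var_noise_pseudo_inverse}, \ref{prop:var_noise_proj}.

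There is one tactical difference worth flagging. For the $\{i,j\}=\{k,\ell\}$, $i\neq j$ pattern you condition on one block and reduce to $\E{(\theta_{MN}^{(1)})^T\Sigma M\Sigma\theta_{MN}^{(1)}}-(\mu^T\Sigma\mu)^2$ with $M=\E{\theta_{MN}^{(1)}(\theta_{MN}^{(1)})^T}$. The paper instead expands $(\beta^T\Pi_{X^{(i)}}\Sigma\Pi_{X^{(j)}}\beta)^2$ directly in the eigenvector basis, isolating the cases $(k_1{=}\ell_1,k_2{=}\ell_2)$, $(k_1{=}k_2,\ell_1{=}\ell_2)$, etc., and then shows that the dominant $\tfrac{n^4}{N^4d^4}(\beta^T\Sigma^3\beta)^2$ contribution cancels against the first-moment-squared term up to an $O(1/N)$ residual, by sharpening the mixed moment $\E{a_k^TA^{-1}a_k\,a_m^TA^{-1}a_m}\le \tfrac{n^2}{N^2d^2}(1+O(1/N)+O(nr(n)/N))$. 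Your $M$-route sees the same cancellation via $M=\mu\mu^T+\Cov[\theta_{MN}^{(1)}]$, which turns the difference into $\Var[\mu^T\Sigma\theta_{MN}^{(1)}]$ plus a covariance-matrix term; that is perfectly valid, but note that the remaining covariance-matrix term is \emph{not} covered by Prop.\ \ref{prop:var_x_y} (which is for Gaussian $x$, not for $\theta_{MN}^{(1)}$), so you still end up doing an eigenvector-basis calculation of the same flavor as the paper's. Your enumeration is in fact slightly more explicit than the paper's (you name the $\Theta(N^3)$ single-overlap pattern, which the paper's exposition absorbs into its ``24'' constants without comment), and your reduction of that pattern to $\Var[\mu^T\Sigma\theta_{MN}^{(1)}]$ with $\mu\approx\tfrac{k}{d}\Sigma\beta$ is correct.
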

\begin{proof}
Now, we note that, for $\theta_{ds}^T\Sigma\beta$ we use Prop. \ref{prop:proj_expect}. We have the upper bound,
\begin{align*}
    \E{\theta_{ds}^T\Sigma\beta}&\leq \sum_{i=1}^{N-1}\frac{n/N}{d}\beta^T\Sigma^2\beta\left(1+\Theta(\frac{1}{N})+\frac{n/Nr(n)}{(1-2\rho)^2}\right)-\frac{n^2/N^2}{d^2}\frac{1+2n/Nr(n)(1-\Theta(1)\rho)}{1+\rho}\beta^T\Sigma^3\beta
    \\&\leq \frac{n}{d}\beta^T\Sigma^2\beta\left(1+\Theta(\frac{1}{N})-\frac{1}{N}\frac{n^2}{d^2(1+\rho)}\beta^T\Sigma^3\beta(1+o(1)\right)\leq \frac{n}{d}\beta^T\Sigma^2\beta\left(1+o(1)-\frac{\rho}{N}(1+o(1)\right)
\end{align*}

We also have the lower bound,
\begin{align*}
 \E{\theta_{ds}^T\Sigma\beta}&\geq \sum_{i=1}^{N-1}\frac{n/N}{d}\beta^T\Sigma^2\beta(1+o(1))+\frac{n^2/N^2r(n)}{d}\beta^T\Sigma^2\beta(1+o(1))-3\frac{n^2/N^2}{d^2}\beta^T\Sigma^3\beta    
 \\&\geq \frac{n}{d}\beta^T\Sigma^2\beta(1+o(1)-\frac{3\rho}{N})
\end{align*}

For $\theta_{ds}^T\Sigma\theta_{ds}$, we use Prop. \ref{prop:exp_sig_exp} and Prop. \ref{prop:proj_expect}.
\begin{align*}
    \E{\theta_{ds}^T\Sigma\theta_{ds}}=\sum_{i,j}\E{(\theta_{MN}^{(i)})^T\Sigma\theta_{MN}^{(j)}}
\end{align*}

When, $i=j$, we have the following bounds. We first focus on the noiseless terms. For upper bound,
\begin{align*}
\E{\beta^T\Pi_{X^{(i)}}\Sigma\Pi_{X^{(i)}}\beta}&\leq (1+o(1))\frac{n}{N}r(n)(1+\frac{n}{N}r(n)/(1-\rho)^2)\beta^T\Sigma\beta+2(1+o(1))\frac{n^2/N^2}{d^2}\beta^T\Sigma^3\beta
\\&\leq \frac{1}{N}\beta^T\Sigma\beta\left(nr(n)+2\frac{n^2}{N}r(n)^2+\frac{2}{N}\right)
\end{align*}

For the lower bound,
\begin{align*}
    \E{\beta^T\Pi_{X^{(i)}}\Sigma\Pi_{X^{(i)}}\beta}&\geq \frac{n}{N}r(n)(1+o(1))(1+\frac{n}{N}r(n))\beta^T\Sigma\beta+\frac{n^2/N^2}{2d^2}\beta^T\Sigma^3\beta-\Omega(\frac{n^3}{d^3})\beta^T\Sigma^4\beta
    \\&\geq \frac{n}{N}(1+o(1)r(n)+\frac{n}{N}r(n)^2+\frac{\rho^2}{2N^2})\beta^T\Sigma\beta 
\end{align*}

When $i\neq j$, we have the following bounds. For the upper bound,
\begin{align*}
    \E{\beta^T\Pi_{X^{(i)}}\Sigma\Pi_{X^{(j)}}\beta}&\leq \frac{n^2/N^2}{d^2}(1+o(1)+\frac{nr(n)}{N})^2\beta^T\Sigma^3\beta
    \\&\leq \frac{n^2/N^2}{d^2}(1+o(1)+2\frac{nr(n)}{N})\beta^T\Sigma^3\beta
\end{align*}

For the lower bound,
\begin{align*}
    \E{\beta^T\Pi_{X^{(i)}}\Sigma\Pi_{X^{(j)}}\beta}&\geq \frac{n^2/N^2}{d^2}(1+o(1)+\frac{nr(n)-3\rho}{N})^2\beta^T\Sigma^3\beta
    \\&\geq \frac{n^2/N^2}{d^2}(1+o(1)-\frac{6\rho}{N})\beta^T\Sigma^3\beta
\end{align*}

For the $\eps$ dependent terms, we have the following bounds for some sub-sample of size $n/N$, $X_s$. For the upper bound we use Props. \ref{prop:exp_noise_first_order} and \ref{prop:noise_nN_v_n},
\begin{align*}
    \E{\eps^T(X_sX_s^T)^{-1}X_s\Sigma X_s^T(X_sX_s^T)^{-1}\eps}\leq \tilde{\sigma}^2r(n)\frac{n}{N}(1+3\frac{n}{N}r(n))(1+o(1))
\end{align*}

Conversely, for the lower bound,
\begin{align*}
    \E{\eps^T(X_sX_s^T)^{-1}X_s\Sigma X_s^T(X_sX_s^T)^{-1}\eps}\geq \tilde{\sigma}^2r(n)\frac{n}{N}(1+\frac{n}{N}r(n))(1+o(1))
\end{align*}

By Props. \ref{prop:var_Sig_proj} and \ref{prop:var_exp_beta},
\begin{align*}
    \Var[\beta^T\Sigma\theta_{MN}^{(i)}]&\leq 2\Var[\beta^T\Sigma\Pi_{X^{(i)}}\beta]+2\Var[\beta^T\Sigma (X^{(i)})^T((X^{(i)})(X^{(i)})^T)^{-1}\eps^{(i)}]
    \\&\leq \Theta(\frac{n^3r(n)}{N^3tr(\Sigma)^2})(\beta^T\Sigma^2\beta)^2+\Theta(\frac{n/N}{tr(\Sigma)^2})\beta^T\Sigma^3\beta+\Theta(\frac{n^3}{N^3tr(\Sigma)^3})\beta^T\Sigma^3\beta\beta^T\Sigma^2\beta\\&+\Theta(\sigma^2\log(n)^2\frac{n/N}{d^2})\beta^T\Sigma^3\beta
    \\&\leq \Theta(\frac{nr(n)}{N^3})+\Theta(\frac{1}{nN})+\Theta(\frac{1}{N^3})+\Theta(\frac{\log(n)^2}{Nn})\sigma^2
\end{align*}

Where we use the fact that $\sigma_{max}^2$ is polynomial in $n$ to construct the appropriate $b_n$ for Prop. \ref{prop:var_exp_beta}.

As a result,
\begin{align*}
    \Var[\beta^T\Sigma\theta_{ds}]=\Theta(\frac{1}{n}(1+\log(n)^2\sigma^2))
\end{align*}

Turning to $\theta_{ds}^T\Sigma\theta_{ds}$, we first note that,
\begin{align*}
    \E{(\theta_{ds}^T\Sigma\theta_{ds})^2}&=\sum_{i,j,k,\ell}\E{(\theta_{MN}^{(i)})^T\Sigma\theta_{MN}^{(j)}(\theta_{MN}^{(k)})^T\Sigma\theta_{MN}^{(\ell)}}\\
    \E{\theta_{ds}^T\Sigma\theta_{ds}}^2&=\sum_{i,j,k,\ell}\E{(\theta_{MN}^{(i)})^T\Sigma\theta_{MN}^{(j)}}\E{(\theta_{MN}^{(k)})^T\Sigma\theta_{MN}^{(\ell)}}
\end{align*}

When $i,j$ is distinct from $k,\ell$, the term's respective contributions to the first and second moment are equal. The same is true when all indexes are unique. As a result, we can write the variance as follows:
\begin{align*}
    \Var[\theta_{ds}^T\Sigma\theta_{ds}]&\leq\sum_{i=1}^N\Var[(\theta_{MN}^{(i)})^T\Sigma\theta_{MN}^{(i)}]+24\sum_{i\neq j}^N\left|\E{(\theta_{MN}^{(i)})^T\Sigma \theta_{MN}^{(i)}(\theta_{MN}^{(i)})^T\Sigma \theta_{MN}^{(j)}}\right|
    \\&+24\sum_{i\neq j}\E{(\theta_{MN}^{(j)})^T\Sigma \theta_{MN}^{(i)}(\theta_{MN}^{(i)})^T\Sigma \theta_{MN}^{(j)}}-\E{(\theta_{MN}^{(j)})}^T\Sigma \E{\theta_{MN}^{(i)}}\E{(\theta_{MN}^{(i)})}^T\Sigma \E{\theta_{MN}^{(j)}}
\end{align*}

Using Props. \ref{prop:var_noise_pseudo_inverse}, \ref{prop:var_noise_proj}, and \ref{prop:var_proj_sig_proj}, we have the following:
\begin{align*}
\Var[\beta^T\Pi_X\Sigma\Pi_X\beta] &\leq \frac{1}{N^2}\Theta(\frac{r(n)n^2}{tr(\Sigma)^2})+\frac{1}{N^3}\Theta(\frac{n^3r(n)}{tr(\Sigma)^2})\beta^T\Sigma^3\beta
\\ &+\frac{1}{N^4}\Theta(\frac{n^4}{tr(\Sigma)^4})\left((\beta^T\Sigma^3\beta)^2+\beta^T\Sigma^5\beta\right)
\\&\leq \frac{1}{N^2}\Theta(\frac{r(n)n^2}{tr(\Sigma)^2})+\frac{1}{N^3}\Theta(nr(n))
\\ &+\frac{1}{N^4}\Theta(1)\rho^4 \\
\Var[\beta^T\Pi_X\Sigma X^T(XX^T)^{-1}\eps] &\leq \Theta(n^2r(n)^2)\frac{1}{N^2}\sigma_{max}^2 \\
\Var[\eps^T(XX^T)^{-1}X\Sigma X^T(XX^T)^{-1}\eps] &\leq \sigma_{max}^4\Theta(1)r(n)^2\frac{n^2}{N^2}
\end{align*}

As a result,
\begin{align*}
    \Var[(\theta_{MN}^{(i)})^T\Sigma\theta_{MN}^{(i)}]\leq \Theta\left(\frac{1}{N^2}\left(n^2r(n)^2(\sigma_{max}^2+\sigma_{max}^4)+\frac{n}{Nd^2}nr(n)+\frac{1}{N^2}\right)\right)
\end{align*}

Addressing the second sum, we see that for $i\neq j$,
\begin{align*}
\E{(\theta_{MN}^{(i)})^T\Sigma \theta_{MN}^{(i)}(\theta_{MN}^{(i)})^T\Sigma \theta_{MN}^{(j)}}&\leq \Theta(\frac{n/N}{d})\E{(\theta_{MN}^{(i)})^T\Sigma \theta_{MN}^{(i)}(\theta_{MN}^{(i)})^T\Sigma^2\beta}
\\&\leq \Theta(\frac{n/N}{d})\E{((\theta_{MN}^{(i)})^T\Sigma \theta_{MN}^{(i)})^2}^{1/2}\E{(\beta^T\Sigma\theta_{MN}^{(i)})^2}^{1/2}
\\&\leq \Theta(\frac{1}{N})\Theta(\frac{1}{N}nr(n)\sigma_{max}^2)\Theta(\frac{1}{\sqrt{Nn}})
\end{align*}

Addressing the third term, let $c_i=\beta^Tv_i$
\begin{align*}
    \beta^T\Pi_{X^{(i)}}\Sigma\Pi_{X^{(j)}}\beta=\sum_{k,\ell,m}c_kc_\ell\sqrt{\lambda_k\lambda_m}\lambda_\ell^2(a_k^{(i)})^T(A^{(i)})^{-1}(a_\ell^{(i)})(a_\ell^{(j)})^T(A^{(j)})^{-1}(a_m^{(j)})
\end{align*}

As a result, we essentially need to consider for some $\ell_1, \ell_2, m_1, m_2$, $\E{a_{\ell_1}^TA^{-1}a_{m_1}a_{\ell_2}^TA^{-1}a_{m_2}}$. We note that this is non-zero only in two distinct cases, $\ell_1=m_1$ and $\ell_2=m_2$ as well as $m_1=m_2$, $\ell_1=\ell_2$ and $\ell_1\neq m_1$. Using our techniques, in the first case for a subsample,
\begin{align*}
\E{a_{\ell_1}^TA^{-1}a_{\ell_1}a_{\ell_2}^TA^{-1}a_{\ell_2}}=\Theta(\frac{n^2/N^2}{d^2})    
\end{align*}
For the second case,
\begin{align*}
    \E{a_{\ell_1}^TA^{-1}a_{\ell_2}a_{\ell_2}^TA^{-1}a_{\ell_1}}\leq \Theta(\frac{n}{d^2})
\end{align*}

We note the summands of $(\beta^T\Pi_{X^{(i)}}\Sigma\Pi_{X^{(j)}}\beta)^2$ are as follows:
\begin{align*}
    c_{k_1}c_{k_2}c_{m_1}c_{m_2}\sqrt{\lambda_{k_1}\lambda_{k_2}\lambda_{m_1}\lambda_{m_2}}\lambda_{\ell_1}^{2}\lambda_{\ell_2}^{2}\E{b_{k_1,k_2,m_1,m_2,\ell_1,\ell_2}}
\end{align*}

Where,
\begin{align*}
    b_{k_1,k_2,m_1,m_2,\ell_1,\ell_2}=(a_{k_1}^{(i)})^T(A^{(i)})^{-1}(a_{\ell_1}^{(i)})(a_{\ell_2}^{(i)})^T(A^{(i)})^{-1}(a_{k_2}^{(i)})(a_{m_1}^{(j)})^T(A^{(j)})^{-1}(a_{\ell_1}^{(j)})(a_{\ell_2}^{(j)})^T(A^{(j)})^{-1}(a_{m_2}^{(j)})
\end{align*}

Now due to the independence of the sub-samples, we can see three distinct cases emerge based on the previous two cases. First, when $k_1=\ell_1$, $k_2=\ell_2$, $m_1=\ell_1$, and $m_2=\ell_2$, this yields the sum,
\begin{align*}
    \sum_{k_1,k_2} c_{k_1}^2c_{k_2}^2\lambda_{k_1}^{3}\lambda_{k_2}^3=(\beta^T\Sigma^3\beta)^2
\end{align*}

Second, when $k_1=\ell_1$, $k_2=\ell_2$, $m_1=m_2$, $\ell_1=\ell_2$, and $m_1\neq \ell_1$,
\begin{align*}
    \sum_{k,m\neq k}c_k^2c_m^2\lambda_k^4\lambda_m\leq \beta^T\Sigma^4\beta\beta^T\Sigma\beta
\end{align*}

Finally, when $k_1=k_2\neq \ell_1=\ell_2\neq m_1=m_2$,
\begin{align*}
    \sum_{k\neq m\neq \ell} c_k^2c_m^2\lambda_k\lambda_m\lambda_\ell^4\leq (\beta^T\Sigma\beta)^2tr(\Sigma^4)
\end{align*}

As a result,
\begin{align*}
    \E{(\beta^T\Pi_{X^{(i)}}\Sigma\Pi_{X^{(j)}}\beta)^2}\leq \Theta(\frac{n^4/N^4}{d^4})(\beta^T\Sigma^3\beta)^2+\Theta(\frac{n^3/N^3}{d^4})\beta^T\Sigma^4\beta+\Theta(\frac{r(n)^2}{N^2})
\end{align*}

We can see that, this bound is not sufficiently tight on the $(\beta^T\Sigma^3\beta)^2$ when summed over $N^2$ terms. For terms not including an $r(n)$, we need them to be $o(\frac{1}{N})$ when  because we would like terms in the variance when summed to be $o(\frac{1}{N^2})$.

We begin with the first moment squared (and using the notes to Prop. \ref{prop:proj_expect}):
\begin{align*}
\E{(\theta_{MN}^{(j)})}^T\Sigma \E{\theta_{MN}^{(i)}}\E{(\theta_{MN}^{(i)})}^T\Sigma \E{\theta_{MN}^{(j)}}&\geq \left(\frac{n/N}{d}(1+\Omega(\frac{1}{N})+\frac{1}{N}(nr(n)-3\rho)\right)^4(\beta^T\Sigma^3\beta)^2  
\\&\geq \frac{n^4/N^4}{d^4}(1+\Omega(\frac{1}{N})-3\rho\frac{1}{N})^4(\beta^T\Sigma^3\beta)^2
\\&\geq\frac{n^4/N^4}{d^4}(1+\Omega(\frac{1}{N})-6\frac{1}{N})(\beta^T\Sigma^3\beta)^2
\end{align*}

Now, for the second moment, we note that for $k\neq m$
\begin{align*}
    \E{(a_k^{(i)})^T(A^{(i)})^{-1}(a_k^{(i)})(a_m^{(i)})^T(A^{(i)})^{-1}(a_m^{(i)})}&\leq \E{(a_k^{(i)})^T(A_{km}^{(i)})^{-1}(a_k^{(i)})(a_m^{(i)})^T(A_{km}^{(i)})^{-1}(a_m^{(i)})}\\&\leq \E{tr((A_{km}^{(i)})^{-1})^2}
    \leq \frac{n}{N}\E{tr((A_{km}^{(i)})^{-2})}\\&\leq \frac{n^2}{N^2d^2}(1+O(\frac{1}{N})+\frac{3nr(n)}{N})  
\end{align*}

Taking the difference, we get that,
\begin{align*}
\E{(a_k^{(i)})^T(A^{(i)})^{-1}(a_k^{(i)})(a_m^{(i)})^T(A^{(i)})^{-1}(a_m^{(i)})}-  \E{(\theta_{MN}^{(j)})}^T\Sigma \E{\theta_{MN}^{(i)}}\E{(\theta_{MN}^{(i)})}^T\Sigma \E{\theta_{MN}^{(j)}}\leq \Theta(\frac{n^4}{N^5d^4})   
\end{align*}

For the noise term, we start with:
\begin{align*}
    \E{\left((\eps^{(i)})^T((X^{(i)})(X^{(i)}))^{-1}(X^{(i)})\Sigma\theta_{MN}^{(j)}\right)^2}&\leq \sigma_{max}^2\beta^T\E{\Pi_{X^{(j)}}\Sigma (X^{(i)})^T((X^{(i)})(X^{(i)}))^{-2}(X^{(i)})\Sigma\Pi_{X^{j}}}\beta
    \\&\leq \Theta(\frac{n/N}{d^2})\sigma^2_{max}\beta^T\E{\Pi_{X^{(j)}}\Sigma^3\Pi_{X^{(j)}}}\beta
    \\&\leq \Theta(\frac{n^3/N^3}{d^4})\sigma_{max}^2\beta^T\Sigma^5\beta+\Theta(\frac{nr(n)^2}{N^2})\sigma_{max}^2\beta^T\Sigma\beta
\end{align*}
Where we note by Equ. \ref{equ:tr_sigma_4_normalized} as well as,
\begin{align*}
    \E{X_s^T(X_sX_s^T)^{-2}X_s}\leq \Theta(\frac{n}{d^2})
\end{align*}

Additionally, for $\E{\Pi_{X^{(j)}}\Sigma^3\Pi_{X^{(j)}}}$, we note that $v_i^T\E{\Pi_{X^{(j)}}\Sigma^3\Pi_{X^{(j)}}}v_j=0$ when $i\neq j$. Otherwise,
\begin{align*}
    v_i^T\E{\Pi_{X^{(j)}}\Sigma^3\Pi_{X^{(j)}}}v_i&=\sum_{k}\lambda_i\lambda_k^4\E{a_i^TA^{-1}a_ka_k^TA^{-1}a_i}\\
    &=\sum_i\lambda_i^4\E{(a_i^TA^{-1}a_i)^2}+\sum_{i\neq k}\lambda_i\lambda_k^4\E{a_i^TA^{-1}a_ka_k^TA^{-1}a_i}
    \\&\leq \Theta(\frac{n^2/N^2}{d^2})\lambda_i^5+\Theta(\frac{ntr(\Sigma^4)/N}{d^2})\lambda_i
\end{align*}

Finally, we have to handle the term,
\begin{align*}
&\E{\left((\eps^{(i)})^T((X^{(i)})(X^{(i)}))^{-1}(X^{(i)})\Sigma(X^{(j)})^T((X^{(j)})(X^{(j)}))^{-1}\eps^{(j)}\right)^2}\\&\leq \sigma_{max}^2\E{(\eps^{(i)})^T((X^{(i)})(X^{(i)}))^{-1}(X^{(i)})\Sigma (X^{(j)})^T((X^{(j)})(X^{(j)}))^{-2}(X^{(j)})\Sigma (X^{i)})^T((X^{(i)})(X^{(i)}))^{-1}\eps^{(i)}} \\
&\leq \Theta(\frac{n/N}{d^2})\sigma_{max}^2\E{(\eps^{(i)})^T((X^{(i)})(X^{(i)}))^{-1}(X^{(i)})\Sigma^3(X^{(i)})^T((X^{(i)})(X^{(i)}))^{-1}\eps^{(i)}}
\\&\leq \Theta(\frac{n/N}{d^2})\sigma_{max}^4\E{tr\left(\Sigma^3(X^{i)})^T((X^{(i)})(X^{(i)}))^{-2}(X^{(i)})\right)}\leq \Theta(\frac{n^2/N^2}{d^4}tr(\Sigma^4))\sigma_{max}^4\leq \Theta(\frac{n r(n)^2}{N^2})\sigma_{max}^4
\end{align*}

As a result,
\begin{align*}
    &\Var[\theta_{ds}^T\Sigma\theta_{ds}]\\&\leq \Theta\left(nr(n)^2(\sigma_{max}^2+\sigma_{max}^4)+\frac{\sigma_{max}^2}{Nn}+\frac{1}{N^3}+r(n)^2+\frac{nr(n)\sigma_{max}^2}{\sqrt{Nn}}+\frac{n^2r(n)^2(\sigma_{max}^2+\sigma_{max}^4)}{N}+\frac{n^2r(n)}{Nd^2}\right)
\end{align*}

\end{proof}

\begin{proposition}\label{prop:lower_bound_G_ds}
    We assume Assumption \ref{assumpt:weak_canonical_case}.
    \begin{align*}
        \min_cG(c\theta_{ds})=\Omega((1+\sigma^2)nr(n))
    \end{align*}
\end{proposition}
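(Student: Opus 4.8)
The plan is to lower-bound $G(c\theta_{ds})$ for every $c$ by a quadratic-in-$c$ expression whose minimum over $c$ is exactly of order $(1+\sigma^2)nr(n)$. Writing out the generalization error as a quadratic in $c$,
\begin{align*}
G(c\theta_{ds}) = \beta^T\Sigma\beta - 2c\,\E{\theta_{ds}^T\Sigma\beta} + c^2\,\E{\theta_{ds}^T\Sigma\theta_{ds}},
\end{align*}
the minimizing value is $c^* = \E{\theta_{ds}^T\Sigma\beta}/\E{\theta_{ds}^T\Sigma\theta_{ds}}$ and the minimum value equals $\beta^T\Sigma\beta - \E{\theta_{ds}^T\Sigma\beta}^2/\E{\theta_{ds}^T\Sigma\theta_{ds}}$. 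Since $\beta^T\Sigma\beta = 1$ by Assumption~\ref{assumpt:weak_canonical_case}.1, the claim $\min_c G(c\theta_{ds}) = \Omega((1+\sigma^2)nr(n))$ amounts to showing
\begin{align*}
1 - \frac{\E{\theta_{ds}^T\Sigma\beta}^2}{\E{\theta_{ds}^T\Sigma\theta_{ds}}} = \Omega((1+\sigma^2)nr(n)).
\end{align*}

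The key estimates come from Proposition~\ref{prop:straight_var_in_ds}. From there (with $N\to\infty$ so the $1/N$ corrections are negligible) we have $\E{\theta_{ds}^T\Sigma\beta} = \frac{n}{d}\beta^T\Sigma^2\beta(1+o(1)) = q(1+o(1))$, and
\begin{align*}
\E{\theta_{ds}^T\Sigma\theta_{ds}} = (1+\tilde\sigma^2)nr(n)(1+o(1)) + \frac{n^2}{d^2}\beta^T\Sigma^3\beta(1+o(1)).
\end{align*}
By Proposition~\ref{prop:noise_nN_v_n}, $\tilde\sigma^2 = \sigma^2(1+O(\rho))$, so $(1+\tilde\sigma^2)nr(n) = (1+\sigma^2)nr(n)(1+o(1))$. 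Substituting, the quantity to bound below is
\begin{align*}
1 - \frac{q^2}{(1+\sigma^2)nr(n) + \frac{n^2}{d^2}\beta^T\Sigma^3\beta}(1+o(1)).
\end{align*}
Denote $A := (1+\sigma^2)nr(n)$ and $B := \frac{n^2}{d^2}\beta^T\Sigma^3\beta$, so the target is $1 - q^2/(A+B)$. Using $1 - \frac{q^2}{A+B} = \frac{A+B-q^2}{A+B}$, I would split into the regime where $q$ is bounded away from $1$ (so the numerator is $\Omega(1)$ and, since $A+B = O(q^2) = O(1)$ by Assumptions~\ref{assumpt:weak_canonical_case}.3--4, the whole expression is $\Omega(1) \ge \Omega(A)$ trivially once one notes $A \le A+B = O(q^2) \ll 1$). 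In fact under Assumption~\ref{assumpt:weak_canonical_case} we always have $q \le \frac18$ (from $\lambda_1 \le \frac18\frac dn$ and $\beta^T\Sigma\beta=1$, since $q = \frac nd\beta^T\Sigma^2\beta \le \frac nd\lambda_1\beta^T\Sigma\beta \le \frac18$), so the numerator $A+B-q^2 \ge A + B - \frac18 q$; combined with $B \ge 0$ this is at least $A - \frac18 q$. If $A \ge \frac14 q$ we are done since then $A + B - q^2 \ge A/2$ and $A+B = O(q^2)$ gives $1 - q^2/(A+B) \ge \Omega(A/q^2) \cdot A \ge \Omega(A)$ after checking $A/q^2$ is bounded; more cleanly, $1 - q^2/(A+B) \ge 1 - q^2/A$ is negative there, so I instead argue directly that $A+B \ge A$ forces $q^2/(A+B) \le q^2/A$ — this is the wrong direction, so the right move is: $\frac{q^2}{A+B} \le \frac{q^2}{B}$ only helps when $B$ is large, hence the clean dichotomy below.

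Here is the dichotomy I would actually run. \textbf{Case 1:} $(1+\sigma^2)nr(n) \ge \frac{n^2}{d^2}\beta^T\Sigma^3\beta$, i.e. $A \ge B$. Then $A+B \le 2A$, so $\frac{q^2}{A+B} \ge \frac{q^2}{2A}$ — again wrong direction. The correct observation is simply that $1 - \frac{q^2}{A+B}$ need not be shown small; the statement $\min_c G = \Omega(A)$ is a \emph{lower} bound, so I want $1 - \frac{q^2}{A+B} \ge cA$ for some constant $c>0$. Since $A+B = O(q^2)$ and $q \le \frac18$, write $1 - \frac{q^2}{A+B} = \frac{A+B-q^2}{A+B} \ge \frac{A+B-q^2}{C(C_1+C_{noise})q^2}$ using $A+B \le 3(C_1+C_{noise})q^2$ from Proposition~\ref{prop:straight_var_in_ds}'s bounds (as in the proof of Theorem~\ref{theorem:canon_multiplicative}). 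It remains to show $A+B - q^2 \ge c'\,q^2 A/(\text{something})$, equivalently that $A+B-q^2$ is itself $\Omega(A)$. But $A+B-q^2 \ge A+B - \frac18 q \ge A + B - q/8$; if $A \ge q/4$ then this is $\ge A/2 + B \ge A/2$, giving $1-\frac{q^2}{A+B} \ge \frac{A/2}{3(C_1+C_{noise})q^2} \ge \Omega(1)\cdot A$ since $q = \Theta(1)$ — done. If instead $A < q/4$, then $A = (1+\sigma^2)nr(n) = o(1)$-scale but still I only need $\min_c G \ge \Omega(A)$, and since $B \le (C_1 + C_{noise} - 1_{?})q^2$ and $q^2$ could exceed $A+B$... here I note $A + B - q^2 \ge A + B - \frac18 q$, and the worst case $B$ small forces $A+B$ small so the ratio $\frac{A+B-q^2}{A+B}$ is bounded below by a constant (e.g. $\ge 1 - \frac{q/8}{A+B}$, and when $A+B \le q^2 \le q/8$ this could be negative — but that cannot happen because Assumption~\ref{assumpt:weak_canonical_case}.3 gives $B = \frac{n^2}{d^2}\beta^T\Sigma^3\beta \le C_1 q^2$, which does NOT bound $B$ below). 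The actual resolution: I claim $A + B \ge \frac{q^2}{1+\sigma^2}\cdot\text{const}$ is false in general, so instead I bound $G$ below pointwise using Proposition~\ref{prop:exp_noise_first_order} to get $\E{\theta_{ds}^T\Sigma\theta_{ds}} \ge \Omega((1+\sigma^2)nr(n))$ directly, and combine with Cauchy--Schwarz: $\E{\theta_{ds}^T\Sigma\beta}^2 \le \E{\theta_{ds}^T\Sigma\theta_{ds}}\cdot\beta^T\Sigma\beta$ shows $1 - \frac{\E{\theta_{ds}^T\Sigma\beta}^2}{\E{\theta_{ds}^T\Sigma\theta_{ds}}} \ge 0$ but not strictly positive. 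The genuinely correct approach, which I would adopt, is: among all $c$, $G(c\theta_{ds}) \ge G(c_{MN-proxy})$ is not available, so instead use that $\min_c G(c\theta_{ds}) = \frac{\Var\text{-like term}}{\E{\theta_{ds}^T\Sigma\theta_{ds}}}$ and lower-bound the numerator $\beta^T\Sigma\beta\,\E{\theta_{ds}^T\Sigma\theta_{ds}} - \E{\theta_{ds}^T\Sigma\beta}^2$ by the $(1+\sigma^2)nr(n)$ contribution to $\E{\theta_{ds}^T\Sigma\theta_{ds}}$, which survives because $\E{\theta_{ds}^T\Sigma\beta}^2 = q^2(1+o(1))$ while $\E{\theta_{ds}^T\Sigma\theta_{ds}} \ge q^2(1+o(1)) + (1+\sigma^2)nr(n)(1+o(1))$ — wait, that requires $\frac{n^2}{d^2}\beta^T\Sigma^3\beta \ge q^2$, which is false.

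\textbf{The main obstacle}, which I now isolate cleanly, is precisely that $\frac{n^2}{d^2}\beta^T\Sigma^3\beta$ can be as small as $\Theta(q^2)$ or smaller, so the ``signal'' term $q^2$ in the numerator nearly cancels against $q^2$ in the denominator, and one must track the lower-order survivor carefully. The clean resolution: from Proposition~\ref{prop:straight_var_in_ds} and Proposition~\ref{prop:weak_c_opt_bounds}-type algebra,
\begin{align*}
\min_c G(c\theta_{ds}) = \beta^T\Sigma\beta - \frac{\E{\theta_{ds}^T\Sigma\beta}^2}{\E{\theta_{ds}^T\Sigma\theta_{ds}}} = \frac{(1+\sigma^2)nr(n)}{\E{\theta_{ds}^T\Sigma\theta_{ds}}}\,\beta^T\Sigma\beta\,(1+o(1)) + \frac{\frac{n^2}{d^2}\beta^T\Sigma^3\beta\,\beta^T\Sigma\beta - q^2}{\E{\theta_{ds}^T\Sigma\theta_{ds}}}(1+o(1)),
\end{align*}
and since $\beta^T\Sigma\beta = 1$, Cauchy--Schwarz gives $\frac{n^2}{d^2}\beta^T\Sigma^3\beta = \frac{n^2}{d^2}(\beta^T\Sigma^3\beta)(\beta^T\Sigma\beta) \ge \frac{n^2}{d^2}(\beta^T\Sigma^2\beta)^2 = q^2$, so the second term is $\ge 0$ and the first term is $\Omega\!\big((1+\sigma^2)nr(n)/\E{\theta_{ds}^T\Sigma\theta_{ds}}\big) = \Omega\!\big((1+\sigma^2)nr(n)/q^2\big)\cdot(1+\sigma^2)nr(n)$; since $\E{\theta_{ds}^T\Sigma\theta_{ds}} = O(q^2) = O(1)$, this is $\Omega((1+\sigma^2)nr(n))$, completing the proof. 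So the real content is the single Cauchy--Schwarz inequality $\frac{n^2}{d^2}\beta^T\Sigma^3\beta \ge q^2$, plus bookkeeping with the estimates of Proposition~\ref{prop:straight_var_in_ds} and the identity $1 - \frac{E_1^2}{E_2} = \frac{E_2 - E_1^2}{E_2}$ with $\beta^T\Sigma\beta = 1$; I would present exactly that, with the $o(1)$ terms in Proposition~\ref{prop:straight_var_in_ds} and $\tilde\sigma^2 = \sigma^2(1+o(1))$ from Proposition~\ref{prop:noise_nN_v_n} absorbed into constants for $n$ large and $N \to \infty$.
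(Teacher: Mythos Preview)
Your final approach—using the Cauchy--Schwarz inequality $(\beta^T\Sigma^2\beta)^2 \le (\beta^T\Sigma\beta)(\beta^T\Sigma^3\beta)$ to get $q^2 \le B := \tfrac{n^2}{d^2}\beta^T\Sigma^3\beta$, and then concluding $E_2 - E_1^2 \ge A := (1+\sigma^2)nr(n)$—has a genuine gap. The inequality $q^2 \le B$ is exact, but you only know $E_1^2 = q^2(1+o(1))$ and $E_2 = (A+B)(1+o(1))$ from Proposition~\ref{prop:straight_var_in_ds}. Those $o(1)$ factors multiply quantities of size $\Theta(1)$ (since $q$ is a fixed constant under Assumption~\ref{assumpt:weak_canonical_case}), so the additive error in $E_2 - E_1^2$ is $o(1)$, not $o(A)$. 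Under Assumption~\ref{assumpt:weak_canonical_case} there is no lower bound on $A=(1+\sigma^2)nr(n)$; indeed $nr(n)$ can be as small as $\Theta(1/n)$, and the paper explicitly treats the regime $(1+\sigma^2)nr(n)=o(1)$ elsewhere (Theorem~\ref{theorem:canon_multiplicative}, Assumption~\ref{assumpt:rate_improve}). In that regime your $o(1)$ slack swallows $A$ entirely, and the argument yields nothing.

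The paper's proof applies Cauchy--Schwarz in a different place that avoids this loss. Writing $\mu=\E{\theta_{ds}}$, one has exactly
\[
E_1^2=(\mu^T\Sigma\beta)^2\le (\mu^T\Sigma\mu)(\beta^T\Sigma\beta)=\mu^T\Sigma\mu,
\]
so $E_2-E_1^2\ge \E{\theta_{ds}^T\Sigma\theta_{ds}}-\mu^T\Sigma\mu = \tr\!\big(\Sigma\,\Cov[\theta_{ds}]\big)$, an \emph{exact} inequality with no $o(1)$ correction. By independence of sub-samples this variance is $\sum_i \big(\E{(\theta_{MN}^{(i)})^T\Sigma\theta_{MN}^{(i)}}-\E{\theta_{MN}^{(i)}}^T\Sigma\E{\theta_{MN}^{(i)}}\big)$. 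The noise contribution alone gives $\Omega((1+\tilde\sigma^2)nr(n))$, and the signal part is handled by a direct computation of $\Var[a_i^T A^{-1}a_i]$ showing the residual error is $o(nr(n))+o(1/n)$—i.e.\ small \emph{relative to $A$}, not merely relative to $1$. That sharper control of the error scale is exactly what your route via Proposition~\ref{prop:straight_var_in_ds} cannot deliver. Your population-level Cauchy--Schwarz $q^2\le B$ is in fact the special case of the paper's estimator-level inequality when $\theta_{ds}$ is replaced by its leading-order mean $\tfrac{n}{d}\Sigma\beta$; but passing through that approximation first is precisely what introduces the fatal $o(1)$ slack.
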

\begin{proof}

By properties of quadratics,
\begin{align*}
    \min_c G(c\theta_{ds})=1-\frac{\E{\beta^T\Sigma\theta_{ds}}^2}{\E{\theta_{ds}^T\Sigma\theta_{ds}}}
\end{align*}

We note that by Prop. \ref{prop:straight_var_in_ds},
\begin{align*}
   \E{\theta_{ds}^T\Sigma\theta_{ds}}\geq \Theta(\frac{n^2}{d^2})\beta^T\Sigma^3\beta \geq \Theta(1)q^2=\Theta(1) 
\end{align*}

As a result, it is sufficient to show $\E{\theta_{ds}^T\Sigma\theta_{ds}}-\E{\beta^T\Sigma\theta_{ds}}^2 =\Omega((1+\sigma)^2nr(n))$

    We begin by noting that,
    \begin{align*}
        (\E{\theta_{ds}^T\Sigma\beta})^2&=\sum_{i,j}\E{\theta_{MN}^{(i)}}^T\Sigma\beta^T\beta^T\Sigma\E{\theta_{MN}^{(j)}}
        \\&=\sum_{i,j}\E{\theta_{MN}^{(i)}}^T\Sigma\beta^T\beta^T\Sigma\E{\theta_{MN}^{(i)}}
        \\&\leq \sum_{i,j}\E{\theta_{MN}^{(i)}}^T\Sigma\E{\theta_{MN}^{(i)}}\beta^T\Sigma\beta
    \end{align*}

As a result, we only need to consider this difference,
\begin{align*}
    \sum_{i=1}^d\E{(\theta_{MN}^{(i)})^T\Sigma\theta_{MN}^{(i)}}-\E{\theta_{MN}^{(i)}}^T\Sigma\E{\theta_{MN}^{(i)}}
\end{align*}

Now, by the proof of Prop. \ref{prop:straight_var_in_ds} where $\tilde{\sigma}^2$ is as defined in that proposition and we use $X_s$ to represent the ith subsample,
\begin{align*}
\E{(\theta_{MN}^{(i)})^T\Sigma\theta_{MN}^{(i)}} &=\sum_{i\neq j}c_i^2\lambda_j^2\lambda_i\E{a_i^TA^{-1}a_ja_j^TA^{-1}a_i}+\tilde{\sigma}^2\E{tr(\Sigma X_s^T(X_sX_s^T)^{-2}X_s)}\\&+\sum_{i=1}^d\lambda_i^3\E{(a_i^TA^{-1}a_i)^2}   
\\&\geq (1+\tilde{\sigma}^2)nr(n)(1+\frac{1}{N})(1+\Theta(1)\rho)+\sum_{i=1}^dc_i^2\lambda_i^3\E{(a_i^TA^{-1}a_i)^2}
\end{align*}

Where $c_i=\beta^Tv_i$.

Since, 
\begin{align*}
    \E{\theta_{MN}^{(i)}}^T\Sigma\E{\theta_{MN}^{(i)}}=\sum_{i=1}^dc_i^2\lambda_i^3\E{a_i^TA^{-1}a_i}^2
\end{align*}

As a result, we care about lower bounding, $\Var[a_i^TA^{-1}a_i]$.
Now, using Jensen's Inequality and Prop. \ref{prop:tr_A_bounds},
\begin{align*}
    \E{a_i^TA^{-1}a_i}&=\E{a_i^TA_i^{-1}a_i}-\E{\frac{\lambda_i}{1+\lambda_ia_i^TA_i^{-1}a_i}(a_i^TA_i^{-1}a_i)^2}
    \\&\leq \frac{n}{Ntr(\Sigma)-\lambda_i} +\frac{n^2r(n)}{N^2(tr(\Sigma)-\lambda_i)(1-2\rho)^2}-\frac{\lambda_i}{1+2\lambda_i\frac{k}{d}}\E{(a_i^TA_i^{-1}a_i)^2}
    \\&\leq \frac{n}{Ntr(\Sigma)-\lambda_i} +\frac{n^2r(n)}{N^2(tr(\Sigma)-\lambda_i)(1-2\rho)^2}-\frac{\lambda_i}{1+2\lambda_i\frac{k}{d}}\E{tr(A_i^{-1})^2+2tr(A_i^{-2})}
    \\&\leq \frac{n}{Ntr(\Sigma)-\lambda_i} +\frac{n^2r(n)}{N^2tr(\Sigma)(1-2\rho)^2}-\frac{\lambda_i}{1+2\lambda_i\frac{k}{d}}(\E{tr(A_i^{-1})}^2+2\E{tr(A_i^{-2})})
    \\&\leq \frac{n}{Ntr(\Sigma)-\lambda_i} +\frac{n^2r(n)}{N(tr(\Sigma)-\lambda_i)(1-2\rho)^2}\\&-\frac{\lambda_i}{1+2\lambda_i\frac{k}{d}}\left(\frac{n^2}{N^2(tr(\Sigma)-\lambda_i)^2}+\frac{n/N+2(n/N)^2r(n)}{(tr(\Sigma)-\lambda_i)^2}\right)
\end{align*}

Using that $\frac{1}{1-x}\leq 1+x+2x^2$ for small $x$, we have,
\begin{align*}
    \frac{k}{tr(\Sigma)-\lambda_i}&\leq \frac{k}{tr(\Sigma)}+\frac{k\lambda_i}{tr(\Sigma)^2}+\frac{2k\lambda_i^2}{tr(\Sigma)^3}
    \\&\leq \frac{k}{tr(\Sigma)}(1+\frac{\lambda_i}{tr(\Sigma)}(1+O(\frac{1}{n})))
\end{align*}

As a result,
\begin{align*}
    \E{a_i^TA^{-1}a_i}^2&\leq \frac{n^2}{N^2tr(\Sigma)^2}+2\frac{n^3r(n)}{N^3tr(\Sigma)^2}+\frac{n^4r(n)^2}{N^4tr(\Sigma)^2}-2\lambda_i\frac{n^3}{N^3tr(\Sigma)^3}+\frac{n^4}{N^4tr(\Sigma)^4}\lambda_i^2
    \\&+4\frac{n^4r(n)}{N^4tr(\Sigma)^3}\lambda_i+4\frac{n^2}{N^2tr(\Sigma)^4}\lambda_i^2
    \\&=\frac{n^2}{N^2tr(\Sigma)^2}-2\lambda_i\frac{n^3}{N^3tr(\Sigma)^3}+\lambda_i^2\frac{n^4}{N^4tr(\Sigma)^4}+o(\frac{n^2}{Ntr(\Sigma)^2}nr(n))+o(\frac{n}{Ntr(\Sigma)^2})
\end{align*}

On the flip side,
\begin{align*}
    \E{(a_i^TA^{-1}a_i)^2}&=\E{\left(a_i^TA_i^{-1}a_i-\frac{\lambda_i(a_i^TA_i^{-1}a_i)^2}{1+\lambda_ia_i^TA_i^{-1}a_i}\right)^2}
    \\&\geq \E{(a_i^TA_i^{-1}a_i)^2}-\lambda_i\E{(a_i^TA_i^{-1}a_i)^3}+\lambda_i^2\E{(a_i^TA_i^{-1}a_i)^4}
\end{align*}

Now, by Prop. \ref{prop:tr_A_bounds},
\begin{align*}
    \E{(a_i^TA_i^{-1}a_i)^2}&\geq \E{a_i^TA_i^{-1}a_i}^2=\E{tr(A_i^{-1})}^2\geq \frac{n^2}{N^2(tr(\Sigma)-\lambda_i)^2}+o(\frac{n^2}{Ntr(\Sigma)^2}nr(n))
    \\&\geq \frac{n^2}{N^2tr(\Sigma)^2}+o(\frac{n^2}{Ntr(\Sigma)^2}nr(n))\\
    \E{(a_i^TA_i^{-1}a_i)^4}&\geq \E{(a_i^TA_i^{-1}a_i)^2}\geq \frac{n^4}{N^4tr(\Sigma)^4}+o(\frac{n^4}{Ntr(\Sigma)^4}nr(n))\\
    \E{(a_i^TA_i^{-1}a_i)^3}&=\E{tr(A_i^{-1})^3+6tr(A_i^{-1})tr(A_i^{-2})+8tr(A_i^{-3})}
    \\&\leq (\frac{n^2}{N^2}+6\frac{n}{N}+8)\E{tr(A_i^{-3})}
    \\&\leq \frac{n(n^2/N^2+6n/N+8)}{Ntr(\Sigma)^3}(1+O(\frac{1}{n})+O(\frac{nr(n)}{Ntr(\Sigma)^3}))
    \\&\leq \frac{n^3}{N^3tr(\Sigma)^3}+o(\frac{n^2}{N tr(\Sigma)^3})+o(\frac{n^3}{N tr(\Sigma)^3}nr(n))
\end{align*}

Where the last statement used Prop. \ref{prop:second_third_quadratic_forms}

As a result,
\begin{align*}
    \E{(a_i^TA^{-1}a_i)^2}&\geq \frac{n^2}{N^2tr(\Sigma)^2}+o(\frac{n^2}{Ntr(\Sigma)^2}nr(n))+\lambda_i^2\frac{n^4}{N^4tr(\Sigma)^4}+\lambda_i^2o(\frac{n^4}{Ntr(\Sigma)^4}nr(n))\\&-\lambda_i\frac{n^3}{N^3tr(\Sigma)^3}+\lambda_io(\frac{n^2}{Ntr(\Sigma)^3})+o(\frac{n^3}{Ntr(\Sigma)^3}nr(n))
    \\&\geq \frac{n^2}{N^2tr(\Sigma)^2}-\lambda_i\frac{n^3}{N^3tr(\Sigma)^3}+\lambda_i^2\frac{n^4}{N^4tr(\Sigma)^4}+o(\frac{n}{Ntr(\Sigma)^2})+o(\frac{n}{Ntr(\Sigma)^2}nr(n))
\end{align*}

Finally, we can see that,
\begin{align*}
    \Var[a_i^TA^{-1}a_i]\geq o(\frac{n^2}{Ntr(\Sigma)^2}nr(n))+o(\frac{n}{Ntr(\Sigma)^2})
\end{align*}

Thus,
\begin{align*}
\sum_{i=1}^d\E{(\theta_{MN}^{(i)})^T\Sigma\theta_{MN}^{(i)}}-\E{\theta_{MN}^{(i)}}^T\Sigma\E{\theta_{MN}^{(i)}}\geq \Omega((1+\tilde{\sigma}^2)nr(n))+ o(nr(n))+o(\frac{1}{n})
\end{align*}

Where we note that $nr(n)=\Omega(\frac{1}{n})$.

The result follows by using Prop. \ref{prop:noise_nN_v_n}.

\end{proof}

\begin{proposition}
    Suppose Assumption \ref{assumpt:rate_improve} holds. Then, $G(\theta_{ds})=\Theta(1)$ and $$\min_cG(c\theta_{ds})=o(1)$$
\end{proposition}
\begin{proof}
We note that, for an lower bound on the generalization error without inflation we have that,
\begin{align*}
    G(\theta_{ds})&=\beta^T\Sigma\beta-2\E{\theta_{ds}^T\Sigma\beta}+\E{\theta_{ds}^T\Sigma\theta_{ds}}
    \\&\geq \beta^T\Sigma\beta -4\frac{n}{d}\beta^T\Sigma^2\beta+\frac{1+\tilde{\sigma^2}}{2}nr(n)+\frac{1}{4}\frac{n^2}{d^2}\beta^T\Sigma^3\beta
    \\&\geq 1-4q\geq \frac{6}{10}
\end{align*}
For the upper bound,
\begin{align*}
    G(\theta_{ds})&=\beta^T\Sigma\beta-2\E{\theta_{ds}^T\Sigma\beta}+\E{\theta_{ds}^T\Sigma\theta_{ds}}
    \\&\leq \beta^T\Sigma\beta -\frac{n}{2d}\beta^T\Sigma^2\beta+\frac{1+\tilde{\sigma}^2}{2}nr(n)+4\frac{n^2}{d^2}\beta^T\Sigma^3\beta
    \\&\leq 1+4q^2 \leq \frac{14}{10} 
\end{align*}
Where we note that by Prop. \ref{prop:noise_nN_v_n} (by seeing that since $q\leq \rho$ and $q=\Theta(1)$),
\begin{align*}
    \tilde{\sigma}^2nr(n)\leq (1+10\rho)\sigma^2nr(n)
\end{align*}

Now, turning to $\min_cG(c\theta_{ds})$, we note that since $G(c\theta_{ds})\geq 0$, it is sufficient to show that $\min_cG(c\theta_{ds})\leq o(1)$. This can be seen as follows:
\begin{align*}
 \min_c G(c\theta_{ds})&=1-\frac{\E{\theta_{ds}^T\Sigma\beta}^2}{\E{\theta_{ds}^T\Sigma\theta_{ds}}}\\&\leq 1-\frac{(\frac{n}{d}\beta^T\Sigma^2\beta)^2(1+o(1)-\frac{3\rho}{N})^2}{(1+\tilde{\sigma}^2)r(n)n(1+3\frac{n}{N}r(n))(1+o(1))+\frac{n^2}{d^2}\beta^T\Sigma^3\beta(1+o(1))(1+\frac{n}{N}r(n))} 
 \\&\leq 1-\frac{q^2(1+o(1)-\frac{3\rho}{N})^2}{(1+\tilde{\sigma}^2)r(n)n(1+3\frac{n}{N}r(n))(1+o(1))+q^2(1+o(1))(1+\frac{n}{N}r(n))}
\end{align*}
We start off by noting that since $q^2=\Theta(1)$, 
\begin{align*}
    1-\frac{q^2}{q^2(1+o(1))+(1+\tilde{\sigma}^2)nr(n)}&\leq \frac{q^2(1+o(1))+(1+\tilde{\sigma}^2)nr(n)-q^2}{q^2(1+o(1))+(1+\tilde{\sigma}^2)nr(n)}
    \\&\leq \frac{q^2o(1)+(1+\tilde{\sigma}^2)nr(n)}{q^2(1+o(1))+(1+\tilde{\sigma}^2nr(n)}=o(1)
\end{align*}

For the same reason, the remaining terms are $o(1)$ by the same reasoning (and using that $(1+\sigma^2)nr(n)=o(1)$).

\end{proof}
\begin{proposition}\label{prop:c_estimate_mult}
    We assume that Assumption \ref{assumpt:weak_canonical_case} holds, $\sigma_{max}^2=o(\sqrt{n})$. Let $c^*=\arg\min_c G(c\theta_{ds})$. Then, we can construct a specific number of splits ($N$) to create a $\theta_{ds}$,  $\hat{c}^*$, and $\delta_n=o(1)$ such that,
    \begin{align*}
        \mathbb{P}(|\hat{c}^*-c^*|>\delta_n)=o(1)\\
        \mathbb{P}(|R(\hat{c}^*\theta_{ds})-G(\hat{c}\theta_{ds})|>\delta_n)=o(1)\\
    \end{align*}
When the assumptions of Theorem \ref{theorem:canon_multiplicative}, then,
\begin{align*}
    \mathbb{P}(R(\hat{c}^*\theta_{ds})>\alpha G(\theta_{MN}))\to 0
\end{align*}
Where $\alpha$ is as constructed in Theorem \ref{theorem:canon_multiplicative}.
\end{proposition}
\begin{proof}

For simplicity, we would like a subsample that is $o(n)$, so that $\theta_{ds}$ taken on a sample that is $n-n/N$ (and thus none of the expectations are affected up to a $o(1)$ difference). That is, on the larger subsample, we want to use the data splitting procedure discussed in Section \ref{sec:data_splitting}. We need each split size, $N=N(n)$ to be such that $\sigma_{max}^2/N=o(1)$ and $\sigma_{max}^2/(n/N)=o(1)$. This can trivially be satisfied by $N=\Theta(\sqrt{n})$.

Our goal is to estimate the following, 
\begin{align*}
c^*=\frac{\E{\theta_{ds}^T\Sigma\beta}}{\E{\theta_{ds}^T\Sigma\theta_{ds}}}  
\end{align*}

From the Prop. \ref{prop:straight_var_in_ds},
\begin{align*}
c^*=\frac{\frac{n}{d}\beta^T\Sigma^2\beta}{\frac{n^2}{d^2}\beta^T\Sigma^3\beta+(1+\tilde{\sigma}(\lfloor n/N\rfloor)^2)r(n)n}(1+o(1))    
\end{align*}

Here, we let,
\begin{align*}
    \tilde{\sigma}^2(k)=\E{tr(\Lambda(X^*)(X^*(X^*)^T)^{-1}X^*\Sigma(X^*)^T(X^*(X^*)^T)^{-1})}/\E{tr(\Sigma (X^*)^T(X^*(X^*)^T)^{-2}X^*)}
\end{align*}

Where $X^*$ is a sub-sample of $X$ of size $k$.

Consider the samples within $X^{(N)}$. That is, $x_{(N-1)\lfloor\frac{n}{N}\rfloor+1},...,x_n$. For notational convenience, we denote them $z_1,...,z_k$ with corresponding outputs $\eta_1,...,\eta_k$. We note that $k=n/N+o(1)$.

Let $\hat{q}=\frac{1}{k}\sum_{i=1}^{ k}\theta_{ds}^Tz_i\eta_i$. We note that,  $\E{\hat{q}}=q(1+o(1))$ by Prop. \ref{prop:straight_var_in_ds}.

Let $\hat{r}=\frac{1}{k}\sum_{i=1}^k(\theta_{ds}^Tz_i)^2$. Now, by Prop. \ref{prop:straight_var_in_ds},
\begin{align*}
    \E{\hat{r}}=(1+\tilde{\sigma}(\lfloor n/N\rfloor)^2)nr(n)(1+o(1))+\frac{n^2}{d^2}\beta^T\Sigma^3\beta(1+o(1))
\end{align*}

Let $\hat{c}^*=\hat{q}/\hat{r}$.

Our first goal is to use the Continuous Mapping Theorem (Prop. \ref{prop:cts_mapping_theorem}). As such, our goal is to show that the variance is $o(1)$.

We first focus on the conditional variance where we assume everything but $X^{(N)}$ is known (we denote this event $X^s$). From Prop. \ref{prop:var_x_y}, we see the following:

\begin{align*}
\Var\left[\frac{1}{k}\sum_{i=1}^k\theta_{ds}^Tz_i\eta_i\,|\,X^s\right]&=\frac{1}{k}\theta_{ds}^T\Var[z_1\eta_1]\theta_{ds}\leq \frac{1}{k}\theta_{ds}^T\Sigma\theta_{ds}(\sigma_{max}^2+\beta^T\Sigma\beta)+\frac{1}{k}(\theta_{ds}^T\Sigma\beta)^2\\
\Var\left[\frac{1}{k}\sum_{i=1}^k(\theta_{ds}^Tz_i)^2\,|\,X^s\right]&=\frac{1}{k}\Var[\theta_{ds}^Tz_1z_1^T\theta_{ds}]\leq \frac{2}{k}(\theta_{ds}^T\Sigma\theta_{ds})^2
\end{align*}

By Law of Total Variance (Prop. \ref{prop:total_law_probability}),
\begin{align*}
    \Var[\frac{1}{k}\sum_{i=1}^k\theta_{ds}^Tx_iy_i]&=\frac{1}{k}\E{\Var[\theta_{ds}^Tx_1y_1\,|\,X^s]}+\Var\left[\E{\theta_{ds}^Tx_1y_1\,|\, X^s}\right]
    \\&=\frac{1}{k}\E{\theta_{ds}^T\Sigma(\sigma_{max}^2+\beta^T\Sigma\beta)\theta_{ds}+(\theta_{ds}^T\Sigma\beta)^2}+\Var[\theta_{ds}^T\Sigma\beta]
    \\&=o(1)+\Var[\theta_{ds}^T\Sigma\beta]
\end{align*}

Where we use the fact that $(\sigma_{max}^2+\beta^T\Sigma\beta)/k=o(1)$ and from the proof of Prop. \ref{prop:concentration_estimators} and because by Prop. \ref{prop:straight_var_in_ds}, the variances of $\theta_{ds}^T\Sigma\theta_{ds}$ and $\theta_{ds}^T\Sigma\beta$ are $o(1)$ and the first moments are $O(1)$ (hence the second moments are $O(1)$).

Similarly, for the remaining term,
\begin{align*}
    \Var\left[\frac{1}{k}\sum_{i=1}^k(\theta_{ds}^Tx_i)^2\right]&=\frac{1}{k}\E{\Var[(\theta_{ds}^Tx_1)^2\,|\,X^s]}+\Var\left[\E{(\theta_{ds}^Tx_1)^2\,|\, X^s}\right]\\&=\frac{2}{k}\E{(\theta_{ds}^T\Sigma\theta_{ds})^2}+\Var[\theta_{ds}^T\Sigma\theta_{ds}]=o(1)
\end{align*}

As a result, $\hat{q}$ converges to $q$ in probability and $\hat{r}$ converges to $r$. Thus, $\hat{c}^*$ converges in probability to $c^*$ at $o(1)$ by Prop. \ref{prop:cts_mapping_theorem} and Prop. \ref{prop:noise_nN_v_n_o_1}. This is because,
\begin{align*}
    \mathbb{P}(\hat{r}= 0)= \mathbb{P}(\cup_{i=1}^k\theta_{ds}^Tz_iz_i^T\theta_{ds}= 0)\leq \sum_{i=1}^k\mathbb{P}(z_1\neq 0,\theta_{ds}\neq 0)\leq 0
\end{align*}

Because, $z_1\sim N(0,\Sigma)$ which is not zero almost surely and $\Pi_{X}$ is almost surely rank $n$. 

Additionally, by the same reasoning, $|R(\hat{c}^*\theta_{ds})-G(c^*\theta_{ds})|$ converges to zero in probability.

Since there are a finite number of terms that are decaying to zero, by choosing $\delta_n$ with a sufficiently slow decay to zero, you can find a sequence of $\delta_n$ such that the result holds.

For the statement regarding Theorem \ref{theorem:canon_multiplicative}. By construction,
\begin{align*}
    \alpha G(\theta_{MN})\geq G(c_{opt}\theta_{MN})
\end{align*}

Because $\alpha$ is not particularly optimized, it is pretty clear from the proof of Theorem \ref{theorem:canon_multiplicative} that when $q$ satisfies its assumptions (noting that $C_1\geq 1$),
\begin{align*}
    \alpha G(\theta_{MN})- G(c_{opt}\theta_{MN})\geq 4q
\end{align*}

As a result,
\begin{align*}
    \mathbb{P}(R(\hat{c}^*\theta_{ds})\geq \alpha G(\theta_{MN}))\leq \mathbb{P}(|R(\hat{c}^*\theta_{ds})-G(c^*\theta_{ds})|\geq q)\to0
\end{align*}

\end{proof}

\begin{proposition}\label{prop:concentration_estimators}
Consider Assumption \ref{assumpt:weak_canonical_case}. Let $c>0$ be constant assume $n$ sufficiently large. Let $M$ be some large number that is fixed. Suppose one of the following is true:
\begin{enumerate}
    \item $\sigma_{max}^2=o(\sqrt{n})$; or
\item $\sigma^2nr(n)=o(1)$, $\sigma^2=o(n/\log(n)^2)$ and $\sigma_{max}^2/\sigma^2=o(N^{1/2})$;

\end{enumerate}

For any $M>c>0$, $\Var[c\theta_{ds}]=o(1)$. As a result, when $\min_c G(c\theta_{ds})=\omega(1)$, $R(c\theta_{ds})$ concentrates around its mean (as in Def. \ref{def:mean_concentration}.3).
\end{proposition}
\begin{proof}

We just need to show that $\Var[R(c\theta_{ds})]=o(1)$. This is shown when $\Var[\theta_{ds}^T\Sigma\beta]$ and $\Var[\theta_{ds}^T\Sigma\theta_{ds}]$ are $o(1)$.  

By Prop. \ref{prop:straight_var_in_ds}, we can easily see in both of those cases $\Var[\theta_{ds}^T\Sigma\beta]=o(1)$.

For the remaining term, we use Prop. \ref{prop:straight_var_in_ds} and note that in the first case, we can see easily see that it is $o(1)$ by the assumptions. The second case can be seen by noting that every $\sigma_{max}^2$ term is linked with a $r(n)n$ term and is either divided by $N^{1/2}$ or $n^{1/2}$. The conclusion comes from noting that $N\leq n$. 

\end{proof}
\subsection{Miscellaneous}
\begin{proposition}\label{prop:spiked_cov_model_improv}
We use the notation established in Section \ref{sec:def_not}. Consider $\Sigma$ is a spiked covariance model ($\Sigma=I_d+vv^T$ for some $v$). Assume $\eps_i\perp x_i$, $\norm{v}^2\leq \frac{1}{10}\frac{d}{n}$, and $\lim\inf_{n\to \infty}\beta^Tv/\sigma>1+\limsup_{n\to\infty} \frac{n}{d}\norm{v}^2$. Then, the Inflation Property occurs for $n$ sufficiently large.
\end{proposition}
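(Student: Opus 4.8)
The plan is to prove $c_{opt}>1$ (Def.~\ref{def:c_opt}); since $c\mapsto G(c\theta_{MN})$ is an upward parabola with vertex at $c_{opt}$, this immediately gives the Inflation Property. Because $\eps_i\perp x_i$ the cross term drops out, and, writing $A:=XX^T$, the condition $c_{opt}>1$ is equivalent to
\begin{equation*}
\E{\beta^T\Pi_X\Sigma\beta}-\beta^T\E{\Pi_X\Sigma\Pi_X}\beta \;>\; \sigma^2\,\E{\tr(\Sigma X^TA^{-2}X)}.
\end{equation*}
The first move is to exploit the rank-one form $\Sigma=I+vv^T$: then $X\Sigma X^T=A+(Xv)(Xv)^T$, so $\Pi_X\Sigma\Pi_X=\Pi_X+X^TA^{-1}(Xv)(Xv)^TA^{-1}X$, and hence, using $v^T\Pi_X\beta=(Xv)^TA^{-1}X\beta$,
\begin{equation*}
\beta^T\Pi_X\Sigma\beta-\beta^T\Pi_X\Sigma\Pi_X\beta=\bigl(v^T\Pi_X\beta\bigr)\bigl(v^T(I-\Pi_X)\beta\bigr),
\end{equation*}
which is exactly the $\beta_{\parallel}^T\Sigma\beta_{\perp}$ quantity flagged in the Introduction, specialized to the spike. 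Likewise, in the eigenbasis of $\Sigma$ (with spike direction $v_1=v/\norm v$ and $a_1$ its Gaussian coordinate vector), $X\Sigma X^T=A+\norm v^2(1+\norm v^2)\,a_1a_1^T$, whence $\tr(\Sigma X^TA^{-2}X)=\tr(A^{-1})+\norm v^2(1+\norm v^2)\,a_1^TA^{-2}a_1$.

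Second, I would check the quantitative hypotheses needed for the trace estimates: since $d/n\to\infty$ and $\norm v^2\le\tfrac1{10}\tfrac dn$, for $n$ large we have $\lambda_1=1+\norm v^2\le\tfrac14\tfrac dn$ and $\rho:=\tfrac nd\lambda_1\le\tfrac1{10}+o(1)<\tfrac14$, while $\tr(\Sigma)=d+\norm v^2>n+1$, $\lambda_{\min}=1$, and a one-line computation gives $n\,\tr(\Sigma^2)/\tr(\Sigma)^2=o(1)$, so the noise-adjusted effective-rank corrections are negligible. Then I would compute leading asymptotics by peeling $a_1$ off $A$ via Sherman--Morrison, $A=\lambda_1 a_1a_1^T+B$ with $B=\sum_{i\ge2}a_ia_i^T$ an isotropic Wishart; Hanson--Wright (Lemma~\ref{lemma:hanson-wright}) plus standard Wishart moments give $a_1^TB^{-1}a_1=\tfrac nd(1+o(1))$, $a_1^TB^{-2}a_1=\tfrac n{d^2}(1+o(1))$, and $\tr(A^{-1})=\tfrac nd(1+o(1))$ (Prop.~\ref{prop:tr_A_bounds}). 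Using $a_1^TA^{-1}a_1=\tfrac{w}{1+\lambda_1 w}$ with $w:=a_1^TB^{-1}a_1$, together with the identity $\sum_{i\ge2}(a_i^TA^{-1}a_1)^2=\tfrac{w}{(1+\lambda_1 w)^2}$ (which handles the $\beta_\perp$ directions in one shot, by exchangeability), one obtains $\E{v^T\Pi_X\beta}=\lambda_1(\beta^Tv)\E{a_1^TA^{-1}a_1}=\tfrac{\rho}{1+\rho}(\beta^Tv)(1+o(1))$ and, collecting terms,
\begin{equation*}
\mathrm{Gain}=\frac{\rho}{(1+\rho)^2}\Bigl[(\beta^Tv)^2-\frac{\norm v^2}{d}\norm{\beta_{\perp}}^2\Bigr](1+o(1)),
\qquad
\sigma^2\,\E{\tr(\Sigma X^TA^{-2}X)}=\sigma^2\,\frac nd\,(1+o(1)),
\end{equation*}
where $\beta_{\perp}$ is the component of $\beta$ orthogonal to $v$.

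To finish, under the normalization $\beta^T\Sigma\beta=1$ (so $\norm{\beta_{\perp}}^2\le1$) the subtracted term $\tfrac{\norm v^2}{d}\norm{\beta_{\perp}}^2=O(1/n)$ is absorbed by the strict margin in the hypothesis; dividing through by $\tfrac nd=\tfrac{\rho}{1+\norm v^2}$, the target inequality reduces to $\bigl(\beta^Tv/\sigma\bigr)^2>\tfrac{(1+\rho)^2}{1+\norm v^2}(1+o(1))$, i.e.\ $\beta^Tv/\sigma>\tfrac{1+\rho}{\sqrt{1+\norm v^2}}(1+o(1))$. Since $\rho=\tfrac nd(1+\norm v^2)$ with $\tfrac nd\to0$, an elementary case analysis on $\norm v^2$ (bounded below; $\to0$; $\to\infty$) shows $\tfrac{1+\rho}{\sqrt{1+\norm v^2}}\le 1+\tfrac nd\norm v^2+o(1)$ in every regime, so the assumption $\liminf_n\beta^Tv/\sigma>1+\limsup_n\tfrac nd\norm v^2$ beats the required bound for all large $n$; hence $c_{opt}>1$ and the Inflation Property holds. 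The main obstacle I anticipate is making the ``$(1+o(1))$''s above rigorous: one must keep $\rho=\tfrac nd\lambda_1$ as a possibly non-vanishing constant — this is why the looser Loewner bounds of Propositions~\ref{prop:proj_expect}--\ref{prop:exp_sig_exp} are not sharp enough here and the rank-one identity is essential — and one must control the variances of the quadratic forms $a_1^TB^{-1}a_1,\ a_i^TA^{-1}a_1,\ a_1^TA^{-2}a_1$ and the negatively-signed $\beta_{\perp}$-contribution, all of which should follow from Hanson--Wright combined with the bound $\rho\le\tfrac1{10}$.
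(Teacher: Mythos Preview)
Your proof is correct and shares the paper's overall strategy — show $c_{opt}>1$ by computing the numerator-minus-denominator to leading order using the rank-one structure — but with a cleaner execution. The paper first rotates so that $v\propto e_1$, then splits $\Sigma=I+\|v\|^2e_1e_1^T$ and handles the two pieces separately: the identity part via the general Loewner estimates (Propositions~\ref{prop:proj_expect}, \ref{prop:exp_sig_exp}, \ref{prop:exp_noise_first_order}), and the spike part by a direct Sherman--Morrison computation of $\E{e_1^T\Pi_X\beta}$ and $\E{\theta_{MN}^Te_1e_1^T\theta_{MN}}$, arriving at essentially the same final inequality $(\beta^Tv)^2>\sigma^2(1+\rho)^2/(1+\|v\|^2)+o(1)$. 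Your identity $\beta^T\Pi_X\Sigma\beta-\beta^T\Pi_X\Sigma\Pi_X\beta=(v^T\Pi_X\beta)\,(v^T(I-\Pi_X)\beta)$ kills the identity-part contributions in one stroke (they cancel exactly, since $\Pi_XI\Pi_X=\Pi_X I$), so you never need the general projection propositions on the signal side; your remark that those Loewner bounds ``are not sharp enough here'' is therefore off — the paper does use them successfully, you simply bypass them. One small slip: the claim $\sigma^2\,\E{\tr(\Sigma X^TA^{-2}X)}=\sigma^2(n/d)(1+o(1))$ is not literally true without $d=o(n^2)$, since the spike piece $\|v\|^2\lambda_1\,\E{a_1^TA^{-2}a_1}$ is $O(1/n)$ rather than $o(n/d)$ in general; this does no harm, because after you factor out $\rho/(1+\rho)^2$ it becomes an $O(\|v\|^2/d)=O(1/n)$ correction inside the bracket and is absorbed exactly like your $\beta_\perp$ term.
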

\begin{proof}
For convenience, we want to show that without loss of generality $v=\rho e_1$ for some rotated $\beta$. 

This can be seen by noting that there exists a rotation matrix $Q$ such that $Q\Sigma Q^T=I_d+\rho^2e_1e_1^T$. We can obtain the WLOG conclusion by noting the following:
\begin{align*}
    \Pi_{XQ}&=(XQ)^T(XX^T)^{-1}XQ=Q^T\Pi_XQ\\
    (XQ)^\dagger\eps&=Q^TX^T(XX^T)^{-1}\eps\\
    \hat{\theta}_{MN}&:=Q^T\theta_{MN}=Q^T\Pi_X\beta+Q^TX^\dagger\eps\stackrel{d}{=}\Pi_{XQ}\hat{\beta}+(XQ)^\dagger Q\eps\\
    \E{(c\theta_{MN}-\beta)^Txx^T(c\theta_{MN}-\beta)}&=\E{(c\theta_{MN}-\beta)^TQ(Q^Tx)(Q^Tx)^TQ^T(c\theta_{MN}-\beta)}
    \\&=\E{(cQ^T\theta_{MN}-Q^T\beta)^T(Q^Tx)(Q^Tx)^T(cQ^T\theta_{MN}-Q^T\beta)}
    \\&=\E{(c\hat{\theta}_{MN}-\hat{\beta})^T(Q^Tx)(Q^Tx)^T(c\hat{\theta}_{MN}-\hat{\beta})}
\end{align*}
Where $c>0$ is an arbitrary constant and $\hat{\beta}=Q\beta$. The conclusion comes from letting $\rho=\norm{v}$

We first calculate, $r(n)n=o(1)$ as follows:
\begin{align*}
    r(n)n=\frac{tr(I_d+2\rho e_1e_1^T+\rho^2e_1e_1^T )}{tr(I_d+\rho e_1e_1^T)^2}=n\frac{d+\rho+\rho^2}{(d+\rho)^2}\leq \frac{1}{100n}(1+o(1))
\end{align*}

Let $\gamma =\beta^Te_1$ and noting that $tr(\Sigma)=d+\rho=d(1+\rho/d)=d(1+o(1))$. Therefore, by Props. \ref{prop:proj_expect}, \ref{prop:exp_sig_exp}, and \ref{prop:exp_noise_first_order},

\begin{align*}
    \E{\beta^T\Sigma\theta_{MN}}&=\E{\beta^T\theta_{MN}}+\rho\gamma\E{e_1^T\Pi_X\beta}=\frac{n}{d}\beta^T\Sigma\beta(1+o(1))+\rho\gamma\frac{n}{d}e_1^T\Sigma\beta(1+o(1))
    \\&=\frac{n}{d}\beta^T\Sigma\beta(1+o(1))+\rho\gamma^2\frac{n}{d}(1+o(1))
    \\
    \E{\theta_{MN}^T\Sigma\theta_{MN}}&= \E{\theta_{MN}^T\theta_{MN}}+\rho\E{\theta_{MN}^Te_1e_1^T\theta_{MN}}\\&=\beta^T\E{\Pi_X}\beta+\sigma^2\E{tr((XX^T)^{-1})}+\rho\E{\theta_{MN}^Te_1e_1^T\theta_{MN}}
    \\&\leq \frac{n}{d}(\beta^T\Sigma\beta+\sigma^2) (1+o(1))+\rho\E{\theta_{MN}^Te_1e_1^T\theta_{MN}}
\end{align*}

Now let $c_i=e_i^T\beta$, $\lambda_i$ be the ith order eigenvalue of $\Sigma$, and $A=\sum_{i=1}^d\lambda_ia_ia_i^T$ where i.i.d. $a_i\sim N(0,1)$. We then calculate the remaining object,
\begin{align*}
    \E{\theta_{MN}^Te_1e_1^T\theta_{MN}}&=\E{\eps^T(XX^T)^{-1}Xe_1e_1X^T(XX^T)^{-1}\eps}+\E{\beta^TX^T(XX^T)^{-1}Xe_1e_1^TX^T(XX^T)^{-1}X\beta}
    \\&=\E{e_1^TX^T(XX^T)^{-2}Xe_1}\sigma^2+\sum_{i,j}c_ic_j\lambda_1\sqrt{\lambda_i\lambda_j}\E{a_i^TA^{-1}a_1a_1^TA^{-1}a_j}
    \\&=\E{tr(X^T(XX^T)^{-2}X)}\sigma^2+\sum_{i}c_i^2\lambda_1\lambda_i\E{a_i^TA^{-1}a_1a_1^TA^{-1}a_i}
\end{align*}

Where we note the following:
\begin{align*}
\E{e_1^TX^T(XX^T)^{-2}Xe_1}&=\lambda_1\E{a_1^TA^{-2}a_1}\leq (1+\rho)\frac{n}{d^2}
\\\sum_{i}c_i^2\lambda_1\lambda_i\E{a_i^TA^{-1}a_1a_1^TA^{-1}a_i}&=\lambda_1^2c_1^2\E{(a_1^TA^{-1}a_1)^2}+\lambda_1\sum_{i\neq 1}c_i^2\lambda_i\E{a_i^TA_1^{-2}a_i}
\\&=(1+\rho)^2\gamma^2\frac{n^2}{d^2}(1+o(1))+(1+\rho)\beta^T\Sigma\beta\frac{n}{d^2}(1+o(1))
\\&=(1+\rho)^2\gamma^2\frac{n^2}{d^2}(1+o(1))+(1+\rho)\beta^T\Sigma\beta\frac{n}{d^2}(1+o(1))
\end{align*}

As a result,
\begin{align*}
    c_{opt}&\geq \frac{\frac{n}{d}\beta^T\Sigma\beta+\rho\gamma^2\frac{n}{d}}{\frac{n}{d}(\beta^T\Sigma\beta+\sigma^2)+(1+\rho)\frac{n}{d^2}+(1+\rho)^2\gamma^2\frac{n^2}{d^2}}(1+o(1))
\end{align*}

Because $c_{opt}>1$ if and only if the numerator is larger than the denominator, we can analyze:
\begin{align*}
 (*)=\frac{n}{d}\beta^T\Sigma\beta+\rho\gamma^2\frac{n}{d}-&\frac{n}{d}(\beta^T\Sigma\beta+\sigma^2)-(1+\rho)\frac{n}{d^2}-(1+\rho)^2\gamma^2\frac{n^2}{d^2}
\\&\geq \frac{n}{d}\left((\rho-(1+\rho)^2\frac{n}{d})\gamma^2-\sigma^2\right)(1+o(1))
\end{align*}

We note that $\beta^Tv=\sqrt{\rho}\gamma$. And the result comes from our assumptions. When $n$ is sufficiently large and noting that the $n/d$ term is irrelevant because it can be factored out of the numerator and denominator of $c_{opt}$.
\end{proof}
\begin{proposition}\label{prop:lambda_reg_isotropic_high_dim}
    We use the notation established in Section \ref{sec:def_not}. Consider $\Sigma=I_d$. Let $\theta_\lambda:=X^T(XX^T+\lambda I_n)^{-1}Y$ and $\sigma^2>0$. Let $\lambda_{opt}:=\arg \min\{ R(\theta_\lambda): \lambda>-\lambda_{min}(XX^T)\}$. Then, $\lambda_{opt}> 0$ a.s. 
\end{proposition}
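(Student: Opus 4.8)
The plan is to reduce the conditional-on-$X$ risk to an explicit scalar function of $\lambda$ via the singular value decomposition of $X$, and then to show that its derivative is strictly negative on all of $(-\lambda_{min}(XX^T),0]$, which forces the minimizer into $(0,\infty)$. Here $R(\theta_\lambda)=\E{\|\theta_\lambda-\beta\|_2^2\mid X}$ (the label noise integrated out — the only reading under which the ``a.s.'' claim can hold), $\sigma^2$ denotes the noise variance $\E{\eps_i^2\mid x_i}$, and I take $d\ge n$; when $d<n$ the feasible set is already $(0,\infty)$ and the derivative computation below still shows the optimum is interior. I assume $\beta\neq0$ (otherwise $R(\lambda)=\sigma^2\sum_i s_i(s_i+\lambda)^{-2}$ is strictly decreasing and no minimizer exists).

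First I would record the bias–variance decomposition. With $N_\lambda:=X^T(XX^T+\lambda I_n)^{-1}X$ one has $\theta_\lambda=N_\lambda\beta+X^T(XX^T+\lambda I_n)^{-1}\vec\eps$, so, using $\E{\vec\eps\mid X}=0$, $\E{\vec\eps\vec\eps^T\mid X}=\sigma^2 I_n$, the thin SVD $X=USV^T$ (with $U\in\R^{n\times n}$ orthogonal, $V\in\R^{d\times n}$ of orthonormal columns, $s_i$ the eigenvalues of $XX^T$, $b:=V^T\beta$, $\beta_\perp:=(I_d-VV^T)\beta$), and $N_\lambda=V\,\mathrm{diag}(\tfrac{s_i}{s_i+\lambda})V^T$, a short computation gives
\begin{align*}
R(\theta_\lambda)&=\big\|(I_d-N_\lambda)\beta\big\|_2^2+\sigma^2\,\tr\!\big(XX^T(XX^T+\lambda I_n)^{-2}\big)\\
&=\|\beta_\perp\|_2^2+\sum_{i=1}^n\frac{\lambda^2 b_i^2+\sigma^2 s_i}{(s_i+\lambda)^2}=:R(\lambda).
\end{align*}
(The heteroscedastic case is identical with $\sigma^2 s_i$ replaced by $s_i\,u_i^T\Lambda(X)u_i$, and $\sum_i u_i^T\Lambda(X)u_i=\tr\Lambda(X)>0$ a.s.\ in place of $\sigma^2>0$.)

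Then I would differentiate termwise: $\frac{d}{d\lambda}\frac{\lambda^2 b_i^2+\sigma^2 s_i}{(s_i+\lambda)^2}=\frac{2s_i(\lambda b_i^2-\sigma^2)}{(s_i+\lambda)^3}$, whence
\begin{equation*}
R'(\lambda)=\sum_{i=1}^n\frac{2s_i\,(\lambda b_i^2-\sigma^2)}{(s_i+\lambda)^3}.
\end{equation*}
A.s.\ $X$ has full rank, so $s_n=\lambda_{min}(XX^T)>0$ and hence $s_i+\lambda>0$ for every $i$ and every $\lambda\in(-s_n,0]$; on this interval $\lambda b_i^2-\sigma^2\le-\sigma^2<0$, so every summand is strictly negative and $R'(\lambda)<0$. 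Thus $R$ is strictly decreasing on $(-\lambda_{min}(XX^T),0]$, and therefore every minimizer of $R$ over the feasible set $(-\lambda_{min}(XX^T),\infty)$ lies in $(0,\infty)$.

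Finally I would check that the minimizer is attained, so that $\lambda_{opt}$ is well defined: as $\lambda\downarrow-s_n$ the $i=n$ term of $R$ blows up (numerator $\to s_n^2b_n^2+\sigma^2 s_n>0$), so $R(\lambda)\to+\infty$; and since $b=V^T\beta\neq 0$ a.s.\ (as $\beta\neq0$ and the row space of $X$ is a.s.\ in general position, so cannot be orthogonal to a fixed nonzero vector), $R'(\lambda)=\tfrac2{\lambda^2}\sum_i s_ib_i^2+o(\lambda^{-2})>0$ for all large $\lambda$, so $R$ is eventually strictly increasing; a routine compactness argument then yields a finite minimizer, which by the previous step is positive. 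The only real difficulty is this a.s.\ bookkeeping — invertibility of $XX^T$, $V^T\beta\neq0$, and $\sigma^2>0$ — since the sign of $R'$ on $(-\lambda_{min}(XX^T),0]$ is immediate from the closed form.
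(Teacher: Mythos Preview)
Your proof is correct, but the route differs from the paper's. The paper works directly with matrix derivatives: it computes $R'(0)$ via $\frac{d}{d\lambda}(XX^T+\lambda I_n)^{-k}=-k(XX^T+\lambda I_n)^{-k-1}$, observes that the two bias contributions cancel at $\lambda=0$ leaving $R'(0)=-2\sigma^2\tr\big((XX^T)^{-2}\big)<0$, and then separately computes the second derivative to show $R''(\lambda)>0$ for all $\lambda\le 0$, whence the minimizer cannot lie in $(-\lambda_{min}(XX^T),0]$. Your approach instead diagonalizes via the SVD and reads off the sign of $R'(\lambda)=\sum_i 2s_i(\lambda b_i^2-\sigma^2)/(s_i+\lambda)^3$ on the whole interval $(-s_n,0]$ in one step, with no second-derivative computation needed. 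What your argument buys is economy (one sign check instead of two matrix-calculus computations) and a clean verification that the minimizer is actually attained, which the paper does not address; what the paper's approach buys is that it stays coordinate-free and would generalize more transparently to non-identity $\Sigma$ in the risk norm. Both are complete for the stated claim.
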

\begin{proof}
Let $\theta_\lambda:=X^T(XX^T+\lambda I_n)^{-1}Y$. This is the ridge regularized estimator of $\beta$ with penalty $\lambda$. 

We ultimately want to show that the derivative of the risk with respect to $\lambda$ around zero is strictly less than 0 and the risk is convex with respect to $\lambda$ when $\lambda<0$ (i.e. the minimum of the risk occurs when $\lambda>0$).
\begin{align*}
R(\theta_\lambda)&=\beta^T\left(X^T(XX^T+\lambda I_n)^{-1}X\right)^2\beta+\sigma^2tr\left(X^T(XX^T+\lambda I_n)^{-2}X\right)\\&-2\beta^TX^T(XX^T+\lambda I_n)^{-1}X\beta+\norm{\beta}^2    
\end{align*}
We note the following technical derivative results which uses $d(A^{-1})=-A^{-1}d(A)A^{-1}$ and $\frac{d((XX^T+\lambda I))}{d\lambda}=I$ which implies that $d((XX^T+\lambda I)^{-k})=-k(XX^T+\lambda I)^{-k-1}$:
\begin{align*}
    \frac{d}{d\lambda}X^T(XX^T+\lambda I_n)^{-1}X\biggr|_{\lambda=0}&=-X^T(XX^T)^{-2}X\\
    \frac{d}{d\lambda}X^T(XX^T+\lambda I_n)^{-2}X\biggr|_{\lambda=0}&=-2X^T(XX^T)^{-3}X\\
     \frac{d}{d\lambda}\left(X^T(XX^T+\lambda I_n)^{-1}X\right)^2\biggr|_{\lambda=0}&=-2X^T(XX^T)^{-2}X
\end{align*}

As a result,
\begin{align*}
\frac{d}{d\lambda}R(\theta_\lambda)\biggr|_{\lambda=0}=-2\beta^TX^T(XX^T)^{-2}X\beta-2\sigma^2tr\left((XX^T)^{-1}\right)+2\beta^TX^T(XX^T)^{-2}X\beta<0
\end{align*}

Turning to the second derivative, when we assume $\lambda>-\lambda_{min}(XX^T)$,
\begin{align*}
    \frac{d}{d\lambda}R(\theta_\lambda)&=2\beta^TX^T(XX^T+\lambda I_n)^{-1}X\beta+6\sigma^2\tr\left(X^T(XX^T+\lambda I_n)^{-4}X\right)-2\beta^TX^T(XX^T+\lambda I_n)^{-3}X\beta
    \\&+4\beta^TX^T(XX^T+\lambda I_n)^{-1}XX^T(XX^T+\lambda I_n)^{-3}X\beta
    \\&=2\beta^TX^T(XX^T+\lambda I_n)^{-1}X\beta+6\sigma^2\tr\left(X^T(XX^T+\lambda I_n)^{-4}X\right)-2\beta^TX^T(XX^T+\lambda I_n)^{-3}X\beta
    \\&+4\beta^TX^T(XX^T+\lambda I_n)^{-1}(XX^T+\lambda I_n)(XX^T+\lambda I_n)^{-3}X\beta-4\lambda\beta^TX^T(XX^T+\lambda I_n)^{-4}X\beta
    \\&=2\beta^TX^T(XX^T+\lambda I_n)^{-1}X\beta+6\sigma^2\tr\left(X^T(XX^T+\lambda I_n)^{-4}X\right)
    \\&+2\beta^TX^T(XX^T+\lambda I_n)^{-3}X\beta-4\lambda\beta^TX^T(XX^T+\lambda I_n)^{-4}X\beta
\end{align*}

Where we note that,
\begin{align*}
    d^2(B)=Bd^2(B)+2d(B)^2+d^2(B)B
\end{align*}

As a result, $\frac{d}{d\lambda}R(\theta_\lambda)>0$ when $\lambda\leq 0$. Therefore, $\lambda_{opt}>0$.
\end{proof}
\begin{proposition}\label{prop:make_unbiased}
Consider the assumptions of Prop. \ref{prop:proj_expect}. Suppose further that either $\sigma^2$ or $\norm{\beta}^2$ is $\omega(\frac{n}{tr(\Sigma)})$. Then, 
\begin{align*}
    \lim _{n\to\infty}G(\E{\Pi_X}^{-1}\theta_{MN})=\infty
\end{align*}
\end{proposition}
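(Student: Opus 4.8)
The plan is to set $P:=\E{\Pi_X}$ and $\hat\theta:=P^{-1}\theta_{MN}$ (so $\E{\theta_{MN}}^{-1}\theta_{MN}$ is to be read as $P^{-1}\theta_{MN}$, since $\E{\theta_{MN}}=P\beta$). By Prop.~\ref{prop:proj_expect} $P$ is symmetric positive definite and diagonal in the eigenbasis $\{v_i\}$ of $\Sigma$, and $\E{\hat\theta}=\beta$, so $G(\hat\theta)=\E{(\hat\theta-\beta)^T\Sigma(\hat\theta-\beta)}=\tr(\Sigma\,\Var[\hat\theta])$. Writing $\theta_{MN}=\Pi_X\beta+X^\dagger\vec\eps$ and using $\E{\vec\eps\mid X}=0$ to kill the cross term, $G(\hat\theta)=S+\mathcal N$ with $S:=\E{\norm{P^{-1}(\Pi_X-P)\beta}_\Sigma^2}\ge0$ and $\mathcal N:=\E{\norm{P^{-1}X^\dagger\vec\eps}_\Sigma^2}\ge0$. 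It suffices to show $S\to\infty$ when $\beta^T\Sigma\beta=\omega(n/\tr(\Sigma))$ (which is how I read $\norm\beta^2$ in the statement) and $\mathcal N\to\infty$ when $\sigma^2=\omega(n/\tr(\Sigma))$. The common engine is Prop.~\ref{prop:proj_expect}: with $\rho\le\tfrac14$ and $nr(n)=O(1)$ it gives a universal constant $C$ with $P\preceq C\,\tfrac{n}{\tr(\Sigma)}\Sigma$, hence $\Sigma P^{-2}=P^{-1}\Sigma P^{-1}\succeq\tfrac{\tr(\Sigma)^2}{C^2n^2}\Sigma^{-1}$. Also put $A:=XX^T=\sum_i\lambda_ia_ia_i^T$ (Prop.~\ref{prop:norm_decomp}) and, crucially, $W:=X\Sigma^{-1}X^T=\sum_i a_ia_i^T$ (the eigenvalues cancel), so that $(X^\dagger)^T\Sigma^{-1}X^\dagger=A^{-1}WA^{-1}$ and $\Pi_X\Sigma^{-1}\Pi_X=X^TA^{-1}WA^{-1}X$.

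For the signal term, since $\Sigma P^{-2}\succeq\tfrac{\tr(\Sigma)^2}{C^2n^2}\Sigma^{-1}$,
\[
S=\E{\beta^T\Pi_X(\Sigma P^{-2})\Pi_X\beta}-\beta^T\Sigma\beta\ \ge\ \tfrac{\tr(\Sigma)^2}{C^2n^2}\,\beta^T\E{\Pi_X\Sigma^{-1}\Pi_X}\beta-\beta^T\Sigma\beta .
\]
Off-diagonal terms vanish in expectation by parity in the $a_i$ (as in the proof of Prop.~\ref{prop:proj_expect}), so $\beta^T\E{\Pi_X\Sigma^{-1}\Pi_X}\beta=\sum_k\lambda_k(v_k^T\beta)^2\,\E{a_k^TA^{-1}WA^{-1}a_k}$ with $a_k^TA^{-1}WA^{-1}a_k=\sum_i(a_k^TA^{-1}a_i)^2$. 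The key estimate is $\E{(a_k^TA^{-1}a_i)^2}\ge c\,n/\tr(\Sigma)^2$ for every $i\ne k$: with $A_k:=A-\lambda_ka_ka_k^T$ and $A_{ki}:=A_k-\lambda_ia_ia_i^T$, Sherman--Morrison gives $a_k^TA^{-1}a_i=\dfrac{a_k^TA_{ki}^{-1}a_i}{(1+\lambda_i a_i^TA_{ki}^{-1}a_i)(1+\lambda_k a_k^TA_k^{-1}a_k)}$, whose two denominators are $O(1)$ on a high-probability event (the quadratic forms concentrate near $\tr(A_{ki}^{-1})$, of order $n/\tr(\Sigma)$, by Lemma~\ref{lemma:hanson-wright}, using the $\lambda_{min}(\Sigma)$ bound of Assumption~\ref{assumpt:weak_canonical_case}; the complementary event contributes negligibly by Cauchy--Schwarz and a polynomial fourth-moment bound), while $\E{(a_k^TA_{ki}^{-1}a_i)^2\mid A_{ki},a_k}=a_k^TA_{ki}^{-2}a_k$ and $\E{\tr(A_{ki}^{-2})}\ge\tfrac1n\E{\tr(A_{ki}^{-1})}^2\ge n^3/\E{\tr(A_{ki})}^2\ge n/\tr(\Sigma)^2$ by Prop.~\ref{prop:cauchy_tr_tr_inverse} and $\E{\tr(A_{ki})}=n(\tr(\Sigma)-\lambda_k-\lambda_i)\le n\tr(\Sigma)$. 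Summing over the $d$ indices $i$ yields $\E{a_k^TA^{-1}WA^{-1}a_k}\ge c\,dn/\tr(\Sigma)^2$, whence $S\ge\big(\tfrac{c}{C^2}\cdot\tfrac dn-1\big)\beta^T\Sigma\beta$; since $\tfrac dn\ge 2\sqrt{\tr(\Sigma)/n}\to\infty$, $S\to\infty$ whenever $\beta^T\Sigma\beta=\omega(n/\tr(\Sigma))$ (using the standing normalization $\tr(\Sigma)\asymp d$).

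For the noise term, $\mathcal N\ge\tfrac{\tr(\Sigma)^2}{C^2n^2}\E{\vec\eps^TA^{-1}WA^{-1}\vec\eps}=\tfrac{\tr(\Sigma)^2}{C^2n^2}\E{\tr(\Lambda(X)A^{-1}WA^{-1})}$, $\Lambda(X)=\mathrm{diag}(\E{\eps_j^2\mid x_j})$. By row-exchangeability and the Schur-complement identity from the proof of Prop.~\ref{prop:noise_nN_v_n_gen} (single out the first row $x_1^T$ of $X$ and let $\Pi$ be the projection onto the row space of the other $n-1$ rows $\tilde X$), $\E{\tr(\Lambda(X)A^{-1}WA^{-1})}=n\,\E{\E{\eps_1^2\mid x_1}\,\E{h_1\mid x_1}}$ with $h_1=\dfrac{x_1^T(I-\Pi)\Sigma^{-1}(I-\Pi)x_1}{(x_1^T(I-\Pi)x_1)^2}$. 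Since $x_1^T(I-\Pi)x_1\le\norm{x_1}^2$ deterministically, and averaging over $\tilde X$ with $\E{\Pi}\preceq C\tfrac{n}{\tr(\Sigma)}\Sigma$ (Prop.~\ref{prop:proj_expect}) and dropping the nonnegative $\E{\Pi\Sigma^{-1}\Pi}$ gives $\E{x_1^T(I-\Pi)\Sigma^{-1}(I-\Pi)x_1\mid x_1}\ge\norm{\Sigma^{-1/2}x_1}^2-2C\tfrac{n}{\tr(\Sigma)}\norm{x_1}^2$, one gets $\E{h_1\mid x_1}\ge c\,d/\tr(\Sigma)^2$ on the event $\mathcal A=\{\norm{\Sigma^{-1/2}x_1}^2\ge\tfrac d2,\ \norm{x_1}^2\le 2\tr(\Sigma)\}$, which has probability $1-o(1)$ by Lemma~\ref{lemma:hanson-wright} (note $\norm{\Sigma^{-1/2}x_1}^2\sim\chi^2_d$ and $d/n\to\infty$). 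Hence $n\E{\E{\eps_1^2\mid x_1}h_1}\ge c\tfrac{nd}{\tr(\Sigma)^2}\big(\E{\eps_1^2}-\sigma_{max}^2\,\P(\mathcal A^c)\big)$ and $\mathcal N\ge c'\tfrac dn\,\E{\eps_1^2}(1-o(1))\to\infty$ once $\E{\eps_1^2}=\omega(n/\tr(\Sigma))$ — which is exactly $\sigma^2=\omega(n/\tr(\Sigma))$ for homoskedastic noise ($\sigma^2=\E{\eps_1^2}$) and more generally under the mild regularity $\sigma^2\asymp\E{\eps_1^2}$ (e.g.\ $\sigma_{min}^2=\omega(n/\tr(\Sigma))$, since $\sigma^2\ge\sigma_{min}^2$ and always $\sigma^2\le\sigma_{max}^2$), the subtracted term being negligible as $\P(\mathcal A^c)$ is exponentially small.

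The hard part will be precisely the two uniform second-moment lower bounds, $\E{(a_k^TA^{-1}a_i)^2}\ge c\,n/\tr(\Sigma)^2$ uniformly in $i\ne k$ and $\E{h_1\mid x_1}\ge c\,d/\tr(\Sigma)^2$ for typical $x_1$, which both need the Sherman--Morrison reductions together with Cauchy--Schwarz trace inequalities and Hanson--Wright tail control (hence the non-negligible lower bound on $\lambda_{min}(\Sigma)$), and, on the noise side, the bookkeeping that matches the resulting effective noise level to the constant $\sigma^2$ of Def.~\ref{def:mean_concentration}, which is built from $\Sigma$ rather than $\Sigma^{-1}$. Conceptually everything reflects one mechanism: $P^{-1}\approx\tfrac{\tr(\Sigma)}{n}\Sigma^{-1}$ inflates the small-eigenvalue directions of $\Sigma$, while the row space of $X$ deposits $\Theta(n)$ units of ($\Sigma^{-1}$-weighted) mass spread over $\Theta(d)$ such directions, producing the net $\Theta(d/n)\to\infty$ blow-up of both the bias and the variance of $\hat\theta$.
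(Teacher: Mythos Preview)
Your approach is essentially the paper's. Both use Prop.~\ref{prop:proj_expect} to replace $P=\E{\Pi_X}$ by $\Theta(\tfrac{n}{\tr(\Sigma)})\Sigma$, then reduce the question to lower-bounding $\E{\theta_{MN}^T\Sigma^{-1}\theta_{MN}}$, splitting into signal and noise and arguing by analogy with Props.~\ref{prop:exp_sig_exp} and~\ref{prop:exp_noise_first_order} (with $\Sigma^{-1}$ in the middle). The paper's proof is a six-line sketch; you actually carry out the reductions, and your identity $X\Sigma^{-1}X^T=\sum_i a_ia_i^T$ is exactly what makes the ``$\Sigma^{-1}$ version'' of those propositions clean.

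One substantive point: following the template of Prop.~\ref{prop:exp_sig_exp} with $\Sigma^{-1}$ in the middle yields $\beta^T\E{\Pi_X\Sigma^{-1}\Pi_X}\beta\asymp\tfrac{dn}{\tr(\Sigma)^2}\,\beta^T\Sigma\beta$, which is precisely what your Sherman--Morrison lower bound on $\E{(a_k^TA^{-1}a_i)^2}$ delivers. So the signal blow-up is naturally controlled by $\beta^T\Sigma\beta$, not $\norm{\beta}^2$; the paper's one-line ``by the proof of Prop.~\ref{prop:exp_sig_exp}'' asserts $\norm{\beta}^2$ without justification, and the analogy does not obviously produce that. Your explicit acknowledgment that you read the hypothesis as $\beta^T\Sigma\beta=\omega(n/\tr(\Sigma))$ is therefore the honest version of what the argument actually proves. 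Similarly, on the noise side your bound comes out in $\E{\eps_1^2}$ rather than the $\sigma^2$ of Def.~\ref{def:mean_concentration}.4 (which is defined via $\Sigma$, not $\Sigma^{-1}$); the paper silently uses $\sigma^2$ here, while you flag the needed regularity (e.g.\ homoskedasticity, or $\sigma^2\asymp\E{\eps_1^2}$). In short: same route, but you are more careful where the paper is elliptical.
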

\begin{proof}
By Prop. \ref{prop:proj_expect}, $\E{\Pi_X}=\Theta(\frac{n}{tr(\Sigma)})\Sigma$. 

As a result,
\begin{align*}
    G(\E{\Pi_X}^{-1}\theta_{MN})=\Theta(1)\frac{tr(\Sigma)^2}{n^2}\E{\theta_{MN}^T\Sigma^{-1}\theta_{MN}}-\Theta(1)\frac{tr(\Sigma)}{n}\E{\theta_{MN}^T\beta}+\beta^T\Sigma\beta
\end{align*}

As a threshold matter, $\frac{tr(\Sigma)}{n}\E{\theta_{MN}^T\beta}=\Theta(1)\beta^T\Sigma\beta$. So we can focus $\E{\theta_{MN}^T\Sigma^{-1}\theta_{MN}}$.

From the proof of Prop. \ref{prop:exp_noise_first_order}, 
\begin{align*}
    \frac{tr(\Sigma)^2}{n^2}\E{\eps^T(XX^T)^{-1}X\Sigma^{-1}X^T(XX^T)^{-1}\eps}\geq \Theta(1)\sigma^2\frac{tr(\Sigma)}{tr(\Sigma)^2}n\frac{tr(\Sigma)^2}{n^2}
    \geq \Theta(1)\sigma^2\frac{tr(\Sigma)}{n}
\end{align*}

From the proof of Prop. \ref{prop:exp_sig_exp}, we can see a similar result, namely,
\begin{align*}
    \frac{tr(\Sigma)^2}{n^2}\E{\beta^T\Pi_X\Sigma^{-1}\Pi_X\beta}\geq \Theta(1)\beta^T\Sigma^2\beta+\Theta(1)\frac{tr(\Sigma)}{n}\norm{\beta}^2
\end{align*}
\end{proof}
\begin{proposition}\label{prop:optimal_direction_shrink}
Consider the assumptions of Theorem \ref{thm:add_improve}. Suppose $\sigma_{max}=0$ Then, suppose $v\neq 0$
\begin{align}\label{equ:opt_v_equ}
    \lim_{n\to\infty}\sup _{(\beta,\Sigma)\in\mathcal{F}}G((1-c)\theta_{MN}+cv)\to\infty\;\;\;\forall c\in\R-\{0\} 
\end{align}
Where $(\beta,\Sigma)\in\mathcal{F}$ means that $\beta$ and $\Sigma$ satisfy the assumptions of Theorem \ref{thm:add_improve}.
\end{proposition}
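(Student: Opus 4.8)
The plan is to reduce the whole statement to one deterministic estimate. Under $\sigma_{max}=0$ we have $\theta_{MN}=\Pi_X\beta$, hence
\[
(1-c)\theta_{MN}+cv-\beta=c(v-\beta)-(1-c)\beta_\perp ,
\]
the sum of a \emph{non-random} vector $c(v-\beta)$ and a random vector $-(1-c)\beta_\perp$ whose $\Sigma$-size is uniformly $O(1)$ over $\mathcal F$. Writing $\norm{Z}_\star:=\sqrt{\E{Z^T\Sigma Z}}$, which is a seminorm on square-integrable $\R^d$-valued vectors (it is the norm of $\Sigma^{1/2}Z$ in $L^2(\P;\R^d)$), I would apply the reverse triangle inequality to obtain
\[
\sqrt{G\left((1-c)\theta_{MN}+cv\right)}\ \ge\ |c|\,\sqrt{(v-\beta)^T\Sigma(v-\beta)}\;-\;|1-c|\,\sqrt{\E{\beta_\perp^T\Sigma\beta_\perp}} .
\]
Here $(v-\beta)^T\Sigma(v-\beta)$ carries no expectation because $v-\beta$ is deterministic, and $\E{\beta_\perp^T\Sigma\beta_\perp}=G(\theta_{MN})\le G(0)=\beta^T\Sigma\beta=1$ since $a\mapsto G(a\theta_{MN})$ is a convex quadratic minimized at $c_{opt}>1$ (Theorem \ref{thm:add_improve}) and is therefore decreasing on $[0,1]$. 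Since $c\ne 0$, it then suffices to produce, for each $n$, an admissible pair $(\beta,\Sigma)\in\mathcal F$ along which $(v_n-\beta)^T\Sigma(v_n-\beta)\to\infty$; I will take $v=v_n$ with $\liminf_n\norm{v_n}>0$, the case of a genuine fixed direction (note that if $\norm{v_n}\to 0$ too fast the statement degenerates, since $v^T\Sigma v=o(1)$ for \emph{every} admissible $\Sigma$).

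For the construction, given $v_n\ne 0$ set $\hat v_n=v_n/\norm{v_n}$, pick a unit $u_n\perp\hat v_n$, and let $\Sigma_n$ be the positive-definite matrix with eigenvector $\hat v_n$ of eigenvalue $\lambda_1=q\,\tfrac dn$, eigenvector $u_n$ of eigenvalue $\lambda_2=\tfrac q2\,\tfrac dn$, and all remaining eigendirections sharing the eigenvalue $\mu=(d-\lambda_1-\lambda_2)/(d-2)=1+o(1)$, where $q:=\min(\tfrac18,C)$ and $C$ is the universal constant of Theorem \ref{thm:add_improve}; put $\beta_n=\lambda_2^{-1/2}u_n$. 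I would then verify $(\beta_n,\Sigma_n)\in\mathcal F$ for $n$ large: $\tr(\Sigma_n)=d$ and $\beta_n^T\Sigma_n\beta_n=1$ are immediate; $\lambda_1\le\tfrac18\tfrac dn$ and $\tfrac nd\lambda_1=q\le C$; $\lambda_{\min}=\mu=1+o(1)\ge \exp(-\sqrt n)/d$; and, since $\beta_n^T\Sigma_n^2\beta_n=\lambda_2=\Theta(d/n)$ while $\tfrac1d\tr(\Sigma_n^2)=\tfrac1d(\lambda_1^2+\lambda_2^2+(d-2)\mu^2)=\Theta(d/n^2)+\Theta(1)=o(\lambda_2)$ (using $d/n\to\infty$), Assumption \ref{assump:additive}.3 holds. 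Because $\hat v_n$ and $u_n=\beta_n/\norm{\beta_n}$ are distinct eigenvectors of $\Sigma_n$ they are $\Sigma_n$-orthogonal, so $(v_n-\beta_n)^T\Sigma_n(v_n-\beta_n)=\norm{v_n}^2\lambda_1+\beta_n^T\Sigma_n\beta_n=\norm{v_n}^2 q\,\tfrac dn+1\to\infty$. Plugging this into the displayed inequality gives $\sqrt{G((1-c)\theta_{MN}+cv_n)}\ge |c|\sqrt{\norm{v_n}^2 q\tfrac dn+1}-|1-c|\to\infty$ for every $c\ne 0$, whence the supremum over $\mathcal F$ diverges.

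The routine parts are the seminorm triangle inequality and the membership checks. The one genuinely delicate step — the main obstacle — is verifying Assumption \ref{assump:additive}.3 for an \emph{arbitrary} input direction $v_n$ while simultaneously keeping $\beta$ normalized ($\beta^T\Sigma\beta=1$) and well enough aligned with $\Sigma$. The two-eigenvalue-plus-bulk design is precisely what threads this: placing $\beta_n$ on a direction whose eigenvalue is still $\Theta(d/n)$ forces $\beta_n^T\Sigma_n^2\beta_n=\Theta(d/n)$, which dominates $\tfrac1d\tr(\Sigma_n^2)=\Theta(d/n^2)+\Theta(1)$ exactly because $d/n\to\infty$; this is the only place the diverging aspect ratio is essential, and one should track the $o(1)$ terms there with care (they are uniform in $v_n$, the construction depending on $v_n$ only through the direction $\hat v_n$).
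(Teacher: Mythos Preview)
Your proof is correct and takes a genuinely different route from the paper. Both arguments construct $(\beta,\Sigma)\in\mathcal F$ with $\hat v=v/\norm{v}$ as the top eigenvector of eigenvalue $\Theta(d/n)$, so that $v^T\Sigma v\to\infty$ drives the divergence. The paper, however, takes $\beta$ \emph{parallel} to $v$ (a single spike $\Sigma=q\tfrac{d}{n}\hat v\hat v^T+\eps(I-\hat v\hat v^T)$, $\beta=\pm\tfrac{1}{\sqrt q}\sqrt{n/d}\,\hat v$) and controls the cross term by exploiting that $-\beta$ is also admissible, flipping the sign of $\E{(\beta-\theta_{MN})^T\Sigma v}$ so that the quadratic expansion is bounded below by $c^2v^T\Sigma v\to\infty$. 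You instead take $\beta$ \emph{$\Sigma$-orthogonal} to $v$ via a second spike, which kills the cross term outright, and then use the reverse triangle inequality in the seminorm $\norm{\cdot}_\star$ together with $G(\theta_{MN})\le G(0)=1$ (from $c_{opt}>1$) to absorb the random piece uniformly in $c$. Your approach is a bit more systematic (no sign-flip, no tracking of the cross term's dependence on $c$), at the price of a slightly more elaborate $\Sigma$; the paper's construction is minimal but leans on the $\beta\mapsto-\beta$ symmetry of $\mathcal F$. Your side remark that the statement degenerates when $\norm{v_n}\to 0$ is a fair caveat the paper sidesteps by normalizing $\norm v=1$ without comment.
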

\begin{proof}

We begin by noting the expansion,
\begin{align*}
   G\left((1-c)\theta_{MN}+cv\right)=G((1-c)\theta_{MN})-2c(1-c)\E{(\beta-\theta_{MN})^T\Sigma v}+c^2v^T\Sigma v 
\end{align*}

Consider some $v\neq 0$ and $c\in \mathbb{R}$ where $c\neq 0$. WLOG consider $\norm{v}=1$.

We first consider the following setup for some $0<q\ll 1$:
\begin{align*}
    \Sigma&=q\frac{d}{n}vv^T+\eps (I_d-vv^T)\\
    \beta&=\frac{1}{\sqrt{q}}\frac{\sqrt{n}}{\sqrt{d}}v
\end{align*}
Where $\eps=1-\frac{1}{d}-\frac{q}{n}$.

We can further see that,
\begin{align*}
    \frac{n}{d}\beta^T\Sigma^2\beta-nr(n)&=\frac{n^2}{qd^2}v^T(q^2\frac{d^2}{n^2}vv^T+\eps^2(I_d-vv^T))v-o(1)\\
    &=q-o(1)>0
\end{align*}

Where $nr(n)=n\frac{tr(q^2\frac{d}{n}vv^T)+\eps^2(d-1)}{d^2}=o(1)$.

We note that for this specification of the problem, $\beta^T\Sigma\beta=1$, $tr(\Sigma)=d$ and the remaining requirements of Theorem \ref{thm:add_improve} hold given the diagonal nature of $\Sigma$ (under a basis which includes $v$).

Consider the following parameterized by $q\ll 1$. We also note that by the assumptions of Theorem \ref{thm:add_improve}, $-\beta$ is also admissible. As such, WLOG,
\begin{align*}
\sqrt{q}\frac{\sqrt{d}}{\sqrt{n}}\E{(\beta-\theta_{MN})^T\Sigma\beta}=\E{(\beta-\theta_{MN})^T\Sigma v}    
\end{align*}
Then, for either $\beta$ or $-\beta$, the following holds:
\begin{align*}
G\left((1-c)\theta_{MN}+cv\right)&= G((1-c)\theta_{MN})+2c(1-c)\E{(\beta-\theta_{MN})^T\Sigma v}+c^2\E{v^T\Sigma v}
\\&=G((1-c)\theta_{MN})+2c(1-c)\frac{1}{\sqrt{q}}\frac{\sqrt{d}}{\sqrt{n}}\E{(\beta-\theta_{MN})^T\Sigma\beta}+c^2\frac{1}{q}\frac{d}{n}\beta^T\Sigma\beta\to\infty
\end{align*}

\end{proof}   
\begin{proposition}\label{prop:snr_ex_prop}
Let $\Sigma$ be in the class of covariance matrices such that $(1+\sigma^2)nr(n)=o(1)$, $\lambda_d(\Sigma)=\Omega(1)$, and $\sigma_{max}^2/\sigma^2=O(n)$. Then, when the assumptions of Theorem \ref{theorem:canon_multiplicative} are satisfied, we obtain its conclusions, and,
\begin{align*}
    \beta^T\Sigma\beta/\E{\eps^2}=o(1)
\end{align*}
occurs if and only if  $\beta^T\Sigma\beta/\sigma^2\to\infty$.

\end{proposition}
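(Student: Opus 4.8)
The plan is to strip away the immediate part of the statement and reduce the substantive claim to a comparison of the two noise scales $\E{\eps^2}$ and $\sigma^2$ (the latter from Definition \ref{def:mean_concentration}.4). The clause ``we obtain its conclusions'' is automatic: we are assuming Assumption \ref{assumpt:weak_canonical_case}, which is the hypothesis of Theorem \ref{theorem:canon_multiplicative}, so the multiplicative improvement and $c_{opt}>1$ follow verbatim once $q$ is small enough. For the equivalence, Assumption \ref{assumpt:weak_canonical_case}.1 gives $\beta^T\Sigma\beta=1$, so the two ratios in the statement are exactly $1/\E{\eps^2}$ and $1/\sigma^2$; hence it suffices to show that under the stated hypotheses $\sigma^2$ and the ambient noise level $\E{\eps^2}$ are of the same order, so that $1/\E{\eps^2}=o(1)$ and $1/\sigma^2$ are governed by the same sequence and the claimed equivalence is then read off, both sequences being positive.

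The heart of the argument is therefore the two-sided estimate $\sigma^2=\Theta(\E{\eps^2})$. For the upper bound, since $\Lambda(X)=\mathrm{diag}(\E{\eps_i^2\mid X})$ satisfies $0\preceq\Lambda(X)\preceq\sigma_{max}^2 I$, the numerator of $\sigma^2$ is at most $\sigma_{max}^2\E{tr(\Sigma X^T(XX^T)^{-2}X)}$, so $\sigma^2\le\sigma_{max}^2$; combined with the hypothesis $\sigma_{max}^2/\sigma^2=O(n)$ this traps $\sigma^2$ in the band $[\sigma_{max}^2/(Cn),\sigma_{max}^2]$, and in particular $\E{\eps^2}\le\sigma_{max}^2\le Cn\,\sigma^2$. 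For the matching direction I would use the row-exchange identity from the proof of Proposition \ref{prop:noise_nN_v_n_gen}, $e_n^T(XX^T)^{-1}X\Sigma X^T(XX^T)^{-1}e_n=\frac{\tilde x^T(I-\Pi_{\tilde X})\Sigma(I-\Pi_{\tilde X})\tilde x}{(\tilde x^T(I-\Pi_{\tilde X})\tilde x)^2}$, together with the Hanson--Wright bounds of Lemma \ref{lemma:hanson-wright}: the hypothesis $\lambda_d(\Sigma)=\Omega(1)$ makes $\norm{\tilde x}^2$ concentrate around $tr(\Sigma)$ and forces the projection correction $\tilde x^T\Pi_{\tilde X}\Sigma\tilde x$ to be lower order (precisely the events $E,F$ handled there), while the band above keeps $\sigma_{max}^2$ at most sub-exponential, which is the regularity Proposition \ref{prop:noise_nN_v_n_gen} needs for its tail terms, and $(1+\sigma^2)nr(n)=o(1)$ makes the remaining error terms genuinely negligible. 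Following the same bookkeeping as in Propositions \ref{prop:noise_nN_v_n_gen}--\ref{prop:exp_noise_first_order} this identifies $\sigma^2=(1+o(1))\E{\frac{x_1^T\Sigma x_1}{\norm{x_1}^4}\E{\eps_1^2\mid x_1}}/(r(n)(1+o(1)))$, and since $x_1^T\Sigma x_1/\norm{x_1}^4$ clusters near $tr(\Sigma^2)/tr(\Sigma)^2=r(n)$ (again via $\lambda_d(\Sigma)=\Omega(1)$, $tr(\Sigma)=d$, and $r(n)=o(1)$), the ratio collapses to $\sigma^2=\Theta(1)\cdot\E{\eps_1^2}=\Theta(\E{\eps^2})$; plugging this back into the reduction of the first paragraph yields the asserted equivalence.

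I expect the main obstacle to be this second estimate in the presence of heteroscedastic noise. When $\E{\eps_i^2\mid x_i}$ is constant the identity $\sigma^2=\E{\eps^2}$ is exact, but in general $\sigma^2$ weights the conditional variances $\E{\eps_i^2\mid X}$ by the random diagonal of $(XX^T)^{-1}X\Sigma X^T(XX^T)^{-1}$, so one must rule out a conspiracy in which those variances load disproportionately on the atypical configurations of that diagonal. The role of $\lambda_d(\Sigma)=\Omega(1)$ is exactly to confine the weights --- all eigenvalues being bounded below, $\norm{x_1}^2$ cannot be unusually small and the relevant quadratic forms concentrate --- and the role of $\sigma_{max}^2/\sigma^2=O(n)$ is to prevent a spuriously small $\sigma^2$ produced by a rare but heavy variance spike; marshalling these two controls against the worst-case weighting, while staying inside the tail budget that the propositions above were designed around, is the delicate point, after which the remaining step is purely formal.
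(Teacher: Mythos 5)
Your proposal follows essentially the same route as the paper: reduce via $\beta^T\Sigma\beta=1$ to showing $\sigma^2=\Theta(\E{\eps^2})$, invoke Proposition \ref{prop:noise_nN_v_n_gen} to write $\sigma^2$ as $\E{\tfrac{x_1^T\Sigma x_1}{\norm{x_1}^4}\E{\eps_1^2\mid x_1}}/(r(n)(1+o(1)))$, and then use concentration of $x_1^T\Sigma x_1/\norm{x_1}^4$ around $r(n)$ together with $\sigma_{max}^2/\sigma^2=O(n)$ to decouple the weight from the conditional variance. The one step you flag as ``the delicate point'' is exactly what the paper executes: it writes the product expectation as $\E{\tfrac{x_1^T\Sigma x_1}{\norm{x_1}^4}}\E{\eps_1^2}+\Cov[\cdot,\cdot]$, bounds the covariance by Cauchy--Schwartz using $\Var[\E{\eps_1^2\mid x_1}]^{1/2}\le\sigma_{max}\E{\eps_1^2}^{1/2}$ and $\Var[x_1^T\Sigma x_1/\norm{x_1}^4]^{1/2}=o(r(n)/\sqrt{n})$, so the cross term is $o(r(n))\E{\eps_1^2}$ and the two-sided bound $\tfrac12\E{\eps_1^2}\le\sigma^2\le 64\,\E{\eps_1^2}$ follows.
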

\begin{proof}

Because $\beta^T\Sigma\beta=1$, we only need to consider when $\sigma^2\to\infty$. 

Using the notation and results of Prop. \ref{prop:noise_nN_v_n_gen} we can see that,
    \begin{align*}
        \frac{1}{2r(n)}\E{\frac{x_1^T\Sigma x_1}{\norm{x_1}^4}\E{\eps_1^2|x_1}}\leq \sigma^2\leq \frac{2}{r(n)}\E{\frac{x_1^T\Sigma x_1}{\norm{x_1}^4}\E{\eps_1^2|x_1}}
    \end{align*}

Our overall goal is to show that there are some constants such that, $c\E{\eps_1^2}\leq \sigma^2\leq C\E{\eps_1^2}$. This would prove the result because then, if $\sigma^2\to\infty$, then $\E{\eps_1^2}\to\infty$. By the same reasons the other direction would occur.

For the lower bound, we compute as follows.

To split the $\E{\eps_1^2|x_1}$ and $\frac{x_1^T\Sigma x_1}{\norm{x_1}^4}$, we use the definition of covariance and Cauchy-Schwartz.
\begin{align*}
\E{\frac{x_1^T\Sigma x_1}{\norm{x_1}^4}\E{\eps_1^2|x_1}}&=\E{\frac{x_1^T\Sigma x_1}{\norm{x_1}^4}}\E{\eps_1^2}+\Cov[\frac{x_1^T\Sigma x_1}{\norm{x_1}^4},\E{\eps_1^2|x_1}]   \\
&\geq \E{\frac{x_1^T\Sigma x_1}{\norm{x_1}^4}}\E{\eps_1^2}-\Var\left[\frac{x_1^T\Sigma x_1}{\norm{x_1}^4}\right]^{1/2}\Var\left[\E{\eps^2_1|x_1}\right]^{1/2}
\end{align*}

We note that,
\begin{align*}
    \Var\left[\E{\eps^2_1|x_1}\right]^{1/2}\leq \E{\E{\eps_1^2|x_1}^2}^{1/2}\leq \sigma_{max}\E{\eps_1^2}^{1/2}\leq \frac{\sigma_{max}}{\E{\eps_1^2}^{1/2}}\E{\eps_1^2}
\end{align*}

For $n$ sufficiently large because we only care about the  $\E{\eps_1^2}\to\infty$ case.

Our next goal is to use the concentration of $\frac{x_1^T\Sigma x_1}{\norm{x_1}^4}$ to basically make  the covariance term small.

We can use the same trick to separate, $x_1^T\Sigma x_1$ and $\frac{1}{\norm{x_1}^4}$.
\begin{align}\label{equ:cauch_lower_bound_noise_ex}
    \E{\frac{x_1^T\Sigma x_1}{\norm{x_1}^4}}&\geq\E{x_1^T\Sigma x_1}\E{\frac{1}{\norm{x_1}^4}}-\Var[x_1^T\Sigma x_1]^{1/2}\Var\left[\frac{1}{\norm{x_1}^4}\right]^{1/2}
\end{align}

We first note that,
\begin{align*}
  \E{x_1^T\Sigma x_1}&=tr(\Sigma^2) \\
  \E{\frac{1}{\norm{x_1}^4}}&\geq \frac{1}{\E{\norm{x_1}^2}^2}=\frac{1}{d^2}
\end{align*}

For the variance terms, let $z\sim N(0,I_d)$
\begin{align*}
    \Var[x_1^T\Sigma x_1]&= \E{(x_1^T\Sigma x_1)^2}-tr(\Sigma^2)^2=\E{(z^T\Sigma^2z)^2}-tr(\Sigma^2)^2
    \\&=tr(\Sigma^2)^2+2tr(\Sigma^4)-tr(\Sigma^2)^2=2tr(\Sigma^4)\\
    \Var\left[\frac{1}{\norm{x_1}^4}\right]&\leq \E{\frac{1}{(z^T\Sigma z)^4}}-\frac{1}{d^4}\leq \frac{1}{d^4(1-1/h(n))^4}(1+o(1))-\frac{1}{d^4}
\end{align*}

Where we use Prop. \ref{prop:second_third_quadratic_forms} for the $\Var[x_1^T\Sigma x_1]$ term and for the $\frac{1}{\norm{x_1}^4}$ term we let $h(n)=\frac{n^{1/2}}{\log(n)}$ and use Prop. \ref{prop:general_chi_square_moment_taught} (which applies because both $tr(\Sigma)^2/tr(\Sigma^2)$ and $tr(\Sigma)/\lambda_1$ are at least order $n$).

Using the fact that for small $\gamma$, $\frac{1}{(1-\gamma)^4}\leq 8\gamma$,
\begin{align*}
    \Var\left[\frac{1}{\norm{x_1}^4}\right]\leq \frac{8}{d^4h(n)}
\end{align*}

As a result,
\begin{align*}
    \E{\frac{x_1^T\Sigma x_1}{\norm{x_1}^4}}\geq r(n)-\frac{16}{h(n)^{1/2}}\left(\frac{tr(\Sigma^4)}{d^4}\right)^{1/2}=r(n)(1+o(1))
\end{align*}

Where we note that, by the proof of Prop. \ref{prop:var_noise_pseudo_inverse}, $tr(\Sigma^4)/d^4\leq \rho^2\frac{1}{n}r(n)^2$.

Because of Cauchy-Schwartz, we obtain the following bound on the remaining variance term,
\begin{align*}
    \Var\left[\frac{x_1^T\Sigma x_1}{\norm{x_1}^4}\right]\leq \Var[x_1^T\Sigma x_1]\Var\left[\frac{1}{\norm{x_1}^4}\right]\leq \frac{16}{h(n)^{1/2}}\left(\frac{tr(\Sigma^4)}{d^4}\right)\leq \frac{16}{nh(n)}r(n)^2
\end{align*}

As a result, since $\sigma_{max}^2/\E{\eps_1^2}=O(n)$,
\begin{align*}
   \Var\left[\frac{x_1^T\Sigma x_1}{\norm{x_1}^4}\right]^{1/2}\Var[\E{\eps_1^2|x_1}]^{1/2}=o(r(n)) 
\end{align*}

As a result,
\begin{align*}
    \sigma^2\geq \frac{1}{2}\E{\eps_1^2}
\end{align*}

For the upper bound, we can use a similar technique.
\begin{align*}
\E{\frac{x_1^T\Sigma x_1}{\norm{x_1}^4}\E{\eps_1^2|x_1}}&=\E{\frac{x_1^T\Sigma x_1}{\norm{x_1}^4}}\E{\eps_1^2}+\Cov[\frac{x_1^T\Sigma x_1}{\norm{x_1}^4},\E{\eps_1^2|x_1}]   \\
&\leq \E{\frac{x_1^T\Sigma x_1}{\norm{x_1}^4}}\E{\eps_1^2}+\Var\left[\frac{x_1^T\Sigma x_1}{\norm{x_1}^4}\right]^{1/2}\Var\left[\E{\eps^2_1|x_1}\right]^{1/2}
\end{align*}

By Prop. \ref{prop:general_chi_square_moment},
\begin{align*}
\E{\frac{1}{\norm{x_1}^4}}\leq 4\frac{1}{d^4}(1+o(1))
\end{align*}

Thus, using the technique in Equ. \ref{equ:cauch_lower_bound_noise_ex}, we have that,
\begin{align*}
    \E{\frac{x_1^T\Sigma x_1}{\norm{x_1}^4}}\leq 16r(n)
\end{align*}

As a result,
\begin{align*}
    \sigma^2\leq 64\E{\eps_1^2}
\end{align*}

\end{proof}
\begin{proposition}\label{prop:examples_shown}
    All the examples in Example \ref{example:eigen_decay} satisfy Assumption \ref{assumpt:weak_canonical_case} (sans the assumptions relating $\beta$ and $\P_\eps$).
\end{proposition}
\begin{proof}
For the first example, we can see that $tr(\Sigma)=qd+\frac{d}{d-n}(d-n)(1-q)=d$. The second part of Assumption 2.9 is satisfied because we are considering $q\leq \frac{1}{8}$ small and for $n $ sufficiently large $\epsilon\geq \frac{1}{2}$.  

We further note that $K(n)=\{1,...,n\}$ and so $\text{card}(K_n)=n$.

Finally, we note that $nr(n)=n\frac{\frac{d^2}{n^2}nq^2+\epsilon^2(d-n)}{d^2}=q^2(1+o(1))$. As a result, the fifth assumption is satisfied when $\alpha_{min}>1$.

For the second example, we note that $\alpha\in (0,1)$ by our assumption and that $\int_{1}^dx^{-\alpha}dx=\frac{d^{1-\alpha}-1}{1-\alpha}>8$. As a result, taking $\lambda_i=\frac{1}{\gamma}\frac{d}{n}$, we have that $tr(\Sigma)=d$. We also note that $\lambda_d=d^{-\alpha}\geq d^{-1}$.

We note that $\alpha_{min}\gamma\frac{d}{n}\leq \gamma\frac{d}{n}i^{-\alpha}$ implies that $i\leq \alpha_{min}^{-1/\alpha}$. As a result, $\text{card}(K_n)$ is a fixed finite quantity.

Finally, suppose $\alpha> \frac{1}{2}$,
\begin{align*}
   nr(n)&=\gamma^2n\frac{\frac{d^2}{n^2}\sum_{i=1}^{d}i^{-2\alpha}}{d^2}=\gamma^2\frac{\sum_{i=1}^di^{-2\alpha}}{n}=\Theta(\gamma^2\frac{d^{1-2\alpha}}{n(2\alpha-1)})
   \\&=\Theta\left(\frac{d^{1-\alpha}d^{-\alpha}}{n}\right)=\Theta(d^{-\alpha})=o(1)
\end{align*}

Now, when $\alpha=\frac{1}{2}$,
\begin{align*}
      nr(n)&=\gamma^2n\frac{\frac{d^2}{n^2}\sum_{i=1}^{d}i^{-2\alpha}}{d^2}=\gamma^2\frac{\sum_{i=1}^di^{-1}}{n}=O\left(\frac{\log(d)}{n}\right)=O\left(\frac{d^{1-\alpha}}{n}\frac{\log(d)}{d^{1-\alpha}}\right)=o(1)
\end{align*}

Now, suppose $\alpha<\frac{1}{2}$,
\begin{align*}
      nr(n)&=\gamma^2n\frac{\frac{d^2}{n^2}\sum_{i=1}^{d}i^{-2\alpha}}{d^2}=\gamma^2\frac{\sum_{i=1}^di^{-2\alpha}}{n}=\Theta\left(\gamma^2\frac{d^{1-2\alpha}}{n(1-2\alpha)}\right)
   \\&=\Theta\left(\frac{d^{1-\alpha}d^{-\alpha}}{n}\right)=\Theta(d^{-\alpha})=o(1)
\end{align*}

As a result, $nr(n)=o(1)$ and so we do not need to check the remaining assumption.

For our last example, we note that sum of the eigenvalues in the first block are is between $qd$ and $2qd$ (without any scaling). The sum of second block is of the order $\frac{d^{1-\alpha_2+\alpha_1}}{n^{\alpha_1}}$ which by assumption is order $d$.

We can see that when $q<\frac{1}{8}$, the second assumption holds because $\lambda_d$ does not decay faster than $1/d$. The third assumption holds because we have that $\lambda_{n+1}=o(d/n)$ and $\lambda_1=\Theta(d/n)$.

Finally, $nr(n)\leq 2q^2+O\left(\frac{d^{2\alpha_1}}{n^{2\alpha_1}d^2}\sum_{i=1}^{d-n}i^{-2\alpha_2}\right)$.

Now, if $\alpha_2\neq \frac{1}{2}$,
\begin{align*}
    \frac{d^{2\alpha_1}}{n^{2\alpha_1}d^2}\sum_{i=1}^{d-n}i^{-2\alpha_2}=O\left(\frac{d^{2\alpha_1}d^{1-2\alpha_2}}{n^{2\alpha_1}d^2}\right)=O\left(\left(\frac{d^{1+\alpha_1-\alpha_2}}{n^{2\alpha_1}}\right)^2d^{-3}\right)=o(1)
\end{align*}

When $\alpha_2=\frac{1}{2}$, we have that,
\begin{align*}
    \frac{d^{2\alpha_1}}{n^{2\alpha_1}d^2}\sum_{i=1}^{d-n}i^{-2\alpha_2}=O\left(\frac{d^{2\alpha_1}}{n^{2\alpha_1}d^2}\log(d)\right)=O\left(\frac{\log(d)}{n^{2\alpha_1}d^{2-2\alpha_1}}\right)=o(1)
\end{align*}
Where we use that $\log(d)/d^{\ell}=o(1)$ for all $\ell>0$.

Hence, $nr(n)\leq 2q^2(1+o(1))$ and the result holds.

\end{proof}
\bibliographystyle{plainnat}
\bibliography{bib}
\end{document}